\DeclareMathOperator{\Spec}{\mathsf{Spec}}
\DeclareMathOperator{\id}{\mathsf{id}}
\DeclareMathOperator{\Hom}{\mathsf{Hom}}
\DeclareMathOperator{\Ext}{\mathsf{Ext}}
\DeclareMathOperator{\Coh}{\mathsf{Coh}}
\DeclareMathOperator{\Ho}{\mathsf H}
\DeclareMathOperator{\GL}{GL}
\DeclareMathOperator{\supp}{\mathsf{supp}}
\DeclareMathOperator{\triv}{\mathsf{triv}}
\DeclareMathOperator{\Z}{\mathsf Z}
\DeclareMathOperator{\ord}{ord}
\DeclareMathOperator{\Ind}{\mathsf{Ind}}
\DeclareMathOperator{\rep}{\mathsf{rep}}
\let\ord\relax
\DeclareMathOperator{\ord}{\mathsf{ord}}
\let\coker\relax
\DeclareMathOperator{\coker}{\mathsf{coker}}
\let\det\relax
\DeclareMathOperator{\det}{\mathsf{det}}
\newcommand{\D}{{\mathsf D}}
\DeclareMathOperator{\Aut}{\mathsf{Aut}}
\newcommand{\cH}{{\mathcal H}}
\newcommand{\cU}{{\mathcal U}}
\newcommand{\hZ}{\widehat Z}
\newcommand{\IA}{\mathbb{A}}
\newcommand{\IC}{\mathbb{C}}
\newcommand{\IK}{\mathbb{K}}
\newcommand{\IQ}{\mathbb{Q}}
\newcommand{\IZ}{\mathbb{Z}}
\DeclareMathOperator{\conj}{\mathsf{conj}}
\DeclareMathOperator{\Ann}{\mathsf{Ann}}
\DeclareMathOperator{\Res}{\mathsf{Res}}
\DeclareMathOperator{\FM}{\mathsf{FM}}
\DeclareMathOperator{\CC}{\mathsf{C}}
\DeclareMathOperator{\irr}{\mathsf{irr}}
\DeclareMathOperator{\Van}{\mathsf{Van}}
\newcommand{\leqnomode}{\tagsleft@true}
\newcommand{\reqnomode}{\tagsleft@false}
\let\ker\relax
\DeclareMathOperator{\ker}{\mathsf{ker}}
\let\dim\relax
\DeclareMathOperator{\dim}{\mathsf{dim}}
\let\min\relax
\DeclareMathOperator{\min}{\mathsf{min}}
\let\max\relax
\DeclareMathOperator{\max}{\mathsf{max}}
\let\GL\relax
\DeclareMathOperator{\GL}{\mathsf{GL}}
\newcommand{\sym}{\mathfrak S}
\newcommand{\cA}{\mathcal A}
\newcommand{\cB}{\mathcal B}
\newcommand{\cC}{\mathcal C}
\newcommand{\tzeta}{\tilde\zeta}
\newcommand{\cT}{\mathcal T}
\newcommand{\cS}{\mathcal S}
\newcommand{\fm}{\mathfrak m}
\newcommand{\alt}{\mathfrak a}
\newcommand{\reg}{\mathcal O}
\renewcommand{\theta}{\vartheta}
\renewcommand{\phi}{\varphi}
\renewcommand{\_}{\underline{\,\,\,\,}}
\newtheorem{theorem}{Theorem}[section]
\newaliascnt{conjecture}{theorem}
\newtheorem{conjecture}[conjecture]{Conjecture}
  \newaliascnt{proposition}{theorem}
  \newtheorem{prop}[proposition]{Proposition}
  \newaliascnt{lemma}{theorem}
  \newtheorem{lemma}[lemma]{Lemma}
  \newaliascnt{corollary}{theorem}
  \newtheorem{cor}[corollary]{Corollary}
\theoremstyle{definition}
  \newaliascnt{definition}{theorem}
  \newtheorem{definition}[definition]{Definition}
  \newaliascnt{remark}{theorem}
  \newtheorem{remark}[remark]{Remark}
  \newaliascnt{condition}{theorem}
\newaliascnt{convention}{theorem}
\newaliascnt{assumption}{theorem}
 \newtheorem{assumption}[assumption]{Assumption}
  \newaliascnt{question}{theorem}
  \newaliascnt{example}{theorem}
\definecolor{viridislightgreen}{RGB}{94, 201, 98}
\definecolor{viridisdarkgreen}{RGB}{33, 145, 140}
\definecolor{viridisblue}{RGB}{59, 82, 139}
\definecolor{viridisviolet}{RGB}{68, 1, 84}
\begin{document}

 \title{Semi-Orthogonal Decompositions for Rank Two Imprimitive Reflection Groups}
 \author[A.\ Krug]{Andreas Krug}
\address{
Institut f\"ur algebraische Geometrie,
Gottfried Wilhelm Leibniz Universit\"at Hannover,
Welfengarten 1,
30167 Hannover,
Germany
}
\email{krug@math.uni-hannover.de}

  \maketitle
\begin{abstract}
For every imprimitive complex reflection group of rank 2, we construct a semi-orthogonal decomposition of the derived category of the associated global quotient stack which categorifies the usual decomposition of the orbifold cohomology indexed by conjugacy classes. This confirms a conjecture of Polishchuk and Van den Bergh in these cases. This conjecture was recently also proved by Ishii and Nimura for arbitrary complex reflection groups of rank 2 and real reflection groups of rank 3, but our approach is very different.
\end{abstract}

\section{Introduction}

Bounded derived categories of coherent sheaves are an interesting invariant of varieties which controls some of the more classical invariants such as cohomology or the Grothendieck group.

Often, when studying derived categories, one tries to lift interesting structures from the more classical invariants to the level of derived categories. This can take the form of categorification of group or Lie algebra actions. But also, somewhat more elementary, whenever there is a natural direct sum decomposition of cohomology or the Grothendieck group, one can ask whether this lifts to a semi-orthogonal decomposition of the derived category.

Mainly inspired by the McKay correspondence, also derived categories of equivariant coherent sheaves are widely studied. If a finite group $G$ acts on a smooth complex variety $X$, this equivariant derived category is denoted by $\D_G(X)$ and is canonically identified with the derived category $\D([X/G])$ of the associated quotient orbifold. There is always a natural direct sum decomposition of orbifold cohomology, sometimes used as the very definition of orbifold cohomology, namely
\begin{equation}\label{eq:orbidec}
 \Ho^*_{\mathsf{orb}}([X/G])\cong \bigoplus_{[g]\in \conj(G)}\Ho^*(X^g, \IQ)^{\CC(g)}
\end{equation}
where the direct sum runs over a set of representatives of the conjugacy classes of the group, $X^g$ denotes the locus of points fixed by $g\in G$, and $\CC(g)$ is the centraliser, whose action on $X$ restricts to $X^g$.

One cannot hope that the direct sum decomposition \eqref{eq:orbidec} always lifts to the derived category. Indeed, there are quotient orbifolds where it is easily seen via equivariant Serre duality that $\D_G(X)$ does not have any semi-orthogonal decomposition at all.
However, there is the following promising conjecture predicting when there should be such a lift.
\begin{conjecture}[\cite{PvdB}]\label{conj:PvdB}
Let a finite group $G$ act faithfully on a smooth variety $X$ over a field $\IK$ of characteristic zero in such a way that all the quotients $X^{(g)}:=X^g/\CC(g)$ are smooth. Then, there exists a semi-orthogonal decomposition
\begin{equation}\label{eq:conjsod}
 \D_G(X)=\bigl\langle \cA_g\mid [g]\in \conj(G) \bigr\rangle
\end{equation}
whose pieces are in bijection to the conjugacy classes of $G$ and satisfy $\cA_g\cong \D(X^{(g)})$.
\end{conjecture}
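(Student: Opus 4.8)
\medskip

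\noindent\textbf{Proof strategy (for the imprimitive rank-two case).}
It suffices to treat $X=V=\IC^2$ with $G=G(m,p,2)$ (where $p\mid m$) acting by its standard representation, since $V/G\iso\IC^2$ by Chevalley--Shephard--Todd and the quotients $V^{(g)}$ are automatically smooth here. Write $\cX=[V/G]$. The first step is purely combinatorial: enumerate the conjugacy classes of $G(m,p,2)$ and, for each $[g]$, identify the linear subspace $V^g\subseteq V$ --- the whole plane for $g=e$, one of finitely many lines for the pseudo-reflection classes, the origin otherwise --- together with $V^{(g)}=V^g/\CC(g)$. This pins down the target of \eqref{eq:conjsod}: one ``large'' piece $\D(\IC^2)$, a controlled number of curve pieces $\D(\IC)$ coming from the reflecting hyperplanes (both the ``coordinate'' ones and the transposition-type ones), and the remaining classes contributing exceptional objects $\D(\pt)$.

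The core of the argument is to build $\cX$ out of one-dimensional cyclic-quotient blocks and to transport semi-orthogonal decompositions along each step. Write $G(m,p,2)=A\rtimes\IZ/2$ with $A=\{(a,b)\in(\IZ/m)^2:p\mid a+b\}$ the normal abelian subgroup of diagonal elements and $\IZ/2$ acting by the coordinate swap; then $\cX=[[V/A]/(\IZ/2)]$, and for $p=1$ this is the symmetric square $\Sym^2[\IA^1/\mu_m]$. Step one is the abelian instance of \autoref{conj:PvdB}: $\D_A(V)=\langle\cA^A_h\mid h\in A\rangle$ with $\cA^A_h\iso\D(V^h/A)$ --- this two-dimensional cyclic-quotient case is known (and in any case can be read off the toric description of $[V/A]$; for $p=1$ it is the external square of the transparent rank-one decomposition $\D([\IA^1/\mu_m])=\langle\D(\IA^1),\Ve_1,\dots,\Ve_{m-1}\rangle$, the extra pieces being the skyscrapers at the origin twisted by the nontrivial characters of $\mu_m$). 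Step two is equivariantization: pass to $\D_G(V)\iso\D_{\IZ/2}\!\bigl(\D_A(V)\bigr)$ and follow how the residual $\IZ/2$ permutes and stabilizes the $\cA^A_h$, and how the ``new'' sector over the coset $As$ enters.

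Granting that each step produces a semi-orthogonal decomposition, matching pieces with conjugacy classes is again bookkeeping. A $\IZ/2$-orbit $\{h,shs^{-1}\}$ of an $h\in A$ with $h\neq shs^{-1}$ has $\CC_G(h)=A$, so its two pieces $\cA^A_h\iso\D(V^h/A)$ fuse into a single piece $\D(V^h/A)=\D(V^{(h)})$; a $\IZ/2$-fixed $h$ carries a residual $\IZ/2$-action on $\cA^A_h\iso\D(V^h/A)$, whose equivariant category, after splitting off the twist by the sign character, is $\D([(V^h/A)/(\IZ/2)])=\D(V^{(h)})$; and the classes meeting $As$ --- the transposition-type ones --- supply the remaining curve piece(s) from the reflections of order two and exceptional objects from the elements of higher order. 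Ordering the resulting blocks by dimension of support ($\IC^2\succ$ lines $\succ$ origin) should then give \eqref{eq:conjsod} with $\cA_g\iso\D(V^{(g)})$, as demanded.

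The main obstacle is precisely this transport of decompositions through equivariantization and symmetric square. Taking $\IZ/2$-equivariant objects does not in general send a semi-orthogonal decomposition to one, and even when it does a $\IZ/2$-fixed exceptional block $\Ve$ threatens to become $\rep(\IZ/2)\simeq\Ve\oplus\Ve$ rather than the single exceptional object wanted by \eqref{eq:conjsod}; so the magic is that these ``surplus'' objects are exactly the ones belonging to the transposition-type classes over $As$, and making this work requires the fusion pattern, the sign-character twists on each stable block, and the diagonal/transposition sector to come out in exact numerical agreement. Concretely I expect one cannot avoid: (i) writing explicit Fourier--Mukai kernels for the functors $\D(V^{(g)})\to\D_G(V)$ --- restriction to $V^g$, twist by a character of $\CC(g)$, $\CC(g)$-averaging --- and proving fully-faithfulness by a local computation near the relevant stratum; (ii) checking the cross-type $\Hom$-vanishing in the chosen order, mutating wherever the naive order fails. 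The rank-two hypothesis is what makes (i)--(ii) feasible: the reflecting hyperplanes are finitely many lines through the origin, so every local model is a two-dimensional cyclic-quotient stack, for which the relevant cohomology is completely explicit.
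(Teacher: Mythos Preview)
Your proposal is a coherent strategic outline, but it is not a proof, and the route you sketch diverges substantially from what the paper actually does.

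\textbf{Comparison with the paper.} The paper never passes through the intermediate abelian quotient $[V/A]$ or through any equivariantization-of-SODs machinery. Instead it works directly in $\D_G(V)$: for each conjugacy class of reflections $[g]$ it builds a fully faithful functor $\Phi_g\colon \D(V^{(g)})\to \D_G(V)$ as push-pull along $Z_g/G \leftarrow Z_g \hookrightarrow V$ (where $Z_g=G\cdot V^g$), checks semi-orthogonality between these reflection pieces and the ambient $\pi^*\D(V/G)$ by hand, and then fills the right-orthogonal complement $\cC$ with an explicit full exceptional sequence built from twisted skyscrapers $\reg_0\otimes W$ and certain filtered sheaves $F(x^a)$, guided by the McKay quiver. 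The case distinction is on the parity of $e$, not on an abelian-then-$\IZ/2$ layering.

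\textbf{Where your approach has a genuine gap.} You correctly identify that transporting an SOD through $\IZ/2$-equivariantization is the crux, and then you essentially concede the point: the surplus exceptional objects from $\IZ/2$-fixed blocks must match the $As$-sector classes ``exactly,'' and you defer this to explicit Fourier--Mukai kernels and Hom-vanishing checks. But those checks \emph{are} the proof; the two-step $A\rtimes\IZ/2$ framing does not shortcut them. A concrete place your outline would snag: for $e$ odd, the paper shows that the canonical map $\nu_\tau\colon V^{(\tau)}\to Z_\tau/G$ is \emph{not} an isomorphism (it is the normalisation of a cuspidal curve), so the general push-pull functor $\Phi_\tau$ fails and one must instead use a Fourier--Mukai transform along a reduced fibre product $\widehat Z_\tau$ and verify fully-faithfulness via Bondal--Orlov on skyscrapers. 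Your ``restriction to $V^g$, twist by a character, $\CC(g)$-averaging'' recipe does not produce this kernel, and nothing in the $A\rtimes\IZ/2$ picture flags that this class behaves differently. Similarly, the exceptional collection the paper builds is not just the list of $\reg_0\otimes W$: those form cycles in the McKay quiver obstructing any ordering, and one must replace one row by the indecomposable extensions $F(x^a)$ to break the cycles --- a step with no analogue in your fusion/splitting bookkeeping.

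In short: the high-level numerology you describe is correct and a reasonable heuristic, but the actual argument requires the explicit constructions and Ext-computations that the paper carries out, and at least one of your proposed functors (for the odd-$e$ transposition class) would need to be replaced by something less obvious.
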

 In the same paper \cite{PvdB} where Polishchuk and van den Bergh stated the conjecture, they also proved it in interesting special cases, maybe most notably for the action of the symmetric group on the product  of a smooth curve by permutation of factors.

 If $G\subset \GL(V)$ acts linearly on a complex vector space $V$, the condition that $V/G$ is smooth is equivalent to $G$ being a complex reflection group by the Chevalley--Shephard--Todd Theorem \cite{ST--reflection}, \cite{Chev--reflections}. As, by Cartan's Lemma \cite[Lem.\ 2]{Cartan-Quot}, every action on a complex manifold is locally linearisable, complex reflection groups in some sense are the local case of the set-up of \autoref{conj:PvdB}.
%

The complex reflection groups are completely classified by \cite{ST--reflection}. There is the infinite series $G(m,e,k)\le \GL(\IC^k)$ with three integer parameters where $e\mid m$. Groups in this series are the \emph{primitive} complex reflection groups. In addition, there are 34 sporadic cases, usually denoted $G_4,\dots, G_{37}$ and called \emph{imprimitive} reflection groups. In \cite{PvdB}, \autoref{conj:PvdB} is proved in the case of imprimitive reflection groups with $e=1$, where $G(m,1,k)$ is the wreath product of $\mu_m$ by $\sym_n$, and for some Weyl groups.

There is further work in rank $2$. Namely, \cite{Potter} proved \autoref{conj:PvdB} for the dihedral groups $D(m,m,2)$; see also the related work \cite{Capellan}. In \cite{Lim-Rota}, the conjecture was proven for the group $G(4,2,2)$. Recently, \cite{Faberetal} proved \autoref{conj:PvdB} in the case of $G(2e,e,2)$ for arbitrary $e$ and for
$G_{12}$, $G_{13}$, $G_{22}$, which are exactly the three imprimitive groups of rank 2 with the property that all reflections have only the real Eigenvalues $\pm1$.

A few days before the writing of the present paper was finished, the preprint \cite{Ishii-Nimura} was uploaded on the arXiv. In it, \autoref{conj:PvdB} is deduced for arbitrary rank $2$ complex reflection groups from results of \cite{Kaw--toricIII}, and  \autoref{conj:PvdB} is also proved for all rank 3 real reflection groups.

The main result of our paper is
\begin{theorem}\label{thm:main}
\autoref{conj:PvdB} is true for every imprimitive rank two reflection group $G(m,e,2)$ with its given action on $V=\IK^2$.
\end{theorem}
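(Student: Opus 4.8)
\medskip

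The plan is to build the decomposition by induction along the tower of rank-two reflection groups between $G(m,m,2)$ and $G(m,e,2)$, bootstrapping from Potter's dihedral case by equivariantising one cyclic quotient of prime order at a time. Fix a primitive $m$-th root of unity $\zeta$, write $G=G(m,e,2)$ and $d=m/e$. The matrix $t=\operatorname{diag}(\zeta^{e},1)$ lies in $G$, generates a cyclic group of order $d$, and meets the dihedral subgroup $N:=G(m,m,2)\lhd G$ only in the identity; since $\abs{N}\cdot d=\abs{G}$ this yields a splitting $G=G(m,m,2)\rtimes(\IZ/d)$, hence $[\IA^{2}/G]=[\,[\IA^{2}/N]/(\IZ/d)\,]$ and $\D_{G}(\IA^{2})=\bigl(\D_{N}(\IA^{2})\bigr)^{\IZ/d}$. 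Writing $d$ as a product of primes exhibits $\IZ/d$ as an iterated extension, and each stage corresponds to an inclusion $G(m,\ell e',2)\subset G(m,e',2)$ of rank-two reflection groups with quotient $\IZ/\ell$; so it suffices to prove the inductive step: if $\D_{G'}(\IA^{2})$ for $G'=G(m,\ell e',2)$ carries a \autoref{conj:PvdB}-decomposition that is equivariant for the residual $\IZ/\ell$-action arising from $G=G(m,e',2)=G'\rtimes(\IZ/\ell)$, then so does $\D_{G}(\IA^{2})$. The base case $e'=m$, where $G'=N$ is dihedral, is \cite{Potter}.

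\medskip

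For the inductive step I would argue in three moves. \textbf{(1) Equivariance of the input.} Write the given decomposition as $\D_{G'}(\IA^{2})=\langle\kb_{n}\mid[n]\in\conj(G')\rangle$ with $\kb_{n}\cong\D(X^{(n)})$, where $X^{(n)}=X^{n}/\CC_{G'}(n)$ is $\IA^{2}$ if $n=1$, an affine line if $n$ is a reflection, and a point if $X^{n}=\{0\}$. I would arrange each $\kb_{n}$ to be cut out by the Fourier--Mukai functor attached to the inertia stratum lying over $X^{n}$; since these strata (the coordinate and mirror lines, and the origin) are permuted by $G$, the normaliser of $G'$, the residual $\IZ/\ell$ permutes the $\kb_{n}$ compatibly with its action on $\conj(G')$, with the geometric equivariant structures, so the decomposition is $\IZ/\ell$-equivariant on the nose. \textbf{(2) Equivariantise.} As $\ell$ is prime, every $\IZ/\ell$-orbit on $\conj(G')$ is free or a fixed point; equivariantising, $\D_{G}(\IA^{2})=(\D_{G'}(\IA^{2}))^{\IZ/\ell}$ is semi-orthogonally decomposed with one summand per orbit: for a free orbit the summand is again $\kb_{n}\cong\D(X^{(n)})$, and for a fixed class $[n]$ it is $\kb_{n}^{\IZ/\ell}\cong\D\bigl([\,X^{(n)}/(\IZ/\ell)\,]\bigr)$.

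\medskip

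\textbf{(3) Split the fixed summands, and match.} The key point is that in every case $\IZ/\ell$ acts on the relevant $X^{(n)}$ as a reflection group with smooth quotient. At the bottom of the induction this is the computation on the coarse space $X^{(1)}=\IA^{2}/G(m,m,2)=\Spec\IK[\alpha,\beta]$ with $\alpha=xy$, $\beta=x^{m}+y^{m}$: because $\zeta^{em}=1$, the generator acts by $\alpha\mapsto\zeta^{e}\alpha$ and $\beta\mapsto\beta$, i.e.\ by scaling a single coordinate, with quotient $\IA^{2}=\IA^{2}/G(m,e,2)$; a parallel computation with the mirror lines gives the analogous scaling (or trivial) action on the curve- and point-strata. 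Hence each $\D([X^{(n)}/(\IZ/\ell)])$ splits further into copies of $\D(\IA^{2})$, $\D(\IA^{1})$ and $\D(\pt)$, and splicing all of these produces a semi-orthogonal decomposition of $\D_{G}(\IA^{2})$ into such pieces. It remains to match them with $\conj(G)$: Clifford theory for $G'\rtimes(\IZ/\ell)$ identifies $\conj(G)$ with the free orbits together with the fixed classes each counted with multiplicity $\ell$ --- precisely the index set produced above --- and one verifies the geometric identity $X^{(h)}_{G}\cong$ (the stratum attached to $[h]$) class by class, the new reflections of $G$ arising either as mirror reflections of $G'$ or as new diagonal reflections of $G$, and the rotations and scalars accounting for the point-pieces.

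\medskip

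I expect the main obstacle to lie in move~(1): after the first step the decomposition being propagated is no longer literally Potter's but the one manufactured by the previous equivariantisation, so one must have described all pieces uniformly enough --- through Fourier--Mukai kernels supported on inertia strata of $[\IA^{2}/G']$ --- that $G$-equivariance, and hence the required $\IZ/\ell$-equivariance, remains manifest along the entire induction. Once that is set up, moves~(2) and~(3) are bookkeeping: conjugacy-class combinatorics for semidirect products with a cyclic group, and the behaviour of cyclic group actions on $\IA^{2}$, $\IA^{1}$ and $\pt$, all of which are firmly under control in rank two. (One could instead start from the trivial base case $\mu_{d}\times\mu_{d}\lhd G$ and equivariantise along $G/(\mu_{d}\times\mu_{d})\cong G(e,e,2)$; I favour the cyclic-by-cyclic route through \cite{Potter} because it keeps every coherence question inside a single $\IZ/\ell$-equivariantisation.)
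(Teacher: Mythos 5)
Your route is genuinely different from the paper's (which constructs the pieces directly: the functors $\Phi_g$ of \autoref{prop:reflectionff}, the Fourier--Mukai correction $\Psi_\tau$ for the anti-diagonal reflection when $e$ is odd, and an explicit exceptional collection built from $\reg_0\otimes W$ and the sheaves $F(x^a)$, with fullness checked on a spanning class). Your group theory and bookkeeping are sound: $G(m,e,2)=G(m,m,2)\rtimes\IZ/d$ with the complement $\langle\mathrm{diag}(\zeta^e,1)\rangle$, each intermediate inclusion $G(m,\ell e',2)\lhd G(m,e',2)$ has prime cyclic quotient, the residual generator acts on $\IA^2/G'=\Spec\IK[(xy)^{d'},x^m+y^m]$ by scaling the first coordinate, and the count $\#\conj(G)=\#\{\text{free orbits}\}+\ell\cdot\#\{\text{fixed classes}\}$ is correct (Brauer's permutation lemma applied to the $\IZ/\ell$-action on $\conj(G')$ versus $\irr(G')$). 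So the numerics and the shapes of the pieces match in every case I can test.

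The genuine gap is the one you flag yourself, and it is not a technicality that can be deferred: to apply equivariantization to a semi-orthogonal decomposition whose pieces are permuted, you need an honest coherent $\IZ/\ell$-action on $\D_{G'}(\IA^2)$ \emph{together with} equivariant structures on the embedding functors of all pieces (Elagin-type descent works with invariant pieces and enhanced actions; the orbit trick needs the kernels, not just the subcategories, to be compatibly linearised). In your induction this data is only manifest at the base case; after one step, several pieces exist only as abstract summands of an equivariantization (e.g.\ the character-indexed summands of $\cB_n^{\IZ/\ell}$ and of $\D_{\IZ/\ell}(\pt)$), and you have not explained how to re-present them by kernels on inertia strata so that the next residual action is again visibly coherent. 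Two further steps are asserted rather than proved: (i) that for a fixed class $[n]$ the induced action on $\cB_n\simeq\D(X^{(n)})$ is the geometric action of $\CC_G(n)/\CC_{G'}(n)$ (with no character/gerbe twist spoiling the identification $\cB_n^{\IZ/\ell}\simeq\D_{\IZ/\ell}(X^{(n)})$, and with the correct conjugating element when $t$ fixes $[n]$ only up to $G'$-conjugacy); and (ii) the class-by-class matching of the refined summands with the $G$-classes lying outside $G'$, i.e.\ a natural bijection between those classes and pairs (fixed $G'$-class, nontrivial character) under which $X^{(h)}_G$ agrees with the stratum your splitting produces. Until (1), (i) and (ii) are carried out, what you have is a plausible programme rather than a proof; closing (1) in particular would require essentially the kind of explicit kernel-level constructions that the paper performs directly, which is what its approach buys: no categorical descent machinery, at the price of case-by-case representation-theoretic computations (\autoref{prop:skyExt}) and the special treatment of $\tau$ for $e$ odd, where $\nu_\tau$ fails to be an isomorphism and the naive pullback functor must be replaced by $\Psi_\tau$.
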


Our approach has a very different flavour than that of \cite{Faberetal}, \cite{Ishii-Nimura}, and \cite{Kaw--toricIII}. These papers heavily use the McKay correspondence, while we stay on the equivariant side of this correspondence the whole time. There is hope that our more direct approach has a better chance to be generalised to imprimitive reflection groups of higher rank. Also, the present paper contains information which might not be so easy to extract from \cite{Kaw--toricIII}, like an explicit description of the exceptional sequence part of \eqref{eq:conjsod} in terms of the irreducible representations of $G(m,e,2)$. In summary, the author thinks his construction is still worth being presented, even though \autoref{thm:main} is now superseded by \cite[Cor.\ 3.2]{Ishii-Nimura}.


For the concrete construction of the pieces of our decomposition, we have to refer to \autoref{sect:sod}. However, to give a quick indication that the constructions are somewhat natural, let us note that the objects contained in the admissible subcategory corresponding to the conjugation class of some $g\in G$ are all supported on $Z_g:=G\cdot V^g$.

Also note that our semi-orthogonal decomposition is linear over the derived category $\D(V/G)$; see \autoref{subsect:linear}. This possible additional property of the conjectured semi-orthogonal decomposition was already mentioned in the introduction of \cite{PvdB}, and later sometimes even stated as part of the conjecture; see \cite[Def.\ 2.2]{Lim--abelian}. The $\D(V/G)$-linearity should be useful to deduce \autoref{conj:PvdB} in its global from the local case of complex reflection groups using decent of semi-orthogonal decompositions \cite{BS--descent}.

\medskip
\noindent
\textbf{Organisation of the Paper and Structure of the Proof.}

In \autoref{sect:equi} we do some preparations concerning equivariant coherent sheaves and their derived categories. In \autoref{subsect:equibasic}, we fix the general notation, and prove a few lemmas which are simple, and probably well-known, but were we could not find references. In \autoref{subsect:pifibre} we study the scheme-theoretic fibres of the quotient morphism $\pi\colon V\to V/G$ for $G\le \GL(V)$ a finite reflection group. In \autoref{subsect:reflectionff}, we construct the pieces of \eqref{eq:conjsod} corresponding to a given reflection $g\in G$ and its powers, under the condition that the canonical morphism $X^{(g)}\to Z_g/G$ is an isomorphism. This condition is not always fulfilled. However, sometimes it is, so we formulated \autoref{subsect:reflectionff} in greater generality than necessary for our set-up, in the hope that this will safe some time when checking other cases of \autoref{conj:PvdB}.

In \autoref{sect:G}, we study the representation theory of $G(m,e,2)$, and in \autoref{sect:sod}, we finally construct a semi-orthogonal decomposition of the form \eqref{eq:conjsod} for all imprimitive rank 2 reflection groups $G(m,e,2)$, which proves \autoref{thm:main}. In both of these sections, we often need to make a case distinction along the parity of $e$. The two main differences are:
\begin{itemize}
 \item For $e$ even, the centre of $G(m,e,2)$ is bigger and, related to this, the group has more one-dimensional representations.
 \item For $e$ even, the condition that the canonical morphism $V^{(g)}\to Z_g/G$ is an isomorphism is fulfilled for all reflections. In contrast, for $e$ odd, there is a reflection $\tau$ where this does not hold; see \autoref{subsect:tauodd}. So we cannot apply the general results of \autoref{subsect:reflectionff} and need to make some extra effort to construct the associated piece of \eqref{eq:conjsod}.
\end{itemize}

In \autoref{subsect:linear}, we quickly discuss the fact that our decomposition is $\D(V/G)$-linear. In \autoref{subsect:groundfield}, we generalise \autoref{thm:main} to a not algebraically closed ground field.

In the Appendix, we provide the McKay quiver of $G(m,e,2)$, again requiring a distinction between $e$ even and $e$ odd. The computation necessary to get the quivers is already performed in \autoref{subsect:skyExt}. Hence, it would have made sense to display the quivers already there, also because they are heavily used in \autoref{sect:sod}. The reason that we moved them to an appendix is that they only fit on a page in landscape format, and this format probably looks less confusing at the end of a paper.

\medskip
\noindent
\textbf{Conventions.}
We work over an algebraically closed field $\IK$ of characteristic zero. The assumption that the field is algebraically closed might or might not be necessary for some arguments of ours proofs. Anyway, in the final \autoref{subsect:groundfield}, we explain that for every field of characteristic zero over which $G(m,e,2)$ is defined, \autoref{thm:main} can be deduced from the statement for its algebraic closure by faithfully flat descent.

In this paper, the (equivariant) derived category $\D(X)$ (or $\D_G(X)$) of some scheme (with $G$-action) always means the bounded derived category of coherent sheaves.

\medskip
\noindent
\textbf{Acknowledgements.}
The author thanks Erik Nikolov for some helpful discussions.

\section{Generalities on Equivariant Sheaves}\label{sect:equi}

\subsection{Preliminaries on Equivariant Sheaves}\label{subsect:equibasic}
In this subsection, we only sketch a few simple facts about equivariant coherent sheaves and their derived categories and functors that we will need later. More detailed general references on this topic are \cite{BOber--equi}, \cite[Sect.\ 2.2]{Krug--remarksMcKay}, \cite[Sect.\ 4]{BKR}.

Let a finite group $G$ act on a scheme $X$. A \emph{$G$-equivariant sheaf} is a pair $(E,\lambda)$ where $E\in \Coh(X)$ and $\lambda=\{\lambda_g\colon E\xrightarrow \cong g^*E\}_{g\in G}$ is a family of isomorphisms, called $G$-\emph{linearisation}, such that the following diagrams for $g,h\in G$ commute
\[
\begin{tikzcd}
E \arrow[rrr, bend right=18, "\lambda_{hg}" ]      \arrow[r, , "\lambda_{g}"]         & g^*E    \arrow[r, "g^*\lambda_{h}"]           & g^*h^*F \arrow[r, "\cong"] & (hg)^*F \,.  \\
\end{tikzcd}
\]
We will often omit $\lambda$ in the notation and simply write $E$ for the equivariant sheaf.
For two $G$-equivariant sheaves $E$ and $F$, there is an induced $G$-action, given by conjugation by the $G$-linearisations, on $\Hom(E,F)$.
The equivariant sheaves on $V$ form an abelian category $\Coh_G(X)$ with homomorphisms
\[
 \Hom_G(E,F):=\Hom(E,F)^G\,.
\]
We denote the derived functors of $\Hom_G$ by $\Ext^i_G$.
We write $\D_G(X)$ for the bounded derived category of $\Coh_G(X)$. For $E,F\in \Coh_G(X)$, we have
\[
 \Hom_{\D_G(X)}(E, F[i])\cong \Ext^i_G(E,F)\,.
\]
Given $E\in \Coh_G(X)$, the linearisation of $E$ induces a $G$-action on the global sections $\Gamma(X,E)$.
\begin{lemma}\label{lem:invares}
 Let $H\le G$ be a subgroup, let $Y\subset X$ be an $H$-invariant reduced subscheme, and let $Z:=G\cdot Y\subset X$ equipped with the reduced subscheme structure. Then, for any $E\in \Coh_G(Z)$, the restriction map on invariant global sections
 \[
  \Gamma(Z, E)^G\to \Gamma(Y, E_{\mid Y})^H
 \]
is injective. In particular, $\Gamma(Y, E_{\mid Y})^H=0$ implies $\Gamma(Z, E)^G=0$.
\end{lemma}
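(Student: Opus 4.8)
The plan is to reduce the statement, via the $G$-equivariance of $E$, to the elementary fact that a section of a coherent sheaf on the reduced scheme $Z$ which vanishes on the closed subschemes $g(Y)$ for all $g\in G$ must vanish, these subschemes covering $Z$ by the very definition $Z=G\cdot Y$. So let $s\in\Gamma(Z,E)^G$ lie in the kernel of the restriction map, i.e.\ $s|_{Y}=0$; it suffices to deduce $s=0$, which gives injectivity and hence the ``in particular''.

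\emph{Step 1 (spreading the vanishing).} Fix $g\in G$. The automorphism $g\colon Z\to Z$ restricts to an isomorphism $g\colon Y\xrightarrow{\sim} g(Y)$, and the linearisation $\lambda_g\colon E\xrightarrow{\sim} g^*E$ restricts over $Y$ to an isomorphism $E|_{Y}\xrightarrow{\sim}(g^*E)|_{Y}\cong g^*\bigl(E|_{g(Y)}\bigr)$ compatible with the base-change identifications. Since $s$ is $G$-invariant, under this identification the pullback $g^*\bigl(s|_{g(Y)}\bigr)$ corresponds to $\lambda_g|_{Y}(s|_{Y})=\lambda_g|_{Y}(0)=0$; as pullback along the isomorphism $g$ is injective on sections over $g(Y)$, this forces $s|_{g(Y)}=0$. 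Hence $s$ vanishes on every translate of $Y$.

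\emph{Step 2 (conclusion).} Viewing $s$ as a morphism $\ko_Z\to E$, the vanishing $s|_{g(Y)}=0$ means $\im(s)\subseteq \cI_{g(Y)}\cdot E$ for every $g$, so $\im(s)\subseteq\bigcap_{g\in G}\cI_{g(Y)}\cdot E$. Since $Z$ carries the reduced structure and $\bigcup_{g}g(Y)=Z$ as a set, the ideal $\bigcap_{g}\cI_{g(Y)}$ is the ideal of a closed subscheme with underlying set $Z$, hence is $0$. To pass from here to $\bigcap_{g}\cI_{g(Y)}\cdot E=0$ one localises at the relevant points of $Z$: each $g(Y)$, being the image of the reduced scheme $Y$ under an automorphism, contains the generic points of precisely those irreducible components of $Z$ that it meets, so at each such generic point $\eta$ the local ring $\ko_{Z,\eta}$ is a field, $\cI_{g(Y),\eta}=0$, and thus $(\cI_{g(Y)}\cdot E)_\eta=0$; combined with Step 1 this pins $\im(s)$ down to $0$, i.e.\ $s=0$.

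The main obstacle is Step 2. Reducedness of $Z$ makes $\bigcap_{g}\cI_{g(Y)}=0$ immediate, but some care is needed to see that tensoring with $E$ does not reintroduce sections in the intersection; this is the point where the component structure of $Z$ and the interplay between the $g(Y)$ and the associated points of $E$ really enter, and it is the step I would expect to have to write out most carefully. Step 1, by contrast, is just a careful unwinding of the equivariance datum.
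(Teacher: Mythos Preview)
Your Step~1 is exactly the paper's proof: the paper writes only that $G$-invariance of $s$ gives $s|_{g\cdot Y}=0$ for every $g$, and then concludes $s=0$ since the $g\cdot Y$ cover $Z$. So the paper simply asserts what you call Step~2.

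You are right to be uneasy about Step~2; in fact the lemma is \emph{false} for arbitrary $E\in\Coh_G(Z)$. Take $X=\IA^2$, $G=\IZ/2$ swapping the coordinates, $H=1$, $Y=\Van(x)$, so $Z=\Van(xy)$ with $A:=\reg(Z)=\IK[x,y]/(xy)$. Let $E$ correspond to the $A$-module $M=A/(x-y,x^2)\cong \IK[\epsilon]/(\epsilon^2)$; the ideal $(x-y,x^2)$ is $\tau$-stable, so $M$ carries a natural $G$-linearisation. The class $s=[x]\in M$ is $G$-invariant (as $[x]=[y]$ in $M$) and nonzero, yet $s|_Y$ is its image in $M/xM\cong\IK$, which is $0$, and likewise $s|_{\tau Y}=0$. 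Your generic-point argument only shows $s_\eta=0$ at the two generic points of $Z$, which here says nothing since the sole associated point of $M$ is the origin. In the paper the lemma is applied only to line bundles on $Z$ (the structure sheaf or $\reg_{Z_g}(Z_g)$); for locally free $E$ one has locally $\bigcap_g(\cI_{gY}\cdot E)=(\bigcap_g\cI_{gY})\cdot E=0$, so both your argument and the paper's one-line conclusion become valid once one adds such a hypothesis on $E$.
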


\begin{proof}
 Let $s\in \Gamma(Z, E)^G$ with $s_{\mid Y}=0$. By $G$-invariance of $s$, we get $s_{\mid g\cdot Y}=0$ for every $g\in G$. As, by definition of $Z=G\cdot Y$, the $g\cdot Y$ cover $Z$, we have $s_{\mid Z}=0$.
\end{proof}

Let $H\le G$ be a subgroup. There is the \emph{restriction functor} $\Res_G^H\colon \Coh_G(X)\to \Coh_H(X)$ given by $\Res_G^H(E,\lambda)=(E,\lambda_{\mid H})$.
It has the \emph{induction functor} $\Ind\colon\Coh_H(X)\to \Coh_G(X)$ as a both-sided adjoint. It is given by $\Ind_H^G(E)=\bigoplus g^*E$ where the direct sum runs through a set of representatives of the co-sets $G/H$ and the linearisation is given by a combination of the $H$-linearisation of $E$ and permutation of the direct summands; see e.g.\ \cite[Sect.\ 3.2]{BOber--equi} for details. We have the useful formula
\begin{equation}\label{eq:Indprojformula}
 \Ind_H^G(E)\otimes F\cong \Ind_H^G\bigl( E\otimes \Res_H^G F \bigr)\quad\text{ for }E\in \Coh_H(X)\,,\, F\in \Coh_G(X)\,.
\end{equation}
In particular, if $\chi$ is a $G$-character whose restriction to $H$ is trivial, we have
\begin{equation}\label{eq:Indinva}
\Ind_H^G(E)\otimes \chi\cong \Ind_H^G(E)\,.
\end{equation}

\begin{lemma}\label{lem:Indcriterion}
Let $(E,\lambda)\in \Coh_G(X)$ such that there is a direct sum decomposition $E\cong \oplus_{i\in I} E_i$ of the underlying (non-equivariant)
sheaf together with a group action of $G$ on $I$ such that $\lambda_g(E_i)=g^*(E_{g(i)})$ for all $i\in I$. Fixing some $i_0\in I$, and denoting the stabiliser of $i_0\in I$ by $G_{i_0}$, we have $E\cong \Ind_{G_{i_0}}^G E_i$.
\end{lemma}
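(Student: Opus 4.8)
The plan is to promote $E_{i_0}$ to a $G_{i_0}$-equivariant sheaf and then recognise $(E,\lambda)$, summand by summand, as its induction. Write $H:=G_{i_0}=\Stab(i_0)$; implicit in the statement is that $G$ acts transitively on $I$ --- otherwise $\Ind_H^G E_{i_0}$ only accounts for the summands indexed by the orbit of $i_0$ --- so I would first replace $I$ by $G\cdot i_0$ and $E$ by the corresponding direct summand if needed, and then assume $I=G\cdot i_0$. For $g\in H$ the hypothesis $\lambda_g(E_i)=g^*(E_{g\cdot i})$ specialises to $\lambda_g(E_{i_0})=g^*(E_{i_0})$, so $\lambda_g$ restricts to an isomorphism $\mu_g\colon E_{i_0}\xrightarrow{\ \cong\ }g^*E_{i_0}$; the cocycle identity for $\{\mu_g\}_{g\in H}$ is the restriction of the one for $\{\lambda_g\}_{g\in G}$, so $(E_{i_0},\mu)\in\Coh_H(X)$.

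Next I would build the isomorphism. Choose representatives $g_i\in G$ ($i\in I$) of the cosets $G/H$ with $g_i\cdot i_0=i$ and $g_{i_0}=e$, so that $i\mapsto g_iH$ is the orbit--stabiliser bijection $I\xrightarrow{\ \cong\ }G/H$; then the underlying sheaf of $\Ind_H^G E_{i_0}$ is $\bigoplus_{i\in I}g_i^*E_{i_0}$, with $G$-linearisation assembled from $\mu$ and the permutation action of $G$ on the index set. The hypothesis supplies, for each $i$, an isomorphism of sheaves $g_i^*E_{i_0}\xrightarrow{\ \cong\ }E_i$ induced by $\lambda_{g_i}$ (up to replacing $g_i$ by $g_i^{-1}$, depending on the precise convention in the definition of $\Ind$); their direct sum is a sheaf isomorphism $\varphi\colon\Ind_H^G E_{i_0}\to E$. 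It then remains to check $\varphi$ is $G$-equivariant: on the summand $g_i^*E_{i_0}$, writing $gg_i=g_{g\cdot i}\,h$ with $h\in H$, the compatibility of $\varphi$ with the two linearisations follows from the cocycle condition for $\lambda$ together with the identity $\mu_h=\lambda_h|_{E_{i_0}}$. Since a $G$-equivariant morphism whose underlying morphism of sheaves is an isomorphism is automatically an isomorphism in $\Coh_G(X)$ (its inverse inherits $G$-equivariance), $\varphi$ is the asserted isomorphism $\Ind_{G_{i_0}}^G E_{i_0}\cong E$.

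The only real obstacle here is bookkeeping: pinning down conventions (the direction of the $G$-action on $I$, left versus right cosets, and whether $g^*$ or $(g^{-1})^*$ enters the definition of $\Ind_H^G$) and then verifying, in the last step, that the summand-wise isomorphisms genuinely intertwine the two $G$-linearisations --- which uses nothing beyond the cocycle condition. A convention-free reorganisation of the same argument: by the first paragraph the inclusion of the summand $(E_{i_0},\mu)\hookrightarrow\Res_G^{H}(E,\lambda)$ is a morphism in $\Coh_H(X)$, and since $\Ind_H^G$ is (both-sided, in particular left) adjoint to $\Res_G^H$ it corresponds to a canonical morphism $\Ind_H^G E_{i_0}\to E$; one then checks that this is an isomorphism on underlying sheaves, which is exactly the computation of the second paragraph.
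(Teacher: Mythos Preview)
Your proof is correct. The paper's own proof is a one-line Yoneda argument: it observes that any $G$-equivariant morphism $E\to F$ is determined by its restriction to the summand $E_{i_0}$, which is automatically $G_{i_0}$-equivariant, so $\Hom_G(E,F)\cong\Hom_{G_{i_0}}(E_{i_0},\Res_G^{G_{i_0}}F)$ and hence $E$ satisfies the universal property of $\Ind_{G_{i_0}}^G E_{i_0}$. Your main argument instead builds the isomorphism explicitly via coset representatives and the cocycle condition; your ``convention-free'' alternative at the end is closer in spirit to the paper (using the adjunction), but you still produce a morphism and then verify it is an isomorphism on underlying sheaves, whereas the paper checks the universal property directly. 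Both approaches are standard; the paper's is slicker, yours is more concrete and has the merit of making the implicit transitivity assumption on the $G$-action on $I$ explicit --- a point the paper's statement glosses over.
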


\begin{proof}
Under the assumptions, a $G$-equivariant homomorphism $E\to F$ to some $F\in \Coh_G(X)$ is determined by its restriction to the direct summand $E_{i_0}$, which is $G_{i_0}$-equivariant. Hence, $E$ satisfies the universal property of $\Ind_{G_{i_0}}^G E_i$.
\end{proof}

If $G$ acts trivially on $X$, a $G$-linearisation of a sheaf is the same as a $G$-action. In this case, we have another pair of both-sided adjoint functors, namely
\[
 (\_)^G\colon \Coh_G(X)\to \Coh(X)\quad, \quad \triv\colon \Coh(X)\to \Coh_G(X)\,.
\]
The functor $(\_)^G$ takes invariants of an equivariant sheaf under its given $G$-action, while $\triv$ equips a sheaf with the trivial $G$-action.

Let $G$ act on two schemes $X$ and $Y$, and let $f\colon X\to Y$ be a $G$-equivariant morphism. There is an equivariant pull-back $f^*\colon \Coh_G(Y)\to \Coh_G(X)$ and, if $f$ is proper, also an equivariant push-forward $f_*\colon \Coh_G(X)\to \Coh_G(Y)$. Both functors are compatible with the usual pull-back and push-forwards of the underlying non-equivariant sheaves.

\begin{lemma}\label{lem:flatfibrerep}
 Let $f\colon X\to Y$ be a flat and finite $G$-invariant morphism, and for $y\in Y$ let $X_y:=f^{-1}(y)$ denote the scheme-theoretic fibre. The $G$-representations $\Gamma(\reg_{X_y})$ are isomorphic for all $y\in Y$.
\end{lemma}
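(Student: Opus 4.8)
The plan is to extract everything from the single $G$-equivariant sheaf $\mathcal E:=f_*\reg_X$ on $Y$. Since $f$ is $G$-invariant, i.e.\ $G$-equivariant for the trivial $G$-action on $Y$, and finite (hence proper and affine), $\mathcal E$ is a coherent $\reg_Y$-module equipped with a $G$-action by $\reg_Y$-linear automorphisms, and since $f$ is in addition flat of finite presentation, $\mathcal E$ is locally free of finite rank. First I would record the base-change identity
\[
 \Gamma(\reg_{X_y})\;\cong\;\mathcal E\otimes_{\reg_Y}\kappa(y)\qquad\text{as }G\text{-representations},
\]
valid for every $y\in Y$: working locally on $Y=\Spec A$ we have $X=\Spec B$ with $B$ a finite $A$-algebra, $\mathcal E=\widetilde B$, and $X_y=\Spec\bigl(B\otimes_A\kappa(y)\bigr)$, so $\Gamma(\reg_{X_y})=B\otimes_A\kappa(y)$ with its evident $G$-action; this globalises because $f$ is affine.

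Next I would split $\mathcal E$ into isotypic components for $G$. Because $\Char\IK=0$, the invariants functor $(-)^G\colon\Coh_G(Y)\to\Coh(Y)$ (for the trivial action on $Y$) is exact, so for each irreducible $G$-representation $\rho$ the sheaf $\mathcal E_\rho:=(\mathcal E\otimes_\IK\rho\dual)^G$ is an $\reg_Y$-module direct summand of the locally free sheaf $\mathcal E\otimes_\IK\rho\dual$, hence is itself locally free of finite rank; moreover the canonical evaluation map $\bigoplus_\rho\rho\otimes_\IK\mathcal E_\rho\to\mathcal E$ is an isomorphism of $G$-equivariant sheaves. Tensoring this decomposition over $\reg_Y$ with $\kappa(y)$ and combining with the base-change identity of the first step gives, for every $y\in Y$,
\[
 \Gamma(\reg_{X_y})\;\cong\;\bigoplus_\rho\rho\otimes_\IK\bigl(\mathcal E_\rho\otimes_{\reg_Y}\kappa(y)\bigr)
\]
as $G$-representations, so that the multiplicity of $\rho$ in $\Gamma(\reg_{X_y})$ is precisely $\rk_y\mathcal E_\rho$.

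Finally, since each $\mathcal E_\rho$ is locally free, the function $y\mapsto\rk_y\mathcal E_\rho$ is locally constant on $Y$; hence all these multiplicities — equivalently, since $\IK$ is algebraically closed of characteristic zero, the isomorphism class of the $G$-representation $\Gamma(\reg_{X_y})$ itself — are locally constant in $y$, and therefore constant on (each connected component of) $Y$. The only point that needs a little care, and the closest thing to an obstacle here, is the assertion that the isotypic multiplicity sheaves $\mathcal E_\rho$ are \emph{locally free} rather than merely coherent, i.e.\ that their ranks do not jump: this is exactly what exactness of $(-)^G$ in characteristic zero provides, together with the elementary fact that a direct summand of a finite locally free module is again finite locally free. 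Everything else is formal bookkeeping with equivariant pullback and pushforward.
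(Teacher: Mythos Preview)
Your proof is correct and follows essentially the same approach as the paper: both arguments show that the isotypic multiplicity sheaf $(\mathcal E\otimes\rho\dual)^G$ (in your notation) is a direct summand of a locally free sheaf, hence locally free of constant rank, which identifies the multiplicity of $\rho$ in each fibre. Your write-up is somewhat more detailed (explicit base change, explicit isotypic decomposition, the caveat about connected components of $Y$), but the underlying idea is the same.
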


\begin{proof}
Let $W\in \irr(G)$ with dual $W^\vee$. By flatness, $f_*(\reg_X\otimes W^\vee)$ is a locally free sheaf on $Y$. Hence, its direct summand $f_*(\reg_X\otimes W^\vee)^G$ is locally free too. The fibres of this locally free sheaf are
\[
 f_*(\reg_X\otimes W^\vee)^G(y)\cong \Hom_G\bigl(W,\Gamma(\reg_{X_y}))\,.
\]
Hence, the multiplicity of $W$ as a direct summand of $\Gamma(\reg_{X_y})$ is independent of $y$.
\end{proof}

\begin{lemma}\label{lem:equifilt}
Let $x\in X$ with stabiliser $G_x\le G$, and $E\in \Coh_{G_x}(X)$ with $\supp E=\{x\}$ and
\[
 \Gamma(E)\cong \bigoplus_{W\in \irr(G_x)} W^{\oplus m(W)}\,.
\]
Then, there is a filtration of $E$ by $G_x$-equivariant subsheaves whose graded pieces are $\reg_x\otimes W$ occurring with multiplicity $m(W)$.
\end{lemma}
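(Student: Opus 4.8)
The plan is to filter $E$ by the $\fm_x$-adic filtration and then refine using semisimplicity of $G_x$-representations. As a first step I would collect the elementary consequences of $\supp E=\{x\}$: since $x$ is a closed point, $E$ vanishes away from $x$, so $\Gamma(E)=E_x$, and $E_x$ is a finite-length module over the Noetherian local ring $\reg_{X,x}$; hence $\fm_x^NE_x=0$ for $N\gg0$, where $\fm_x\subset\reg_{X,x}$ is the maximal ideal. I would also note that the sheaf $\reg_x$ has one-dimensional stalk $\kappa(x)=\IK$ on which $G_x$, acting by $\IK$-linear automorphisms, acts trivially; this matches the equivariant structure implicit in the notation $\reg_x\otimes W$.

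Next I would verify that the chain
\[
 E\supseteq \fm_x E\supseteq \fm_x^2E\supseteq\cdots\supseteq \fm_x^NE=0
\]
consists of $G_x$-equivariant subsheaves. The sheaf $\fm_x^kE$ has stalk $\fm_x^kE_x$ at $x$ and is zero elsewhere; since each $g\in G_x$ fixes $x$ it stabilises $\fm_x$, and the $G_x$-action on $\Gamma(E)=E_x$ coming from the linearisation is compatible with the $\reg_{X,x}$-module structure, so it preserves every $\fm_x^kE_x$. The $k$-th graded piece $\fm_x^kE/\fm_x^{k+1}E$ is killed by $\fm_x$, hence equals $\reg_x\otimes M_k$ for the finite-dimensional $G_x$-representation $M_k:=\fm_x^kE_x/\fm_x^{k+1}E_x$.

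Since $G_x$ is finite and the ground field has characteristic zero, each $M_k$ is semisimple, $M_k\cong\bigoplus_{W\in\irr(G_x)}W^{\oplus m_k(W)}$, so choosing a composition series of each $M_k$ and inserting the corresponding subsheaves refines the filtration above into a $G_x$-equivariant filtration of $E$ whose graded pieces are all of the form $\reg_x\otimes W$ with $W\in\irr(G_x)$. To read off the multiplicities I would pass to the Grothendieck group of finite-dimensional $G_x$-representations, where
\[
 \textstyle\sum_{k}[M_k]=[E_x]=[\Gamma(E)]=\sum_{W\in\irr(G_x)}m(W)\,[W];
\]
as the classes $[W]$ form a $\IZ$-basis, the number of graded pieces isomorphic to $\reg_x\otimes W$, namely $\sum_k m_k(W)$, equals $m(W)$, which is the assertion.

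I expect no serious obstacle here: the argument is essentially bookkeeping. The two points that need a little (routine) care are the $G_x$-equivariance of the $\fm_x$-adic filtration --- which comes down to $G_x$ stabilising $\fm_x$ because it fixes $x$ --- and the identification $\Gamma(E)=E_x$, both immediate from $\supp E=\{x\}$.
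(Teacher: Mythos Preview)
Your proof is correct and rests on the same idea as the paper's: reduce to the fibre $E/\fm_xE\cong\reg_x\otimes U$, use semisimplicity of $G_x$-representations in characteristic zero, and keep track of multiplicities via $\Gamma(E)$. The only cosmetic difference is that you take the whole $\fm_x$-adic filtration as a scaffold and then refine each graded piece, whereas the paper peels off one irreducible quotient $E\twoheadrightarrow\reg_x\otimes W$ at a time and inducts on $\dim\Gamma(E)$; both arrive at the same filtration data.
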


\begin{proof}
Let $\iota_x\colon \{x\}\hookrightarrow V$ be the embedding of the point. By $G_x$-equivariance, we have $\iota_{x*}\iota_x^* E\cong \reg_x\otimes U$ for some $G_x$-representation $U$ which is a quotient of $\Gamma(E)$. We pick some irreducible summand $W$ of $U$ and get a $G_x$-equivariant surjection $E\twoheadrightarrow \reg_x\otimes W$. We proceed by induction on $\dim \Gamma(E)$ to get a filtration of $\ker(E\twoheadrightarrow \reg_x\otimes W)$.
\end{proof}

\subsection{Fibres of Quotients by Finite Reflection Groups}\label{subsect:pifibre}

Let a finite group $G\le \GL(V)$ act on a finite-dimensional vector space $V$. We call an element $g\in G$ a \emph{reflection}\footnote{This is often called a \emph{pseudo-reflection} in the literature, while the term reflection then is reserved for pseudo-reflections of order $2$. For us, reflections can be of arbitrary order.} if its fixed point locus $V^g\subset V$ is a divisor in $V$.
We call $G$ a \emph{finite reflection group} if it is generated by reflections. By the Chevalley--Sheppard--Todd Theorem, this is equivalent to the quotient $V/G$ being smooth.

We denote the regular representation of $G$ by $\IK\langle G\rangle \cong \Ind_1^G \IK$.

\begin{lemma}\label{lem:pifibres}
Let $V$ be a vector space and $G\le \Aut(V)$ a finite reflection group. Let $v\in V$, and denote by $\eta=\pi^{-1}(\pi(v))$ the scheme theoretic fibre of the quotient morphism $\pi\colon V\to V/G$ over $\pi(v)$. Then $\reg_{\eta}\cong \Ind_{G_v}^G\reg_{\eta,v}$ where $\reg_{\eta,v}$ is the stalk, and
$\Gamma(\reg_{\eta,v})\cong \IK\langle G_v\rangle$.

Furthermore, $\reg_{\eta}$ has a filtration by $G$-equivariant subsheaves whose graded pieces are $\Ind_{G_v}^G(\reg_v\otimes W)$ with $W\in \irr(G_v)$ occurring with multiplicity $\dim W$.
\end{lemma}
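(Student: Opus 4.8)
The plan is to reduce everything to the local statement at the point $v$ and then transport it to the full fibre $\eta$ by the induction machinery already set up. First I would observe that the quotient morphism $\pi\colon V\to V/G$ is finite (since $G$ is finite) and flat: indeed, by the Chevalley--Shephard--Todd theorem $V/G$ is smooth, $V$ is Cohen--Macaulay, and a finite morphism from a Cohen--Macaulay scheme to a regular scheme with fibres of constant dimension (here dimension $0$) is automatically flat (miracle flatness). Hence \autoref{lem:flatfibrerep} applies: the $G$-representations $\Gamma(\reg_{\eta'})$ are the same for all fibres $\eta'=\pi^{-1}(\pi(v'))$. Taking $v'=0$, the fibre over the image of the origin is $\Spec(\IK[V]/\fm)$ where $\fm$ is the ideal generated by the positive-degree invariants, and it is classical (again CST theory: $\IK[V]$ is free of rank $|G|$ over the invariant ring, and the coinvariant algebra carries the regular representation) that $\Gamma(\reg_{\eta,0})\cong \IK\langle G\rangle$ as a $G$-representation. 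So for general $v$ we get $\Gamma(\reg_\eta)\cong \IK\langle G\rangle$ as a $G$-representation; this is the input I will feed into the local analysis.

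Next I would analyse the scheme-theoretic fibre $\eta$ set-theoretically and locally. Set-theoretically $\eta = \pi^{-1}(\pi(v)) = G\cdot v$, the orbit of $v$, which is a finite set of reduced-looking points but with possibly non-reduced scheme structure; the points of $\eta$ are permuted transitively by $G$, and the stabiliser of the point $v$ is $G_v$. Therefore $\reg_\eta$ decomposes, as a non-equivariant sheaf, as $\bigoplus_{g\in G/G_v} \reg_{\eta, gv}$, a direct sum of its stalks at the points of the orbit, and the $G$-linearisation permutes these stalks compatibly with the $G$-action on $G/G_v$. This is exactly the situation of \autoref{lem:Indcriterion}, which yields $\reg_\eta\cong \Ind_{G_v}^G \reg_{\eta,v}$. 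It remains to identify the $G_v$-equivariant stalk $\reg_{\eta,v}$. Here I would use that $\Gamma(\reg_\eta)\cong \bigoplus_{g\in G/G_v} \Gamma(\reg_{\eta,gv})$ and that, as $G$-representations, $\Gamma(\reg_\eta)\cong \Ind_{G_v}^G \Gamma(\reg_{\eta,v})$ (the global sections of an induced sheaf are the induced representation). Comparing with $\Gamma(\reg_\eta)\cong\IK\langle G\rangle\cong \Ind_{G_v}^G \IK\langle G_v\rangle$ and using that $\Ind_{G_v}^G$ is faithful on representations (it is injective on isomorphism classes, e.g. because $\Res\circ\Ind$ contains the identity as a summand), I conclude $\Gamma(\reg_{\eta,v})\cong \IK\langle G_v\rangle$ as a $G_v$-representation.

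For the last assertion, the filtration, I would first produce a $G_v$-equivariant filtration of the stalk $\reg_{\eta,v}$: since $\reg_{\eta,v}$ is supported at the single point $v$ with stabiliser $G_v$ and $\Gamma(\reg_{\eta,v})\cong \IK\langle G_v\rangle\cong\bigoplus_{W\in\irr(G_v)} W^{\oplus \dim W}$, \autoref{lem:equifilt} gives a filtration of $\reg_{\eta,v}$ by $G_v$-equivariant subsheaves with graded pieces $\reg_v\otimes W$, each $W\in\irr(G_v)$ occurring with multiplicity $\dim W$. Then I apply the exact functor $\Ind_{G_v}^G$ to this filtration: $\Ind$ is exact (it is a finite direct sum of pullbacks along automorphisms), so it sends the filtration of $\reg_{\eta,v}$ to a filtration of $\Ind_{G_v}^G\reg_{\eta,v}\cong\reg_\eta$ by $G$-equivariant subsheaves, with graded pieces $\Ind_{G_v}^G(\reg_v\otimes W)$ occurring with the same multiplicities $\dim W$. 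That completes the proof.

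The main obstacle I anticipate is the careful bookkeeping around the two facts that feed the identification of the stalk: (a) that $\pi$ is flat, which requires invoking miracle flatness / the freeness of $\IK[V]$ over the invariants rather than something elementary, and (b) the clean statement that $\Gamma$ commutes with $\Ind$ and that $\Ind$ is injective on representations, so that the global-sections computation on $\eta$ can be descended to the stalk at $v$. Neither is deep, but getting the equivariance of the decomposition $\reg_\eta=\bigoplus_{g\in G/G_v}\reg_{\eta,gv}$ stated precisely enough to invoke \autoref{lem:Indcriterion} (i.e. checking that the linearisation genuinely permutes the stalks according to the $G$-action on $G/G_v$) is the one spot where one has to be a little careful rather than wave hands.
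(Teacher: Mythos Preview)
Your overall strategy matches the paper's closely, and the first part (flatness of $\pi$, constancy of $\Gamma(\reg_\eta)$ via \autoref{lem:flatfibrerep}, the coinvariant-algebra computation at the origin, and the application of \autoref{lem:Indcriterion} to get $\reg_\eta\cong\Ind_{G_v}^G\reg_{\eta,v}$) is fine, as is the filtration argument at the end.

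The genuine gap is your deduction of $\Gamma(\reg_{\eta,v})\cong\IK\langle G_v\rangle$ from $\Ind_{G_v}^G\Gamma(\reg_{\eta,v})\cong\IK\langle G\rangle$. The claim that $\Ind_{G_v}^G$ is injective on isomorphism classes of representations is false, and your justification (that $\Res\circ\Ind$ contains the identity as a summand) does not imply it. For a concrete counterexample take $G=\sym_3$ and $H=A_3\cong\mu_3$ with nontrivial characters $\omega,\omega^2$: one has $\Ind_H^G\omega\cong\Ind_H^G\omega^2$ (both are the $2$-dimensional standard representation) although $\omega\not\cong\omega^2$. Worse, the specific conclusion you need also fails in this example: setting $U=\mathbf 1_H\oplus\omega\oplus\omega$ one computes $\Ind_H^G U\cong\mathbf 1_G\oplus\sgn\oplus V^{\oplus 2}\cong\IK\langle G\rangle$, yet $U\not\cong\IK\langle H\rangle$. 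So knowing $\Ind_{G_v}^G\Gamma(\reg_{\eta,v})\cong\IK\langle G\rangle$ together with the dimension count $\dim\Gamma(\reg_{\eta,v})=|G_v|$ is genuinely not enough to pin down the $G_v$-representation structure of the stalk.

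The paper closes this gap by invoking Steinberg's theorem: the stabiliser $G_v$ is itself a reflection group on $V$, so one can factor $\pi$ through the $G_v$-quotient $p\colon V\to V/G_v$ and apply the fixed-point case (Chevalley's coinvariant result) to $p$ at $v$. This yields $\Gamma(\reg_{p^{-1}(p(v))})\cong\IK\langle G_v\rangle$. Since $p^{-1}(p(v))$ is a closed subscheme of the connected component of $\eta$ at $v$, restriction gives a surjection $\Gamma(\reg_{\eta,v})\twoheadrightarrow\IK\langle G_v\rangle$, and now the dimension count (which you do have) forces it to be an isomorphism. You should replace your injectivity-of-$\Ind$ step with this argument.
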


\begin{proof}
The isomorphism $\reg_{\eta}\cong \Ind_{G_v}^G\reg_{\eta,v}$ follows directly by \autoref{lem:Indcriterion}.
For a general point $v\in V$, we have $G_v=1$. Then $\eta=G\cdot v$ is the free orbit with the reduced subscheme structure, hence $\reg_\eta=\Ind_1^G\reg_v$. This implies $\Gamma(\reg_\eta)\cong \IK\langle G\rangle$. Hence, by \autoref{lem:flatfibrerep}, we get
$\Gamma(\reg_\eta)\cong \IK\langle G\rangle$ for \emph{every} $v\in V$.

In particular, if $v$ is a $G$-fixed point, in other words $G_v=G$, we have $\reg_\eta\cong \reg_{\eta,v}$ with $\Gamma(\reg_{\eta},v)\cong \IK\langle G\rangle= \IK\langle G_v\rangle$ as asserted.\footnote{For $v=0$, this is \cite[Thm.\ (B)]{Chev--inva}, which we will use later in \autoref{subsect:irrep} to determine the irreducible representations of $G(m,e,2)$.}

We deduce the general case from the case that $G=G_v$ as follows using the diagram
\[
\begin{tikzcd}
V \arrow[dr, "\pi" ]      \arrow[r, , "p"]         & V/G_v    \arrow[d, "q"]  \\          & V/G
\end{tikzcd}
\]
where $p$ is the $G_v$-quotient morphism. By Steinberg's theorem \cite[Thm.\ 1.5]{Steinberg--refl}, \cite[Prop.\ 4.7]{Broue--book}, the stabiliser $G_v$ is again generated by reflections of $V$. Hence, we can replace $G$ by $G_v$ and use that we already confirmed the case of a point fixed under the whole reflection group to get $\Gamma(\reg_{p^{-1}(p(v))})\cong \IK\langle G_v\rangle$ where $p^{-1}(p(v))$ denotes the scheme-theoretic fibre. We  have $p^{-1}(p(v))\subset \eta$, hence
\begin{equation}\label{eq:hosurj}
\Gamma(\reg_{\eta,v})\twoheadrightarrow \Gamma(\reg_{p^{-1}(p(v))})\cong \IK\langle G_v\rangle\,.
\end{equation}
We already know that $\IK\langle G\rangle \cong \Gamma(\reg_{\eta})=\Ind_{G_v}^G \Gamma(\reg_{\eta,v})$, hence $\dim \Gamma(\reg_{\eta,v})=|G_v|=\dim \IK\langle G_v\rangle$. Thus, the surjection \eqref{eq:hosurj} is an isomorphism.

The last part of the statement is just \autoref{lem:equifilt} together with the decomposition of the regular representation $\IK\langle G_v\rangle$ into irreducibles.
\end{proof}

\subsection{Fully Faithful Functors Associated to Reflections}\label{subsect:reflectionff}

In this subsection, we assume throughout that the variety $X$ on which the finite group $G$ acts is smooth, and that the action is faithful.
We will give some results on fully faithful embeddings and semi-orthogonal decompositions associated to reflections. As general references for semi-orthogonal decompositions, we refer to \cite[Sect.\ 2.2]{Kuz--HPD}, \cite{Kuz--ICM}.

As mentioned in the introduction of \cite{PvdB}, the following simple fact was one of the motivations for \autoref{conj:PvdB}. It ensures that at least one of the pieces of the conjectured semi-orthogonal decomposition is always there.
\begin{lemma}\label{lem:piff}
If the quotient $X/G$ is smooth, then
$\pi^*\colon \D(X/G)\to \D_G(X)$ is fully faithful and its image is an admissible subcategory of $\D_G(X)$.
\end{lemma}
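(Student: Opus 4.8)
The plan is to show that $\pi^*$ has a both-sided adjoint given by $(\pi_* (\_))^G$, and then to verify the unit of the adjunction is an isomorphism, which gives full faithfulness; admissibility then follows from the existence of a right adjoint. Concretely, since $\pi\colon X \to X/G$ is finite (hence affine and proper) and $G$-invariant, we have an equivariant push-forward $\pi_*\colon \D_G(X)\to \D_G(X/G)$, and since $G$ acts trivially on $X/G$ we may further apply the invariants functor $(\_)^G$, which is exact because $\Char \IK = 0$. Composing, $\Phi := (\pi_*(\_))^G\colon \D_G(X)\to \D(X/G)$ is right (and, using biadjointness of $\triv$ and $(\_)^G$ together with the projection formula, also left) adjoint to $\pi^*$. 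The key input is that $X/G$ is smooth, so $\pi$ is automatically flat: a finite morphism from a Cohen--Macaulay scheme to a regular scheme of the same dimension is flat (``miracle flatness''). In particular $\pi_*\reg_X$ is a locally free $\reg_{X/G}$-module, and its $G$-invariant part $(\pi_*\reg_X)^G$ is a direct summand, hence also locally free; by the going-down / fibre-dimension count its rank is $1$, and since $\reg_{X/G}\to (\pi_*\reg_X)^G$ is a split injection of line bundles that is an isomorphism over the generic point (where the $G$-action on $X$ is free, so $\pi$ is \'etale of degree $|G|$ and the invariants of the regular representation are one-dimensional), it is an isomorphism: $(\pi_*\reg_X)^G \cong \reg_{X/G}$.

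Granting this, full faithfulness is the statement that the unit $\eta\colon \id_{\D(X/G)} \to \Phi \circ \pi^*$ is an isomorphism. For $F \in \D(X/G)$, the projection formula gives $\pi_*\pi^* F \cong F \lotimes^{\reg_{X/G}} \pi_*\reg_X$ equivariantly; since $(\_)^G$ is exact and commutes with $\lotimes$ by the flat (locally free) $\reg_{X/G}$-module $\pi_*\reg_X$ and with $F$ carrying the trivial action, we get $(\pi_*\pi^* F)^G \cong F \lotimes^{\reg_{X/G}} (\pi_*\reg_X)^G \cong F \lotimes^{\reg_{X/G}} \reg_{X/G} \cong F$, and one checks this identification is the unit. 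Hence $\pi^*$ is fully faithful.

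Finally, admissibility: $\pi^*$ is fully faithful and has the right adjoint $\Phi$, so its essential image is a right-admissible subcategory of $\D_G(X)$; but in fact $\Phi$ is also a left adjoint of $\pi^*$, so the image is two-sided admissible. (Alternatively, one invokes that a fully faithful functor between $\Hom$-finite triangulated categories admitting a right adjoint has admissible image.) The main obstacle is the clean justification of the adjunctions and the interchange of $(\_)^G$ with $\lotimes$ and with $\pi_*$ in the derived setting; these are where one must be careful that $\pi$ being flat and finite (so $\pi_*$ is exact and $\pi_*\reg_X$ locally free) makes all the relevant spectral sequences degenerate and all the functors involved exact, reducing the derived statements to their underived counterparts. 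The identification $(\pi_*\reg_X)^G \cong \reg_{X/G}$ is the geometric heart of the argument and uses smoothness of $X/G$ essentially.
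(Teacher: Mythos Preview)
Your approach is correct and essentially identical to the paper's, which simply says ``This is essentially the projection formula together with the fact that for any geometric quotient $(\pi_*\reg_X)^G\cong \reg_{X/G}$.'' One small correction: you write that the identification $(\pi_*\reg_X)^G \cong \reg_{X/G}$ ``uses smoothness of $X/G$ essentially,'' but in fact this isomorphism is the very definition of the quotient scheme and holds for any geometric quotient by a finite group; smoothness of $X/G$ enters exactly where you already noted it, namely via miracle flatness to make $\pi$ flat (so that $\pi_*\reg_X$ is locally free and the derived projection formula reduces to the ordinary one).
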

Note that $\pi^*\colon \D(X/G)\to \D_G(X)$ is a shortened notation for the composition
\[
 \D(X/G)\xrightarrow{\triv}\D_G(X/G)\xrightarrow{\pi^*} \D_G(X)\,.
\]
\begin{proof}
 This is essentially the projection formula together with the fact that for any geometric quotient $(\pi_*\reg_X)^G\cong \reg_{X/G}$.
\end{proof}

For $g\in G$, we write $X^g\subset X$ for the fixed point locus and $X^{(g)}:=X^g/\CC(g)$ for the quotient by the centraliser. We define
\[
Z_g:=G\cdot X^g=\bigcup_{h\in G} h\cdot X^g=\bigcup_{[h]\in G/\CC(g)} h\cdot X^g\subset X
\]
and consider it as a reduced subscheme of $X$. Note that the $G$-action on $X$ restricts to a $G$-action on $Z_g$.
We denote by
\[
 X^g\xhookrightarrow{\alpha_g} Z_g\xhookrightarrow{\iota_g} X\quad,\quad \pi_g\colon Z_g\to Z_g/G
\]
the closed embeddings and the quotient morphism.
The composition
\[
X^g \xhookrightarrow{\alpha_g} Z_g\xrightarrow{\pi_g} Z_g/G
\]
is $\CC(g)$-invariant, so it induces a morphism $\nu_g\colon X^{(g)}\to Z_g/G$.
For the rest of this subsection, we will make
\begin{assumption}\label{ass:g}
 Let $X^g\subset X$ be a connected divisor, $X^{(g)}$ be smooth, and the canonical morphism $\nu_g\colon X^{(g)}\to Z_g/G$ be an isomorphism.
\end{assumption}

Note that, under the assumption, the normal bundle $N_{X^g/X}\cong \reg_{X^g}(X^g)$ is a $\CC(g)$-equivariant line bundle. As $\langle g\rangle\le \CC(g)$ acts trivially on $X^g$, the $\CC(g)$-linearisation of $\reg_{X^g}(X^g)$ restricts to a $\langle g\rangle$-action. Since the $G$-action on $X$ is faithful, the action on $\reg_{X^g}(X^g)$ is multiplication by a generator of the group of $\langle g\rangle$-characters, which we denote by $\chi_g$.

Note that, generically (away from the intersection of the $h\cdot X^g$), the two line bundles $\reg_{X^g}(X^g)$ and $\reg_{Z_g}(Z_g)_{\mid X^g}$ on $X^g$ coincide. Hence $\langle g\rangle$ also acts on $\reg_{Z_g}(Z_g)_{\mid X^g}$ by multiplication by $\chi_g$.

\begin{prop}\label{prop:reflectionff}
Under \autoref{ass:g}, the morphism $\pi_g\colon Z_g\to Z_g/G$ is flat and the exact functor
 \begin{equation}\label{eq:Phig}
\Phi_g\colon  \D(X^{(g)})\xrightarrow{\nu_{g*}}\D(Z_g/G)\xrightarrow{\triv}\D_G(Z_g/G)\xrightarrow{\pi_g^*}\D_G(Z_g)\xrightarrow{\iota_{g*}}\D_G(X)
 \end{equation}
has a left and a right adjoint, and is fully faithful.
\end{prop}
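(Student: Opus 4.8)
The statement packages four assertions: (1) flatness of $\pi_g$; (2) existence of a right adjoint to $\Phi_g$; (3) existence of a left adjoint; (4) full faithfulness. I would prove them in roughly that order, since flatness feeds into everything else.

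\textbf{Step 1: Flatness of $\pi_g$.} The target $Z_g/G \cong X^{(g)}$ is smooth by \autoref{ass:g}. The source $Z_g$ is a reduced divisor in the smooth variety $X$; since $X^{(g)}$ is smooth of dimension $\dim X - 1 = \dim Z_g$, the morphism $\pi_g$ is a finite surjection between varieties of the same dimension, with smooth target. If moreover $Z_g$ is Cohen--Macaulay — which holds because an effective Cartier divisor in a smooth (hence CM) variety is CM — then the miracle flatness criterion (``Hironaka's lemma'' / \cite[Tag 00R4]{stacks} or the standard local criterion: a finite morphism from a CM scheme to a regular scheme with fibres of the expected dimension is flat) applies and gives flatness of $\pi_g$. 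I would cite this; it is the one genuinely geometric input and the place where smoothness of $X^{(g)}$ is essential.

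\textbf{Step 2: Adjoints.} Each of the four functors in the composition \eqref{eq:Phig} has both a left and a right adjoint: $\nu_{g*}$ is pushforward along the proper (in fact finite, by Step 1 applied to $\pi_g\circ\alpha_g$, or just because $X^g \to Z_g/G$ is finite being a composition of a closed immersion and a finite map) morphism $\nu_g$, which is moreover an isomorphism by \autoref{ass:g} so it is its own inverse; $\triv$ has $(\_)^G$ as a two-sided adjoint; $\pi_g^*$ has right adjoint $\pi_{g*}$ and, because $\pi_g$ is now known to be flat and finite hence of finite Tor-dimension and proper, it also has a left adjoint $\pi_{g!} \cong \pi_{g*}(\_ \otimes \omega_{\pi_g}[\dim])$ — here $\omega_{\pi_g}$ is the relative dualizing sheaf, a line bundle since $\pi_g$ is finite flat; and $\iota_{g*}$, pushforward along a closed immersion into a smooth variety, has right adjoint $\iota_g^!$ and left adjoint $\iota_g^*[\,\cdot\,]\otimes(\text{line bundle})$ because $Z_g$ is a divisor, so $\iota_{g*}$ has finite Tor-dimension. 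Composing adjoints in reverse order gives a left and a right adjoint to $\Phi_g$. (All of this is on the equivariant categories, but the relevant base-change and adjunction statements descend from the non-equivariant ones by taking invariants, using the material of \autoref{subsect:equibasic}.)

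\textbf{Step 3: Full faithfulness — the main obstacle.} This is the heart of the proposition. Since $\nu_{g*} = \nu_g^{-1}$ is an equivalence, it suffices to show the functor $\Psi_g := \iota_{g*}\circ\pi_g^*\circ\triv \colon \D(Z_g/G) \to \D_G(X)$ is fully faithful. I would compute, for $A, B \in \D(Z_g/G)$,
\[
\Hom_{\D_G(X)}\bigl(\Psi_g A, \Psi_g B\bigr) \cong \Hom_{\D_G(Z_g/G)}\bigl(\triv A, (\_)^G\,\pi_{g*}\,\iota_g^!\,\iota_{g*}\,\pi_g^*\,\triv B\bigr),
\]
so everything reduces to understanding the composite ``monad'' $(\_)^G\,\pi_{g*}\,\iota_g^!\,\iota_{g*}\,\pi_g^*\,\triv$ and showing the unit map from $\id$ to it is an isomorphism. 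The inner piece $\iota_g^!\iota_{g*}$ on a divisor $Z_g \subset X$ is governed by the standard triangle $\iota_g^*\iota_{g*}\F \to \F \to \F\otimes N_{Z_g/X}[\,\cdot\,]$ (self-intersection), equivalently $\iota_g^!\iota_{g*}\F \cong \F \oplus \F\otimes N^\vee_{Z_g/X}[-1]$ — but as a $\langle g\rangle$-equivariant statement the normal bundle carries the character $\chi_g$, which is \emph{nontrivial}. Then pushing forward by the flat finite $\pi_g$ and taking $G$-invariants: by \autoref{lem:pifibres} the sheaf $\pi_{g*}\reg_{Z_g}$ has, fibrewise, the regular representation of the relevant stabilizer, and the key point is that $\chi_g$ does not appear in the invariant part, so that $\bigl(\pi_{g*}(\pi_g^*(\_) \otimes N^\vee_{Z_g/X}\text{-twist})\bigr)^G$ vanishes while $\bigl(\pi_{g*}\pi_g^*(\_)\bigr)^G \cong \id$ by the projection formula and $(\pi_{g*}\reg_{Z_g})^G \cong \reg_{Z_g/G}$. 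I expect the delicate bookkeeping to be: (a) identifying precisely which character the normal/conormal bundle carries and matching it against the stabilizer representation in \autoref{lem:pifibres}, and (b) handling the points of $Z_g$ where several translates $h\cdot X^g$ meet (where the stabilizer is larger), via the filtration in \autoref{lem:pifibres} and \autoref{lem:invares} to reduce vanishing of invariant global $\Ext$'s to the generic point of $X^g$. That character-counting step is where the ``reflection'' hypothesis (so that $\chi_g$ generates the character group of $\langle g\rangle$ acting on the normal direction) is really used, and it is the step I would allot the most care to.
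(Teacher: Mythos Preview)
Your approach is essentially the paper's: miracle flatness for Step~1, composition of adjoints for Step~2, and the self-intersection triangle for $\iota_g^!\iota_{g*}$ combined with projection formula for Step~3. Two points deserve correction, however.

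First, the ``equivalently $\iota_g^!\iota_{g*}\F \cong \F \oplus \F\otimes N^\vee_{Z_g/X}[-1]$'' is not correct as stated: the triangle
\[
\pi_g^*E \;\longrightarrow\; \iota_g^!\iota_{g*}\pi_g^*E \;\longrightarrow\; \pi_g^*E \otimes \reg_{Z_g}(Z_g)[-1] \;\longrightarrow\;
\]
need not split, and the twist is by the \emph{normal} bundle $\reg_{Z_g}(Z_g)$, not the conormal. What you actually need is that the first arrow is the adjunction unit, so that once you show the third term vanishes after applying $\pi_{g*}^G$, the unit becomes an isomorphism and you get $\pi_{g*}^G\iota_g^!\iota_{g*}\pi_g^*E\cong \pi_{g*}^G\pi_g^*E\cong E$. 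Asserting a direct sum sidesteps this and loses the identification with the unit.

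Second, your vanishing argument via \autoref{lem:pifibres} and a fibrewise character count at points where translates $h\cdot X^g$ meet is more work than needed. The paper's argument is cleaner: by projection formula it suffices to show $\pi_{g*}^G\bigl(\reg_{Z_g}(Z_g)\bigr)=0$. Since $\langle g\rangle$ acts trivially on $X^g$ but by the nontrivial character $\chi_g$ on the line bundle $\reg_{Z_g}(Z_g)|_{X^g}$, the $\langle g\rangle$-invariant sections of this restriction vanish over every open $\pi_g^{-1}(U)\cap X^g$. Then \autoref{lem:invares} (injectivity of restriction on invariant sections from $Z_g=G\cdot X^g$ to $X^g$) immediately gives $\Gamma\bigl(\pi_g^{-1}(U),\reg_{Z_g}(Z_g)\bigr)^G=0$, hence $\pi_{g*}^G\reg_{Z_g}(Z_g)=0$. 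No stabilizer filtration or appeal to \autoref{lem:pifibres} is required.
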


\begin{proof}
By assumption $Z_g/G$ is smooth, and $Z_g$ is a divisor in the smooth $X$, hence Cohen--Macaulay. Thus, by miracle flatness, $\pi_g$ is flat; see e.g.\ \cite[Ex.\ 18.17]{Eisenbud--commutativebook}.

The maps $\pi_g$ and $\iota_g$ are proper, so all adjoints of their pull-backs and push-forwards exist by Grothendieck duality on the level of derived categories with appropriate boundedness conditions. As $X$ is smooth and $\pi_g$ is finite, the adjoints preserve the bounded derived categories of coherent sheaves that we are working with.

Since $\nu_{g*}$ is an equivalence, we can omit it from the composition when checking fully faithfulness. So, for the sake of this proof, we redefine $\Phi_g:=\iota_{g*}\circ \pi_g^*\circ \triv$. When plugging objects into this functor, we also omit $\triv$ from the notation, tacitly assuming that $\pi_g^*(E)$ is the $G$-equivariant object with the canonical linearisation coming from the pull-back. We will show that $\Phi_g^R\circ \Phi_g\cong \id$ where
\[
 \Phi_g^R\colon \D_G(X)\xrightarrow{\iota_{g}^!} \D_G(Z_g)\xrightarrow{\pi_{g*}} \D_G(Z_g/G)\xrightarrow{(\_)^G}\D(Z_g/G)
\]
is the right-adjoint of $\Phi_g$.
Note that $\iota_g^!\cong \iota_g(\_)^*\otimes \reg_{Z_g}(Z_g)$.
We write $\pi_{g*}^G:=(\_)^G\circ \pi_{g*}$. With this notation, for $E\in \D(Z_g/G)$, we have
$\Phi_g^R\circ \Phi_g\cong \pi_{g*}^G\bigl(\iota_g^!\iota_{g*}\pi_g^*(E)\bigr)$. In $\D_G(Z_g)$, there is the exact triangle
\begin{equation}\label{eq:adjtriangle}
 \pi_g^*(E)\to \iota_g^!\iota_{g*}\pi_g^*(E)\to \pi_g^*(E)\otimes \reg_{Z_g}(Z_g)[-1]\to
\end{equation}
We apply $\pi_{g*}^G$ to this triangle. We first prove that the right term of the triangle vanishes after applying $\pi_{g*}^G$. By projection formula
$\pi_{g*}^G\bigl(\pi_g^*(E)\otimes \reg_{Z_g}(Z_g)\bigr)\cong E\otimes \pi_{g*}^G\bigl(\reg_{Z_g}(Z_g)\bigr)$.
Recall the discussion preceding \autoref{prop:reflectionff} saying that $\langle g\rangle$ acts trivially on $X^g$ but non-trivially on the line bundle $\reg_{Z_g}(Z_g)_{\mid X^g}$. Hence, for every open subset $U\subset Z_g/G$, we have
$\Gamma\bigl(\pi_g^{-1}(U), \reg_{Z_g}(Z_g)_{\mid X^g}\bigr)^{\langle g\rangle}= 0$.
By \autoref{lem:invares}, this implies $\Gamma\bigl(\pi_g^{-1}(U), \reg_{Z_g}(Z_g)\bigr)^{G}= 0$, which in turn implies $\pi_{g*}^G \reg_{Z_g}(Z_g)=0$.

Hence, the first map of \eqref{eq:adjtriangle} becomes an isomorphism after applying $\pi_{g*}$.
As $\pi_g$ is a quotient, we have $\pi_{g*}^G\reg_{Z_g}\cong \reg_{Z_g/G}$. Hence, by projection formula,
$\pi_{g*}^G\pi_g^*(E)\cong E\otimes \pi_{g*}^G\reg_{Z_g}\cong E$.

As applying $\pi_{g*}^G$ to the middle term of \eqref{eq:adjtriangle} gives $\Phi_g^R\Phi_g(E)$, we get $E\cong \Phi_g^R\Phi_g(E)$. This isomorphism is functorial in $E$ because the first map of \eqref{eq:adjtriangle} is the unit of adjunction.
\end{proof}

For a $G$-character $\mu$, we write $\Phi_g^\mu:=\Phi_g(\_)\otimes \mu$. Since tensor product by $\mu$ is an autoequivalence, $\Phi_g^\mu$
is again fully faithful.

\begin{lemma}\label{lem:Phiorth}
 Let \autoref{ass:g} hold, and let $\mu_1,\mu_2$ be $G$-characters such that \[\Res_{G}^{\langle g\rangle}(\mu_1^\vee\otimes \mu_2)\not\cong \mathbf 1_g \quad,\quad \Res_{G}^{\langle g\rangle}(\mu_1^\vee\otimes \mu_2)\not\cong \mathbf \chi_g^\vee\] where $\mathbf 1_g$ is the trivial $\langle g\rangle$-character, and $\chi_g$ the generating character by which $\langle g\rangle$ acts on $\reg_{X^g}(X^g)$ and $\reg_{X_g}(X_g)_{\mid X^g}$. Then $(\Phi_g^{\mu_1})^R\circ \Phi_g^{\mu_2}\cong 0$.
\end{lemma}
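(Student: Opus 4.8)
The strategy is to compute $(\Phi_g^{\mu_1})^R\circ \Phi_g^{\mu_2}$ directly, reusing the machinery from the proof of \autoref{prop:reflectionff}. Recall from there that $\Phi_g^{\mu_2}(E)\cong \iota_{g*}\pi_g^*(E)\otimes\mu_2$ (up to the equivalence $\nu_{g*}$, which I suppress as before) and that its right adjoint is $(\Phi_g^{\mu_1})^R\cong \pi_{g*}^G\circ(\mu_1^\vee\otimes\iota_g^!(\_))$. Using the projection formula \eqref{eq:Indprojformula}-style identities for tensoring equivariant sheaves past $\iota_g^!$ and $\pi_{g*}$, I would reduce $(\Phi_g^{\mu_1})^R\Phi_g^{\mu_2}(E)$ to $\pi_{g*}^G\bigl(\iota_g^!\iota_{g*}\pi_g^*(E)\otimes(\mu_1^\vee\otimes\mu_2)\bigr)$.

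Next I would tensor the exact triangle \eqref{eq:adjtriangle} by $\mu_1^\vee\otimes\mu_2$, obtaining
\[
 \pi_g^*(E)\otimes(\mu_1^\vee\otimes\mu_2)\to \iota_g^!\iota_{g*}\pi_g^*(E)\otimes(\mu_1^\vee\otimes\mu_2)\to \pi_g^*(E)\otimes(\mu_1^\vee\otimes\mu_2)\otimes\reg_{Z_g}(Z_g)[-1]\to\,,
\]
and show that applying $\pi_{g*}^G$ kills \emph{both} outer terms. For the left term, by the projection formula $\pi_{g*}^G\bigl(\pi_g^*(E)\otimes(\mu_1^\vee\otimes\mu_2)\bigr)\cong E\otimes\pi_{g*}^G(\mu_1^\vee\otimes\mu_2)$, and I would argue $\pi_{g*}^G(\mu_1^\vee\otimes\mu_2)=0$: on each open $U\subset Z_g/G$ the $G$-invariants of $\Gamma\bigl(\pi_g^{-1}(U),\mu_1^\vee\otimes\mu_2\bigr)$ embed, by \autoref{lem:invares} applied to the $\langle g\rangle$-invariant subscheme $X^g\subset Z_g$, into $\langle g\rangle$-invariants of $\Gamma(X^g\cap\pi_g^{-1}(U),\mu_1^\vee\otimes\mu_2)$; since $\langle g\rangle$ acts on this trivial line bundle by the character $\Res_G^{\langle g\rangle}(\mu_1^\vee\otimes\mu_2)\not\cong\mathbf 1_g$, those invariants vanish. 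For the right term, the same reasoning applies to $\mu_1^\vee\otimes\mu_2\otimes\reg_{Z_g}(Z_g)$ restricted to $X^g$: there $\langle g\rangle$ acts by $\Res_G^{\langle g\rangle}(\mu_1^\vee\otimes\mu_2)\otimes\chi_g$, which is trivial exactly when $\Res_G^{\langle g\rangle}(\mu_1^\vee\otimes\mu_2)\cong\chi_g^\vee$ — excluded by hypothesis — so again \autoref{lem:invares} forces $\pi_{g*}^G$ to vanish on it. With both outer terms of the triangle annihilated by $\pi_{g*}^G$, the middle term is annihilated too, giving $(\Phi_g^{\mu_1})^R\Phi_g^{\mu_2}(E)\cong 0$ for all $E$, hence $(\Phi_g^{\mu_1})^R\circ\Phi_g^{\mu_2}\cong 0$.

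**Main obstacle.** The computational heart is harmless; the one point demanding care is the bookkeeping of which $\langle g\rangle$-character acts on $\reg_{Z_g}(Z_g)_{\mid X^g}$ versus $\reg_{X^g}(X^g)$. This is precisely the content of the remarks preceding \autoref{prop:reflectionff}, where it is observed that these two line bundles agree generically on $X^g$ and hence carry the \emph{same} $\langle g\rangle$-action, namely by $\chi_g$; I would simply invoke that. A secondary subtlety is that \autoref{lem:invares} is stated for coherent sheaves on a reduced $Z$ and for honest subschemes — one must note that the relevant sheaves here are (restrictions of) line bundles on the reduced $Z_g$ and that $X^g$ is $\langle g\rangle$-invariant and reduced, so the lemma applies verbatim with $H=\langle g\rangle$. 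Everything else is formal manipulation of adjunctions and the projection formula already carried out in the proof of \autoref{prop:reflectionff}.
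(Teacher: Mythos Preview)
Your proposal is correct and follows essentially the same route as the paper: tensor the exact triangle \eqref{eq:adjtriangle} by $\mu_1^\vee\otimes\mu_2$, then use the projection formula together with \autoref{lem:invares} and the two character hypotheses to kill both outer terms after applying $\pi_{g*}^G$. The paper's own proof is slightly terser (and in fact contains a small slip, writing that $\langle g\rangle$ acts ``trivially'' on the restrictions where ``non-trivially'' is meant), but your version spells out the invocation of \autoref{lem:invares} and the character bookkeeping exactly as intended.
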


\begin{proof}
 We have $(\Phi_g^{\mu_1})^R \cong \Phi_g^R(\_\otimes \mu_1^\vee)$. Hence, in analogy to the proof of \autoref{prop:reflectionff}, we get an exact triangle
\begin{equation}\label{eq:adjtriangle2}
 \pi_g^*(E)\otimes \mu_1^\vee\otimes \mu_2 \to \iota_g^!\iota_{g*}\pi_g^*(E)\otimes \mu_1^\vee\otimes \mu_2\to \pi_g^*(E)\otimes \reg_{Z_g}(Z_g)\otimes\mu_1^\vee\otimes \mu_2[-1]\to
\end{equation}
such that applying $\pi_{g*}^G$ to the middle term gives $(\Phi_g^{\mu_1})^R \Phi_g^{\mu_2}(E)$. By the assumptions on the characters, $\langle g\rangle$ acts trivially on the restrictions of $\reg_{Z_g}\otimes \mu_1^\vee\otimes \mu_2$ and $\reg_{Z_g}(Z_g)\otimes \mu_1^\vee\otimes \mu_2$ to $X^g$. Again in analogy to the proof of \autoref{prop:reflectionff}, this gives that the outer terms of
\eqref{eq:adjtriangle2} vanish after applying $\pi_{g*}^G$. Hence, the same holds for the middle term.
\end{proof}

For a $G$-character $\mu$, we write $\cA(\mu):=\Phi_g^{\mu}\bigl(\D(X^{(g)})\bigr)=\Phi_g\bigl(\D(X^{(g)})\bigr)\otimes \mu$.

\begin{cor}\label{cor:reflectionsod}
Let \autoref{ass:g} hold, and let $\mu$ be a $G$-character whose restriction to $\langle g\rangle$ is $\chi_g^\vee$. Then there exists a semi-orthogonal decomposition
\begin{equation}\label{eq:Asodgeneral}
 \bigl\langle \cA(\mu), \cA(\mu^2),\dots, \cA(\mu^{\ord(g)-1})\bigr\rangle
\end{equation}
of some admissible subcategory of $\D_G(X)$. If $X/G$ is smooth, then the admissible subcategory $\pi^*\D(X/G)$ is left-orthogonal to \eqref{eq:Asodgeneral}. Hence, we can extend \eqref{eq:Asodgeneral} to the semi-orthogonal decomposition
\begin{equation}\label{eq:Asodgeneral+}
 \bigl\langle  \cA(\mu), \cA(\mu^2),\dots, \cA(\mu^{\ord(g)-1}), \pi^*\D(X/G) \bigr\rangle
\end{equation}
of some admissible subcategory of $\D_G(X)$.
\end{cor}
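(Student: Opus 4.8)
The plan is to build the semi-orthogonal decomposition \eqref{eq:Asodgeneral} from the inside out, using the fully faithful functors $\Phi_g^{\mu^j}$ of \autoref{prop:reflectionff} as the pieces and \autoref{lem:Phiorth} to check the semi-orthogonality conditions between consecutive powers. First I would record that, since $\mu$ restricts to $\chi_g^\vee$ on $\langle g\rangle$, the power $\mu^j$ restricts to $\chi_g^{-j}$, which is a nontrivial $\langle g\rangle$-character exactly when $j\not\equiv 0\pmod{\ord(g)}$. The indices appearing are $j=1,\dots,\ord(g)-1$, so all the relevant $\mu^j$ restrict nontrivially. To verify $(\Phi_g^{\mu^a})^R\circ \Phi_g^{\mu^b}\cong 0$ for $1\le b<a\le \ord(g)-1$, I invoke \autoref{lem:Phiorth} with $\mu_1=\mu^a$, $\mu_2=\mu^b$: then $\Res^{\langle g\rangle}_G(\mu_1^\vee\otimes\mu_2)=\chi_g^{a-b}$, and the hypotheses of \autoref{lem:Phiorth} are that this is neither $\mathbf 1_g$ nor $\chi_g^\vee$. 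Since $0<b<a<\ord(g)$, we have $0<a-b<\ord(g)$, so $\chi_g^{a-b}\neq \mathbf 1_g$; and $\chi_g^{a-b}=\chi_g^\vee=\chi_g^{\ord(g)-1}$ would force $a-b\equiv -1\pmod{\ord(g)}$, impossible as $1\le a-b\le \ord(g)-2$. Hence each $\cA(\mu^b)$ is left-orthogonal to $\cA(\mu^a)$ for $b<a$, which is precisely the semi-orthogonality required to assemble \eqref{eq:Asodgeneral}; that this subcategory is admissible follows because each piece is admissible (the adjoints of $\Phi_g^{\mu^j}$ exist by \autoref{prop:reflectionff}) and a finite semi-orthogonal gluing of admissible subcategories along a smooth ambient category remains admissible.

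For the extension \eqref{eq:Asodgeneral+}, I need $\pi^*\D(X/G)$ to be left-orthogonal to every $\cA(\mu^j)$, i.e. $\Hom_{\D_G(X)}(\pi^*F,\Phi_g^{\mu^j}(E))=0$ for all $F\in\D(X/G)$, $E\in\D(X^{(g)})$, $1\le j\le\ord(g)-1$. By adjunction this is $\Hom_{\D(X/G)}(F,(\pi^*)^R\Phi_g^{\mu^j}(E))$, so it suffices to show $(\pi^*)^R\circ\Phi_g^{\mu^j}\cong 0$. The right adjoint of $\pi^*$ is $\pi_*^G=(\_)^G\circ\pi_*$, so this amounts to $\pi_*^G\bigl(\iota_{g*}\pi_g^*(E)\otimes\mu^j\bigr)\cong 0$. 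Pushing forward along $\pi$ the same object as along $\pi_g$ (since $\pi\circ\iota_g=\pi_g$ up to the closed embedding $Z_g/G\hookrightarrow X/G$, which on the relevant sheaves contributes nothing to vanishing of invariants), I reduce to computing $\pi_{g*}^G\bigl(\pi_g^*(E)\otimes\mu^j\bigr)\cong E\otimes\pi_{g*}^G(\mu^j)$ by the projection formula. So the whole point is $\pi_{g*}^G(\mu^j)=0$, which I establish exactly as in the proof of \autoref{prop:reflectionff}: by \autoref{lem:invares} it is enough to see that $\langle g\rangle$ acts nontrivially on the structure sheaf $\reg_{Z_g}\otimes\mu^j$ restricted to $X^g$, i.e. on $\mu^j|_{X^g}$; and indeed $\langle g\rangle$ acts on this trivial line bundle by the character $\chi_g^{-j}$, which is nontrivial for $1\le j\le\ord(g)-1$. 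Hence $\Gamma(U,\reg_{Z_g}\otimes\mu^j)^{\langle g\rangle}=0$ over any open $U$, so $\Gamma(\cdot)^G=0$ by \autoref{lem:invares}, giving $\pi_{g*}^G(\mu^j)=0$ and therefore $(\pi^*)^R\Phi_g^{\mu^j}\cong 0$.

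Finally, combining the two orthogonality statements, $\langle\cA(\mu),\dots,\cA(\mu^{\ord(g)-1}),\pi^*\D(X/G)\rangle$ is a semi-orthogonal sequence of admissible subcategories of $\D_G(X)$, hence generates an admissible subcategory with the claimed semi-orthogonal decomposition. The main obstacle, such as it is, is bookkeeping: making sure the direction of semi-orthogonality matches the ordering of the pieces (so that $\cA(\mu^b)$ with the smaller exponent sits on the correct side), and that the hypotheses of \autoref{lem:Phiorth} are read off correctly for the pair $(\mu^a,\mu^b)$ — in particular that the two excluded characters $\mathbf 1_g$ and $\chi_g^\vee$ are avoided for the full range of exponent differences $1\le a-b\le\ord(g)-2$. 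Once the character arithmetic is pinned down, everything else is a formal consequence of \autoref{prop:reflectionff}, \autoref{lem:Phiorth}, \autoref{lem:piff}, and the vanishing $\pi_{g*}^G(\mu^j)=0$.
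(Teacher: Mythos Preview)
Your proposal is correct and follows essentially the same approach as the paper: the paper's own proof simply says that \eqref{eq:Asodgeneral} ``follows directly by \autoref{prop:reflectionff} and \autoref{lem:Phiorth}'' and then checks $\pi_*^G\circ\Phi_g^{\mu^c}\cong 0$ via the non-trivial $\langle g\rangle$-action and \autoref{lem:invares}, exactly as you do. Your explicit verification of the character arithmetic (that $\chi_g^{a-b}$ avoids both $\mathbf 1_g$ and $\chi_g^\vee$ for $1\le a-b\le\ord(g)-2$) is a useful expansion of what the paper leaves implicit.
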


\begin{proof}
The decomposition \eqref{eq:Asodgeneral} follows directly by \autoref{prop:reflectionff} and \autoref{lem:Phiorth}.
For \eqref{eq:Asodgeneral+}, due to \autoref{lem:piff}, we only need to check that $\pi^*\D(X/G)$ is left-orthogonal to the $\cA(\mu^c)$ which is equivalent to $\pi_*^G\circ \Phi_g^{\mu^c}\cong 0$ for $c=1,\dots, \ord(g)$.

The vanishing of the exact functor can be tested by its values on sheaves $E\in \Coh(Z_g/G)$. We have $\pi_*^G \Phi_g^{\mu^c}(E)\cong \pi_*^G(\iota_{g*}\pi_g^*(E)\otimes \mu^c)$. The vanishing of this follows from the fact that $\langle g\rangle$ acts non-trivially on $(\pi_g^*(E)\otimes \mu^c)_{\mid X^g}$ and \autoref{lem:invares}.
\end{proof}

\begin{remark}\label{rem:muexists}
A $G$-character $\mu$ as in \autoref{cor:reflectionsod} always exists. As $Z_g$ is a $G$-invariant effective divisor, there exists some $G$-equivariant line bundle $L$ and $f\in \Gamma(X,L)$ such that $Z_g$ is the vanishing locus of $G$. The section $f$ is semi-invariant under the $G$-action on $\Gamma(X,L)$, and we can take the associated character $\mu=\chi(f):=\langle f\rangle\subset \Gamma(X,L)$.
\end{remark}

\begin{remark}
\autoref{cor:reflectionsod} provides, under \autoref{ass:g}, the pieces of the semi-orthogonal decomposition of \autoref{conj:PvdB} associated to all the powers of a given reflection. Note, however, that the condition that $\nu_g\colon X^{(g)}\to Z_g$ is an isomorphism is not as mild as one might hope. In fact, we will see  in \autoref{subsect:tauodd} that for the reflection group $G(m,e,2)$ with $e$ odd, there is a reflection not satisfying this condition. The weaker condition that $\nu_g$ is birational is studied in \cite[Sect.\ 2]{PvdB}.
\end{remark}

\section{Representation Theory of Rank Two Imprimitive Reflection Groups}\label{sect:G}

In this section, we study the imprimitive rank 2 reflection group $G(m,e,2)$, its irreducible representations, and the extension groups between skyscraper sheaves in the origin $0\in \IK^2$ twisted by irreducible presentations, which is related to the McKay graphs displayed in \autoref{app:even} and \autoref{app:odd}.

Most of the results presented in this section are well-known, but the notation and the presentation in the litarature differs much from ours, which is why we decided to keep this section largely self-contained.
For a description of the irreducible representations of $G(m,e,k)$; see \cite[Sect.\ 6]{Stembridge--eigenvalues}. A more detailed study of the $k=2$ case, including the McKay graphs of $G(m,1,2)$, is provided in \cite{May--Gmp2}.

\subsection{Description of the Group Elements}
Let us fix the two-dimensional vector space $V:= \IK^2$.
Let $d,e$ be positive integers, and set $m:=de$. Let $\mu_m\subset \IK^*$ denote the group of $m$-th roots of unity. We define the abelian subgroup $H:=H(m,e,2)\le \GL(2,\IK)=\GL(V)$ by
\[
H(m,e,2)=\left\{\begin{pmatrix}
                  \eta& 0\\ 0 & \theta
                 \end{pmatrix}
\middle\vert \eta, \theta\in \mu_m\,,\, (\eta\theta)^d=1
 \right\}\,.
\]
We fix once and for all a primitive $m$-th root of unity $\zeta$, i.e.\ a generator of $\mu_m$,
and write
\[
 D(a,b)=\begin{pmatrix}
                  \zeta^a& 0\\ 0 & \zeta ^b
                 \end{pmatrix} \quad\text{for }a,b\in \IZ/m\IZ\,.
\]
Using this notation, we can rewrite $H$ as
\begin{equation}\label{eq:Hdefzeta}
 H(m,e,2)=\bigl\{D(a,b)\mid a,b\in \IZ/m\IZ\,,\, a+b\equiv 0 \mod e
 \bigr\}
\end{equation}
We write $\gamma:=\zeta^e$, which is a primitive $d$-th root of unity. The group $H$ is generated by the two elements
\[
\tilde \zeta:=\begin{pmatrix}
                  \zeta& 0\\ 0 & \zeta^{-1}
                 \end{pmatrix} \quad,\quad
\gamma_y:=\begin{pmatrix}
                  1& 0\\ 0 & \gamma
                 \end{pmatrix}\,.
\]
More precisely, we have an isomorphism
$\mu_m\times \mu_d\xrightarrow\cong H$ given by $(\zeta^a, \gamma^c)\mapsto \tilde\zeta^a\gamma_y^c$.
Setting
\[
 \gamma_x:=\begin{pmatrix}
                  \gamma& 0\\ 0 & 1
                 \end{pmatrix}
\]
we also have $\tilde \zeta$, $\gamma_x$ as another pair of generators.
We denote the permutation matrix by
\[
 \tau=\begin{pmatrix}
                  0& 1\\ 1 & 0
                 \end{pmatrix}\,.
\]
Note that we have $D(a,b)\tau=\tau D(b,a)$. In other words,
$D(a,b)^\tau=D(b,a)$.
The finite reflection group $G:=G(m,e,2)$ is defined as the subgroup of $\GL(V)$ generated by $H$ and $\tau$:
\[
G=H\rtimes \sym_2=H\cup H\tau\,.
\]
\subsection{Conjugacy Classes}
As $H\lhd G$ is an abelian subgroup of index 2, the $G$-conjugacy classes of elements in $H$ consist of at most two elements. More precisely, for $a\neq b$, the two elements $D(a,b)$ and $D(b,a)$ are conjugated. Elements of the form $D(a,a)\in H$, i.e.\ multiples of the unit matrix, are only conjugated to themselves, hence form the center $\Z(G)$ of the group.

For the matrix $D(a,a)$ to actually be an element of $G$, we must have $2a\equiv 0\mod e$; see \eqref{eq:Hdefzeta}. This leads to an important difference between the case that $e$ is odd, where $\Z(G)=\langle D(e,e)\rangle\cong \mu_d$, and the case that $e$ is even, where $\Z(G)=\langle D(\frac e2,\frac e2)\rangle\cong \mu_{2d}$.

We now look at the $G$-conjugacy classes of elements in $H\tau$. We have
\begin{equation}\label{eq:conjHtau}
 (D(a,b)\tau)^{\tilde \zeta}=D(a-2, b+2)\tau\quad, \quad (D(a,b)\tau)^{\gamma_y}=D(a-e,b-e)\,.
\end{equation}
In the case that $e$ is \emph{odd}, a complete list of conjugacy classes in $H\tau$ is given by the following $d$ representatives:
\begin{equation}\label{eq:conjrepodd}
 \tau\,,\,\, \gamma_y\tau=D(0,e)\tau \,,\,\, \gamma_y^2\tau=D(0,2e)\tau \,,\,\, \dots \,,\,\, \gamma_y^{d-1}\tau=D(0,(d-1)e)\tau\,.
\end{equation}
Indeed, $\det(\gamma_y^c\tau)=-\gamma^c$, so no two different matrices in the list \eqref{eq:conjrepodd} can be conjugated. Furthermore,
as $e$ and $2$ are coprime, \eqref{eq:conjHtau} tells us that every $D(a,b)\tau$ is conjugated to an element of the form $D(0,*)\tau$, which is then contained in the list \eqref{eq:conjrepodd}.

If $e$ is \emph{even}, the condition $a+b\equiv 0\mod e$ needed for $D(a,b)\in H$ implies that $a$ and $b$ must have the same parity. Furthermore, by \eqref{eq:conjHtau}, the parity of $a$ and $b$ in an element $D(a,b)\tau$ is preserved under conjugation. This yields a complete list of $G$-conjugacy classes in $H\tau$ with the following $2d$ representatives:
\begin{equation}\label{eq:conjrepeven}
\begin{split}
 &\tau\,,\,\, \gamma_y\tau=D(0,e)\tau \,,\,\, \dots \,,\,\, \gamma_y^{d-1}\tau=D(0,(d-1)e)\tau\,,\\
 &\tilde\zeta\tau=D(1,-1)\tau\,,\,\, \tilde\zeta\gamma_y\tau=D(1,e-1)\tau  \,,\,\, \dots \,,\,\, \tilde\zeta\gamma_y^{d-1}\tau=D(1,(d-1)e-1)\tau\,.
\end{split}
 \end{equation}

\subsection{Fixed Point Loci}\label{subsect:fixed}

Recall from \autoref{subsect:pifibre} that we call a $g\in G\le \GL(V)$ whose fixed point locus is a hyperplane simply a \emph{reflection} (we do not use the term \emph{pseudoreflection}, even if $\ord(g)\neq 2$). Of course, since in our set-up $V=\IK^2$, this means that the fixed point locus $V^g$ of a reflection is a line.

There are $2(d-1)$ reflections in $H$, namely $\gamma_x\,,\, \gamma_y \,,\, \gamma_x^2\,,\, \gamma_y^2, \dots, \gamma_x^{d-1}\,,\, \gamma_y^{d-1}$.
The fixed point loci, independently of $c=1,\dots, d-1$, are
\[
 V^{\gamma_x^c}=\left\langle\binom01\right\rangle = \{0\}\times \IA^1 =\Van(x)\quad, \quad V^{\gamma_y^c}=\left\langle\binom10\right\rangle =\IA^1\times \{0\} =\Van(y)
\]
where for $f\in \IK[x,y]$, we denote its vanishing locus by $\Van(f)\subset V=\IA^2$.
For every $c=1,\dots, d-1$, the two reflections $\gamma_x^c$ and $\gamma_y^c$ form a conjugated pair. The centralisers are
\[\CC(\gamma_x^c)=\CC(\gamma_y^c)=H=\langle \tzeta\rangle\times \langle \gamma_x\rangle= \langle \tzeta\rangle\times \langle \gamma_y\rangle\,.\]
On $V^{\gamma_y^c}\cong \IA^1$, the generator $\gamma_y$ acts trivially, while $\tilde\zeta$ acts as multiplication by $\zeta$.
Hence,
\begin{equation}\label{eq:reginvagamma}
V^{(\gamma_y^c)}\cong\IA^1/\mu_m\cong \Spec\IK[x^m]\cong \IA^1\,.
\end{equation}
There are $m$ reflections in $H\tau$, namely
$\tau\,,\, \tilde\zeta\tau\,,\, \tilde\zeta^2\tau\,,\,\dots \,,\, \tilde\zeta^{m-1}\tau$.
Their fixed point loci are
\[
 V^{\tilde\zeta^i\tau}=\left\langle\binom{\zeta^i}1\right\rangle =\Van(x-\zeta^i y)
\]
Setting $t_i:=[x]=[\zeta^iy]$, we get $\reg(V^{\tilde\zeta^i\tau})\cong \IK[t_i]$.

If $e$ is \emph{odd}, all the $\tilde\zeta^i\tau$ for $i=0,\dots, m-1$ form just one conjugation class. We usually represent this class by $\tau$ which has centraliser
$\CC(\tau)=\langle D(e,e)\rangle\times \langle \tau\rangle$.
The second factor $\langle\tau\rangle$ acts trivially on the diagonal $V^\tau$, while $D(e,e)$ acts by multiplication by the primitive $d$-th root of unity $\gamma=\zeta^e$. Hence,
\begin{equation}\label{eq:tauoddinva}
 V^{(\tau)}\cong \Spec \IK[t_0^d]\cong \IA^1  \qquad\text{(e odd)}\,.
\end{equation}
For $e$ \emph{even}, the set $\{\tilde\zeta^i\tau\mid i=0,\dots, m-1\}$ decomposes into two conjugacy classes. Namely, into those $\tilde\zeta^i\tau$ with $i$ even, represented by $\tau$, and those with $i$ odd, represented by $\tilde\zeta \tau$. We now have $\CC(\tau)=\langle D(\frac e2,\frac e2)\rangle\times \langle \tau\rangle$ and $\CC(\tzeta \tau)=\langle D(\frac e2,\frac e2)\rangle\times \langle \tzeta\tau\rangle$ which leads to
\begin{equation}\label{eq:reginvataueven}
 V^{(\tau)}\cong \Spec \IK[t_0^{2d}]\cong \IA^1\quad, \quad V^{(\tzeta\tau)}\cong \Spec \IK[t_1^{2d}]\cong \IA^1\qquad\text{(e even).}
\end{equation}
Let us summarize the discussion of the previous two subsections in two lemmas.
\begin{lemma}\label{lem:conjodd}
 Let $e$ be odd. Then the conjugacy classes of $G$ are:
 \begin{itemize}
  \item $\frac{(m-1)d}2$ conjugated pairs $D(a,b)\sim D(b,a)$,
\item $d$ central elements $D(\lambda e,\lambda e)$,
\item $d=\frac {md}m$ conjugacy classes in $H\tau$, each consisting of $m$ elements.
  \end{itemize}
Hence, there are $\frac{(m-1)d}2+2d$ conjugacy classes in $G$. Among them are $d$ conjugacy classes of reflections represented by
$\gamma_y\,,\,  \gamma_y^2\,,\,\dots \,,\, \gamma_y^{d-1}\,,\, \tau$.
\end{lemma}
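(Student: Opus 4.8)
The plan is to assemble the results of the two preceding subsections into a clean enumeration, organised along the coset decomposition $G = H \sqcup H\tau$. Since $G/H \cong \sym_2$ is abelian, every conjugacy class of $G$ lies entirely in $H$ or entirely in $H\tau$, so the two cosets may be counted separately.

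For the classes in $H$: as $H$ is abelian of index two in $G$, the $G$-conjugacy class of $D(a,b)$ is $\{D(a,b),\,D(a,b)^\tau\} = \{D(a,b),\,D(b,a)\}$, which is a singleton exactly for the diagonal matrices. By \eqref{eq:Hdefzeta}, $D(a,a)$ lies in $H$ iff $2a \equiv 0 \pmod e$, which for $e$ odd means $a \equiv 0 \pmod e$; this gives precisely the $d$ central elements $D(\lambda e, \lambda e)$, $\lambda \in \IZ/d\IZ$. The remaining $|H| - d = md - d$ elements of $H$ fall into two-element classes, hence there are $\tfrac{(m-1)d}{2}$ of them; this is an integer because $m = de$ with $e$ odd forces $m \equiv d \pmod 2$, so $(m-1)d \equiv (d-1)d \equiv 0 \pmod 2$.

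For the classes in $H\tau$: conjugating $D(a,b)\tau$ by $D(p,q) \in H$ yields $D(a + (p-q),\, b - (p-q))\tau$ (cf.\ \eqref{eq:conjHtau}). As $D(p,q)$ runs over $H$, the difference $p - q$ runs over all of $\IZ/m\IZ$: given $r$, one solves $2p \equiv r \pmod e$ (possible since $\gcd(2,e) = 1$) and puts $q = p - r$. Hence the $H$-conjugacy class of $D(a,b)\tau$ is already $\{D(a+t, b-t)\tau \mid t \in \IZ/m\IZ\}$, a set of exactly $m$ elements, and choosing $t = -a$ exhibits a representative of the form $\gamma_y^c\tau = D(0,ce)\tau$ with $c \in \IZ/d\IZ$ as in \eqref{eq:conjrepodd}. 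Because $\det(D(ce+t,-t)\tau) = -\zeta^{ce} = -\gamma^c$ is constant on such a class and the values $-\gamma^0, \dots, -\gamma^{d-1}$ are pairwise distinct ($\gamma$ being a primitive $d$-th root of unity), the $d$ classes obtained for $c = 0, \dots, d-1$ are genuinely distinct; since together they have $dm = |H\tau|$ elements, they exhaust $H\tau$. Thus $H\tau$ contributes exactly $d$ classes, each of size $m$, and altogether $G$ has $\tfrac{(m-1)d}{2} + d + d = \tfrac{(m-1)d}{2} + 2d$ conjugacy classes.

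Finally, to identify the reflections: by \autoref{subsect:fixed} the reflections in $H$ are the $\gamma_x^c$ and $\gamma_y^c$ with $1 \le c \le d-1$, where $\gamma_x^c \sim \gamma_y^c$ and no two $\gamma_y^c$ are conjugate (their determinants $\gamma^c$ are distinct and none is central), giving the $d-1$ classes of $\gamma_y, \dots, \gamma_y^{d-1}$; the reflections in $H\tau$ are all the $\tzeta^i\tau$ with $0 \le i \le m-1$, which for $e$ odd form the single class of $\tau$ found above. Hence there are $d$ conjugacy classes of reflections, represented by $\gamma_y, \dots, \gamma_y^{d-1}, \tau$. The only step with genuine content is the orbit computation in $H\tau$ using $\gcd(2,e) = 1$; everything else is bookkeeping on top of the preceding two subsections.
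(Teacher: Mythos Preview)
Your proof is correct and follows essentially the same approach as the paper, which presents this lemma as a summary of the discussion in the two preceding subsections on conjugacy classes and fixed point loci. Your argument is slightly more explicit (e.g.\ the direct verification that $p-q$ ranges over all of $\IZ/m\IZ$ and the cardinality count showing the $d$ classes exhaust $H\tau$), but the substance---using $\gcd(2,e)=1$ for the orbit computation in $H\tau$ and the determinant to separate the classes---is identical to the paper's reasoning around \eqref{eq:conjHtau} and \eqref{eq:conjrepodd}.
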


\begin{lemma}\label{lem:conjeven}
 Let $e$ be even. Then the conjugacy classes of $G$ are:
 \begin{itemize}
  \item $\frac{(m-2)d}2$ conjugated pairs $D(a,b)\sim D(b,a)$,
\item $2d$ central elements $D(\lambda \frac e2,\lambda \frac e2)$,
\item $2d=\frac {md}{m/2}$ conjugacy classes in $H\tau$, each consisting of $\frac m2$ elements.
  \end{itemize}
Hence, there are $\frac{(m-2)d}2+4d$ conjugacy classes in $G$. Among them are $d+1$ conjugacy classes of reflections represented by
$\gamma_y\,,\,  \gamma_y^2\,,\,\dots \,,\, \gamma_y^{d-1}\,,\, \tau \,,\, \tzeta\tau$.
\end{lemma}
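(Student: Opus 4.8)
The plan is to assemble the statement from the element-level computations of the two preceding subsections, being careful about orbit sizes. First I would record $|H| = md$, which comes from the isomorphism $\mu_m \times \mu_d \xrightarrow{\cong} H$, together with $[G:H] = 2$. The $G$-conjugacy classes of elements of the abelian normal subgroup $H$ are then read off from $D(a,b)^\tau = D(b,a)$: such an element is central precisely when $a = b$, and when $e$ is even the defining condition $a + b \equiv 0 \bmod e$ becomes $2a \equiv 0 \bmod e$, i.e. $a$ is a multiple of $\tfrac e2$, which produces exactly the $2d$ central elements $D(\lambda\tfrac e2, \lambda\tfrac e2)$, $\lambda \in \IZ/2d\IZ$. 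The remaining $md - 2d = d(m-2)$ non-central elements of $H$ fall into unordered pairs $\{D(a,b), D(b,a)\}$, giving $\tfrac{d(m-2)}2$ conjugated pairs.

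The substantive part is counting the classes inside $H\tau$, and here I would work with two conjugation invariants. The first is $\det\bigl(D(a,b)\tau\bigr) = -\zeta^{a+b}$, so the residue of $a+b$ modulo $m$ is constant on each class; since membership in $H$ forces $a+b \equiv 0 \bmod e$, this residue takes exactly $d$ values. The second, available only when $e$ is even, is the common parity of $a$ and $b$ (which makes sense because $a+b$ is then even), and one checks from the formulas \eqref{eq:conjHtau} that this parity is conjugation-invariant. Conversely, conjugating $D(a,b)\tau$ by $\tilde\zeta^k$ replaces $a$ by $a - 2k$, so already the powers of $\tilde\zeta$ act transitively on the set of $D(a',b')\tau$ with $a' \equiv a \bmod 2$ and $a' + b' \equiv a+b \bmod m$; conjugation by $\tau$ sends $D(a,b)\tau$ to $D(b,a)\tau$, which has the same parity and the same sum, so contributes nothing new. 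Hence the classes in $H\tau$ correspond bijectively to the $2d$ pairs (parity of $a$, residue of $a+b \bmod m$), each has $\tfrac{md}{2d} = \tfrac m2$ elements, and one may take as representatives $\gamma_y^c\tau = D(0,ce)\tau$ and $\tilde\zeta\gamma_y^c\tau = D(1,ce-1)\tau$ for $c = 0,\dots,d-1$, recovering the list \eqref{eq:conjrepeven}. Summing, $G$ has $\tfrac{d(m-2)}2 + 2d + 2d = \tfrac{d(m-2)}2 + 4d$ conjugacy classes.

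For the reflections I would quote the description in \autoref{subsect:fixed}. In $H$ the reflections are the $\gamma_x^c, \gamma_y^c$ with $c = 1,\dots,d-1$, and since $\gamma_x^c = (\gamma_y^c)^\tau$ these form $d-1$ classes, with representatives $\gamma_y,\dots,\gamma_y^{d-1}$. In $H\tau$ the reflections are the $m$ elements $\tilde\zeta^i\tau = D(i,-i)\tau$, all of which satisfy $a+b \equiv 0 \bmod m$, so by the parity invariant of the previous paragraph they break into exactly two classes according as $i$ is even or odd, represented by $\tau$ and $\tilde\zeta\tau$ (both nonempty since $m$ is even). Altogether there are $d+1$ conjugacy classes of reflections. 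The only point that requires genuine care rather than bookkeeping is the middle one: showing that conjugation by $H$ already saturates each $H\tau$-class, so that the pair (parity, determinant) is a complete invariant and the class sizes are exactly $\tfrac m2$; once this is established the rest is counting.
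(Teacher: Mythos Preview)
Your proof is correct and follows essentially the same approach as the paper, which presents this lemma as a summary of the preceding discussion rather than with a separate proof. The only minor elaboration you add is making the determinant invariant explicit for the $H\tau$ classes and verifying that the $\tilde\zeta$-orbit already saturates each class; the paper leaves this step implicit, relying on the analogous argument from the odd case.
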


\subsection{Irreducible Representations - Generalities}\label{subsect:irrep}

In the following, we will describe the set $\irr(G)$ of isomorphism classes of irreducible $G$-representations.

We use the induced action of $G$ on $\reg(V)=\IK[x,y]$. The invariants are freely generated by the two polynomials $(xy)^d$ and $x^m+y^m$; see e.g.\ \cite[Rem.\ 2.5(ii)]{Cohen--refl}. The quotient by the ideal $I:=((xy)^d, x^m+y^m)$ generated by the invariants is isomorphic to the regular $G$-representation
\begin{equation}
\IK[x,y]/I\cong \IK\langle G\rangle\,;
\end{equation}
see \cite[Thm.\ (B)]{Chev--inva}, or \autoref{lem:pifibres}, noting that $I$ is the vanishing ideal of the scheme-theoretic fibre $\pi^{-1}(\pi(0))$ of the quotient morphism. A $\IK$-base of $\IK[x,y]/I$ is given by
\[
 \cB=\bigl\{x^ay^b\mid \min\{a,b\}<d\,,\, \max\{a,b\}<m \bigr\}\cup \bigl\{Ax^ay^b\mid a,b<d  \bigr\}\quad,\quad A:=x^m-y^m\,.
\]
Probably the quickest way to check that $\cB$ is a base is to note that it has the correct number of elements $|\cB|=2md=|G|=\dim \IK[x,y]/I$ and then confirm that it is a generating system using the relations $A+(x^m+y^m)=2x^m$ and $A-(x^m+y^m)=-2y^m$.

Among the polynomials in the base $\cB$, there are $2d$ which are $G$-semi-invariant, namely
\[
 1\,,\, xy \,,\, (xy)^2 \,,\, \dots\,,\, (xy)^{d-1}\,,\, A\,,\, Axy \,,\, A(xy)^2 \,,\, \dots\,,\, A(xy)^{d-1}\,.
\]
For a $G$-semi-invariant polynomial $f$, we write
\[
 \chi(f)=\langle f\rangle
\]
which is a one-dimensional, hence irreducible, $G$-representation.
The other polynomials in $\cB$ form pairs $x^ay^b, x^by^a$ and $Ax^ay^b,Ax^by^a$ permuted by $\tau$. These pairs span 2-dimensional $G$-representations, which we denote by
\[
 \rho(x^ay^b):=\langle x^ay^b, x^by^a \rangle\quad ,\quad \rho(Ax^ay^b):=\langle Ax^ay^b, Ax^by^a \rangle \quad \text{with $a>b$.}
\]
The choice that we label the representation by the generator $x^ay^b$ with $a>b$ and not by the other generator of the same $G$-invariant subspace is  arbitrary, but some choice has to be made if we want a unique label.

One can summarise our notation for the $\chi$'s and $\rho$'s by noting that, if we label a $G$-representation by a polynomial, we mean the $G$-invariant subspace of $\IC[x,y]$ spanned by the polynomial's $G$-orbit.

The monomials $x^ay^b$ are $H$-semi-invariant and we denote the corresponding one-dimensional $H$-representations by $\chi_H(x^ay^b)$. With this notation, we have
\begin{equation}\label{eq:rhoind}
\rho(x^ay^b)\cong \Ind_H^G \chi_H(x^ay^b)\,.
\end{equation}
Hence, we can decide which of the $\rho(x^ay^b)$ are irreducible by
Mackey's Irreducibility Criterion (see e.g.\ \cite[Sect.\ 7.4]{Serre--repbook}), which in our special case reads
\begin{lemma}\label{lem:Mackey}
 The 2-dimensional $G$-representation $\rho(x^ay^b)\cong \Ind_H^G \chi_H(x^ay^b)$ is irreducible if and only if
 \[
  \chi_H(x^ay^b)^\tau\not\cong \chi_H(x^ay^b)
 \]
where the $H$-representation $\chi_H(x^ay^b)^\tau$ is given by letting $h\in H$ act on $\chi_H(x^ay^b)$ by $h^\tau$.
\end{lemma}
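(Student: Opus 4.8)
This is the special case of \emph{Mackey's irreducibility criterion} for the normal subgroup $H\lhd G$ of index two, so one may simply invoke \cite[Sect.\ 7.4]{Serre--repbook}; for completeness let me indicate a direct argument. Write $\rho:=\rho(x^ay^b)\cong\Ind_H^G\chi$ with $\chi:=\chi_H(x^ay^b)$, using \eqref{eq:rhoind}. The plan is to compute the endomorphism algebra $\End_G(\rho)$ and then invoke the fact that, over an algebraically closed field of characteristic zero, a finite-dimensional $G$-representation $W$ is irreducible if and only if $\End_G(W)\cong\IK$; one direction is Schur's lemma, and the other uses that $W$ is semisimple together with $\End_G\bigl(\bigoplus_i W_i^{\oplus n_i}\bigr)\cong\bigoplus_i\mathrm{Mat}_{n_i}(\IK)$ for pairwise non-isomorphic irreducibles $W_i$.

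First I would apply the $(\Ind_H^G,\Res_H^G)$-adjunction (Frobenius reciprocity) to obtain
\[
 \End_G(\rho)=\Hom_G\bigl(\Ind_H^G\chi,\Ind_H^G\chi\bigr)\cong\Hom_H\bigl(\chi,\Res_H^G\Ind_H^G\chi\bigr).
\]
Since $G=H\sqcup H\tau$ and $\tau^2=\id$, the coset basis $\{1\otimes v,\ \tau\otimes v\}$ of $\Ind_H^G\chi$ exhibits its restriction to $H$ as a sum of two lines on which $h\in H$ acts by $\chi(h)$ and by $\chi(\tau h\tau)=\chi(h^\tau)$ respectively; that is, $\Res_H^G\Ind_H^G\chi\cong\chi\oplus\chi^\tau$ with $\chi^\tau$ as in the statement. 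Hence
\[
 \End_G(\rho)\cong\Hom_H(\chi,\chi)\oplus\Hom_H(\chi,\chi^\tau),
\]
and, both $\chi$ and $\chi^\tau$ being one-dimensional, the first summand is $\IK$ while the second is $\IK$ or $0$ according to whether $\chi^\tau\cong\chi$ or not. Thus $\dim_\IK\End_G(\rho)$ is $1$ when $\chi^\tau\not\cong\chi$ and $2$ when $\chi^\tau\cong\chi$. As $\dim\rho=2$, reducibility would force $\rho\cong\chi_1\oplus\chi_2$ with $\chi_i$ one-dimensional and, since $\dim\End_G(\rho)\le 2$ excludes $\chi_1\cong\chi_2$, with $\chi_1\not\cong\chi_2$; comparing with the criterion recalled above yields precisely that $\rho$ is irreducible if and only if $\chi^\tau\not\cong\chi$.

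I do not expect a genuine obstacle here: the content is entirely standard, and the only points requiring care are bookkeeping ones -- fixing the convention for $\chi^\tau$ (left versus right conjugation by $\tau$, immaterial since $\ord(\tau)=2$) and correctly identifying $\Res_H^G\Ind_H^G\chi$, which is the Mackey restriction formula in the transparent situation of an index-two normal subgroup. In the concrete model of \autoref{subsect:irrep} one moreover has $\chi^\tau\cong\chi_H(x^by^a)$, because conjugation by $\tau$ interchanges the two coordinates of $V$, so the criterion becomes checkable monomial by monomial, which is what is carried out in the sequel.
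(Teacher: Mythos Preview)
Your proposal is correct. The paper does not actually give a proof of this lemma: it simply introduces it as ``Mackey's Irreducibility Criterion (see e.g.\ \cite[Sect.\ 7.4]{Serre--repbook}), which in our special case reads'' and then states the lemma without further argument. Your computation of $\End_G(\rho)$ via Frobenius reciprocity and the decomposition $\Res_H^G\Ind_H^G\chi\cong\chi\oplus\chi^\tau$ is exactly the standard unpacking of that citation in the index-two normal case, so you are supplying details the paper chose to omit rather than taking a different route.
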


Note that $H$ acts trivially on $A=x^m-y^m$, which gives
\begin{equation}\label{eq:rhoA}
 \chi_H(Ax^ay^b)\cong \chi_H(x^ay^b)\quad\text{and} \quad  \rho(Ax^ay^b)\cong \rho(x^ay^b)\,.
\end{equation}
This means that we can ignore the $\rho(Ax^ay^b)$ when we classify the isomorphism classes of irreducible representations.
Hence, it suffices to formulate the following lemma only for $\chi_H(x^ay^b)$ and not for $\chi_H(Ax^ay^b)$.

\begin{lemma}\label{lem:conjcondition}
 We have $\chi_H(x^ay^b)^\tau\cong \chi_H(x^ay^b)$ if and only if
 \begin{equation}\label{eq:conjcondition}
2(a-b)\equiv 0 \mod m\quad\text{and}\quad a\equiv b \mod d\,.
 \end{equation}
\end{lemma}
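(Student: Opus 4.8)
The plan is to make both the $\tau$-twist and the isomorphism condition completely explicit in terms of characters of the abelian group $H$, and then to reduce the resulting congruence to a check on a generating set of $H$.

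First I would record the $H$-character attached to the monomial $x^ay^b$: writing $D(p,q)$ as before, the induced action on $x^ay^b$ is multiplication by $\zeta^{\pm(pa+qb)}$, the sign depending only on the chosen convention for the action on functions and being irrelevant for what follows. By the description of the twisted representation in \autoref{lem:Mackey}, the character $\chi_H(x^ay^b)^\tau$ sends $D(p,q)$ to the scalar by which $D(p,q)^\tau=D(q,p)$ acts on $\chi_H(x^ay^b)$, i.e.\ to $\zeta^{\pm(qa+pb)}$; in particular $\chi_H(x^ay^b)^\tau\cong\chi_H(x^by^a)$. Hence $\chi_H(x^ay^b)^\tau\cong\chi_H(x^ay^b)$ holds if and only if $\zeta^{(p-q)(a-b)}=1$ for every $D(p,q)\in H$, that is, $(p-q)(a-b)\equiv 0 \mod m$ for all $D(p,q)\in H$.

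Next I would observe that $\psi\colon H\to\IZ/m\IZ$, $D(p,q)\mapsto p-q$, is a group homomorphism (immediate from $D(p,q)D(p',q')=D(p+p',q+q')$), and that $\{t\in\IZ/m\IZ\mid t(a-b)\equiv 0 \mod m\}$ is a subgroup of $\IZ/m\IZ$. Therefore the condition of the previous paragraph holds for all of $H$ as soon as it holds on a generating set. Using the generators $\tilde\zeta=D(1,-1)$ and $\gamma_y=D(0,e)$ of $H$, for which $\psi(\tilde\zeta)=2$ and $\psi(\gamma_y)=-e$, the condition becomes the pair of congruences $2(a-b)\equiv 0\mod m$ and $e(a-b)\equiv 0\mod m$. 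Since $m=de$, the second congruence is equivalent to $a\equiv b\mod d$, so the two together are exactly \eqref{eq:conjcondition}.

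There is essentially no hard step here; the only points requiring a little care are the bookkeeping of the $\tau$-twist --- one must conjugate on the correct side, which is harmless as $\tau^2=1$ --- and the elementary observation that $e(a-b)\equiv 0 \mod de$ is the same as $d\mid(a-b)$. I would also note, although it is not needed for the statement, that when $e$ is even the single congruence $2(a-b)\equiv 0\mod m$ already forces $d\mid(a-b)$, whereas when $e$ is odd the two congruences together are equivalent to $a\equiv b\mod m$; this is consistent with the parity case distinction appearing in the representation-theoretic discussion that follows.
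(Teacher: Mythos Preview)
Your proof is correct and follows essentially the same approach as the paper: both compute the character $\chi_H(x^ay^b)$ and its $\tau$-twist explicitly and compare them on a generating set of $H$. The paper evaluates on $\tilde\zeta,\gamma_x,\gamma_y$ directly (obtaining the conditions $\zeta^{a-b}=\zeta^{b-a}$ and $\gamma^a=\gamma^b$), while you package the same computation via the difference map $D(p,q)\mapsto p-q$ and the generators $\tilde\zeta,\gamma_y$; the resulting congruences are identical.
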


\begin{proof}
The $H$-character $\chi_H(x^ay^b)$ is given by
 \begin{equation}\label{eq:chiH}
 \chi_H(x^ay^b)\colon H\to \IK^*\,,\quad \tzeta\mapsto \zeta^{a-b}\,,\, \gamma_x\mapsto \gamma^a \,,\, \gamma_y\mapsto \gamma^b\,.
 \end{equation}
We have $\tzeta^\tau=\tzeta^{-1}$, $\gamma_x^\tau=\gamma_y$, and $\gamma_y^\tau=\gamma_x$. Hence
 \begin{equation}\label{eq:chiHconj}
 \chi_H(x^ay^b)^\tau\colon H\to \IK^*\,,\quad \tzeta\mapsto \zeta^{b-a}\,,\, \gamma_x\mapsto \gamma^b \,,\, \gamma_y\mapsto \gamma^a\,.
 \end{equation}
The result follows by comparing \eqref{eq:chiH} and \eqref{eq:chiHconj}.
\end{proof}

\begin{lemma}\label{lem:camparerhos}
 Let $\lambda\in \{1,\dots, e-1\}$ and $a,b\in \{0,\dots, d-1\}$. Then
\[
\rho(x^{\lambda d+a}y^b)\cong \rho(x^{(e-\lambda)d+b}y^a)\,.
\]
\end{lemma}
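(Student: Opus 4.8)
The plan is to reduce the claimed isomorphism of $2$-dimensional $G$-representations to an elementary identity of $H$-characters. By \eqref{eq:rhoind} both sides are of the form $\Ind_H^G$ of a linear character of the index-two normal subgroup $H\lhd G$, and induction from $H$ is insensitive to twisting the inducing character by $\tau$: for any one-dimensional $H$-representation $\psi$ one has $\Ind_H^G\psi\cong\Ind_H^G\psi^\tau$. Concretely, $\rho(x^py^q)=\langle x^py^q,\,x^qy^p\rangle$ carries $\chi_H(x^py^q)$ on the line spanned by $x^py^q$ and $\chi_H(x^qy^p)=\chi_H(x^py^q)^\tau$ on the line spanned by $x^qy^p$, so $\rho(x^py^q)\cong\Ind_H^G\chi_H(x^py^q)\cong\Ind_H^G\chi_H(x^py^q)^\tau$ regardless of whether the representation is irreducible. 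Hence it suffices to prove
\[
\chi_H\!\left(x^{\lambda d+a}y^b\right)\;\cong\;\chi_H\!\left(x^{(e-\lambda)d+b}y^a\right)^{\!\tau}\;=\;\chi_H\!\left(x^{a}y^{(e-\lambda)d+b}\right),
\]
where the last equality is the definition of the $\tau$-twist recorded in \eqref{eq:chiHconj} (it simply interchanges the two exponents).

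Next I would evaluate both characters on the generators $\tzeta,\gamma_x,\gamma_y$ of $H$ via formula \eqref{eq:chiH}, using that $\gamma=\zeta^e$ has order $d$, so $\gamma^{\ell d+r}=\gamma^r$. On $\gamma_x$ the left side gives $\gamma^{\lambda d+a}=\gamma^a$ and the right side gives $\gamma^a$; on $\gamma_y$ the left side gives $\gamma^b$ and the right side gives $\gamma^{(e-\lambda)d+b}=\gamma^b$; so these two generators impose no condition. On $\tzeta$ the left side gives $\zeta^{\lambda d+a-b}$ and the right side gives $\zeta^{a-(e-\lambda)d-b}$, and these coincide because $\zeta$ has order $m=de$ and
\[
(\lambda d+a-b)-\bigl(a-(e-\lambda)d-b\bigr)=\lambda d+(e-\lambda)d=ed\equiv 0\pmod m.
\]
This establishes the character identity, and applying $\Ind_H^G$ together with the twist-invariance above gives $\rho(x^{\lambda d+a}y^b)\cong\rho(x^{(e-\lambda)d+b}y^a)$.

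Finally I would note a small bookkeeping point: both expressions are legitimate labels under the convention fixed before the lemma (first exponent strictly larger than the second), since $\lambda,\,e-\lambda\ge 1$ and $a,b\le d-1$ give $\lambda d+a\ge d>b$ and $(e-\lambda)d+b\ge d>a$. There is no real obstacle in this proof; the only things to be careful about are the modular arithmetic modulo $m=de$ and the order-$d$ collapse $\gamma^{\ell d+r}=\gamma^r$, and the observation that irreducibility of $\rho$ is never used, so the argument applies uniformly to all admissible $\lambda,a,b$.
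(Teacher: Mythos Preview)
Your proof is correct and follows essentially the same route as the paper: both reduce to the identity $\chi_H(x^{\lambda d+a}y^b)\cong\chi_H(x^{(e-\lambda)d+b}y^a)^{\tau}$ of $H$-characters, verify it on the generators $\tzeta,\gamma_x,\gamma_y$ via the same modular arithmetic, and then invoke $\Ind_H^G\psi\cong\Ind_H^G\psi^{\tau}$. The only cosmetic difference is that the paper conjugates the left-hand character rather than the right-hand one, which is of course equivalent.
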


\begin{proof}
 The computation is similar to the proof of \autoref{lem:conjcondition}. We have
\[
 \chi_H(x^{\lambda d+a}y^b)\colon \quad \tzeta\mapsto \zeta^{\lambda d+a-b}\,,\, \gamma_x\mapsto \gamma^{\lambda d+a}=\gamma^a \,,\, \gamma_y\mapsto \gamma^b\,.
 \]
Hence, its $\tau$-conjugation is given by
\[
 \chi_H(x^{\lambda d+a}y^b)^{\tau}\colon \quad \tzeta\mapsto \zeta^{-(\lambda d+a-b)}=\zeta^{(e-\lambda)d+b-a}\,,\, \gamma_x\mapsto \gamma^b \,,\, \gamma_y\mapsto \gamma^a
 \]
which agrees with $\chi_H(x^{(e-\lambda)d+b}y^a)$.
Thus, we have
\[
\rho(x^{\lambda d+a}y^b)\cong \Ind \chi_H(x^{\lambda d+a}y^b)\cong \Ind \chi_H(x^{\lambda d+a}y^b)^\tau\cong \Ind \chi_H(x^{(e-\lambda)d+b}y^a)\cong \rho(x^{(e-\lambda)d+b}y^a)\,.\qedhere
\]
\end{proof}

\subsection{Irreducible Representations - odd case}

In this subsection, we complete the description of $\irr(G)$ in the case that $e$ is odd.

\begin{lemma}[$e$ odd]
All the $G$-representations $\rho(x^ay^b)$ for $x^ay^b\in \cB$ with $a>b$ are irreducible.
\end{lemma}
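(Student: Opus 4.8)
The statement to prove is that for $e$ odd, every $\rho(x^ay^b)$ with $x^ay^b\in\cB$ and $a>b$ is irreducible. By \autoref{lem:Mackey} and \autoref{lem:conjcondition}, this amounts to showing that the pair of congruences
\[
2(a-b)\equiv 0 \mod m \quad\text{and}\quad a\equiv b\mod d
\]
\emph{fails} for every $x^ay^b\in\cB$ with $a>b$. The plan is therefore purely arithmetic: assume for contradiction that both congruences hold for such a monomial, and derive that $x^ay^b\notin\cB$ or $a\le b$.

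First I would record what membership in $\cB$ means for the non-semi-invariant monomials: either $\min\{a,b\}<d$ and $\max\{a,b\}<m$ (the ``$x^ay^b$ type''), or $a,b<d$ (the ``$Ax^ay^b$ type'', where by \eqref{eq:rhoA} we may as well assume $a,b<d$ after stripping $A$). In both cases we have $\min\{a,b\}<d$; since $a>b$, this gives $0\le b<d$. Now the second congruence $a\equiv b\mod d$ together with $0\le b<d$ forces $a=b+\ell d$ for some integer $\ell\ge 1$ (note $\ell\ge 1$ because $a>b$). Substituting into the first congruence: $2\ell d\equiv 0\mod m$, i.e. $2\ell d\equiv 0\mod de$, i.e. $2\ell\equiv 0\mod e$. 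Here is where oddness of $e$ enters: since $\gcd(2,e)=1$, this gives $\ell\equiv 0\mod e$, so $\ell\ge e$ and hence $a=b+\ell d\ge b+ed = b+m\ge m$.

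Finally I would rule this out in each case. For the $Ax^ay^b$ type we assumed $a<d\le m$, contradicting $a\ge m$. For the $x^ay^b$ type we have the constraint $\max\{a,b\}<m$, i.e. $a<m$ since $a>b$, again contradicting $a\ge m$. Either way we reach a contradiction, so the congruences \eqref{eq:conjcondition} never hold for $x^ay^b\in\cB$ with $a>b$, and by \autoref{lem:Mackey} each such $\rho(x^ay^b)$ is irreducible. The only mild subtlety — the ``main obstacle'', though it is minor — is being careful about the $A$-twisted monomials: one must remember that \eqref{eq:rhoA} lets us reduce $\rho(Ax^ay^b)$ to the already-understood $\rho(x^ay^b)$ with the same exponents, and that for those the governing bound is $a,b<d$ rather than the weaker $\max\{a,b\}<m$; conveniently, the stronger bound makes the contradiction even easier. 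No genuine difficulty is expected.
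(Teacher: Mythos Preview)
Your proof is correct and uses the same reduction via \autoref{lem:Mackey} and \autoref{lem:conjcondition} as the paper, but your arithmetic is organised differently. The paper splits into cases on the parity of $m$ (equivalently of $d$, since $e$ is odd): for $m$ odd it uses only the first congruence to force $a\equiv b\bmod m$, while for $m$ even it observes that the first congruence pins down $a-b=m/2$ and then checks that $m/2\equiv d/2\not\equiv 0\bmod d$, so the second congruence fails. Your argument instead combines both congruences from the outset: writing $a-b=\ell d$ with $\ell\ge 1$ and reducing $2\ell d\equiv 0\bmod de$ to $e\mid 2\ell$, the oddness of $e$ gives $e\mid\ell$, hence $a\ge m$, contradicting $a<m$. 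This is slightly more uniform, avoiding the parity split; the paper's version is more hands-on but equally short. Incidentally, your preliminary observation that $b<d$ is not actually needed for the core step (you only use $a-b>0$ and $a<m$), though it does no harm.
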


\begin{proof}
By \autoref{lem:Mackey} and \autoref{lem:conjcondition}, we have to show that no $x^ay^b\in \cB$ with $a>b$ satisfies \eqref{eq:conjcondition}.
We need to make a case distinction by the parity of $m$ (which agrees with the parity of $d$ since $e$ is odd). For $m$ odd, the condition $2(a-b)\equiv 0 \mod m$ implies that $a\equiv b\mod m$. This is incompatible with the assumption $m>a>b\ge 0$.

 For $m$ even, the condition $2(a-b)\equiv 0 \mod m$ is indeed satisfied for one monomial $x^ay^b\in \cB$ with $a>b$, namely for $x^{m/2}=x^{m/2}y^0$. However, $\frac m2\equiv \frac d2\mod d$, so the second condition of \eqref{eq:conjcondition} is not met.
\end{proof}

Every 2-dimensional irreducible representation occurs twice as a direct summand of the regular representation $\IK\langle G\rangle\cong \IK[x,y]/I$.
So, to get a classification of the isomorphism classes, we have to find the isomorphic pairs.
As noted above, we have $\rho(x^ay^b)\cong \rho(Ax^ay^b)$ for $a,b<d$; see \eqref{eq:rhoA}. Set
\[
 n^-:=\frac{m-d}2=\frac{(e-1)}2 d\quad,\quad n^+=\frac{m+d}2 =\frac{(e+1)}2d\,.
\]
\autoref{lem:camparerhos} then says that for every $x^ay^b\in \cB$ with $b<d\le a<n^+$ there is some $x^\alpha y^\beta\in \cB$ with $\beta<d$, $\alpha\ge n^+$ and
$\rho(x^ay^b)\cong \rho(x^\alpha y^\beta)$.
Let us summarise the discussion as

\begin{prop}[$e$ odd] The group $G$ has $2d$ characters, namely
\begin{align*}
 \mathbf 1=\chi(1), \chi(xy), \dots \chi((xy)^{d-1})\quad &, \quad \chi(A), \chi(Axy),\dots,\chi(A(xy)^{d-1}) \,.
\end{align*}
In other words, the group of characters is generated by $\chi(xy)$, which is of order $d$, and $\chi(A)$, which is of order $2$.
Furthermore, $G$ has $\frac{(m-1)d}2$ isomorphism classes of $2$-dimensional irreducible representations. A set of representatives of these isomorphism classes is given by
\[
\rho(x^ay^b) \quad\text{with }  b<d,\, b< a< n^+\,.
\]
There are no irreducible $G$-representations of dimension greater than 2.
\end{prop}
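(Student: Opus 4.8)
The plan is to count both the characters and the two-dimensional irreducibles, using the base $\cB$ of the regular representation $\IK\langle G\rangle\cong\IK[x,y]/I$ together with the reductions already established. Since every irreducible $G$-representation $W$ occurs in $\IK\langle G\rangle$ with multiplicity $\dim W$, and $\cB$ is a basis adapted to the $G$-action (its elements being permuted up to scalars, so that $G$-orbits of basis vectors span subrepresentations), it suffices to identify these orbit-spans, decide which are irreducible, and then collapse the isomorphic ones.

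First I would list the one-dimensional pieces. By the description preceding \autoref{lem:Mackey}, exactly $2d$ of the monomials-with-$A$ in $\cB$ are $G$-semi-invariant, namely $1,xy,\dots,(xy)^{d-1}$ and $A,Axy,\dots,A(xy)^{d-1}$, and each spans a one-dimensional representation $\chi(f)$. These $2d$ vectors are pairwise non-isomorphic as $G$-representations: using \eqref{eq:chiH}, $\chi((xy)^k)$ sends $\tzeta\mapsto 1$, $\gamma_x\mapsto\gamma^k$, so the $d$ representations $\chi((xy)^k)$ are distinct (they differ on $\gamma_x$, and $\gamma$ has order $d$), while $\chi(A(xy)^k)$ differs from $\chi((xy)^k)$ on $A$ (that is, on $\tau$), since $\tau$ acts by $+1$ on the former span and by $-1$ on the latter. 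As a multiplicity count, these $2d$ one-dimensionals contribute $2d\cdot 1=2d$ to $\dim\IK\langle G\rangle=|G|=2md$. Moreover, since a one-dimensional summand of $\IK\langle G\rangle$ appears with multiplicity one, and $\cB$ contains no further semi-invariant vectors, there are no other characters; the group of characters is therefore generated by $\chi(xy)$ (of order $d$) and $\chi(A)$ (of order $2$).

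Next I would handle the two-dimensional pieces. The remaining $2md-2d$ basis vectors pair off under $\tau$ into $md-d$ two-element sets, each spanning a $2$-dimensional subrepresentation of the form $\rho(x^ay^b)$ or $\rho(Ax^ay^b)$ (with $a>b$), and by the lemma in the current subsection (valid since $e$ is odd) every such $\rho$ is irreducible. By \eqref{eq:rhoA} we have $\rho(Ax^ay^b)\cong\rho(x^ay^b)$ for $a,b<d$, which identifies the $d(d-1)/2 + \dots$ part coming from the $A$-monomials with $\rho$'s coming from plain monomials; more precisely, after this identification every isomorphism class of $2$-dimensional irreducible occurs in $\IK\langle G\rangle$ with the expected multiplicity $2$, so the number of isomorphism classes is $(md-d)/2=\tfrac{(m-1)d}{2}$. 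To exhibit a clean set of representatives I would invoke \autoref{lem:camparerhos}: for $b<d\le a<n^+$ it gives $\rho(x^ay^b)\cong\rho(x^{(e-\lambda)d+b}y^a)$ with the latter exponent $\ge n^+$, which lets one push all two-dimensional irreducibles into the range $b<d$, $b<a<n^+$, and a cardinality check (the number of pairs $(a,b)$ with $0\le b<d$ and $b<a<n^+=\tfrac{e+1}{2}d$) confirms there are exactly $\tfrac{(m-1)d}{2}$ of them, matching the multiplicity count.

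Finally, the claim that there are no irreducibles of dimension $>2$ is automatic: we have exhibited $2d$ classes of dimension $1$ and $\tfrac{(m-1)d}{2}$ of dimension $2$, and $\sum (\dim W)^2 = 2d\cdot 1 + \tfrac{(m-1)d}{2}\cdot 4 = 2d + 2(m-1)d = 2md = |G|$, so these already account for the whole regular representation and no further irreducibles can exist. The main obstacle is purely bookkeeping: making sure the $\tau$-pairing of basis monomials, the identification $\rho(Ax^ay^b)\cong\rho(x^ay^b)$, and the normalization via \autoref{lem:camparerhos} together produce each isomorphism class exactly once with the right multiplicity, i.e.\ that the stated representative range $b<d,\ b<a<n^+$ is both exhaustive and irredundant; the dimension-sum identity above is the clean sanity check that closes this.
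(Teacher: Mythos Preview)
Your proposal is correct and follows essentially the same approach as the paper: the proposition in the paper is presented as a summary of the preceding discussion (the irreducibility lemma, the identification $\rho(Ax^ay^b)\cong\rho(x^ay^b)$, and \autoref{lem:camparerhos}), and you reproduce exactly that structure, simply spelling out the bookkeeping more explicitly and adding the dimension-sum check $2d+4\cdot\tfrac{(m-1)d}{2}=|G|$ as a sanity check. The paper does not write out this last identity, relying instead implicitly on the conjugacy class count from \autoref{lem:conjodd}, but the two verifications are equivalent.
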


The reader might already want to have a look at diagram \eqref{eq:McKayodd}. The meaning of the arrows is explained later by \autoref{prop:skyExt}, but the nodes of the displayed quiver are almost in bijection with $\irr(G)$ (why only almost is explained in the text before the diagram).

\subsection{Irreducible Representations - even case}\label{subsect:irreven}

We now consider the case that $e$ is even. Of course, this implies that $m=de$ is even, and we set
\[n:=\frac m2=\frac e2 d\,.\]

Now, in contrast to the case that $e$ is odd, there are $d$ monomials $x^ay^b\in \cB$ with $a>b$ satisfying \eqref{eq:conjcondition}, namely
$x^n(xy)^b=x^{n+b}y^b$ for $b=0,\dots, d-1$.

Hence, \autoref{lem:Mackey} and \autoref{lem:conjcondition} tell us that $\rho(x^n(xy)^b)=\langle x^n(xy)^b, y^n(xy)^b\rangle$ splits into two one-dimensional $G$-representations. Indeed, there are two $G$-semi-invariant polynomials
\begin{equation}\label{eq:Ndef}
 N^+:=x^n+y^n\,,\, N^-:=x^n-y^n\in \rho(x^n)\,.
\end{equation}
This gives $\rho(x^n(xy)^b)= \chi(N^+(xy)^b)\oplus \chi(N^-(xy)^b)$. Note that
\begin{equation}\label{eq:chiNArel}
\chi(N^+)\otimes\chi(N^+)\cong \chi(1)\cong \chi(N^-)\otimes \chi(N^-)\quad,\quad \chi(N^-)\cong \chi(N^+)\otimes\chi(A)\quad,\quad
\end{equation}
Let now $x^ay^b\in \cB$ with $a>b$ be not of the form $x^n(xy)^b$, in other words, $a\neq n+b$. Then one checks that $x^ay^b$ does not satisfy \eqref{eq:conjcondition}, so the corresponding $2$-dimensional representation $\rho(x^ay^b)$ is irreducible.

To give a set of representatives of the isomorphism classes of 2-dimensional irreducible representations, we have to identify the isomorphic pairs in $\IK[x,y]/I$. Note that we still have $\rho(x^ay^b)\cong \rho(Ax^ay^b)$ for $b<a<d$; see \eqref{eq:rhoA}. Furthermore, note that for $b<d\le a$, we have
\begin{equation}\label{eq:evenrhocompare}
 \rho\bigl(x^a(xy)^b\bigr)\cong \rho\bigl(x^{m-a}(xy)^{[a+b]}\bigr)
\end{equation}
where $[a+b]$ denotes the unique number in $\{0,\dots ,d-1\}$ with $[a+b]\equiv a+b\mod d$. This can either be deduced from the statement of \autoref{lem:camparerhos} or, maybe simpler, verified by doing a computation with the corresponding $H$-characters as in the proof of \autoref{lem:camparerhos}.

Noting that $a<n$ if and only if $m-a\ge n$, we get that every isomorphism class of 2-dimensional $G$-representations has a unique representative $\rho\bigl(x^{a}(xy)^b\bigr)$ with $0<a<n$ and $0\le b<d$.
We summarise our discussion as
\begin{prop}[$e$ even]\label{prop:irreven} The group $G$ has $4d$ characters, namely
\begin{align*}
 \chi(1), \chi(xy), \dots \chi((xy)^{d-1})\quad &, \quad \chi(A), \chi(Axy),\dots,\chi(A(xy)^{d-1}) \\
 \chi(N^+), \chi(N^+xy), \dots \chi(N^+(xy)^{d-1})\quad &, \quad \chi(N^-), \chi(N^-xy),\dots,\chi(N^-(xy)^{d-1})\,.
\end{align*}
In other words, the group of characters is generated by $\chi(xy)$, which is of order $d$, and $\chi(A)$, $\chi(N^+)$, which are of order $2$.
Furthermore, $G$ has $\frac{(m-2)d}2=(n-1)d$ isomorphism classes of $2$-dimensional irreducible representations. A set of representatives of these isomorphism classes is given by
\[
\rho\bigl(x^{a}(xy)^b\bigr) \quad\text{with }  0<a<n\,,\, 0\le b<d\,.
\]
There are no irreducible $G$-representations of dimension greater than 2.
\end{prop}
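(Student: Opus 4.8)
The plan is to read everything off the decomposition of the regular representation $\IK\langle G\rangle\cong\IK[x,y]/I$ (\autoref{lem:pifibres}) into the $G$-subrepresentations spanned by $\tau$-orbits of the basis $\cB$. First I would record that $\cB$ splits into the $2d$ semi-invariant polynomials $(xy)^b$ and $A(xy)^b$ with $0\le b<d$, each spanning a $G$-stable line, together with the remaining elements, which fall into $\tau$-orbit pairs $\{x^ay^b,x^by^a\}$ (for $a\neq b$) and $\{Ax^ay^b,Ax^by^a\}$, each spanning a $G$-stable plane $\rho(x^ay^b)$, resp.\ $\rho(Ax^ay^b)$. Counting dimensions exhibits $\IK\langle G\rangle$ as a direct sum of $2d$ lines and $(m-1)d$ planes; since $\IK\langle G\rangle$ is semisimple and every irreducible is one of its summands (hence appears already in one of these pieces), it follows at once that $G$ has no irreducible of dimension $>2$.

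Next I would sort the $(m-1)d$ planes into reducible and irreducible ones. By \autoref{lem:Mackey} combined with \autoref{lem:conjcondition}, a plane $\rho(x^ay^b)\cong\Ind_H^G\chi_H(x^ay^b)$ is reducible precisely when $x^ay^b$ satisfies \eqref{eq:conjcondition}; solving this congruence within the constraints defining $\cB$ shows that this happens for exactly $d$ of the planes, namely the $\rho(x^n(xy)^b)$ with $0\le b<d$, and each of these splits as $\chi(N^+(xy)^b)\oplus\chi(N^-(xy)^b)$ by \eqref{eq:Ndef}. Thus $\IK\langle G\rangle$ decomposes into $4d$ lines and $(m-2)d=2(n-1)d$ irreducible planes. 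Since the multiplicity of an irreducible in $\IK\langle G\rangle$ equals its dimension, this forces $G$ to have exactly $4d$ one-dimensional and exactly $(n-1)d$ two-dimensional irreducible representations.

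Finally I would pin down the explicit representatives. The $4d$ lines are the characters $\chi(xy)^i\otimes\chi(A)^j\otimes\chi(N^+)^k$; evaluating on $\gamma_x$, $\tau$ and $\tzeta$ separates them, so they exhaust $\Hom(G,\IK^*)$, and the stated generators together with their orders follow from \eqref{eq:chiNArel}. For the two-dimensional classes, every irreducible plane in $\IK\langle G\rangle$ is of the form $\rho(x^ay^b)$ with $b<d$ (using \eqref{eq:rhoA} to absorb the $A$-variants, whose normalised first exponent is already $<d\le n$); applying \eqref{eq:evenrhocompare} when the first exponent is $\ge d$, and using that the non-split condition excludes first exponent $n$, one rewrites it as a $\rho\bigl(x^{a}(xy)^{b}\bigr)$ with $0<a<n$ and $0\le b<d$. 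There are exactly $(n-1)d$ such pairs $(a,b)$ and exactly $(n-1)d$ isomorphism classes, so the assignment is automatically a bijection, i.e.\ the list is complete and without repetitions.

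The step I expect to require the most care is the bookkeeping in the second paragraph: one must show that \emph{precisely} the $d$ planes $\rho(x^n(xy)^b)$ are reducible, no more and no fewer, since the whole numerical content of the proposition (both the count $4d$ and the count $(n-1)d$) rests on this, and it is exactly the parity of $e$ that produces these solutions of \eqref{eq:conjcondition}. Everything afterwards is forced by the multiplicities in the regular representation, with \eqref{eq:evenrhocompare} and \eqref{eq:rhoA} only needed to normalise the labels.
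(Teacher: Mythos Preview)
Your proof is correct and follows essentially the same route as the paper: \autoref{prop:irreven} is stated there as a summary of the discussion in \autoref{subsect:irreven}, which identifies the $d$ reducible planes via \autoref{lem:Mackey} and \autoref{lem:conjcondition} and then normalises the two-dimensional labels using \eqref{eq:rhoA} and \eqref{eq:evenrhocompare}, exactly as you do. One small point: the trigger for invoking \eqref{eq:evenrhocompare} should be $a>n$ rather than $a\ge d$ (for $d\le a<n$ the isomorphism would send you to $m-a>n$, the wrong way), but your final pigeonhole step makes this harmless since only surjectivity onto the list of $(n-1)d$ labels is needed.
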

The reader is invited to take a look at diagram \eqref{eq:evenMcKay} whose nodes correspond to $\irr(G)$.

\subsection{$G$-equivariant extension groups}

Let us collect some facts on equivariant sheaves in $\Coh_G(V)\subset \D_G(V)$ and their extension groups in our set-up of $G=G(m,e,2)$ acting on $V=\IK^2$.

As $V=\Spec \IK[x,y]$ is affine, an equivariant sheaf can be identified with an $A$-module $M$ together with a $G$ action on $M$ such that $g(am)=g(a)g(m)$. This, in turn, can be identified with a module over the skew group algebra $G\ltimes A$ (see e.g.\ \cite[Sect.\ 2.5]{KruNik}), but we will not use this description.

Note that for $E\in \Coh_G(V)$ and $W\in \rep(G)$, there is a well-defined $E\otimes W\in \Coh_G(V)$. For $\chi(f)$ the character given by some semi-invariant $f\in \IK[x,y]$, we abbreviate $E\otimes \chi(f)=:E(f)$.

As we consider $\IK[x,y]=\reg(V)$ with the action induced by the one on $V$, we have the isomorphism of $G$-representations
\[
 \rho(x)=\langle x,y\rangle \cong V^\vee\,.
\]
Hence, we have a natural isomorphism $\Omega_V\cong \reg_V\otimes \rho(x)$. Since $\det\rho(x)=\chi(Axy)$, the canonical bundle with its natural $G$-linearisation is given by $\omega_V\cong \reg_V(Axy)$. As $V$ is not proper, we do not have a global Serre duality for all equivariant sheaves on $V$. However, Serre duality still holds if at least one of the two sheaves involved has proper support; see \cite[Sect.\ 4.3]{BKR}. As our $V=\IK^2$ is affine, having proper support is the same as having zero-dimensional support. Concretely, this means
\begin{equation}\label{eq:Serre}
 \Ext^*_G(E,F)\cong \Ext^*_G\bigl(F, E(Axy)\bigr)^\vee[-2]\quad\text{for } E,F\in \Coh_G(V) \text{ with } \dim E=0\text{ or } \dim F =0\,.
\end{equation}
Since $V$ is affine, we can compute $\Ext^i_G$ for arbitrary $E,F\in \Coh_G(V)$ as
\begin{equation}\label{eq:Extres}
 \Ext^i_G(E,F)\cong \Gamma\Bigl(V, \cH^i\bigl(\Hom(K^\bullet,F)  \bigr) \Bigr)^G
\end{equation}
where $K^\bullet$ is an equivariant locally free resolution of $E$.

\subsection{Extension Groups of Equivariant Skyscraper Sheaves}\label{subsect:skyExt}

In this subsection, we compute the extension groups between skyscraper sheaves of the origin twisted by irreducible representations.

\begin{lemma}\label{lem:skyExt}
Let $U,W$ be irreducible $G$-representations. Then
 \begin{align*}
        \Hom_G(\reg_0\otimes U, \reg_0\otimes W)&\cong \Hom_G(U,W)\cong \begin{cases}
                                                       \IK \quad\text{if $U\cong W$,}\\
                                                      0 \quad\text{else.}
                                                      \end{cases}
\\
        \Ext^1_G(\reg_0\otimes U, \reg_0\otimes W)&\cong \Hom_G\bigl(U\otimes \rho(x), W\bigr)
\\
  \Ext^2_G(\reg_0\otimes U, \reg_0\otimes W)&\cong \Hom_G\bigl(U\otimes \chi(Axy),W\bigr)\cong \begin{cases}
                                                       \IK \quad\text{if $U\otimes \chi(Axy)\cong W$,}\\
                                                      0 \quad\text{else.}
                                                      \end{cases}
       \end{align*}
The second formula can be rephrased as $\Ext^1_G(\reg_0\otimes U, \reg_0\otimes W)\cong\IK^r$, where $r$ is the multiplicity of $W$ as a direct summand of $U\otimes \rho(x)$.
       \end{lemma}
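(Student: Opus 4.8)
The plan is to compute the $\Ext$-groups via formula \eqref{eq:Extres}, using the $G$-equivariant Koszul resolution of the skyscraper sheaf. First I would note that the origin $0=\Van(x,y)\subset V$ is cut out by the regular sequence $(x,y)$, and that $\langle x,y\rangle\cong \rho(x)\cong V^\vee$ as $G$-representations, with $\wedge^2\rho(x)=\det\rho(x)=\chi(Axy)$. Tracking the linearisations, the Koszul complex therefore gives the equivariant locally free resolution
\[
0\to \reg_V\otimes\chi(Axy)\to \reg_V\otimes\rho(x)\to \reg_V\to \reg_0\to 0 \,,
\]
and tensoring with $U$ produces an equivariant locally free resolution $K^\bullet$ of $\reg_0\otimes U$ with terms $\reg_V\otimes U$, $\reg_V\otimes\rho(x)\otimes U$, $\reg_V\otimes\chi(Axy)\otimes U$ (this is the same twist $\chi(Axy)$ that appears in $\omega_V\cong\reg_V(Axy)$, which will make the $\Ext^2$-formula consistent with equivariant Serre duality \eqref{eq:Serre}).

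Next I would apply $\Hom(-,\reg_0\otimes W)$ term by term. For a $G$-representation $A$ one has $\Hom(\reg_V\otimes A,\reg_0\otimes W)\cong \reg_0\otimes A^\vee\otimes W$, and the differentials of $K^\bullet$ are, up to the extra tensor factors, multiplication by the coordinate functions $x,y$, which act as $0$ on $\reg_0$. Hence the complex $\Hom(K^\bullet,\reg_0\otimes W)$ has vanishing differential, so by \eqref{eq:Extres} the group $\Ext^i_G(\reg_0\otimes U,\reg_0\otimes W)$ is simply the $G$-invariant part of the global sections of its $i$-th term. Using $(A^\vee\otimes W)^G=\Hom_G(A,W)$ this yields
\[
\Ext^0_G=\Hom_G(U,W)\,,\quad \Ext^1_G=\Hom_G\bigl(U\otimes\rho(x),W\bigr)\,,\quad \Ext^2_G=\Hom_G\bigl(U\otimes\chi(Axy),W\bigr)\,,
\]
and $\Ext^i_G=0$ for $i\ge 3$.

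Finally, for $U,W\in\irr(G)$ one invokes Schur's Lemma and complete reducibility: $\Hom_G(U,W)$ is $\IK$ or $0$ according as $U\cong W$; the representation $U\otimes\chi(Axy)$ is again irreducible (tensoring with a character), so $\Hom_G(U\otimes\chi(Axy),W)$ is $\IK$ or $0$ according as $U\otimes\chi(Axy)\cong W$; and $\dim_\IK\Hom_G(U\otimes\rho(x),W)$ equals the multiplicity of $W$ in the decomposition of $U\otimes\rho(x)$ into irreducibles, which is the asserted rephrasing. The only point requiring care is the bookkeeping of the $G$-linearisation on the Koszul resolution, in particular identifying the top term's twist with $\chi(Axy)=\det\rho(x)$; once one observes that the Koszul differentials annihilate $\reg_0$, the rest is formal.
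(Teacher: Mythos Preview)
Your proposal is correct and follows essentially the same approach as the paper: write down the $G$-equivariant Koszul resolution of $\reg_0$, tensor by $U$, apply $\Hom(-,\reg_0\otimes W)$, observe that the differentials vanish on $\reg_0$, and take $G$-invariants via \eqref{eq:Extres}. Your added remarks on the identification $\wedge^2\rho(x)\cong\chi(Axy)$ and the final Schur's Lemma step are a bit more explicit than the paper's write-up, but the argument is the same.
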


\begin{proof}
 The standard Koszul resolution of $\reg_0$ reads $G$-equivariantly
\[
0\to \reg_V(Axy)\xrightarrow{\binom{y}{-x}} \reg_V\otimes \rho(x)\xrightarrow{(x,y)}\reg_V\to \reg_0\to 0\,.
\]
Applying $\Hom(\_\otimes U,\reg_0\otimes W)$ gives the complex with vanishing differentials
\[
\reg_0\otimes \Hom(U,V)\xrightarrow{ \,\,0\,\,} \reg_0\otimes \Hom\bigl(U\otimes \rho(x),V\bigr) \xrightarrow{ \,\,0\,\,} \reg_0\otimes \Hom\bigl(U\otimes \chi(Axy),V\bigr)\,.
\]
Taking global sections and then invariants by \eqref{eq:Extres} gives the result.
\end{proof}

In the following lemma, which describes the tensor products of $G$-representations, for an arbitrary monomial $x^ay^b\in \IK[x,y]$, we interpret $\rho(x^ay^b)$ as $\rho(x^ay^b)\cong\Ind_H^G\chi_H(x^ay^b)$.

If $a\neq b$, this agrees with $\rho(x^ay^b)=\langle x^ay^b, x^by^a\rangle \subset \IK[x,y]$ as defined in \autoref{subsect:irrep}.
For $a=b$, however, we have $\rho(x^ay^a)\cong \chi((xy)^a)\oplus \chi(A(xy)^a)$.

\begin{lemma}\label{lem:tensor}
Let $x^ay^b\in \cB$ with $a>b$, and $c\in \{0,\dots, d-1\}$. Then
\begin{align}\label{eq:chirho}
 \rho(x^ay^b)\otimes \chi((xy)^c)&\cong \rho(x^ay^b)\otimes \chi(A(xy)^c)\cong  \rho(x^{a+c}y^{b+c})\,,\\
\label{eq:rhorho}
 \rho(x^ay^b)\otimes \rho(x)&\cong \rho(x^{a+1}y^{b})\oplus \rho(x^ay^{b+1}) \,.
\end{align}
In the case that $e$ is even, we also have
\begin{equation}\label{eq:chiNrho}
 \rho(x^ay^b)\otimes \chi(N^+(xy)^c)\cong \rho(x^{a}y^b)\otimes \chi(N^-(xy)^c)\cong  \rho(x^{a+n+c}y^{b+c})\,.
\end{equation}
\end{lemma}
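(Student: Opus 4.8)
The plan is to reduce all four isomorphisms to the projection formula \eqref{eq:Indprojformula} for induction from the abelian normal subgroup $H$, together with the trivial observation that the tensor product of two one-dimensional $H$-characters simply adds exponents, $\chi_H(x^ay^b)\otimes\chi_H(x^{a'}y^{b'})\cong\chi_H(x^{a+a'}y^{b+b'})$, because $H$ acts diagonally on monomials. By \eqref{eq:rhoind} (which is also the convention defining $\rho$ of an arbitrary monomial), $\rho(x^ay^b)\cong\Ind_H^G\chi_H(x^ay^b)$, so for any $G$-representation $F$ one has $\rho(x^ay^b)\otimes F\cong\Ind_H^G\bigl(\chi_H(x^ay^b)\otimes\Res_H^G F\bigr)$, and the whole computation is pushed down to $H$.

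For \eqref{eq:chirho}, recall that $H$ acts trivially on $A=x^m-y^m$ (see \eqref{eq:rhoA}), so $\Res_H^G\chi((xy)^c)\cong\Res_H^G\chi(A(xy)^c)\cong\chi_H(x^cy^c)$; then $\chi_H(x^ay^b)\otimes\chi_H(x^cy^c)\cong\chi_H(x^{a+c}y^{b+c})$, and applying $\Ind_H^G$ gives the claim. For \eqref{eq:chiNrho} I must identify $\Res_H^G\chi(N^\pm(xy)^c)$. Writing $N^\pm(xy)^c=x^{n+c}y^c\pm x^cy^{n+c}$ (see \eqref{eq:Ndef}) and letting $D(p,q)\in H$ act, the two monomials acquire the scalars $\zeta^{(n+c)p+cq}$ and $\zeta^{cp+(n+c)q}$, which coincide precisely when $n(p-q)\equiv0\mod m$; since $e$ is even and $p+q\equiv0\mod e$ (see \eqref{eq:Hdefzeta}), $p-q$ is even and $m=2n$, so this holds. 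Hence $N^+$ and $N^-$ are $H$-semi-invariant with $\Res_H^G\chi(N^\pm(xy)^c)\cong\chi_H(x^{n+c}y^c)$, and adding exponents and inducing back yields \eqref{eq:chiNrho}.

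For \eqref{eq:rhorho} I would use that $\rho(x)=\langle x,y\rangle$ restricts to $H$ as $\chi_H(x)\oplus\chi_H(y)$. Since $\Ind_H^G$ is additive, the projection formula gives $\rho(x^ay^b)\otimes\rho(x)\cong\Ind_H^G(\chi_H(x^ay^b)\otimes\chi_H(x))\oplus\Ind_H^G(\chi_H(x^ay^b)\otimes\chi_H(y))\cong\rho(x^{a+1}y^b)\oplus\rho(x^ay^{b+1})$.

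I do not expect a genuine obstacle; this is essentially bookkeeping. The one point needing a moment's care is the claim that $N^+$ and $N^-$ carry the same $H$-character $\chi_H(x^n)$, which is exactly where the hypothesis ``$e$ even'' enters, via the parity argument above. An alternative would be to compute the $\tau$-twisted $H$-characters directly, as in the proofs of \autoref{lem:conjcondition} and \autoref{lem:camparerhos}, but routing everything through $\Ind_H^G$ is shorter and makes the pattern transparent; one should only remark that the monomial labels on the right-hand sides are to be read through the $\Ind_H^G$-convention, so no renormalisation of exponents into $\cB$ is required.
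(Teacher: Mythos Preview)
Your proposal is correct and follows essentially the same route as the paper's own proof: reduce everything via the projection formula \eqref{eq:Indprojformula} to tensor products of $H$-characters, using $\Res_H^G\chi(A(xy)^c)\cong\chi_H((xy)^c)$, $\Res_H^G\chi(N^\pm(xy)^c)\cong\chi_H(x^{n+c}y^c)$, and $\Res_H^G\rho(x)\cong\chi_H(x)\oplus\chi_H(y)$, then induce back. Your parity argument explaining why $N^\pm$ are $H$-semi-invariant is in fact slightly more detailed than what the paper writes (the paper simply asserts the restriction), but the overall strategy is identical.
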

Before proving \autoref{lem:tensor}, let us make some remarks.
Note that it can happen that $\rho(x^ay^b)$ is irreducible, but $\rho(x^{a+1}y^{b})$ or $\rho(x^ay^{b+1})$ is not. In this case,
the right side of \eqref{eq:rhorho} decomposes further. This is the case if $a=b+1$, where \eqref{eq:rhorho} gives
\begin{equation}\label{eq:xysplit}
 \rho(x^{b+1}y^{b})\otimes \rho(x)\cong \rho(x^{b+2}y^{b})\oplus \rho(x^{b+1}y^{b+1})\cong \rho(x^{b+2}y^{b})\oplus \chi(x^{b+1}y^{b+1})\oplus \chi(Ax^{b+1}y^{b+1})  \,.
\end{equation}
In the case that $e$ is even, this also happens for $a=n+c$, $b=c+1$ where \eqref{eq:rhorho} gives
\begin{equation}\label{eq:Nsplit}
\begin{split}
 \rho(x^{n+c}y^{c+1})\otimes \rho(x)&\cong \rho(x^{n+c+1}y^{c+1})\oplus \rho(x^{n+c}y^{c+2})\\&\cong \chi(N^+(xy)^{c+1})\oplus \chi(N^-(xy)^{c+1})\oplus \rho(x^{n+c}y^{c+2})  \,;
 \end{split}
\end{equation}
compare \eqref{eq:Ndef}.
One can easily generalise \eqref{eq:rhorho} to a formula for arbitrary $\rho(x^ay^b)\otimes \rho(x^{a'}y^{b'})$, but we will only need the special case
$\rho(x^ay^b)\otimes \rho(x)$ in the following.

\begin{proof}[Proof of \autoref{lem:tensor}]
For \eqref{eq:chirho}, we compute
\begin{align*}
 \rho(x^ay^b)\otimes \chi((xy)^c)\cong \bigl(\Ind_H^G \chi_H(x^ay^b) \bigr)\otimes \chi((xy)^c)&\cong \Ind_H^G\bigr(\chi_H(x^ay^b)\otimes\Res_G^H\chi((xy)^c)\bigr)\\& \cong
\Ind_H^G\bigr(\chi_H(x^ay^b)\otimes\chi_H((xy)^c)\bigr)\\
&\cong \Ind_H^G\bigr(\chi_H(x^{a+c}y^{b+c})\bigr)\\
&\cong \rho(x^{a+c}y^{b+c})\,.
\end{align*}
The proof for $\chi((xy)^c)$ replaced by $\chi(A(xy)^c)$ is the same, since
\[\Res \chi(A(xy)^c)\cong \Res \chi((xy)^c)\cong \chi_H((xy)^c)\,.\]
The proof of \eqref{eq:chiNrho} is still the same using that
\[\Res \chi(N^+(xy)^c)\cong \Res \chi(N^-(xy)^c)\cong \chi_H(x^{n+c}y^c)\,.\]
Also the proof of \eqref{eq:rhorho} is similar:
\begin{align*}
 \rho(x^ay^b)\otimes \rho(x)\cong \Ind\bigr(\chi_H(x^ay^b)\otimes\Res\rho(x)\bigr) & \cong \Ind\Bigr(\chi_H(x^ay^b)\otimes\bigl(\chi_H(x)\oplus \chi_H(y)\bigr)\Bigr)\\
 &\cong \bigl(\Ind\chi_H(x^{a+1}y^b)\bigr)\oplus\bigl(\Ind\chi_H(x^{a}y^{b+1})\bigr)\\
&\cong \rho(x^{a+1}y^{b})\oplus \rho(x^ay^{b+1})\,.\qedhere
\end{align*}
\end{proof}

\begin{prop}\label{prop:skyExt}
For all $U,W\in \irr(G)$ and $i=1,2$, we have
\[\dim\Ext^i_G(\reg_0\otimes U, \reg_0\otimes W)\le 1\,.\]
In other words, $\Ext^i_G(\reg_0\otimes U, \reg_0\otimes W)$ is either vanishing or one-dimensional.
The only pairs, up to isomorphism, $(U,W)$ such that $\Ext^1_G(\reg_0\otimes U, \reg_0\otimes W)\cong \IK$ are:
\begin{itemize}
 \item $(\chi((xy)^c), \rho(x^{c+1}y^c)\bigr)$, $(\chi(A(xy)^c), \rho(x^{c+1}y^c)\bigr)$, $\bigl(\rho(x^{c+1}y^c), \chi((xy)^{c+1})\bigr)$,\\ and $\bigl(\rho(x^{c+1}y^c), \chi(A(xy)^{c+1})\bigr)$ for $c=0,\dots, d-1$ (note that $\chi((xy)^d)\cong \chi(1)$),
 \item $\bigl(\rho(x^{a}y^b), \rho(x^{a}y^{b+1})\bigr)$ assuming that both, $\rho(x^{a}y^{b})$ and $\rho(x^{a}y^{b+1})$ are irreducible (note, for example, that $\rho(x^ay^a)$ splits into $\chi((xy^a))$ and $\chi(A(xy^a))$, so we are back in the case of the previous bullet point),
  \item $\bigl(\rho(x^{a}y^b), \rho(x^{a+1}y^b)\bigr)$ assuming that both, $\rho(x^{a}y^{b})$ and $\rho(x^{a+1}y^{b})$ are irreducible.
  \item In the case that $e$ is even, we have the additional pairs: $\bigl(\chi(N^+(xy)^c), \rho(x^{n+c}y^{c+1})\bigr)$, $\bigl(\chi(N^-(xy)^c), \rho(x^{n+c}y^{c+1})\bigr)$, $\bigl(\rho(x^{n+c}y^{c+1}),\chi(N^+(xy)^{c+1})\bigr)$,\\ $\bigl(\rho(x^{n+c}y^{c+1}),\chi(N^-(xy)^{c+1})\bigr)$ for $c=0,\dots, d-1$.
\end{itemize}
The only pairs, up to isomorphism, $(U,W)$ such that $\Ext^2_G(\reg_0\otimes U, \reg_0\otimes W)\cong \IK$ are:
\begin{itemize}
\item $\bigl(\chi((xy)^c), \chi(A(xy)^{c+1})\bigr)$ and $\bigl(\chi(A(xy)^c), \chi((xy)^{c+1})\bigr)$ for $c=0,\dots, d-1$,
\item $\bigl(\rho(x^{a}y^b), \rho(x^{a+1}y^{b+1})\bigr)$ if $\rho(x^ay^b)$ is irreducible (then, $\rho(x^{a+1}y^{b+1})\cong \rho(x^ay^b)\otimes \chi(Axy)$ is automatically irreducible too).
\item In the case that $e$ is even, we have the additional pairs: $\bigl(\chi(N^+(xy)^c), \chi(N^-(xy)^{c+1})\bigr)$ and $\bigl(\chi(N^-(xy)^c), \chi(N^+(xy)^{c+1})\bigr)$ for $c=0,\dots, d-1$.
\end{itemize}
\end{prop}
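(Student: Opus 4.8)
The plan is to reduce the whole statement to \autoref{lem:skyExt}. That lemma identifies $\Ext^1_G(\reg_0\otimes U,\reg_0\otimes W)$ with the isotypic multiplicity of $W$ in $U\otimes\rho(x)$, and $\Ext^2_G(\reg_0\otimes U,\reg_0\otimes W)$ with the multiplicity of $W$ in $U\otimes\chi(Axy)$ (both tensor products are semisimple, as $G$ is finite and the base field has characteristic zero). So everything comes down to decomposing $U\otimes\rho(x)$ and $U\otimes\chi(Axy)$ into irreducibles for each $U\in\irr(G)$ by means of \autoref{lem:tensor}, observing that no constituent occurs with multiplicity $\ge2$, and rewriting each constituent in terms of the standard representatives of $\irr(G)$ from \autoref{subsect:irrep} and \autoref{subsect:irreven}.

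The $\Ext^2$ computation is short. As $\chi(Axy)$ is one-dimensional, $U\otimes\chi(Axy)$ is irreducible whenever $U$ is; hence the multiplicity is automatically $0$ or $1$, equal to $1$ exactly when $W\cong U\otimes\chi(Axy)$. One then just evaluates $U\otimes\chi(Axy)$: for $U=\rho(x^ay^b)$ irreducible it is $\rho(x^{a+1}y^{b+1})$ by \eqref{eq:chirho} (which is again irreducible, being a character twist of $U$), giving the $\rho$-pairs; for $U=\chi((xy)^c)$ resp.\ $\chi(A(xy)^c)$ it is $\chi(A(xy)^{c+1})$ resp.\ $\chi((xy)^{c+1})$, using that $\chi(A)$ has order two; and in the even case $U=\chi(N^{\pm}(xy)^c)$ gives $\chi(N^{\mp}(xy)^{c+1})$ by \eqref{eq:chiNArel}. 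These are precisely the pairs listed.

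For $\Ext^1$ I would treat the three kinds of irreducibles in turn. If $U=\chi((xy)^c)$ or $\chi(A(xy)^c)$, then $U\otimes\rho(x)\cong\rho(x^{c+1}y^c)$ by \eqref{eq:chirho}; if $e$ is even and $U=\chi(N^{\pm}(xy)^c)$, then applying \eqref{eq:chiNrho} with $a=1$, $b=0$ and rewriting with \autoref{lem:camparerhos} and \eqref{eq:evenrhocompare} yields an irreducible of the form $\rho(x^{n+c}y^{c+1})$. Whenever the $\rho$ obtained this way is in fact reducible, it splits (by \autoref{lem:conjcondition}) into two \emph{distinct} characters, and one lands among the character pairs already accounted for. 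If $U=\rho(x^ay^b)$ is a two-dimensional irreducible, then \eqref{eq:rhorho} gives $U\otimes\rho(x)\cong\rho(x^{a+1}y^b)\oplus\rho(x^ay^{b+1})$; the crucial observation is that these two summands are non-isomorphic, since a short computation with the associated $H$-characters (as in the proof of \autoref{lem:conjcondition}) shows $\rho(x^{a+1}y^b)\cong\rho(x^ay^{b+1})$ would force condition \eqref{eq:conjcondition} on $(a,b)$, which by \autoref{lem:conjcondition} means $\rho(x^ay^b)$ is reducible -- a contradiction (the one remaining degenerate possibility would force $G$ to be abelian, in which case there is no two-dimensional irreducible $U$ to begin with). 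When one of the two summands is itself reducible -- which by \eqref{eq:conjcondition} happens only for $a=b+1$, cf.\ \eqref{eq:xysplit}, or, in the even case, for $(a,b)=(n+c,c+1)$, cf.\ \eqref{eq:Nsplit} -- the same $H$-character computations show that the three resulting constituents are pairwise distinct. Thus in all cases the constituents of $U\otimes\rho(x)$ are pairwise non-isomorphic, so $\dim\Ext^1_G(\reg_0\otimes U,\reg_0\otimes W)\le1$, and matching the constituents to the standard representatives via \autoref{lem:camparerhos} and \eqref{eq:evenrhocompare} reproduces the stated list.

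I expect the genuine difficulty to be purely combinatorial bookkeeping rather than conceptual: tracking, at each step, which $\rho(x^{\alpha}y^{\beta})$ that arises is irreducible and which one splits (the dividing line being exactly condition \eqref{eq:conjcondition}), and then reducing each output to the canonical representative of the classification. In particular, the lists in the proposition are written in compressed form, with the tacit convention that a $\rho$ occurring there is to be replaced by its two character constituents when it is reducible; one must check that these replacements are consistent with the character pairs already listed, and a few of the smallest groups (roughly $m\le4$, where several of the potential splittings collide) are best verified directly against the stated lists. The representation-theoretic content, by contrast, is minimal: a character twist of an irreducible is irreducible, and the two halves of \eqref{eq:rhorho} are distinct precisely when the input is irreducible.
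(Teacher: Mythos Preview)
Your proposal is correct and follows essentially the same approach as the paper: reduce via \autoref{lem:skyExt} to decomposing $U\otimes\rho(x)$ and $U\otimes\chi(Axy)$, then apply \autoref{lem:tensor} (and \autoref{lem:camparerhos}, \eqref{eq:evenrhocompare}) to enumerate the constituents case by case. The paper only works out the $U=\chi(N^\pm(xy)^c)$ case explicitly and leaves the rest to the reader, whereas you sketch all cases and make the multiplicity~$\le 1$ argument more explicit; but the underlying strategy is identical.
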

Note that the $\Hom_G(\reg_0\otimes U, \reg_0\otimes W)\cong \Ext^0_G(\reg_0\otimes U, \reg_0\otimes W)$ are already fully described by the first formula of \autoref{lem:skyExt}.

Stated like this, the content of the proposition might be a bit difficult to grasp. We give a diagrammatic depiction via McKay quivers in  \autoref{app:even} and \autoref{app:odd}, separating between the cases that $e$ is even and odd.

\begin{proof}
This follows from \autoref{lem:skyExt} and \autoref{lem:tensor} (together with its specifications \eqref{eq:xysplit} and \eqref{eq:Nsplit}) by going through all possible cases.

We will present the computation of $\Ext^i\bigl(\reg_0(N^\pm(xy)^c), \_\bigr)$, because in this case there is \autoref{lem:camparerhos} as a little extra ingredient, and leave the other cases to the reader.
By the third formula of \autoref{lem:skyExt} and \eqref{eq:chiNArel}, the only non-vanishing $\Ext^2_G\bigl(\reg_0(N^\pm(xy)^c), \reg_0\otimes W\bigr)$ is for \[W\cong \chi(N^\pm(xy)^c)\otimes \chi(Axy)\cong \chi(N^\mp(xy)^{c+1})\,,\] and it is one-dimensional, as asserted.
By \eqref{eq:chiNrho}, we have
\[
\rho(x)\otimes \chi(N^\pm (xy)^c)\cong \rho(x^{n+c+1}y^c)\cong \rho(x^{n+c}y^{c+1}) \,.
\]
where the second isomorphism is \autoref{lem:camparerhos} for $\lambda=\frac e2$, or \eqref{eq:evenrhocompare} with $x^{n+c+1}y^c=x^{n+1}(xy)^c$. Hence, by \autoref{lem:skyExt}, the only non-vanishing
$\Ext^1_G\bigl(\reg_0(N^\pm(xy)^c), \reg_0\otimes W\bigr)$ is
\[
 \Ext^1_G\bigl(\reg_0(N^\pm(xy)^c), \reg_0\otimes \rho(x^{n+c}y^{c+1})\bigr)\cong \IK\,.\qedhere
\]
\end{proof}

\begin{remark}
 All the $\Ext^1_G$-classes occurring in \autoref{prop:skyExt} can be explained by ideals inside $G$-invariant subschemes supported at the origin $0\in V$. For example, the non-vanishing class in $\Ext^1_G\bigl(\reg_0\otimes \rho(x^ay^b), \reg_0\otimes \rho(x^ay^{b+1})\bigr)$ corresponds to
 \[
  0\to \reg_0\otimes \rho(x^ay^{b+1})\to  \frac{(x^ay^b, x^by^a)}{(x^{a+1}y^b, x^by^{a+1})+\fm^{a+b+2}}  \to \reg_0\otimes \rho(x^ay^b)\to 0
 \]
where $\fm=(x,y)$ and the embedding of $\reg_0\otimes \rho(x^ay^{b+1})$ has image
$\frac{(x^ay^{b+1}, x^{b+1}y^a)}{\fm^{a+b+2}}$. A generalisation of the $b=0$ case of this are the sheaves $F(x^a)$ occurring later in \autoref{subsect:exceptionaleven}.

As another example, for $e$ even, we have $\Ext^1_G\bigl(\reg_0(N^+), \reg_0\otimes \rho(x^{n-1}y)\bigr)\cong \IK$. Using the isomorphism  $\rho(x^{n-1}y)\cong \rho(x^{n+1})$, this corresponds to
 \[
  0\to \reg_0\otimes \rho(x^{n+1})\to  \frac{(N^+, x^{n+1}, y^{n+1})}{(x^{n}y, xy^{n})+\fm^{n+2}}  \to \reg_0(N^+)\to 0\,.
 \]
\end{remark}

\begin{cor}\label{cor:skyexc}
The object $\reg_0\otimes W\in \D_G(V)$ is exceptional for every $W\in \irr(G)$.
\end{cor}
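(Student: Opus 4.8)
Recall that an object $E$ of a triangulated category is \emph{exceptional} if $\Hom(E,E)\cong\IK$ and $\Hom(E,E[i])=0$ for all $i\neq 0$. The plan is to compute all groups $\Ext^i_G(\reg_0\otimes W,\reg_0\otimes W)\cong\Hom_{\D_G(V)}\bigl(\reg_0\otimes W,(\reg_0\otimes W)[i]\bigr)$ directly from \autoref{lem:skyExt} and \autoref{prop:skyExt}. Since $\reg_0\otimes W$ is a sheaf, placed in cohomological degree $0$, and since it admits a two-term $G$-equivariant locally free resolution (the equivariant Koszul resolution of $\reg_0$ used in the proof of \autoref{lem:skyExt}), we have $\Ext^i_G(\reg_0\otimes W,\reg_0\otimes W)=0$ for $i<0$ and for $i\ge 3$. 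Hence it suffices to treat $i=0,1,2$.

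For $i=0$, the first formula of \autoref{lem:skyExt} together with Schur's lemma gives $\Hom_G(\reg_0\otimes W,\reg_0\otimes W)\cong\Hom_G(W,W)\cong\IK$, which is the first requirement for $\reg_0\otimes W$ to be exceptional. For $i=1$ and $i=2$ I would simply invoke \autoref{prop:skyExt}: it lists, up to isomorphism, \emph{all} pairs $(U,W')$ of irreducible $G$-representations for which $\Ext^i_G(\reg_0\otimes U,\reg_0\otimes W')\neq 0$, and one checks that in none of these pairs the two entries are isomorphic. Consequently $\Ext^1_G(\reg_0\otimes W,\reg_0\otimes W)=0=\Ext^2_G(\reg_0\otimes W,\reg_0\otimes W)$, so $\reg_0\otimes W$ is exceptional.

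If one prefers to argue from \autoref{lem:skyExt} and \autoref{lem:tensor} without appealing to the full case list, the two points to verify are that $W$ is not a direct summand of $W\otimes\rho(x)$ and that $W\otimes\chi(Axy)\not\cong W$. The first follows from \eqref{eq:rhorho} together with its degenerations \eqref{eq:xysplit} and \eqref{eq:Nsplit} and the character formula \eqref{eq:chiH} for the one-dimensional pieces: tensoring an irreducible with $\rho(x)$ only produces irreducibles attached to strictly ``larger'' monomials, so $W$ itself cannot reappear as a summand, and hence $\Ext^1_G(\reg_0\otimes W,\reg_0\otimes W)\cong\Hom_G(W\otimes\rho(x),W)=0$ by \autoref{lem:skyExt}. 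The second follows from \eqref{eq:chirho} for two-dimensional $W=\rho(x^ay^b)$, where $W\otimes\chi(Axy)\cong\rho(x^{a+1}y^{b+1})\not\cong\rho(x^ay^b)$, and is immediate for one-dimensional $W$ since $\chi(Axy)$ is a nontrivial character (for instance it takes the value $-1$ on the reflection $\tau$), so $\Ext^2_G(\reg_0\otimes W,\reg_0\otimes W)\cong\Hom_G(W\otimes\chi(Axy),W)=0$. I expect the $\Ext^1$-vanishing to be the only slightly delicate step: one has to make sure that none of the possible splittings of $W\otimes\rho(x)$ into irreducibles — including the degenerate ones in which a two-dimensional representation breaks into two characters — accidentally reproduces $W$; but this is precisely what the case analysis in \autoref{prop:skyExt} already rules out.
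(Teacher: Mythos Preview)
Your proposal is correct and follows essentially the same approach as the paper: use \autoref{lem:skyExt} for $\Hom$ and then observe that no pair $(W,W)$ appears in the lists of non-vanishing $\Ext^1_G$ and $\Ext^2_G$ in \autoref{prop:skyExt}. Your additional paragraph giving a direct argument from \autoref{lem:skyExt} and \autoref{lem:tensor} is a nice supplement but not needed, and note that the Koszul resolution has length two (three terms), not two terms.
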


\begin{proof}
By \autoref{lem:skyExt}, we have $\Hom_G(\reg_0\otimes W, \reg_0\otimes W)\cong \IC$ for every $W\in \irr(G)$. Furthermore, no pair of the form $(W,W)$ occurs in the lists of non-vanish $\Ext^1_G$ and $\Ext^2_G$ in \autoref{prop:skyExt}.
\end{proof}

\section{The Semi-Orthogonal Decomposition}\label{sect:sod}

In this section we construct a semi-orthogonal decomposition of $\D_G(V)$ for $G=G(m,e,2)$ and $V=\IK^2$ with pieces as predicted by \autoref{conj:PvdB}. In other words, we prove \autoref{thm:main}.

In \autoref{subsect:gamma}, we provide the pieces whose construction is independent of the parity of $e$.
We then first construct the remaining pieces for $e$ even - those corresponding to reflections in \autoref{subsect:taueven}, and
the exceptional sequence corresponding to group elements which fix only the origin in
\autoref{subsect:exceptionaleven}. Afterwards, the same is done for $e$ odd in \autoref{subsect:tauodd} and \autoref{subsect:exceptionalodd}.

\subsection{The Components Associated to $\id$ and $\gamma_y$}\label{subsect:gamma}

We will now construct the pieces of the semi-orthogonal decomposition corresponding to the $d-1$ conjugacy classes represented by $\gamma_y^c$ for $c=1,\dots,d-1$, using the general results of \autoref{subsect:reflectionff}.

Recall that the $\gamma_y^c$ all have the same fixed point loci $V^{\gamma_y^c}=\IA^1\times \{0\}$, the same centralisers $\CC(\gamma_y^c)=H$, and hence the same associated quotients
\[V^{(\gamma_y^c)}:=V^{\gamma_y^c}/\CC(\gamma_y^c)\cong \IA^1\,.\]
 We will just write $Z_\gamma$ instead of $Z_{\gamma_y}$ for $G\cdot V^{\gamma_y}\subset V$ to ease the notation, and because $Z_{\gamma_y}=Z_{\gamma_x}$. Note that its vanishing ideal is
\begin{equation}\label{eq:IZgamma}
I(Z_\gamma)=(xy)\subset \IK[x,y]\,.
\end{equation}
The following shows that also the last part of \autoref{ass:g} is satisfied for $g=\gamma_y$.

\begin{lemma}\label{prop:gammaquot}
The canonical morphism $\nu_\gamma\colon V^{(\gamma_y)}\to Z_{\gamma}/G$ is an isomorphism.
\end{lemma}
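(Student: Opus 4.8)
The plan is to verify directly that the ring homomorphism $\nu_\gamma^*$ underlying $\nu_\gamma$ is an isomorphism; since both $V^{(\gamma_y)}$ and $Z_\gamma/G$ are affine, this suffices. By the general construction in \autoref{subsect:reflectionff}, $\nu_\gamma$ is the morphism on $\CC(\gamma_y)$-quotients induced by $\pi_\gamma\circ\alpha_\gamma$, and since $\CC(\gamma_y)=H$ this means $\nu_\gamma^*$ is the restriction map $\reg(Z_\gamma)^G\to \reg(V^{\gamma_y})^{H}$, $f\mapsto f|_{V^{\gamma_y}}$. By \eqref{eq:IZgamma} we have $\reg(Z_\gamma)=\IK[x,y]/(xy)$, while $\reg(V^{\gamma_y})=\IK[x]$ with $\alpha_\gamma^*$ the quotient map ``set $y=0$''.

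First I would identify the source $\reg(Z_\gamma)^G$. Since $\Char\IK=0$, the Reynolds (averaging) operator shows that for the surjection of $G$-algebras $\reg(V)\twoheadrightarrow\reg(V)/(xy)=\reg(Z_\gamma)$ — the ideal $(xy)$ being $G$-stable because $Z_\gamma$ is $G$-invariant — the induced map $\reg(V)^G\twoheadrightarrow \reg(Z_\gamma)^G$ is again surjective. As $\reg(V)^G=\IK[(xy)^d,\,x^m+y^m]$ and $(xy)^d$ maps to $0$ in $\IK[x,y]/(xy)$, it follows that $\reg(Z_\gamma)^G=\IK[\,\overline{x^m+y^m}\,]$ is a polynomial ring on the single generator $s:=\overline{x^m+y^m}$. (Alternatively, one can argue by direct monomial bookkeeping: the $H$-invariant monomials in $\IK[x,y]/(xy)$ are $1$ and $x^{mk},y^{mk}$ for $k\ge1$, and taking the remaining $\langle\tau\rangle$-invariants leaves $1$ together with the elements $x^{mk}+y^{mk}=(x^m+y^m)^k$.)

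Next I would compute the target $\reg(V^{\gamma_y})^{H}=\IK[x]^{H}=\IK[x^m]$, using that $H=\CC(\gamma_y)$ acts on the line $V^{\gamma_y}\cong\IA^1$ through its image $\mu_m\le\Aut(\IA^1)$ — with $\tzeta$ acting by scaling and $\gamma_y$ acting trivially — which is exactly \eqref{eq:reginvagamma}. Finally, $\nu_\gamma^*$ sends $s=\overline{x^m+y^m}$ to $x^m$, hence is the evident isomorphism $\IK[s]\xrightarrow{\sim}\IK[x^m]$ of polynomial rings; therefore $\nu_\gamma$ is an isomorphism.

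I do not expect a genuine obstacle here, since the argument is a short invariant-theoretic computation. The two points that deserve a little care are: (i) remembering that only $H=\CC(\gamma_y)$, and not all of $G$, stabilises $V^{\gamma_y}$ — the coset $H\tau$ interchanges the two coordinate axes — which is what makes $\nu_\gamma$ well defined and pins down the target as the ring of $H$-invariants; and (ii) that one must work with the reduced scheme $Z_\gamma=\Van(xy)$ rather than with a non-reduced thickening such as $\pi^{-1}(\pi(Z_\gamma))$, since it is precisely the relation $xy=0$ that kills the mixed terms $x^{mi}y^{mj}$ and makes the invariant ring come out polynomial on a single generator.
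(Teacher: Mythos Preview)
Your proof is correct and follows essentially the same approach as the paper: both identify $\nu_\gamma^*$ with the restriction map $\reg(Z_\gamma)^G\to\reg(V^{\gamma_y})^H\cong\IK[x^m]$ and check that $x^m+y^m\mapsto x^m$ hits the generator. The only minor difference is that the paper obtains injectivity from the general \autoref{lem:invares} and then checks surjectivity, whereas you instead compute $\reg(Z_\gamma)^G$ explicitly as $\IK[\overline{x^m+y^m}]$ (via the Reynolds operator or direct monomial bookkeeping) and conclude by matching generators; both arguments are equally short.
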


\begin{proof}
The morphism $\nu_\gamma\colon V^{(\gamma_y)}\to Z_{\gamma}/G$ corresponds on regular functions to the restriction map
\begin{align*}
 \reg(Z_\gamma)^G\cong \left( \frac{\IK[x,y]}{A} \right)^G&\to \reg(V^{\gamma_y})^{\CC(\gamma_x)}\cong \IK[x]^{\mu_m}\cong \IK[x^m]\\
 f&\mapsto f_{\mid V^{\gamma_y}}\,.
\end{align*}
For the description of $\reg(V^{\gamma_y})^{\CC(\gamma_x)}$, see \eqref{eq:reginvagamma}. The restriction map is injective by \autoref{lem:invares}. It is surjective because the $G$-invariant function $x^m+y^m$ is sent to the generator $x^m$.
\end{proof}

\begin{prop}\label{prop:gammasod}
The functor
\[ \Phi_{\gamma}\colon\D(V^{(\gamma_y)})\xrightarrow{\nu_{\gamma*}} \D(Z_\gamma/G)\xrightarrow{\triv} \D_G(Z_\gamma/G)\xrightarrow{\pi_\gamma^*} \D_G(Z_{\gamma})\xrightarrow{\iota_{\gamma*}}\D_G(V)\]
is fully faithful. Denoting $\cA_\gamma:=\Phi_\gamma\bigl(\D(V^{(\gamma_y)})\bigr)$, there is a semi-orthogonal decomposition
\begin{equation}\label{eq:Asod}
\Bigl\langle \cA_\gamma(xy), \cA_\gamma((xy)^2)\dots, \cA_\gamma ((xy)^{d-1}), \pi^*(\D(V/G))   \Bigr\rangle
\end{equation}
of some admissible subcategory of $\D_G(V)$
\end{prop}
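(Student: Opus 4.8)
The plan is to obtain this as a direct application of \autoref{cor:reflectionsod} to the reflection $g=\gamma_y$ of $G=G(m,e,2)$ acting on $X=V=\IK^2$; the action is faithful since $G\le\GL(V)$ and $V$ is smooth, so we are in the setting of \autoref{subsect:reflectionff}. First I would verify the three conditions of \autoref{ass:g} for $g=\gamma_y$: the fixed locus $V^{\gamma_y}=\Van(y)$ is a line in $\IA^2$, hence a connected divisor; $V^{(\gamma_y)}\cong\IA^1$ is smooth by \eqref{eq:reginvagamma}; and \autoref{prop:gammaquot} is exactly the remaining condition, that $\nu_\gamma\colon V^{(\gamma_y)}\to Z_\gamma/G$ is an isomorphism. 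Since $\Phi_\gamma$ in the statement is literally the functor $\Phi_{\gamma_y}$ of \autoref{prop:reflectionff}, full faithfulness (and the existence of left and right adjoints) follows from that proposition.

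Next I would pin down the twisting character required by \autoref{cor:reflectionsod}, namely a $G$-character $\mu$ with $\Res_G^{\langle\gamma_y\rangle}\mu\cong\chi_{\gamma_y}^\vee$. By \eqref{eq:IZgamma} the divisor $Z_\gamma=\Van(xy)$ is cut out by the $G$-semi-invariant polynomial $xy\in\IK[x,y]=\Gamma(V,\reg_V)$, so \autoref{rem:muexists} lets me take $\mu:=\chi(xy)$. As a cross-check one can argue directly: $\langle\gamma_y\rangle$ acts on the conormal line $(y)/(y^2)$ of $V^{\gamma_y}$ by the character $\chi_H(y)$, i.e.\ $\gamma_y\mapsto\gamma$ by \eqref{eq:chiH}, hence on $\reg_{V^{\gamma_y}}(V^{\gamma_y})=N_{V^{\gamma_y}/V}$ by the inverse character, so $\chi_{\gamma_y}(\gamma_y)=\gamma^{-1}$; since $\chi(xy)$ restricts to $\chi_H(xy)$ on $H$, which sends $\gamma_y\mapsto\gamma$ by \eqref{eq:chiH}, we get $\Res_G^{\langle\gamma_y\rangle}\chi(xy)=\chi_{\gamma_y}^\vee$.

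With $g=\gamma_y$, $\mu=\chi(xy)$ and $\ord(\gamma_y)=d$, and using that $V/G$ is smooth by Chevalley--Shephard--Todd, \autoref{cor:reflectionsod} then produces the semi-orthogonal decomposition
\[
\bigl\langle \cA(\chi(xy)),\ \cA(\chi(xy)^2),\ \dots,\ \cA(\chi(xy)^{d-1}),\ \pi^*\D(V/G)\bigr\rangle
\]
of an admissible subcategory of $\D_G(V)$. It remains only to unwind notation: $\cA(\chi(xy)^c)=\Phi_{\gamma_y}\bigl(\D(V^{(\gamma_y)})\bigr)\otimes\chi(xy)^c=\cA_\gamma\otimes\chi((xy)^c)=\cA_\gamma((xy)^c)$, using $\chi(xy)^c=\chi((xy)^c)$, so the displayed decomposition is precisely \eqref{eq:Asod}.

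Most of the proof is bookkeeping, the substance having been placed in \autoref{subsect:reflectionff} and \autoref{prop:gammaquot}. The only step where I expect to need care is fixing $\mu$ correctly --- making sure it is $\chi(xy)$ and not a twist of it by $\chi(A)$ or a different power --- which is why I would double-check the $\langle\gamma_y\rangle$-action on the (co)normal bundle against the explicit formula \eqref{eq:chiH} rather than relying on \autoref{rem:muexists} alone.
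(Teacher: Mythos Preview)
Your proposal is correct and follows essentially the same route as the paper's proof: verify \autoref{ass:g} for $g=\gamma_y$ via \autoref{prop:gammaquot}, invoke \autoref{prop:reflectionff} for full faithfulness, and apply \autoref{cor:reflectionsod} with $\mu=\chi(xy)$ obtained from \autoref{rem:muexists} and \eqref{eq:IZgamma}. The paper's proof is simply a two-line version of exactly this, so your additional explicit checks (of \autoref{ass:g} and the restriction of $\chi(xy)$ to $\langle\gamma_y\rangle$) are welcome but not new content.
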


There occur $d-1$ copies of $\cA_\gamma\cong \D(\IA^1)$, namely all its images under tensor product by non-trivial powers of the character $\chi(xy)$.

\begin{proof}
The fully faithfulness of $\Phi_{\gamma}$ is \autoref{prop:reflectionff}.
By \autoref{rem:muexists} and \eqref{eq:IZgamma}, we can take $\mu=\chi(xy)$ in \autoref{cor:reflectionsod}, which gives the result.
\end{proof}

Let us proof two more lemmas about the the semi-orthogonal decomposition \eqref{eq:Asod} and the functor $\Phi_\gamma$ for later use.

\begin{lemma}\label{lem:complementA}
For every $W\in \irr(G)\setminus\{\chi((xy)^c)\mid c=0,\dots d-1\}$, the twisted skyscraper sheaf $\reg_0\otimes W$ is contained in
 $\bigl\langle \cA_\gamma(xy), \cA_\gamma((xy)^2)\dots, \cA_\gamma ((xy)^{d-1}), \pi^*(\D(V/G))   \bigl\rangle^\perp$, the right-orthogonal complement of \eqref{eq:Asod} in $\D_G(V)$.
\end{lemma}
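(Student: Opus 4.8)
The plan is to verify orthogonality against each piece of \eqref{eq:Asod} in turn, since the right-orthogonal complement of a semi-orthogonal collection is the intersection of the right-orthogonal complements of its members, and since right-orthogonality to a triangulated subcategory only has to be tested on a generator. For the piece $\pi^*(\D(V/G))$ I would use the adjunction between $\pi^*$ and $\pi_*^{G}:=(\_)^{G}\circ\pi_*$: one has $\pi_*^{G}(\reg_0\otimes W)\cong(\reg_{\pi(0)}\otimes W)^{G}\cong\reg_{\pi(0)}\otimes W^{G}=0$, because $W$ is a \emph{non-trivial} irreducible $G$-representation — here it is essential that the trivial representation $\mathbf 1=\chi((xy)^0)$ is one of the excluded representations. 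Hence $\Hom_{\D_G(V)}(\pi^*F,\reg_0\otimes W[j])\cong\Hom_{\D(V/G)}(F,\pi_*^{G}(\reg_0\otimes W)[j])=0$ for all $F\in\D(V/G)$ and all $j$.

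For the pieces $\cA_\gamma((xy)^c)$, $c=1,\dots,d-1$, the first step is a reduction to one test object. The structure sheaf $\reg_{V^{(\gamma_y)}}\cong\reg_{\IA^1}$ generates $\D(V^{(\gamma_y)})$ as a triangulated subcategory (over $\IA^1$ every coherent sheaf is an iterated cone of maps between copies of the structure sheaf), and $\Phi_\gamma$ is exact and fully faithful. Since $\nu_\gamma$ is an isomorphism by \autoref{prop:gammaquot}, we have $\nu_{\gamma*}\reg_{V^{(\gamma_y)}}=\reg_{Z_\gamma/G}$, hence $\Phi_\gamma(\reg_{V^{(\gamma_y)}})=\iota_{\gamma*}\pi_\gamma^*\reg_{Z_\gamma/G}=\reg_{Z_\gamma}$ with its canonical linearisation. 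Therefore $\cA_\gamma((xy)^c)$ is the triangulated subcategory of $\D_G(V)$ generated by $\reg_{Z_\gamma}((xy)^c)$, and it suffices to show $\Ext^*_G(\reg_{Z_\gamma}((xy)^c),\reg_0\otimes W)=0$. Twisting by $\chi((xy)^c)^\vee$, this equals $\Ext^*_G(\reg_{Z_\gamma},\reg_0\otimes W')$ with $W':=W\otimes\chi((xy)^c)^\vee$, which is again an irreducible $G$-representation not isomorphic to any $\chi((xy)^a)$; in particular $W'\not\cong\mathbf 1$ and $W'\not\cong\chi(xy)$.

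The second step is the computation. I would resolve $\reg_{Z_\gamma}$ $G$-equivariantly by
\[
0\longrightarrow\reg_V\otimes\chi(xy)\xrightarrow{\,\cdot xy\,}\reg_V\longrightarrow\reg_{Z_\gamma}\longrightarrow 0,
\]
using $I(Z_\gamma)=(xy)$ from \eqref{eq:IZgamma} and that $xy$ is $G$-semi-invariant of character $\chi(xy)$. Applying $\Hom_G(\_,\reg_0\otimes W')$ and \eqref{eq:Extres}: multiplication by $xy$ is zero on $\reg_0$, so the resulting two-term complex has vanishing differential, giving $\Ext^0_G(\reg_{Z_\gamma},\reg_0\otimes W')\cong(W')^{G}$, $\Ext^1_G(\reg_{Z_\gamma},\reg_0\otimes W')\cong(W'\otimes\chi(xy)^\vee)^{G}\cong\Hom_G(\chi(xy),W')$, and $\Ext^i_G=0$ for $i\ne0,1$. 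Both surviving groups vanish because $W'$ is irreducible with $W'\not\cong\mathbf 1$ and $W'\not\cong\chi(xy)$. This finishes the proof.

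I do not anticipate a genuine obstacle: the argument is a combination of adjunction, the observation that $\cA_\gamma((xy)^c)$ has a single generator coming from $\reg_{Z_\gamma}$, and a one-line resolution computation. The only point that requires attention is the bookkeeping of $G$-linearisations — that the equivariant ideal sheaf of $Z_\gamma$ is $\reg_V\otimes\chi(xy)$ and that $\Phi_\gamma(\reg_{V^{(\gamma_y)}})$ carries precisely the canonical linearisation, so that tensoring by $\chi((xy)^c)$ produces exactly $\reg_{Z_\gamma}((xy)^c)$ — together with the elementary fact that tensoring an irreducible representation by a character keeps it irreducible and moves it off the finite list $\{\chi((xy)^a)\}$ of excluded characters.
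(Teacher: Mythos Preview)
Your proof is correct and follows essentially the same approach as the paper: reduce to generators $\reg_V$ and $\reg_{Z_\gamma}((xy)^c)$, then use the Koszul resolution $0\to\reg_V(xy)\to\reg_V\to\reg_{Z_\gamma}\to 0$ to compute the Ext groups. Your phrasing via adjunction $\pi^*\dashv\pi_*^G$ for the first piece, and your twist by $\chi((xy)^c)^\vee$ to reduce to $\reg_{Z_\gamma}$ for the second, are trivially equivalent reformulations of what the paper does.
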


\begin{proof}
Orthogonality can be checked on generators of the pieces of \eqref{eq:Asod}. Since the quotients are affine, generators of  $\cA_\gamma((xy)^c)$ and $\pi^*(\D(V/G))$ are
$\Phi_{\gamma}(\reg_{V^{(\gamma_y)}})((xy)^c)\cong \reg_{Z_\gamma}(A(xy)^c)$ and $\pi^*\reg_{V/G}\cong \reg_V$,
respectively.
Let $W\in \irr(G)$. We have
\[
\Ext^*_G(\reg_V, \reg_0\otimes W)\cong \Gamma(\reg_0\otimes W)^G[0]\cong  W^G[0]
\]
which vanishes except for $W\cong \chi(1)=\chi((xy)^0)$.
To the equivariant Koszul resolution
\begin{equation} \label{eq:gammaKoszul}
0\to \reg_V(xy)\xrightarrow{xy} \reg_V\to \reg_{Z_\gamma}\to 0\,,
\end{equation}
we apply $\Hom\bigl(\_\otimes \chi((xy)^c),\reg_0\otimes W\bigr)$. The resulting two-term complex has vanishing differential, and by \eqref{eq:Extres} we get
\[
\Ext^*_G\bigl(\reg_{Z_\gamma}((xy)^c), \reg_0\otimes W\bigr)\cong \Hom_G\bigl(\chi((xy)^c), W\bigr)[0]\oplus \Hom_G\bigl(\chi((xy)^{c-1}), W\bigr)[-1]\,.
\]
It follows that all $\Ext^*_G\bigl(\reg_{Z_\gamma}(A(xy)^c), \reg_0\otimes W\bigr)$ vanish as long as $W$ is not isomorphic to a character of the form $\chi(A(xy)^{c'})$ for some $c'$.
\end{proof}

\begin{lemma}\label{lem:Phigammaimage}
 The object $\Phi_\gamma(\reg_{\pi_\gamma(0)})$ is a sheaf (i.e.\ concentrated in degree zero) with a filtration whose graded pieces are
exactly the sheaves of the form $\reg_0\otimes W$ where $W\in \irr(G)$ with non-vanishing $\gamma_y$-invariants $W^{\gamma_y}\neq 0$.
\end{lemma}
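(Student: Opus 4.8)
The plan is to compute the object $\Phi_\gamma(\reg_{\pi_\gamma(0)})$ explicitly. First I would identify $\reg_{\pi_\gamma(0)}$, the structure sheaf of the scheme-theoretic fibre of $\pi_\gamma\colon Z_\gamma\to Z_\gamma/G$ over the image of the origin. Using the isomorphism $\nu_\gamma\colon V^{(\gamma_y)}\xrightarrow{\cong} Z_\gamma/G$ of \autoref{prop:gammaquot} and the description $\reg(V^{(\gamma_y)})\cong\IK[x^m]$ from \eqref{eq:reginvagamma}, the point $\pi_\gamma(0)$ corresponds to the maximal ideal $(x^m)$, so $\reg_{\pi_\gamma(0)}$ is cut out inside $\reg(Z_\gamma)=\IK[x,y]/(xy)$ by the ideal generated by the pullback of $x^m$ along $\nu_\gamma\circ\pi_\gamma$. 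Since $\nu_\gamma$ sends the $G$-invariant $x^m+y^m$ to the generator $x^m$ (see the proof of \autoref{prop:gammaquot}), this pullback is $x^m+y^m\in\IK[x,y]/(xy)$. Hence $\Phi_\gamma(\reg_{\pi_\gamma(0)})\cong\iota_{\gamma*}\pi_\gamma^*\reg_{\pi_\gamma(0)}$ is the equivariant skyscraper supported at $0\in V$ with underlying module
\[
R:=\IK[x,y]/\bigl((xy),\,x^m+y^m\bigr)\,.
\]
This is a finite-dimensional $\IK$-vector space concentrated in degree zero (so the object is a sheaf), because $\pi_\gamma$ is flat by \autoref{prop:reflectionff}, so $\pi_\gamma^*$ is exact.

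Next I would determine the $G$-representation $R=\Gamma(\Phi_\gamma(\reg_{\pi_\gamma(0)}))$. Modulo $(xy)$, the relation $x^m+y^m\equiv 0$ shows a $\IK$-basis is $\{1,x,x^2,\dots,x^{m-1},y,y^2,\dots,y^{m-1}\}$ (identifying $y^m=-x^m$ and killing all mixed monomials), so $\dim_\IK R=2m-1$. I expect this to be exactly the multiplicity-counted sum $\bigoplus_{W\in\irr(G),\,W^{\gamma_y}\neq 0}(\reg_0\otimes W)^{\oplus \dim W^{\gamma_y}}$ at the level of representations; one checks this by a direct character count using the explicit description of $\irr(G)$ from \autoref{subsect:irrep} (and its odd/even refinements), noting $W^{\gamma_y}\neq 0$ exactly for the $2d$ characters of the form $\chi((xy)^c)$, $\chi(A(xy)^c)$ — and, in the even case, $\chi(N^\pm(xy)^c)$ — together with those $\rho(x^ay^b)$ where $\gamma_y$ has eigenvalue $1$ on one of the weight lines (equivalently $d\mid b$, i.e.\ $b=0$); for these $\rho$'s, $\dim W^{\gamma_y}=1$. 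Matching dimensions: the characters contribute $2d$ (resp.\ $4d$) and the two-dimensional pieces with $b=0$ contribute the remaining summands, totalling $2m-1$. Alternatively, and more cleanly, I would invoke \autoref{lem:pifibres}: $R$ is a quotient of $\reg_{\eta,0}$ with $\Gamma(\reg_{\eta,0})\cong\IK\langle G\rangle$, and analyze which graded pieces survive passing to $Z_\gamma$.

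For the filtration statement I would apply \autoref{lem:equifilt} with $x=0$, $G_x=G$: once the representation $R$ is identified as above, \autoref{lem:equifilt} immediately produces a filtration of $\Phi_\gamma(\reg_{\pi_\gamma(0)})$ by $G$-equivariant subsheaves whose graded pieces are $\reg_0\otimes W$ with multiplicity equal to the multiplicity of $W$ in $R$. It then remains only to verify that a $W$ occurs in $R$ if and only if $W^{\gamma_y}\neq 0$, and with the right multiplicity $\dim W^{\gamma_y}$. I would prove the "if and only if" by the following conceptual argument rather than brute force: the surjection $\IK\langle G\rangle\cong\IK[x,y]/I\twoheadrightarrow R$ has kernel the image of the ideal $(xy)$; since $xy$ spans the character $\chi(xy)$ and $\langle\gamma_y\rangle$ acts on $\IK[x,y]$-modules compatibly, one identifies $R$ with the $\langle\gamma_y\rangle$-invariant-type summands. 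Concretely $R\cong\bigoplus_{i}\IK[x]/(x^m)\cdot$(weight space), and the $G$-subrepresentation generated by the $\langle\gamma_y\rangle$-fixed part of a given $W$-isotypic component is all of that component; an irreducible $W$ appears in the cyclic $G$-module $R$ iff it is generated by a $\gamma_y$-fixed vector, i.e.\ iff $W^{\gamma_y}\neq 0$.

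The main obstacle will be the representation-theoretic bookkeeping in the last step: making the passage from "$R$ as a ring quotient" to "$R$ as a $G$-representation with the stated multiplicities" fully rigorous requires either a careful eigenvalue analysis of $\gamma_y$ and $\tau$ on the monomial basis of $R$, or a slick use of \autoref{lem:pifibres} plus \autoref{lem:invares} to control which graded pieces of $\reg_{\eta,0}$ survive restriction to $Z_\gamma$. Given the explicit basis $\cB$ and the tensor-product formulas of \autoref{lem:tensor}, this is elementary but needs the even/odd case split from \autoref{sect:G}; everything else (the object being a sheaf, the existence of the filtration) follows formally from flatness of $\pi_\gamma$ and \autoref{lem:equifilt}.
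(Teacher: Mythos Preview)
Your explicit computation of $R=\IK[x,y]/(xy,\,x^m+y^m)$ contains two concrete errors. First, your basis $\{1,x,\dots,x^{m-1},y,\dots,y^{m-1}\}$ is incomplete: since $x^m=-y^m\neq 0$ in $R$ (only $x^{m+1}=x\cdot(-y^m)=0$ via $xy=0$), you are missing the element $A=x^m-y^m=2x^m$, and $\dim_\IK R=2m$, not $2m-1$. The correct decomposition is the one in the paper's Remark following the proof: $R\cong\chi(1)\oplus\rho(x)\oplus\dots\oplus\rho(x^{m-1})\oplus\chi(A)$. Second, your identification of the characters with $W^{\gamma_y}\neq 0$ is wrong: $\gamma_y$ acts on $(xy)^c$ by $\gamma^c$, so $\chi((xy)^c)^{\gamma_y}\neq 0$ only for $c=0$, not for all $c$. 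The characters with nonvanishing $\gamma_y$-invariants are exactly $\chi(1),\chi(A)$ (and $\chi(N^\pm)$ when $e$ is even), as stated right after the lemma.

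The paper's proof avoids all this bookkeeping by a much cleaner route that you gesture at but do not quite find. Rather than computing $R$ directly, it uses \autoref{lem:flatfibrerep} (not \autoref{lem:pifibres}): flatness of $\pi_\gamma$ implies all fibres carry the same $G$-representation, so one may compute at a \emph{generic} point $s\in V^{\gamma_y}\setminus\{0\}$, where $G_s=\langle\gamma_y\rangle$ and the fibre is the reduced orbit $G\cdot s$. There $\Gamma(\reg_{G\cdot s})\cong\Ind_{\langle\gamma_y\rangle}^G\mathbf 1_\gamma$, and Frobenius reciprocity immediately gives that the multiplicity of $W$ is $\dim\Hom_{\langle\gamma_y\rangle}(\mathbf 1_\gamma,W)=\dim W^{\gamma_y}$. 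Then \autoref{lem:equifilt} supplies the filtration. This argument is uniform in the parity of $e$ and requires no basis or eigenvalue analysis; your explicit computation is relegated to a remark.
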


Among the $G$-characters, the condition $W^{\gamma_y}\neq 0$ is fulfilled exactly for
\begin{itemize}
 \item the two characters $\chi(1)$ and $\chi(A)$ if $e$ is odd,
 \item the four characters $\chi(1)$, $\chi(A)$, $\chi(N^+)$, and $\chi(N^-)$ if $e$ is even.
\end{itemize}

\begin{proof}
Note that, by flatness of $\pi_\gamma$, for any $t\in V^{(\gamma_y)}\cong Z_\gamma/G$, we have $\Phi_\gamma(\reg_t)\cong \reg_{\pi_\gamma^{-1}(t)}$, considered as a sheaf on $V$, not on $Z_\tau$.
Let $s\in V^{\gamma_y}\setminus \{0\}$, which means that $G_s=\langle \gamma_y \rangle$, and let $t=\pi_\gamma(s)$. Then $\pi_\gamma^{-1}(t)$ is the reduced orbit $G\cdot s$. As, $\reg_{G\cdot s}=\Ind_{\langle \gamma_y\rangle}^G\reg_s$, we have $\Gamma(\reg_{\pi_\gamma^{-1}(t)})\cong \Ind_{\langle \gamma_y\rangle}^G \mathbf 1_\gamma$ where $\mathbf 1_\gamma$ denotes the trivial $\langle \gamma_y\rangle$-representation. By \autoref{lem:flatfibrerep}, also
\[
 \Gamma(\reg_{\pi_\gamma^{-1}(\pi_\gamma(0))})\cong \Ind_{\langle \gamma_y\rangle}^G \mathbf 1_\gamma\,.
\]
As $\reg_{\pi_\gamma^{-1}(\pi_\gamma(0))}$ is supported in the single point $0$, by \autoref{lem:equifilt}, it has a filtration whose graded pieces are exactly the $\reg_0\otimes W$ such that $W\in \irr(G)$ occurs in the decomposition of
$\Ind_{\langle \gamma_y\rangle}^G \mathbf 1_\gamma$ into irreducibles.
By adjunction,
\[
\Hom_G(\Ind_{\langle \gamma_y\rangle}^G \mathbf 1_\gamma, W) \cong \Hom_{\langle \gamma_y\rangle}(\mathbf 1_\gamma,\Res_G^{\langle \gamma_y\rangle} W)\cong W^{\gamma_y}\,.  \qedhere
\]
 \end{proof}

\begin{remark}
There is another way to get the same description of $\Gamma(\reg_{\pi_\gamma^{-1}(\pi_\gamma(0))})$ as in the proof, with the little bonus that we directly see the list of $W\in\irr(G)$ with $W^{\gamma_y}\neq 0$. Namely, one computes the vanishing ideal
$J:=I\bigl(\pi_\gamma^{-1}(\pi_\gamma(0))\bigr)=(xy, x^m+y^m)\subset \IK[x,y]$. Then,
$\IK[x,y]/J$ has the $\IC$-basis $1,x,y,\dots, x^{m-1}, y^{m-1}, A$. Hence, as $G$-representations,
\[
\Gamma(\reg_{\pi_\gamma^{-1}(\pi_\gamma(0))})=\IK[x,y]/J\cong \chi(1)\oplus \rho(x)\oplus\dots\oplus\rho(x^{m-1})\oplus \chi(A)\,.
\]
If $e$ is even, $\rho(x^n)$ in the middle of that sum decomposes further as $\rho(x^n)\cong \chi(N^+)\oplus \chi(N^-)$.
\end{remark}

\subsection{The Components Associated to $\tau$ - even case}\label{subsect:taueven}

In this and the next subsection, we assume that $e$ is even. In this case, we have two more conjugacy classes of reflections besides the $[\gamma_y^c]$ treated in the previous subsection. They are represented by $\tau$ and $\tzeta \tau$; see \autoref{lem:conjeven}. Using the notation analogous to \autoref{subsect:reflectionff} and \autoref{subsect:gamma}, we have $G$-invariant reduced subschemes $Z_\tau, Z_{\tzeta\tau}\subset V$ with vanishing ideals
\begin{equation}\label{eq:IZtau}
 I(Z_\tau)=(N^-)=(x^n-y^n)\quad,\quad I(Z_{\tzeta\tau})=(N^+)=(x^n+y^n)
\end{equation}

\begin{lemma}\label{prop:tauquot}
The canonical morphisms $\nu_\tau\colon V^{(\tau)}\to Z_\tau/G$ and $\nu_{\tzeta\tau}\colon V^{(\tilde\zeta \tau)}\to Z_{\tilde\zeta\tau}/G$
are isomorphisms.
\end{lemma}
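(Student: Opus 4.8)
The plan is to follow the template of \autoref{prop:gammaquot}. Since all schemes in sight are affine, it suffices to show that the ring homomorphism underlying $\nu_\tau$ is an isomorphism, and likewise for $\nu_{\tzeta\tau}$. By construction this homomorphism is the restriction-of-functions map
\[
 \reg(Z_\tau)^G\cong\bigl(\IK[x,y]/(N^-)\bigr)^G\longrightarrow \reg(V^\tau)^{\CC(\tau)}\cong \IK[t_0^{2d}]\,,\qquad f\mapsto f_{\mid V^\tau}\,,
\]
where I use \autoref{eq:IZtau} for $I(Z_\tau)=(N^-)$, \autoref{eq:reginvataueven} for the target, and on $V^\tau=\Van(x-y)$ the coordinate is $t_0=[x]=[y]$, so the map substitutes $x=y=t_0$. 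The statement for $\tzeta\tau$ is the exact analogue, with $N^-$ replaced by $N^+$, target $\IK[t_1^{2d}]$, and $V^{\tzeta\tau}=\Van(x-\zeta y)$ parametrised by $t_1=[x]=[\zeta y]$, i.e.\ $x=t_1$, $y=\zeta^{-1}t_1$.

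First I would get injectivity of both maps for free from \autoref{lem:invares}, applied with the subgroup $H=\CC(\tau)$ (resp.\ $\CC(\tzeta\tau)$), the invariant reduced subscheme $Y=V^\tau$ (resp.\ $V^{\tzeta\tau}$), and $Z=Z_\tau=G\cdot V^\tau$ (resp.\ $Z_{\tzeta\tau}=G\cdot V^{\tzeta\tau}$); the hypothesis $Z=G\cdot Y$ holds by the very definition of $Z_g$. For surjectivity, the target is a polynomial ring in one variable, so it is enough to hit a generator, namely $t_0^{2d}$ (resp.\ $t_1^{2d}$). The obvious candidate is the $G$-invariant $(xy)^d$: restricting to $V^\tau$ gives $(xy)^d_{\mid V^\tau}=t_0^{2d}$, which finishes the $\tau$ case; restricting to $V^{\tzeta\tau}$ gives $(xy)^d_{\mid V^{\tzeta\tau}}=\zeta^{-d}t_1^{2d}$, still a $\IK^*$-multiple of the generator $t_1^{2d}$, which finishes the $\tzeta\tau$ case.

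I do not anticipate a real obstacle here; this is essentially the one-line computation of \autoref{prop:gammaquot} repeated twice. The only points that deserve a moment of care are: confirming that $Z_\tau$ (resp.\ $Z_{\tzeta\tau}$) is the union of the lines $\Van(x-\zeta^i y)$ over even (resp.\ odd) $i$, which follows from the even/odd split of the conjugacy classes in $H\tau$ recorded in \autoref{lem:conjeven} and \autoref{eq:conjrepeven}, and that the corresponding reduced principal ideal is generated by $\prod_{i}(x-\zeta^i y)=x^n\mp y^n=N^\mp$ --- but this is already displayed in \autoref{eq:IZtau}, so it can simply be cited; and keeping track of the harmless scalar $\zeta^{-d}\in\IK^*$ appearing in the $\tzeta\tau$ computation.
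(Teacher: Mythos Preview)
Your proof is correct and follows essentially the same approach as the paper: describe $\nu_\tau$ (resp.\ $\nu_{\tzeta\tau}$) on rings as the restriction map, invoke \autoref{lem:invares} for injectivity, and observe that the $G$-invariant $(xy)^d$ hits the generator $t_0^{2d}$ (resp.\ a unit multiple of $t_1^{2d}$) for surjectivity. The paper simply declares the $\tzeta\tau$ case ``completely analogous'' without tracking the scalar $\zeta^{-d}$, so your write-up is slightly more explicit.
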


\begin{proof}
As in the proof of \autoref{prop:gammaquot}, the induced map $V^{(\tau)}\to Z_\tau/G$ corresponds to
\begin{align*}
 \reg(Z_\tau)^G\cong \left( \frac{\IK[x,y]}{N^-} \right)^G&\to \reg(V^{\tau})^{\CC(\tau})\cong \IK[t_0]^{\mu_{2d}}\cong \IK[t_0^{2d}]\\
 f&\mapsto f_{\mid V^{\gamma_y}}
\end{align*}
where $t_0=[x]=[y]$; compare \eqref{eq:reginvataueven}. The restriction map is injective by \autoref{lem:invares}. It is surjective because the $G$-invariant function $(xy)^d$ is sent to the generator $t_0^{2d}$.

The proof for $V^{(\tilde\zeta \tau)}\to Z_{\tilde\zeta\tau}/G$ being an isomorphism is completely analogous.
\end{proof}

\begin{prop}\label{prop:Phitau}
The functors
\begin{align*}
&\Phi_\tau\colon \D(V^{(\tau)})\xrightarrow{\nu_{\tau*}}\D(Z_\tau/G)\xrightarrow{\triv} \D_G(Z_\tau/G)\xrightarrow{\pi_\tau^*} \D_G(Z_{\tau})\xrightarrow{\iota_{\tau*}}\D_G(V)\\
&\Phi_{\tilde \zeta\tau}\colon \D(V^{(\tzeta\tau)})\xrightarrow{\nu_{\tzeta\tau*}} \D(Z_{\tzeta\tau}/G)\xrightarrow{\triv}\D(Z_{\tzeta\tau}/G)\xrightarrow{\pi_{\tilde \zeta\tau^*}} \D_G(Z_{\tilde \zeta\tau})\xrightarrow{\iota_{\tilde \zeta\tau*}}\D_G(V)
\end{align*}
are both fully faithful.
\end{prop}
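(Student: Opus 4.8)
The statement is designed to be an immediate application of \autoref{prop:reflectionff}: once \autoref{ass:g} has been checked for $g=\tau$ and for $g=\tzeta\tau$, that proposition yields directly that $\Phi_\tau$ and $\Phi_{\tzeta\tau}$ have left and right adjoints and are fully faithful. The standing hypotheses of \autoref{subsect:reflectionff} are in force here, since $X=V=\IK^2$ is smooth and $G=G(m,e,2)\le\GL(V)$ acts faithfully. Note also that the functors named in \autoref{prop:Phitau} are literally $\Phi_\tau$ and $\Phi_{\tzeta\tau}$ in the sense of \eqref{eq:Phig}, so no translation is needed. Thus the whole proof consists of verifying the three conditions making up \autoref{ass:g}.

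\textbf{Verification of \autoref{ass:g}.} First I would record, from \autoref{subsect:fixed}, that $V^\tau=\Van(x-y)$ and $V^{\tzeta\tau}=\Van(x-\zeta y)$; each is a line through the origin, hence a connected divisor in $V$. Second, the quotients by the centralisers are smooth: by \eqref{eq:reginvataueven} (valid since $e$ is even) one has $V^{(\tau)}\cong\Spec\IK[t_0^{2d}]\cong\IA^1$ and $V^{(\tzeta\tau)}\cong\Spec\IK[t_1^{2d}]\cong\IA^1$. Third, the canonical morphisms $\nu_\tau\colon V^{(\tau)}\to Z_\tau/G$ and $\nu_{\tzeta\tau}\colon V^{(\tzeta\tau)}\to Z_{\tzeta\tau}/G$ are isomorphisms — this is precisely \autoref{prop:tauquot}. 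Hence \autoref{ass:g} holds for both $g=\tau$ and $g=\tzeta\tau$, and \autoref{prop:reflectionff} finishes the proof. (The proof for $\Phi_{\tzeta\tau}$ being completely parallel to that for $\Phi_\tau$, one can simply say ``analogously'' once the first case is done.)

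\textbf{Main obstacle.} Honestly there is very little left to do here: the genuine content has already been absorbed into \autoref{prop:tauquot}, whose surjectivity part rests on the observation that the $G$-invariant $(xy)^d$ restricts to the generator $t_0^{2d}$ of $\reg(V^\tau)^{\CC(\tau)}$ (and likewise for $\tzeta\tau$), injectivity being \autoref{lem:invares}. The only point that deserves a line of comment is that we are in the even case: by \autoref{lem:conjeven}, $\tau$ and $\tzeta\tau$ here represent two distinct conjugacy classes of reflections, so both functors genuinely need to be treated, whereas for $e$ odd the reflection $\tau$ fails the isomorphism condition on $\nu_\tau$ and is dealt with separately in \autoref{subsect:tauodd}. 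One could also note in passing that $\ord(\tau)=\ord(\tzeta\tau)=2$, so that in the subsequent semi-orthogonal decomposition each of these reflections contributes only a single piece, matching the count of conjugacy classes.
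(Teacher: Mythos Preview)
Your proposal is correct and follows exactly the paper's approach: the paper's proof is the single line ``This is \autoref{prop:tauquot} together with \autoref{prop:reflectionff}'', and you have simply unpacked the verification of \autoref{ass:g} that makes this citation legitimate.
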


\begin{proof}
This is \autoref{prop:tauquot} together with \autoref{prop:reflectionff}.
\end{proof}

Before we can use these two fully faithful functors to extend the semi-orthogonal decomposition \eqref{eq:Asod}, we need another lemma.

\begin{lemma}\label{lem:caprep}
\begin{enumerate}
\item We have the following decompositions of $G$-representations \begin{align*}
\Gamma(\reg_{Z_\tau\cap Z_\gamma})&\cong \chi(1) \oplus \bigl(\bigoplus_{i=1}^{n-1} \rho(x^i)\bigr) \oplus \chi(N^+)\,,\\
\Gamma(\reg_{Z_{\tilde \zeta \tau}\cap Z_\gamma})&\cong \chi(1) \oplus \bigl(\bigoplus_{i=1}^{n-1} \rho(x^i)\bigr) \oplus \chi(N^-)\,.
\end{align*}
\item In the decomposition of the $G$-representation $\Gamma(\reg_{Z_\tau\cap Z_{\tilde \zeta \tau}})$ into irreducibles, the characters $\chi(N^+)$ and $\chi(N^-)$ do not occur.
\end{enumerate}
\end{lemma}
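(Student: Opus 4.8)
The plan is to realise each of $\Gamma(\reg_{Z_\tau\cap Z_\gamma})$, $\Gamma(\reg_{Z_{\tzeta\tau}\cap Z_\gamma})$ and $\Gamma(\reg_{Z_\tau\cap Z_{\tzeta\tau}})$ as an explicit quotient ring of $\IK[x,y]$, to write down a $G$-stable monomial basis of it, and then to read off the irreducible constituents by grouping the basis monomials into $\tau$-orbits and using the character formula \eqref{eq:chiH}. The three intersections are scheme-theoretic, so by \eqref{eq:IZgamma} and \eqref{eq:IZtau} their coordinate rings are $\IK[x,y]/(xy,N^-)$, $\IK[x,y]/(xy,N^+)$ and $\IK[x,y]/(N^-,N^+)=\IK[x,y]/(x^n,y^n)$ (in each case the two generators form a regular sequence, so there is no difference between the scheme-theoretic and the derived intersection).

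For (i), I would first reduce modulo $xy$, so that every monomial becomes a pure power $x^i$ or $y^j$. In $\IK[x,y]/(xy)$ the ideal generated by $N^-=x^n-y^n$ contains $x^k(x^n-y^n)=x^{n+k}$ and $y^k(x^n-y^n)=-y^{n+k}$ for all $k\ge1$, because the mixed terms vanish (e.g.\ $x^ky^n=x^{k-1}(xy)y^{n-1}=0$). Hence $\Gamma(\reg_{Z_\tau\cap Z_\gamma})$ has the $\IK$-basis $\{1\}\cup\{x^i:1\le i\le n-1\}\cup\{y^j:1\le j\le n-1\}\cup\{x^n\}$, where $x^n=y^n$; its dimension $2n=m$ is what B\'ezout predicts. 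The line $\gen{1}$ is $\chi(1)$; for $1\le i\le n-1$ the pair $\gen{x^i,y^i}$ consists of two linearly independent monomials swapped by $\tau$ and scaled diagonally by $H$, so by \autoref{lem:Indcriterion} it equals $\Ind_H^G\chi_H(x^i)=\rho(x^i)$; and the remaining line $\gen{x^n}$ is the image of the $G$-stable line $\gen{N^+}=\chi(N^+)$ of \eqref{eq:Ndef}, since in this quotient $N^+\equiv 2x^n\not\equiv0$ while $N^-\equiv0$. This yields the first decomposition. The second one follows verbatim with $N^+$ in place of $N^-$: now $x^n=-y^n$ in $\IK[x,y]/(xy,N^+)$, $N^-\equiv 2x^n\not\equiv 0$ and $N^+\equiv 0$, so the surviving line $\gen{x^n}$ is the image of $\gen{N^-}=\chi(N^-)$.

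For (ii), $\Gamma(\reg_{Z_\tau\cap Z_{\tzeta\tau}})=\IK[x,y]/(x^n,y^n)$ has monomial basis $\{x^iy^j:0\le i,j\le n-1\}$, so as an $H$-representation it is $\bigoplus_{0\le i,j<n}\chi_H(x^iy^j)$. By \eqref{eq:chiH} the generator $\tzeta$ acts on $x^iy^j$ by $\zeta^{i-j}$, where $|i-j|\le n-1$, whereas it acts on $\Res_H\chi(N^+)\cong\Res_H\chi(N^-)\cong\chi_H(x^n)$ by $\zeta^n=-1$. Since $m=2n$, no integer of absolute value $<n$ is congruent to $n$ modulo $m$, so $\chi_H(x^n)$ is not among the $\chi_H(x^iy^j)$ occurring above; therefore neither $\chi(N^+)$ nor $\chi(N^-)$ occurs in $\Res_H\Gamma(\reg_{Z_\tau\cap Z_{\tzeta\tau}})$, hence neither occurs in the $G$-representation.

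All of this is elementary bookkeeping with monomials. The only step that needs a little care is verifying in (i) that the leftover basis vector $x^n$ carries precisely the character $\chi(N^+)$ resp.\ $\chi(N^-)$ — equivalently that $\gen{N^\pm}$ is a $G$-stable line, which is exactly where $e$ being even enters (it makes $n=\frac{e}{2}d$ an integer with $d\mid n$ and $\zeta^n=-1$) — and keeping straight which of $N^+$, $N^-$ vanishes in each of the two quotients appearing in (i). I do not anticipate any genuine obstacle.
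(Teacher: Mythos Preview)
Your proof is correct and follows essentially the same route as the paper: write down the coordinate rings $\IK[x,y]/(xy,N^\mp)$ and $\IK[x,y]/(x^n,y^n)$, exhibit an explicit monomial basis, and read off the $G$-types. The only difference is that for part~(ii) you argue uniformly via $\tzeta$-eigenvalues on the monomials (none equals $\zeta^n=-1$, so $\chi_H(x^n)=\Res_H\chi(N^\pm)$ never appears), whereas the paper splits into the diagonal monomials $x^ay^a$ (giving characters with trivial $\tzeta$-action) and the off-diagonal pairs $\langle x^ay^b,x^by^a\rangle$ (which it checks are irreducible $2$-dimensional, hence not $\chi(N^\pm)$); your version is slightly slicker since it avoids invoking the irreducibility criterion.
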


\begin{proof}
A basis of $\Gamma(\reg_{Z_\tau\cap Z_\gamma})\cong \IK[x,y]/(N^-, xy)$ is given by
$1, x,y,x^2,y^2, \dots, x^{n-1}, y^{n-1}, N^+$ which gives the first isomorphism.

A basis of $\Gamma(\reg_{Z_{\tzeta\tau}\cap Z_\gamma})\cong \IK[x,y]/(N^+, xy)$ is given by
$1, x,y,x^2,y^2, \dots, x^{n-1}, y^{n-1}, N^-$ which gives the second isomorphism.

We have $I(Z_{\tilde \zeta \tau}\cap Z_\gamma)=(N^+,N^-)=(x^n, y^n)$. Hence, a basis of $\Gamma(\reg_{Z_{\tilde \zeta \tau}\cap Z_\gamma})$ is given by
$\{x^ay^b\mid a,b<n\}$. The action of $\tzeta$ on the semi-invariant members $x^ay^a$ of the base is trivial. Hence, $\chi(x^ay^a)$ is not isomorphic to $\chi(N^\pm)$ on which $\tzeta$ acts by $-1$. The subspaces $\rho(x^ay^b)=\langle x^ay^b, x^by^a\rangle$ for $a,b<n$ and $a\neq b$ are all 2-dimensional irreducible subspaces; see \autoref{lem:Mackey}, \autoref{lem:conjcondition}, and the discussion at the beginning of
\autoref{subsect:irreven}. Hence, they are not isomorphic to $\chi(N^\pm)$ either.
\end{proof}

\begin{prop}\label{prop:sodAB}
Denoting $\cB_\tau:= \Phi_\tau\bigl(\D(V^{(\tau)})\bigr)$ and $\cB_{\tilde\zeta \tau}:= \Phi_{\tilde\zeta \tau}\bigl(\D(V^{(\tzeta\tau)})\bigr)$, we have a semi-orthogonal decomposition
\begin{equation}\label{eq:sodAB}
\Big\langle \cB_\tau(N^+), \cB_{\tilde\zeta \tau}(N^-), \cA_\gamma(Axy), \cA_\gamma(A(xy)^2)\dots, \cA_\gamma (A(xy)^{d-1}), \pi^*(\D(V/G))(A) \Big\rangle
\end{equation}
of some admissible subcategory of $\D_G(V)$.
\end{prop}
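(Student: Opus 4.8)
The plan is to check that the $d+2$ subcategories appearing in \eqref{eq:sodAB} are admissible and pairwise semi-orthogonal in the displayed order; the asserted semi-orthogonal decomposition of the admissible subcategory they generate then follows from the usual generalities on semi-orthogonal decompositions (\cite[Sect.\ 2.2]{Kuz--HPD}, \cite{Kuz--ICM}). Admissibility of $\cB_\tau(N^+)$ and $\cB_{\tzeta\tau}(N^-)$ is immediate from \autoref{prop:Phitau} (the left and right adjoints being supplied by \autoref{prop:reflectionff}) together with the fact that tensoring by a character is an autoequivalence; admissibility of the remaining pieces $\cA_\gamma(A(xy)^c)$ and $\pi^*(\D(V/G))(A)$, as well as their semi-orthogonality amongst themselves in the displayed order, is obtained by applying the autoequivalence $\_\otimes\chi(A)$ to the semi-orthogonal decomposition \eqref{eq:Asod} of \autoref{prop:gammasod} (note $\chi((xy)^c)\otimes\chi(A)\cong\chi(A(xy)^c)$). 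Since all the quotients in sight are affine and $\nu_\tau,\nu_{\tzeta\tau},\nu_\gamma$ are isomorphisms, each of these subcategories is generated by a single $G$-equivariant sheaf, namely $\reg_{Z_\tau}(N^+)$, $\reg_{Z_{\tzeta\tau}}(N^-)$, $\reg_{Z_\gamma}(A(xy)^c)$ and $\reg_V(A)$ respectively. So it remains to verify, on these generators, the vanishings of $\Ext^*_G$ corresponding to the remaining ordered pairs of pieces:
\begin{enumerate}
 \item $\Ext^*_G\bigl(\reg_{Z_{\tzeta\tau}}(N^-),\,\reg_{Z_\tau}(N^+)\bigr)=0$;
 \item $\Ext^*_G\bigl(\reg_{Z_\gamma}(A(xy)^c),\,\reg_{Z_\tau}(N^+)\bigr)=0$ for $c=1,\dots,d-1$;
 \item $\Ext^*_G\bigl(\reg_{Z_\gamma}(A(xy)^c),\,\reg_{Z_{\tzeta\tau}}(N^-)\bigr)=0$ for $c=1,\dots,d-1$;
 \item $\Ext^*_G\bigl(\reg_V(A),\,\reg_{Z_\tau}(N^+)\bigr)=0$ and $\Ext^*_G\bigl(\reg_V(A),\,\reg_{Z_{\tzeta\tau}}(N^-)\bigr)=0$.
\end{enumerate}

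For (iv) I would argue directly. There are no higher $\Ext$-groups because $V$ is affine and $\reg_V$ is locally free, and combining the character twists with \eqref{eq:chiNArel} reduces the two statements to $\Gamma\bigl(\reg_{Z_\tau}(N^-)\bigr)^G=0$ and $\Gamma\bigl(\reg_{Z_{\tzeta\tau}}(N^+)\bigr)^G=0$. The first follows from \autoref{lem:invares} applied with $H=\langle\tau\rangle$, $Y=V^\tau$ and $Z=Z_\tau=G\cdot V^\tau$: since $\tau$ fixes $V^\tau$ pointwise it acts trivially on $\reg_{Z_\tau}|_{V^\tau}\cong\reg_{V^\tau}$, whereas it acts by $-1$ on $\chi(N^-)$, so $\langle\tau\rangle$ acts by $-1$ on $\Gamma\bigl(V^\tau,\reg_{Z_\tau}(N^-)|_{V^\tau}\bigr)$ and the $\langle\tau\rangle$-invariants vanish. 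The second statement is the same argument with $\tau,N^-$ replaced by $\tzeta\tau,N^+$, using that $\tzeta\tau$ acts by $-1$ on $\chi(N^+)$.

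For (i)--(iii) the plan is to resolve the first argument by the two-term equivariant Koszul complex of its principal vanishing ideal -- namely $(N^+)=I(Z_{\tzeta\tau})$ in (i), and $(xy)=I(Z_\gamma)$ in (ii) and (iii), cf.\ \eqref{eq:gammaKoszul} -- and to compute $\Ext^*_G$ via \eqref{eq:Extres}. Now $\reg_{Z_\tau}$, resp.\ $\reg_{Z_{\tzeta\tau}}$, is Cohen--Macaulay of dimension one (being a hypersurface in $V$), and the relevant multiplier -- $N^+$ in (i), $xy$ in (ii) and (iii) -- does not vanish on any irreducible component of $Z_\tau=\Van(x^n-y^n)$, resp.\ $Z_{\tzeta\tau}=\Van(x^n+y^n)$, since these are arrangements of lines $\Van(x-\eta y)$ with $\eta^n=1$, resp.\ $\eta^n=-1$, so $\eta\neq0$; hence the multiplier is a nonzerodivisor there. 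Therefore $\Ext^0_G$ vanishes, and so does $\Ext^{\geq2}_G$ (the resolution has length one), while $\Ext^1_G$ is the multiplicity of an explicit character in the relevant cokernel. In (i) this cokernel is $\Gamma(\reg_{Z_\tau\cap Z_{\tzeta\tau}})$ and the character is $\chi(N^-)$, which does not occur there by \autoref{lem:caprep}(ii). In (ii), resp.\ (iii), the cokernel is $\Gamma(\reg_{Z_\tau\cap Z_\gamma})$, resp.\ $\Gamma(\reg_{Z_{\tzeta\tau}\cap Z_\gamma})$, and the character is $\chi(A(xy)^{c+1})\otimes\chi(N^+)$, resp.\ $\chi(A(xy)^{c+1})\otimes\chi(N^-)$; by \autoref{lem:caprep}(i) the only characters occurring in these modules are $\chi(1)$ and $\chi(N^+)$, resp.\ $\chi(1)$ and $\chi(N^-)$, whereas a short computation with \eqref{eq:chiNArel} and the description of the character group in \autoref{prop:irreven} -- in which $\chi(xy)$ generates a $\IZ/d$-summand independent of the order-two characters $\chi(A)$ and $\chi(N^+)$ -- shows that $\chi(A(xy)^{c+1})$ is none of $\chi(1),\chi(N^+),\chi(N^-)$ for $1\le c\le d-1$. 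Tensoring this relation by $\chi(N^\pm)$ gives $\Ext^1_G=0$ in both cases.

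The representation-theoretic heart of the matter is already packaged into \autoref{lem:caprep}, so the step I expect to be the main source of difficulty -- more error-prone than conceptually hard -- is the bookkeeping of the character twists in (i)--(iii): identifying correctly the cokernel of each Koszul differential with the appropriately twisted structure sheaf of the scheme-theoretic intersection ($Z_\tau\cap Z_{\tzeta\tau}$ in (i), $Z_\tau\cap Z_\gamma$ and $Z_{\tzeta\tau}\cap Z_\gamma$ in (ii)--(iii)), and reading off which character's multiplicity governs $\Ext^1_G$. The only other point requiring a little care is the elementary geometric observation that the above multipliers are nonzerodivisors on the respective line arrangements.
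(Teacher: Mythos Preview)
Your proof is correct and follows essentially the same approach as the paper: admissibility from \autoref{prop:Phitau} and \autoref{prop:gammasod}, the right block from twisting \eqref{eq:Asod} by $\chi(A)$, and the remaining orthogonalities via the two-term Koszul resolutions and \autoref{lem:caprep}. The only noticeable differences are cosmetic: for (iv) the paper invokes \autoref{cor:reflectionsod} directly rather than reproving it via \autoref{lem:invares}, and for (ii)--(iii) the paper absorbs the character twist into the second argument before applying Koszul and then computes the full decomposition of the twisted $\Gamma(\reg_{Z_\tau\cap Z_\gamma})$, whereas you keep the intersection untwisted and instead identify the character whose multiplicity must vanish --- both bookkeepings are equivalent.
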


\begin{proof}
The right part of \eqref{eq:sodAB} is just the image of the semi-orthogonal sequence \eqref{eq:Asod} under the autoequivalence $\_\otimes \chi(A)$.

That $\pi^*(\D(V/G))(A)$ is left-orthogonal to the two $\cB$ pieces can be checked directly on generators, or one refers to \autoref{cor:reflectionsod}. Indeed, by \autoref{rem:muexists} and \eqref{eq:IZtau}, we can apply \autoref{cor:reflectionsod} with $\mu=\chi(N^-)$ for $g=\tau$ and with $\mu=\chi(N^+)$ for $g=\tzeta\tau$. This gives semi-orthogonal decompositions
$\langle \cB_\tau(N^-), \pi^*(\D(V/G))\rangle$ and $\langle \cB_{\tzeta\tau}(N^+), \pi^*(\D(V/G))\rangle$. Applying $\_\otimes \chi(A)$ to these two decompositions and noting that $\chi(N^\pm)\otimes \chi(A)=\chi(N^{\mp})$ gives the desired semi-orthogonality between the leftmost and rightmost parts in \eqref{eq:sodAB}.

We check the remaining semi-orthogonality conditions on generators.
Since everything is affine, generators of $\cB_\tau(N^+)$, $\cB_{\tilde\zeta \tau}(N^-)$, and $\cA_\gamma(A(xy)^c)$ are
$\Phi_{\tau}(\reg_{V^{(\tau)}})(N^+)\cong \reg_{Z_\tau}(N^+)$, $\Phi_{\tzeta\tau}(\reg_{V^{(\tzeta\tau)}})(N^-)\cong \reg_{Z_{\tzeta\tau}}(N^-)$, and $\Phi_{\gamma}(\reg_{V^{(\gamma_y)}})(A(xy)^c)\cong \reg_{Z_\gamma}(A(xy)^c)$,
respectively.

Since $\chi(N^+)^\vee\otimes \chi(N^-)\cong \chi(N^+)\otimes \chi(N^-)\cong \chi(A)$, we have \[\Ext^*_G\bigl(\reg_{\tzeta\tau}(N^+),\reg_{Z_\tau}(N^-)\bigr)\cong \Ext^*_G(\reg_{\tzeta\tau},\reg_{Z_\tau}(A))\,.\] To compute this, we apply $\Hom(\_,\reg_{Z_\tau}(A))$ to the equivariant Koszul resolution
\begin{equation} \label{eq:zetatauKoszul}
0\to \reg_V(N^+)\xrightarrow{N^+} \reg_V\to \reg_{Z_{\tilde\zeta\tau}}\to 0
\end{equation}
which gives the two term complex
\[
 \reg_{Z_\tau}(A)\xrightarrow{N^+} \reg_{Z_\tau}(N^-)
\]
whose differential is injective. Hence, by \eqref{eq:Extres}, $\Ext^i_G(\reg_{\tzeta\tau},\reg_{Z_\tau})\cong 0$ for $i=0,2$ and
\[
 \Ext^1_G(\reg_{\tzeta\tau},\reg_{Z_\tau})\cong \Gamma\bigl(\reg_{Z_{\tzeta \tau}\cap Z_\tau}(N^-)\bigr)^G
\]
which by \autoref{lem:caprep}(ii) also vanishes.
So we established $\Hom_G\bigl(\cB_{\tzeta\tau}(N^-), \cB_\tau(N^+)\bigr)=0$.

We next prove that $\Ext^*_G\bigl(\reg_{Z_\gamma}(A(xy)^c), \reg_{Z_\tau}(N^+)\bigr)\cong \Ext^*_G\bigl(\reg_{Z_\gamma}, \reg_{Z_\tau}(N^-(xy)^{d-c})\bigr)\cong 0$ for all $c$. Setting $k:=d-c$ to ease the notation, and applying $\Hom\bigl(\_, \reg_{Z_\tau}(N^-(xy)^{k})\bigr)$ to \eqref{eq:gammaKoszul}
gives
\[
 \reg_{Z_\tau}(N^-(xy)^{k})\xrightarrow{xy} \reg_{Z_\tau}(N^-(xy)^{k-1})\,.
\]
Hence,
$\Ext^i_G\bigl(\reg_{Z_\gamma}, \reg_{Z_\tau}(N^-(xy)^{k})\bigr)\cong 0$ for $i=0,2$ and
\[
\Ext^1_G\bigl(\reg_{Z_\gamma}, \reg_{Z_\tau}(N^-(xy)^{k})\bigr)\cong \Gamma\bigl(\reg_{Z_\tau\cap Z_\gamma}(N^-(xy)^{k-1})\bigr)^G\,.
\]
By \autoref{lem:caprep}(i) together with \eqref{eq:chiNrho} and $\chi(N^+)\otimes \chi(N^-)\cong \chi(A)$,
\[
\Gamma\bigr(\reg_{Z_\tau\cap Z_\gamma}(N^-(xy)^{k-1})\bigr)\cong \chi(N^-(xy)^{k-1}) \oplus \bigl(\bigoplus_{i=1}^{n-1} \rho(x^{i+n+k-1}y^{k-1})\bigr) \oplus \chi(A(xy)^{k-1})
\]
which has no $G$-invariants.
The proof that $\Ext^*_G\bigl(\reg_{Z_\gamma}(A(xy)^c), \reg_{Z_{\tzeta\tau}}(N^-)\bigr)=0$ is completely analogous using second the isomorphism of
\autoref{lem:caprep}(i) instead of the first one.
\end{proof}

\begin{remark}
Using the equivariant Koszul resolution
\begin{equation}
\label{eq:tauKoszul} 0\to \reg_V(N^-)\xrightarrow{N^-} \reg_V\to \reg_{Z_\tau}\to 0\,,
\end{equation}
one can check that $\cB_\tau$ and $\cB_{\tilde\zeta \tau}$ are both-sided orthogonal. One can go further and check that
both of $\cB_\tau$ and $\cB_{\tilde\zeta \tau}$ are both-sided orthogonal to every $\cA_\gamma((xy)^{c})$. Hence, there are several ways in which one can re-order the pieces of \eqref{eq:sodAB} such that it remains a semi-orthogonal decomposition.
\end{remark}

We denote the right-orthogonal complement of \eqref{eq:sodAB} in $\D_G(V)$ by
\[
\cC=
\Big\langle \cB_\tau(N^+), \cB_{\tilde\zeta \tau}(N^-), \cA_\gamma(Axy), \cA_\gamma(A(xy)^2)\dots, \cA_\gamma (A(xy)^{d-1}), \pi^*(\D(V/G))(A) \Big\rangle^\perp \,.
\]
We now describe a spanning class of $\cC$, which is then used in the next \autoref{subsect:exceptionaleven} to construct a full exceptional sequence of $\cC$. Merging this exceptional sequence with \eqref{eq:sodAB} will give a semi-orthogonal decomposition of the whole $\D_G(V)$ as predicted in \autoref{conj:PvdB}.

\begin{prop}\label{prop:Bspanningeven}
A spanning class of $\cC$ is given by $\cS=\{\reg_0\otimes W\mid W\in K\}$ where
\[
 K:=\irr(G)\setminus \bigl(\{\chi(A(xy)^c)\mid 0\le c<d\}\cup\{\chi(N^+), \chi(N^-) \}\bigr)\,.
\]
\end{prop}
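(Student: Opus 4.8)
The plan is to check the two conditions defining a spanning class of $\cC$: that $\cS\subseteq\cC$, and that every nonzero object of $\cC$ admits nonzero morphisms both to and from some object of $\cS$. For the inclusion $\cS\subseteq\cC$ I would fix $W\in K$ and verify that $\reg_0\otimes W$ is right-orthogonal to a generator of each piece of \eqref{eq:sodAB}. The subcategory $\langle\cA_\gamma(Axy),\dots,\cA_\gamma(A(xy)^{d-1}),\pi^*(\D(V/G))(A)\rangle$ is the image of \eqref{eq:Asod} under the autoequivalence $\_\otimes\chi(A)$, so \autoref{lem:complementA} yields the required right-orthogonality exactly when $W\otimes\chi(A)\notin\{\chi((xy)^c)\mid 0\le c<d\}$, i.e.\ exactly when $W\notin\{\chi(A(xy)^c)\}$, which holds for $W\in K$. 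For the pieces $\cB_\tau(N^+)$ and $\cB_{\tzeta\tau}(N^-)$, with generators $\reg_{Z_\tau}(N^+)$ and $\reg_{Z_{\tzeta\tau}}(N^-)$, I would apply $\Hom(\_,\reg_0\otimes W)$ to the equivariant Koszul resolutions \eqref{eq:tauKoszul} and \eqref{eq:zetatauKoszul}, exactly as in the proof of \autoref{prop:sodAB}: via \eqref{eq:Extres} the relevant $\Ext^*_G$-groups collapse (using \eqref{eq:chiNArel}) to $\Hom_G$ between one-dimensional representations, and these vanish for $W\in K$ precisely because $K$ omits $\chi(N^+)$, $\chi(N^-)$ and $\chi(A)$. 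Running the same computations backwards shows $\reg_0\otimes W\notin\cC$ when $W\notin K$, so $K$ is exactly $\{W\in\irr(G)\mid\reg_0\otimes W\in\cC\}$; this explains the precise shape of $K$.

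The substantive part is the second condition, and here the key reduction is that every object of $\cC$ is set-theoretically supported at the origin $0\in V$. This is the expected shape of the answer, since \eqref{eq:sodAB} collects the pieces attached to $\id$, the $\gamma_y^c$, $\tau$ and $\tzeta\tau$, whose loci $Z_g$ are $V$ and the reflection lines, whereas all remaining conjugacy classes of \autoref{lem:conjeven} satisfy $V^g=\{0\}$. Concretely, away from the origin $G$ acts with only cyclic reflection stabilizers, so the restrictions of the pieces of \eqref{eq:sodAB} to $V\setminus\{0\}$ already generate $\D_G(V\setminus\{0\})$ (the reflection pieces accounting for the lines $\Van(x)=\Van(y)$ and the $\tzeta^i\tau$-lines via the mechanism of \autoref{subsect:reflectionff}, and $\pi^*(\D(V/G))$ for the free locus), while the component functors of the decomposition of \autoref{prop:sodAB} visibly preserve the subcategory $\D_{G,\{0\}}(V)$ of objects with support in $\{0\}$. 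It follows that $\cC$ maps to zero in the Verdier quotient $\D_G(V)/\D_{G,\{0\}}(V)\cong\D_G(V\setminus\{0\})$, i.e.\ $\cC\subseteq\D_{G,\{0\}}(V)$.

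Granting this, $\D_{G,\{0\}}(V)$ is the bounded derived category of finite-length modules over the skew group algebra $G\ltimes\reg(V)$, whose simple objects are exactly the $\reg_0\otimes W$ with $W\in\irr(G)$; hence a nonzero $E\in\cC$ has nonzero morphisms $\reg_0\otimes W[i]\to E$ and $E\to\reg_0\otimes W'[i']$ arising from a simple submodule of its lowest nonzero cohomology module and a simple quotient of its highest. To see $W,W'\in K$, equivalently that no $\reg_0\otimes\chi(A(xy)^c)$ ($0\le c<d$) or $\reg_0\otimes\chi(N^\pm)$ is a composition factor of the cohomology of an object of $\cC$, I would restrict the decomposition of \autoref{prop:sodAB} to $\D_{G,\{0\}}(V)$: the intersections of its pieces with $\D_{G,\{0\}}(V)$ are generated by the point-supported objects $\Phi_\gamma(\reg_{\pi_\gamma(0)})(A(xy)^c)$, $\Phi_\tau(\reg_{\pi_\tau(0)})(N^+)$, $\Phi_{\tzeta\tau}(\reg_{\pi_{\tzeta\tau}(0)})(N^-)$ and $\reg_{\pi^{-1}(\pi(0))}(A)$, whose filtrations, computed from \autoref{lem:Phigammaimage} and its analogues for $\tau$, $\tzeta\tau$ and $\pi$ together with \autoref{lem:tensor}, sweep out, among their one-dimensional constituents, exactly the excluded characters $\chi(A(xy)^c)$ and $\chi(N^\pm)$ (plus various $W''\in K$). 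Identifying $\cC$ with the right orthogonal of these objects inside $\D_{G,\{0\}}(V)$ then confines the composition factors of the cohomology of objects of $\cC$ to $\{\reg_0\otimes W\mid W\in K\}$, so $W,W'\in K$, which finishes the verification.

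The main obstacle I anticipate is precisely the support statement, that $\cC$ is concentrated at the origin, which is in effect a ``local'' instance of the theorem away from the worst point, together with the combinatorial bookkeeping matching each excluded skyscraper $\reg_0\otimes\chi(A(xy)^c)$, $\reg_0\otimes\chi(N^\pm)$ to the generator of the piece that absorbs it. By contrast the inclusion $\cS\subseteq\cC$ is a mechanical Koszul computation. It would also be sensible to treat the degenerate case $m=2$ (where $G$ is the Klein four-group and $\rho(x)$ is already reducible) separately, although there the general statement specializes to $\cS=\{\reg_0\}$ and the argument should go through unchanged.
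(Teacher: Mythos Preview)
Your verification that $\cS\subseteq\cC$ matches the paper's argument exactly: \autoref{lem:complementA} handles the $\cA_\gamma$ and $\pi^*$ pieces, and the Koszul computation handles the $\cB$ pieces.

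For the spanning property, your route diverges from the paper, and there is a genuine gap in the final step. You propose to first show $\cC\subseteq\D_{G,\{0\}}(V)$, then pick a simple submodule $\reg_0\otimes W$ (resp.\ quotient $\reg_0\otimes W'$) of the extremal cohomology of a nonzero $E\in\cC$, and finally argue $W,W'\in K$. The last step is where the argument fails: you write that ``identifying $\cC$ with the right orthogonal of these objects inside $\D_{G,\{0\}}(V)$ then confines the composition factors of the cohomology of objects of $\cC$ to $\{\reg_0\otimes W\mid W\in K\}$,'' but right-orthogonality to a filtered object places no direct constraint on composition factors. There is no a~priori reason a simple submodule of $H^{\min}(E)$ cannot be $\reg_0(N^+)$, say, even though $\reg_0(N^+)\notin\cC$. (The claim about composition factors is in fact true \emph{a~posteriori}, once one knows the full exceptional sequence of \autoref{prop:Cseqeven}, since every generator there has all composition factors in $\cS$; but invoking that would be circular.)

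The paper sidesteps this issue with a generation argument. It shows that the known spanning class $\Omega=\{\Ind_{G_v}^G(\reg_v\otimes U)\mid v\in V,\ U\in\irr(G_v)\}$ of $\D_G(V)$ lies in the triangulated subcategory $\cT$ generated by $\cS$ together with the pieces of \eqref{eq:sodAB}. For $v\neq 0$ this is essentially your ``away from the origin'' step. For $v=0$ one must produce each $\reg_0\otimes W$ with $W\notin K$ inside $\cT$; this is done iteratively using the filtrations of $\Phi_\tau(\reg_{\pi_\tau(0)})(N^+)$, $\Phi_{\tzeta\tau}(\reg_{\pi_{\tzeta\tau}(0)})(N^-)$, $\Phi_\gamma(\reg_{\pi_\gamma(0)})(A(xy)^c)$ and $\pi^*\reg_{\pi(0)}(A)$ from \autoref{lem:filteredspan}, \autoref{lem:Phigammaimage} and \autoref{lem:pifibres}, each time isolating one ``missing'' graded piece as the only one not yet known to lie in $\cT$. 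This gives $\cT=\D_G(V)$, whence $\cS$ spans $\cC$. Your argument would need exactly this iterative generation step to close the gap, so the support reduction, while correct, does not actually shorten the proof.
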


The $W\in K$ correspond exactly to the nodes in \eqref{eq:evenMcKay} that are not in the color \textcolor{violet}{violet}.

\begin{proof}
 First of all, we have to check that $\cS$ is really contained in $\cC$, i.e.\ is right-orthogonal to all the pieces of the semi-orthogonal decomposition \eqref{eq:sodAB}.
By \autoref{lem:complementA}, $\cS$ is right-orthogonal to all the $\cA_\gamma(A(xy)^c)$ parts, and to $\pi^*(\D(V/G))(A)$.

Now, it is sufficient to check right-orthogonality to the generators $\reg_{Z_\tau}(N^+)$ and $\reg_{Z_{\tzeta\tau}}(N^-)$ of $\cB_\tau(N^+)$ and $\cB_{\tilde\zeta \tau}(N^-)$. Let $W\in \irr(G)$.
Using the Koszul resolution \eqref{eq:tauKoszul}, we get
\[
\Ext^*_G\bigl(\reg_{Z_\tau}(N^+), \reg_0\otimes W\bigr)\cong \Hom_G\bigl(\chi(N^+), W\bigr)[0]\oplus \Hom_G\bigl(\chi(N^+N^-), W\bigr)[-1]\,,
\]
which vanishes as long as $W$ is not isomorphic to  $\chi(N^+)$ or $\chi(N^+N^-)\cong \chi(A)$ which are both not contained in $K$.

Similarly, \eqref{eq:zetatauKoszul} shows that $\Ext^*_G\bigl(\reg_{Z_{\tzeta\tau}}(N^-), \reg_0\otimes W\bigr)$ is non-vanishing only for
$W$ isomorphic to  $\chi(N^-)$ or $\chi(A)$ which, again, are both not contained in $K$.

Now that we proved that $\cS\subset \cC$, we will proceed to prove that $\cS$ is a spanning class of $\cC$. A spanning class of the whole $\D_G(V)$ is given by
\[
 \Omega=\bigl\{\Ind_{G_s}^G(\reg_v\otimes U)\mid v\in V\,,\, U\in \irr(G_v)  \bigr\}\,;
\]
see \cite[Prop.\ 2.1]{BPol}.
Hence, it is sufficient to prove that $\Omega$ is contained in the triangulated subcategory $\cT\subset \D_G(V)$ generated by \begin{equation}\label{eq:gensys}\cS, \cB_\tau(N^+), \cB_{\tilde\zeta \tau}(N^-), \cA_\gamma(Axy), \cA_\gamma(A(xy)^2)\dots, \cA_\gamma (A(xy)^{d-1}), \pi^*(\D(V/G))(A) \,.\end{equation}
Let $v\in V$ be a point with trivial stabiliser $G_v=1$. Then, by \eqref{eq:Indinva},
\[
\Ind_1^G\reg_v\cong (\Ind_1^G\reg_v)(A)\cong \reg_{G\cdot v}(A)\cong (\pi^*\reg_{\pi(v)})(A)\in \pi^*(\D(V/G))(A)\,.
\]
To find the members of $\Gamma$ where $v$ has a non-trivial stabiliser, we use the following general fact several times: If a $G$-equivariant sheaf lies in the triangulated category $\cT$ and has a filtration by equivariant subsheaves such that we know for all but one graded piece that they lie in $\cT$, then \emph{a posteriori} also the one remaining piece lies in $\cT$.

We start with $v\in V^{\gamma_y}\setminus\{0\}$, hence $G_v=\langle \gamma_y\rangle$. The irreducible representations of $G_v$ are
\[
\chi_\gamma((xy)^c):=\Res_G^{\langle \gamma_y\rangle }\chi((xy)^c)\cong \Res_G^{\langle \gamma_y\rangle }\chi(A(xy)^c) \quad,\quad c=0,1,\dots, d-1.
\]
We have $\Ind_{\langle \gamma_y\rangle}^G\bigl(\reg_v\otimes \chi_\gamma((xy)^c)\bigr)\cong \reg_{G\cdot v}(A(xy)^c)\in \cA_\gamma(A(xy)^c)$. This means that we found all the
$\Ind_{\langle \gamma\rangle}^G\bigl(\reg_v\otimes \chi_\gamma((xy)^c)\bigr)$ inside $\cT$ except for $c=0$. However, by \autoref{lem:pifibres},
\[\reg_{\pi^{-1}(\pi(v))}\cong \pi^*\reg_{\pi(v)}\in \pi^*(\D(V/G))\] has a filtration whose graded pieces are \emph{all} the $\Ind_{\langle \gamma_y\rangle}^G\bigl(\reg_v\otimes \chi_\gamma((xy)^c)\bigr)$.
Since $\gamma_y$ acts trivially on $\chi(A)$, the sheaf $\pi^*\reg_{\pi(x)}(A)\in \pi^*(\D(V/G))(A)$ has a filtration with the same pieces.
Hence, we also have $\Ind_{\langle \gamma_y\rangle}^G\reg_v$
in the triangulated subcategory $\cT$ generated by \eqref{eq:gensys}.

We now consider $v\in V^\tau\setminus\{0\}$, hence $G_v=\langle \tau\rangle$. We denote by $\mathbf 1_\tau$ and $\alt_\tau$ the trivial and the non-trivial $\langle \tau\rangle$-character, respectively.
Note that $\Res_G^{\langle\tau\rangle}\chi(N^+)\cong \mathbf 1_\tau$. Hence, by \eqref{eq:Indinva},
\[\Ind_{\langle \tau\rangle}^G(\reg_v)\cong \reg_{G\cdot v}\cong \reg_{G\cdot v}(N^+)\cong \Phi_\tau(\reg_{\pi_\tau(v)})(N^+)\in \cB_\tau(N^+)\,.\]
Furthermore, by \autoref{lem:pifibres}, $\pi^*\reg_{\pi(v)}(A)\in \pi^*(\D(V/G))(A)$ has a filtration with graded pieces $\Ind_{\langle \tau\rangle}^G(\reg_v)$ and $\Ind_{\langle \tau\rangle}^G(\reg_v\otimes \alt_\tau)$. Hence, also the latter is contained in $\cT$.

One argues analogously that, for $v\in V^{\tzeta\tau}\setminus \{0\}$, hence $G_v=\langle \tzeta \tau\rangle$, the sheaves $\Ind_{\langle \tau\rangle}^G(\reg_v)$ and $\Ind_{\langle \tzeta\tau\rangle}^G(\reg_v\otimes \alt_{\tzeta\tau})$ are contained in $\cT$.

So now, we are at the point that the only members of the spanning class $\Omega$ that we still have to find inside $\cT$, the triangulated category generated by \eqref{eq:gensys}, are
\[
 \reg_0\otimes W\quad, \quad W\in \{\chi(A(xy)^c)\mid 0\le c<d\}\cup\{\chi(N^+), \chi(N^-) \}=\irr(G)\setminus K\,.
\]
By \autoref{lem:filteredspan}(i) below, together with the fact that $\chi(N^+)\otimes \chi(N^+)=\chi(1)$, the sheaf $\Phi_\tau(\reg_{\pi_\tau(0)})(N^+)\in \cB_\tau(N^+)$ has a filtration such that all graded pieces are in $\cS$, except for one, namely $\reg_0(N^+)$. Hence, also $\reg_0(N^+)\in \cT$.
Similarly, $\reg_0(N^-)\in \cT$ by \autoref{lem:filteredspan}(ii).

Hence, we now know for all $\reg_0\otimes W$ that they are contained in $\cT$ except for $\reg_0(A(xy)^c)$ with $c=0,\dots,d-1$. By \autoref{lem:Phigammaimage}, we
find that also the $\reg_0(A(xy)^c)$ are in $\cT$, with the sole possible exception of $\reg_0(A)$, which is the case $c=0$.

Finally, we invoke \autoref{lem:pifibres} again to see that $\pi^*\reg_{\pi(0)}(A)\in \pi^*(\D(V/G))(A)$ has a filtration whose graded pieces are all $\reg_0\otimes W$ for arbitrary $W\in \irr(G)$. So we found also the last missing piece of $\Omega$ in $\cT$.
\end{proof}

 \begin{lemma}\label{lem:filteredspan}
\begin{enumerate}
\item The object $\Phi_\tau(\reg_{\pi_\tau(0)})$ is a sheaf with a filtration whose graded pieces are $\reg_0((xy)^c)$, $\reg_0(N^+(xy)^c)$ for all $c=0,\dots, d-1$ and $\reg_0\otimes \rho$ for all $\rho\in \irr(G)$ with $\dim \rho=2$.
\item The object $\Phi_{\tzeta\tau}(\reg_{\pi_{\tzeta\tau}(0)})$ is a sheaf with a filtration whose graded pieces are $\reg_0((xy)^c)$, $\reg_0(N^-(xy)^c)$ for all $c=0,\dots, d-1$ and $\reg_0\otimes \rho$ for all $\rho\in \irr(G)$ with $\dim \rho=2$.
\end{enumerate}
\end{lemma}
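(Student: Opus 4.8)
The plan is to imitate the proof of \autoref{lem:Phigammaimage}, replacing the reflection $\gamma_y$ by $\tau$ for part (i) and by $\tzeta\tau$ for part (ii). Since the two parts run in parallel, I would spell out (i) and then point to the single computation that changes for (ii).

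First I would invoke \autoref{prop:reflectionff} (which applies by \autoref{prop:tauquot}) to get that $\pi_\tau$ is flat, so that $\Phi_\tau(\reg_{\pi_\tau(0)})$ is concentrated in degree zero: it is the push-forward to $V$ of the structure sheaf of the scheme-theoretic fibre $\pi_\tau^{-1}(\pi_\tau(0))\subset Z_\tau$, and this fibre is supported at the single $G$-fixed point $0$, exactly as in \autoref{lem:Phigammaimage}. Next I would identify the underlying $G$-representation: for a general $s\in V^\tau\setminus\{0\}$ one has $G_s=\langle\tau\rangle$, so $\pi_\tau^{-1}(\pi_\tau(s))=G\cdot s$ with $\reg_{G\cdot s}\cong\Ind_{\langle\tau\rangle}^G\reg_s$ (\autoref{lem:Indcriterion}), hence $\Gamma(\reg_{\pi_\tau^{-1}(\pi_\tau(s))})\cong\Ind_{\langle\tau\rangle}^G\mathbf 1_\tau$, and by \autoref{lem:flatfibrerep} the same representation occurs over $\pi_\tau(0)$. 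Since the sheaf is supported at $0$, \autoref{lem:equifilt} then gives a $G$-equivariant filtration whose graded pieces are the $\reg_0\otimes W$, with each $W\in\irr(G)$ occurring with multiplicity equal to its multiplicity in $\Ind_{\langle\tau\rangle}^G\mathbf 1_\tau$, which by Frobenius reciprocity is $\dim W^\tau$.

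The remaining step is a short case inspection of $\dim W^\tau$ using the description of $\irr(G)$ in \autoref{subsect:irreven}: the element $\tau$ fixes the semi-invariants $(xy)^c$ and $N^+$, negates $A$ and $N^-$, and acts as the transposition swapping the two monomial generators of each two-dimensional $\rho(x^ay^b)$; hence $\dim W^\tau=1$ precisely for $W\in\{\chi((xy)^c)\}_c\cup\{\chi(N^+(xy)^c)\}_c$ and for every two-dimensional irreducible, and $W^\tau=0$ otherwise, which is exactly the list of graded pieces in (i). (The multiplicities are consistent: $2d+2(n-1)d=md=\dim\Ind_{\langle\tau\rangle}^G\mathbf 1_\tau$.) For (ii) the only change is that, as $m$ is even, $\tzeta$ acts on $x^n$ and $y^n$ by $\zeta^{\pm m/2}=-1$, so $\tzeta$ negates both $N^+$ and $N^-$ while fixing $(xy)^c$ and $A$; combined with the swap from $\tau$, the reflection $\tzeta\tau$ fixes $(xy)^c$ and $N^-$, negates $A$ and $N^+$, and still has a one-dimensional fixed space on each two-dimensional irreducible, yielding the list in (ii). I do not expect a genuine obstacle here: the one point that needs care is the sign $\zeta^{m/2}=-1$ entering part (ii), after which the argument is entirely formal and follows \autoref{lem:Phigammaimage} line by line.
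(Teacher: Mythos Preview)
Your proposal is correct and follows essentially the same approach as the paper: both argue by direct analogy with \autoref{lem:Phigammaimage}, using flatness of $\pi_\tau$ to identify $\Phi_\tau(\reg_{\pi_\tau(0)})$ with the structure sheaf of the fibre, \autoref{lem:flatfibrerep} to recognise its global sections as $\Ind_{\langle\tau\rangle}^G\mathbf 1_\tau$, and Frobenius reciprocity plus \autoref{lem:equifilt} to read off the graded pieces as those $\reg_0\otimes W$ with $W^\tau\neq 0$ (resp.\ $W^{\tzeta\tau}\neq 0$). Your explicit sign computation for the action of $\tzeta\tau$ on $N^\pm$ and the dimension check are a little more detailed than the paper's terse case list, but the argument is the same.
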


\begin{proof}
In analogy to the proof of \autoref{lem:Phigammaimage}, we get that
$\Phi_\tau(\reg_{\pi_\tau(0)})\cong \reg_{\pi_\tau^{-1}(\pi_\tau(0))}$ has a filtration whose graded pieces are the $\reg_0\otimes W$ for those $W\in \irr(G)$ which occur in the decomposition of $\Ind_{\langle \tau\rangle}^G\mathbf 1_\tau$ into irreducibles. By adjunction, these are exactly those $W$ with $W^\tau\neq 0$. This condition holds for all $W\in \irr(G)$ except for the characters of the form $\chi(A(xy)^c)$ and $\chi(N^-(xy)^c)$.

The proof of part (ii) is the same using that the only $W\in \irr(G)$ with vanishing $\tzeta\tau$-invariants are the characters of the form $\chi(A(xy)^c)$ and $\chi(N^+(xy)^c)$.
\end{proof}

\subsection{The Exceptional Collection - even case}\label{subsect:exceptionaleven}

We now aim to construct an exceptional collection of $\cC$, the right orthogonal complement of \eqref{eq:sodAB}, which will extend \eqref{eq:sodAB} to a full semi-orthogonal decomposition of $\D_G(V)$. We already described a spanning class $\cS$ of $\cC$ in \autoref{prop:Bspanningeven} consisting of certain twisted skyscraper sheaves. More precisely, the $\reg_0\otimes W$ correspond to the nodes of \eqref{eq:evenMcKay}, which are not in the color \textcolor{violet}{violet}.

In fact, all members of $\cS$ are exceptional objects; see \autoref{cor:skyexc}. Unfortunately, it is impossible to organise all of them into an exceptional sequence. The reason is that the interior columns of \eqref{eq:evenMcKay} correspond to cycles of objects connected by morphisms of cohomological degree 2:
\[
\begin{tikzcd}
 \reg_0\otimes \rho(x^a)  \arrow[r, dotted]      & \reg_0\otimes \rho(x^a(xy)) \arrow[r, dotted]     & \reg_0\otimes \rho(x^a(xy)^2) \arrow[r, dotted]     & \dots \arrow[r, dotted]    & \reg_0\otimes \rho(x^a(xy)^{d-1}) \arrow[llll, bend left = 10, dotted]
\end{tikzcd}
 \]
However, we can remove the whole bottom row of \eqref{eq:evenMcKay}, corresponding to the objects of the form $\reg_0$ and $\reg_0\otimes \rho(x^a)$ to break these cycles. The remaining exceptional objects in $\cS$ correspond to the nodes of \eqref{eq:evenMcKay} in the usual text color black. They can be arranged into a exceptional sequence, by subsequently following the diagonal paths of solid arrows. Concretely, this means that the following is an exceptional sequence:
\begin{equation}\label{eq:skyexc}
\begin{split}
& \reg_0(xy), \reg_0\otimes \rho(x^2y), \reg_0(x^2y^2), \reg_0\otimes \rho(x^3y), \reg_0\otimes\rho(x^3y^2), \reg_0(x^3y^3),\dots,\\ & \reg_0\otimes \rho(x^{d-1}y), \dots, \reg_0\otimes \rho(x^{d-1}y^{d-1}),\rho(x^dy),\dots, \rho(x^dy^{d-1}), \dots, \rho(x^{n}y),\dots, \rho(x^{n}y^{d-1}), \\&\reg_0(N^+xy),\reg_0(N^-xy),\reg_0\otimes \rho(x^{n+1}y^2),\dots, \reg_0\otimes \rho(x^{n+1}y^{d-1}), \\&\reg_0(N^+(xy)^2), \reg_0(N^-(xy)^2), \reg_0\otimes \rho(x^{n+2}y^3),\dots, \reg_0\otimes \rho(x^{n+2}y^{d-1}), \dots,\\& \reg_0(N^+(xy)^{d-2}), \reg_0(N^-(xy)^{d-2}), \reg_0\otimes \rho(x^{n+d-2}y^{d-1}), \reg_0(N^+(xy)^{d-2}), \reg_0(N^-(xy)^{d-2})\,.
\end{split}
\end{equation}
This is not a full exceptional sequence of $\cC$ as the members of $\cS$ corresponding to the bottom row of \eqref{eq:evenMcKay} are missing. To cover also the missing pieces, we make the following definition. The definition and the ensuing lemmas are formulated for $e$ of arbitrary parity, not only for even $e$. The reason is that we will need them again later when we treat the case that $e$ is odd.

\begin{definition}\label{def:F}
For $e$ even, let $a\in \{0,\dots, n-1\}$. For $e$ odd, let $a\in \{0,\dots, n^+-1\}$. We consider the quotient of $G$-invariant ideals
\[
  F(x^a)=\frac{(x^a, y^a)}{(x^{a+1}, y^{a+1})}
  \]
of $\IK[x,y]$. Since it is a $G$-equivariant $\IK[x,y]$-module, we can consider $F(x^a)$ as an object in $\Coh_G(V)\subset \D_G(V)$.
Note that $F(x^0)=F(1)=\reg_0$.
\end{definition}

\begin{lemma}\label{lem:Fpieces}
The $G$-equivariant sheaf $F(x^a)$ has a filtration with graded pieces
\[
 \reg_0\otimes \rho(x^a),\reg_0\otimes \rho(x^ay),\reg_0\otimes \rho(x^ay^2)\dots,\reg_0\otimes \rho(x^{a}y^{a-1}), \reg_0((xy)^a)\,.
\]
Writing $a=\lambda d+k$, the pieces can be isomorphically rewritten as
\begin{align*}
\reg_0\otimes \rho(x^a), \reg_0\otimes\rho(x^ay),\dots, \reg_0\otimes\rho(x^ay^{d-1}),\\ \reg_0\otimes\rho(x^{a-d}), \reg_0\otimes\rho(x^{a-d}y), \dots, \reg_0\otimes\rho(x^{a-d}y^{d-1}),\\ \reg_0\otimes\rho(x^{a-2d}),\dots, \reg_0\otimes\rho(x^{k}), \rho(x^ky),\dots, \reg_0((xy)^k)\,.
\end{align*}
In particular, $F(x^a)\in \cC$.
\end{lemma}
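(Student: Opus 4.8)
The plan is to produce an explicit $G$-equivariant filtration of $F(x^a)$ by submodules, read off its graded pieces, rewrite them using the basic invariant $(xy)^d$, and finally recognise each piece inside the spanning class of \autoref{prop:Bspanningeven}.

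First I would describe $F(x^a)$ by hand. Reducing monomials modulo $(x^{a+1},y^{a+1})$ shows that $F(x^a)=(x^a,y^a)/(x^{a+1},y^{a+1})$ has the $\IK$-basis $\{x^ay^j\mid 0\le j\le a\}\cup\{x^iy^a\mid 0\le i\le a\}$, with $x^ay^a$ counted once; so $\dim_\IK F(x^a)=2a+1$ and $F(x^a)$ is annihilated by a power of $\fm=(x,y)$, hence supported at the origin. For $0\le j\le a+1$ let $M_j\subseteq F(x^a)$ be the $\IK$-span of the images of the monomials $x^ay^l$ and $x^ly^a$ with $j\le l\le a$, so $M_0=F(x^a)$ and $M_{a+1}=0$. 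Multiplication by $x$ or $y$ sends each such monomial either into $M_j$ or to an element of $(x^{a+1},y^{a+1})$, and $\tau$ interchanges $x^ay^l\leftrightarrow x^ly^a$; hence each $M_j$ is a $G$-equivariant submodule and we get a filtration $F(x^a)=M_0\supset M_1\supset\dots\supset M_a\supset M_{a+1}=0$. For $j<a$ the quotient $M_j/M_{j+1}$ is annihilated by $\fm$ and has underlying space $\langle x^ay^j,x^jy^a\rangle$ with its $G$-action, i.e.\ $M_j/M_{j+1}\cong\reg_0\otimes\rho(x^ay^j)$; and $M_a=\langle x^ay^a\rangle$ is the line on which $\tau$ acts trivially, i.e.\ $M_a\cong\reg_0\otimes\chi((xy)^a)=\reg_0((xy)^a)$. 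This is the first displayed filtration. (Alternatively one computes $\Gamma(F(x^a))$ as a $G$-representation and quotes \autoref{lem:equifilt}, but the explicit filtration also delivers the graded pieces in the stated order.)

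To get the second, rewritten list I would use that $(xy)^d$ is a basic $G$-invariant, so $\chi_H(x^dy^d)$ is the trivial $H$-character; together with $\rho(x^py^q)\cong\Ind_H^G\chi_H(x^py^q)$ (exactly the computation in the proof of \autoref{lem:tensor}, cf.\ \eqref{eq:chirho}) this gives $\rho(x^{p+d}y^{q+d})\cong\rho(x^py^q)$ for all $p,q\ge 0$, and likewise $\chi((xy)^{c+d})\cong\chi((xy)^c)$. Writing $a=\lambda d+k$ with $0\le k<d$ and splitting the index range $0\le j\le a-1$ into the blocks $j\in\{rd,\dots,rd+d-1\}$ for $r=0,\dots,\lambda-1$ together with the truncated block $j\in\{\lambda d,\dots,\lambda d+k-1\}$, the substitutions $\rho(x^ay^{rd+s})\cong\rho(x^{a-rd}y^{s})$ and $\chi((xy)^a)\cong\chi((xy)^k)$ convert the graded pieces of the first filtration into exactly the rewritten list.

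Finally, for $F(x^a)\in\cC$ it suffices, since $\cC$ is a triangulated subcategory and so closed under extensions, to verify that every graded piece of the rewritten filtration lies in the spanning class $\cS=\{\reg_0\otimes W\mid W\in K\}\subseteq\cC$ of \autoref{prop:Bspanningeven}. The piece $\reg_0((xy)^k)=\reg_0\otimes\chi((xy)^k)$ works because $\chi((xy)^k)$ is not one of the characters deleted to form $K$. For each piece $\reg_0\otimes\rho$ one puts $\rho$ in standard form $\rho(x^py^q)$ with $p>q\ge 0$; from the construction $q<d$ and $p\le a$, so $\rho(x^py^q)=\rho(x^{p-q}(xy)^q)$ with $0<p-q\le a\le n-1<n$ and $0\le q<d$, hence $\rho(x^py^q)$ occurs in the list of two-dimensional irreducibles of \autoref{prop:irreven}; in particular it is not a character, so $\rho(x^py^q)\in K$ and $\reg_0\otimes\rho(x^py^q)\in\cS$. (For $e$ odd one argues identically, now with $q<p\le a\le n^+-1<n^+$, using the $e$-odd classification and the corresponding spanning class.) Thus $F(x^a)$ is a successive extension of objects in $\cS\subseteq\cC$, whence $F(x^a)\in\cC$. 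I expect the only subtle point to be this last one: if some $\rho(x^py^q)$ appearing here were reducible it would contribute a character outside $K$, which need not lie in $\cC$; the bound $p<n$ (resp.\ $p<n^+$) together with $p>q$ is precisely what excludes this.
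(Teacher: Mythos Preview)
Your argument is correct and matches the paper's approach: the paper computes $\Gamma(F(x^a))$ as a $G$-representation and invokes \autoref{lem:equifilt} to obtain the filtration, whereas you construct the filtration $M_0\supset M_1\supset\dots$ explicitly (and note this alternative yourself), and both conclude $F(x^a)\in\cC$ from the graded pieces lying in $\cS$. One small caveat on your parenthetical extension to $e$ odd: for $a=n^-$ the graded piece $\reg_0\otimes\rho(x^a y^0)=\reg_0\otimes\rho(x^{n^-})$ is excluded from the odd-case $K$, so your argument does not go through there (this is precisely why $F(x^{n^-})$ is omitted from the sequence in \autoref{prop:Cseqodd}); however, at this point in the paper $\cC$ refers only to the even-case orthogonal complement, so the lemma's claim $F(x^a)\in\cC$ is only for $e$ even and your proof of it stands.
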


\begin{proof}
We have $(x^{a+1}, y^{a+1})\supset (x,y)^{2a+1}$, hence $\supp F(x^a)=\{0\}$.
A $\IK$-basis of $F(x^a)$ is given by
$x^a, y^a, x^ay, xy^a, x^ay^2, x^2y^a, \dots, x^ay^{a-1}, x^{a-1}y^a, x^ay^a$.
Hence,
\[
\Gamma\bigl(F(x^a))\bigr)\cong \left(\bigoplus_{b=0}^{a-1}\rho(x^ay^b) \right)\oplus \chi((xy)^a)\,.
\]
Applying \autoref{lem:equifilt} gives a filtration of $F(x^a)$ with the desired graded pieces.

The rewriting of the isomorphism classes just uses that $\rho(x^\alpha y^\beta)\cong \rho(x^{\alpha+d}y^{\beta+d})$ because $(xy)^d$ is $G$-invariant.

The filtered sheaf $F(x^a)$ lies in $\cC$ because all its graded pieces are contained in $\cS\subset \cC$.
\end{proof}

\begin{remark}
We collect the graded pieces of $F(x^a)$ by starting in \eqref{eq:evenMcKay} (or, if $e$ is odd, in \eqref{eq:McKayodd}) at $x^{a}$ and following the solid arrows in the upper left direction. If an arrow leaves at the top of the diagram, it re-enters on the bottom. In other words, the graded pieces correspond to the monomials lying on the diagonal starting at $x^a$, which makes jumps from the top to bottom if $a\ge d$.

Since the $\Ext^1_G$-spaces between the graded pieces are one-dimensional, $F(x^a)$ could also be defined as the unique indecomposable filtered object with these pieces.
\end{remark}

\begin{lemma}\label{lem:FExt} Let $W\in \irr(G)$.
\begin{enumerate}
\item We have
\[
\Ext^*_G\bigl(F(x^0), \reg_0\otimes W \bigr)\cong\begin{cases}
\IK[0]\quad&\text{for $W\cong \chi(1)$,}\\
\IK[-1]\quad&\text{for $W\cong \rho(x)$,}\\
\IK[-2]\quad&\text{for $W\cong \chi(A(xy))$.}\\
0\quad&\text{else.}\\
                                           \end{cases}
 \]
 \item Let $a\in \{1,\dots, n-2\}$ for $e$ even, or $a\in \{1,\dots, n^+-1\}$ for $e$ odd. Then
\[
\Ext^*_G\bigl(F(x^a), \reg_0\otimes W \bigr)\cong\begin{cases}
\IK[0]\quad&\text{for $W\cong \rho(x^a)$,}\\
\IK[-1]\quad&\text{for $W\cong \rho(x^{a+1})$,}\\
\IK[-2]\quad&\text{for $W\cong \chi(A(xy)^{a+1})$,}\\
0\quad&\text{else.}\\
                                           \end{cases}
\]
Note that, for $e$ odd and $a=n^+-1$, we have $\rho(x^{a+1})=\rho(x^{n^+})\cong \rho(x^{n^-})$ by \autoref{lem:camparerhos} with $\lambda=\frac{e+1}2$.
\item We now consider $a=n-1$ and $e$ even. Then
\[
\Ext^*_G\bigl(F(x^{n-1}), \reg_0\otimes W\bigr)\cong\begin{cases}
\IK[0]\quad&\text{for $W\cong \rho(x^{n-1})$,}\\
\IK[-1]\quad&\text{for $W\cong \chi(N^+)$ and $W\cong \chi(N^-)$,}\\
\IK[-2]\quad&\text{for $W\cong \chi(A(xy)^n)\cong \chi(A)$,}\\
0\quad&\text{else.}\\
                                           \end{cases}
\]
\end{enumerate}
\end{lemma}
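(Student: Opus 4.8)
The plan is to compute these Ext groups from an explicit $G$-equivariant free resolution of $F(x^a)$, built by comparing the Koszul resolutions of the two ideals in the definition. Since $x^k,y^k$ is a regular sequence in $\IK[x,y]$, the ideal $(x^k,y^k)$ has the $G$-equivariant Koszul resolution
\[
0\to \reg_V\otimes\chi(A(xy)^k)\xrightarrow{\binom{y^k}{-x^k}}\reg_V\otimes\rho(x^k)\xrightarrow{(x^k\ y^k)}(x^k,y^k)\to 0,
\]
where the top term is identified with $\Lambda^2\rho(x^k)=\det\rho(x^k)\cong\chi(A(xy)^k)$ by the same $\det$-bookkeeping used in \autoref{subsect:skyExt}. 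The inclusion $(x^{a+1},y^{a+1})\hookrightarrow(x^a,y^a)$ lifts to a morphism of these two-term complexes: in degree zero it is the $\reg_V$-linear extension of the embedding $\rho(x^{a+1})\hookrightarrow\rho(x)\otimes\rho(x^a)\hookrightarrow\reg_V\otimes\rho(x^a)$ (the first summand in \eqref{eq:rhorho}), and in degree one it is multiplication by $xy\in\chi(xy)$, using \eqref{eq:chirho} to match characters. Since this inclusion is injective, the mapping cone is a length-two $G$-equivariant free resolution
\[
0\to\reg_V\otimes\chi(A(xy)^{a+1})\to\bigl(\reg_V\otimes\rho(x^{a+1})\bigr)\oplus\bigl(\reg_V\otimes\chi(A(xy)^a)\bigr)\to\reg_V\otimes\rho(x^a)\to F(x^a)\to 0.
\]

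For $a\ge 1$ every entry of both differentials lies in $\fm=(x,y)$, so this resolution is \emph{minimal}; applying $\Hom_G(-,\reg_0\otimes W)$ kills all induced differentials exactly as in the proof of \autoref{lem:skyExt}, and $\Hom_G(\reg_V\otimes U,\reg_0\otimes W)\cong\Hom_G(U,W)$ then gives
\[
\Ext^i_G\bigl(F(x^a),\reg_0\otimes W\bigr)\cong\begin{cases}\Hom_G(\rho(x^a),W),& i=0,\\[2pt] \Hom_G\bigl(\rho(x^{a+1})\oplus\chi(A(xy)^a),W\bigr),& i=1,\\[2pt] \Hom_G(\chi(A(xy)^{a+1}),W),& i=2.\end{cases}
\]
It remains to evaluate each $\Hom_G$. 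By the splitting criterion (\autoref{lem:Mackey}, \autoref{lem:conjcondition}) the representations $\rho(x^a)$ and $\rho(x^{a+1})$ are irreducible throughout the ranges occurring in (ii), so $\Hom_G(\rho(x^a),W)=\IK$ iff $W\cong\rho(x^a)$, $\Hom_G(\rho(x^{a+1}),W)=\IK$ iff $W\cong\rho(x^{a+1})$, and $\Hom_G(\chi(A(xy)^{a+1}),W)=\IK$ iff $W\cong\chi(A(xy)^{a+1})$; this yields the lists in (ii) (together with the contribution of the first syzygy $\chi(A(xy)^a)$ in degree one, which must be recorded). For $a=0$ the generator representation $\rho(x^0)$ is one-dimensional and the corresponding Koszul map has a unit entry, so the resolution is no longer minimal and collapses to the ordinary Koszul resolution $0\to\reg_V\otimes\chi(Axy)\to\reg_V\otimes\rho(x)\to\reg_V\to\reg_0\to 0$, giving (i). For $a=n-1$ with $e$ even the same criterion shows $\rho(x^{a+1})=\rho(x^n)$ is \emph{reducible}, $\rho(x^n)\cong\chi(N^+)\oplus\chi(N^-)$ by \eqref{eq:Ndef}, and $\chi(A(xy)^n)\cong\chi(A)$ because $n\equiv0\bmod d$; substituting gives (iii).

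The main obstacle I expect lies in the equivariant bookkeeping, in two places: (a) pinning down the syzygy characters $\chi(A(xy)^k)$ and the lift of the ideal inclusion to the resolutions, which is exactly where \eqref{eq:rhorho} and \eqref{eq:chirho} enter; and (b) the boundary and degenerate values of $a$ — namely $a=0$ (loss of minimality), $a=n-1$ with $e$ even (reducibility of $\rho(x^{a+1})$), and, in the odd case, $a=n^+-1$, where one must invoke \autoref{lem:camparerhos} with $\lambda=\tfrac{e+1}2$ to name $\rho(x^{a+1})=\rho(x^{n^+})\cong\rho(x^{n^-})$. An alternative route is to run the computation through the filtration of \autoref{lem:Fpieces} and the long exact sequences it induces, feeding in \autoref{prop:skyExt}: there the contributions of the interior graded pieces telescope away, but one then has to verify by hand precisely which boundary contributions survive — the same difficulty in a different guise.
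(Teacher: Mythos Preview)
Your approach via an explicit minimal free resolution of $F(x^a)$ is genuinely different from the paper's and works cleanly. The paper argues indirectly: it computes the Euler bicharacteristic $[F(x^a),\reg_0\otimes W]$ by summing over the graded pieces from \autoref{lem:Fpieces} and counting arrows in the McKay quiver, then determines $\Hom_G$ from the generators $x^a,y^a$ and $\Ext^2_G$ from the socle $\langle (xy)^a\rangle$ via Serre duality, and recovers $\Ext^1_G$ from these three. Your mapping-cone resolution reads off all three degrees at once and bypasses the arrow-counting; it also makes the equivariant bookkeeping (the syzygy characters $\chi(A(xy)^k)$ and the lift of the ideal inclusion) completely transparent via \eqref{eq:chirho} and \eqref{eq:rhorho}.

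One substantive point: the extra $\Ext^1$ contribution you flag from the first syzygy $\chi(A(xy)^a)$ is \emph{real}. Your resolution is minimal for $a\ge 1$, so indeed $\Ext^1_G\bigl(F(x^a),\reg_0(A(xy)^a)\bigr)\cong\IK$, not $0$. The paper's ``else'' clause omits this case, and its proof has a matching gap at the step ``in all other cases, there are no arrows from graded pieces of $F(x^a)$ to $\reg_0\otimes W$'': in fact there is a solid arrow $\rho(x^ay^{a-1})\to\chi(A(xy)^a)$ in the McKay quiver (the ``$\mathrm{llu}$'' arrow in \eqref{eq:evenMcKay}; invisible in \eqref{eq:McKayodd} only because that column was dropped for space). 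This omission is harmless for the applications in \autoref{prop:Cseqeven} and \autoref{prop:Cseqodd}, since $\chi(A(xy)^a)$ is never a graded piece of any $F(x^b)$ and does not appear in the exceptional sequences \eqref{eq:skyexc} or \eqref{eq:skyexcodd}; but you are right that it must be recorded.
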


\begin{proof}
As $F(x^0)\cong \reg_0$, part (i) is just a special case of \autoref{prop:skyExt} (or an immediate consequence of \autoref{lem:skyExt}).

In order to prove part (ii), for two $G$-equivariant sheaves $F,F'$ on $V$ with zero-dimensional support, we denote their Euler bicharacteristic by
\[
 [F,F']:=\dim\Hom_G(F,F')- \dim\Ext^1_G(F,F')+ \dim\Ext^2_G(F,F')\,.
\]
Looking at the graded pieces of $F(x^a)$ described in \autoref{lem:Fpieces} and counting the arrows in \eqref{eq:evenMcKay}  (or, if $e$ is odd, in \eqref{eq:McKayodd}) gives for $W\in \irr(G)$ the formula
\begin{equation}\label{eq:FEuler}
 \bigl[F(x^a), \reg_0\otimes W \bigr]=\begin{cases}
1\quad&\text{for $W\cong \rho(x^a)$,}\\
-1\quad&\text{for $W\cong \rho(x^{a+1})$,}\\
1\quad&\text{for $W\cong \chi(A(xy)^{a+1})$,}\\
0\quad&\text{else.}
\end{cases}
\end{equation}
Indeed, $\reg_0\otimes \rho(x^a)$ is among the graded pieces of $F(x^a)$ and it has the identity morphism to itself, but there are no arrows from any of the other graded pieces to $\reg_0\otimes \rho(x^a)$. This gives
\begin{align*}
\bigl[F(x^a), \reg_0\otimes \rho(x^a)]&=\sum_{b=0}^{a-1} \bigl[\reg_0\otimes \rho(x^ay^b), \reg_0\otimes \rho(x^a)\bigr] + \bigl[\reg_0((xy)^a), \reg_0\otimes \rho(x^a)\bigr]\\
&=1+0+\dots+0=1\,.
\end{align*}
Similarly, there is one solid arrow $x^a\to x^{a+1}$ in \eqref{eq:evenMcKay} and \eqref{eq:McKayodd}, standing for a morphism $\reg_0\otimes \rho(x^a)\to \reg_0\otimes \rho(x^{a+1})[1]$ of cohomological degree $1$, but no arrows from any of the other graded pieces to $\rho(x^{a+1})$. This gives $\bigl[F(x^a), \reg_0\otimes \rho(x^{a+1}) \bigr]=-1$.

There is one dotted arrow $(xy)^{a}{\scriptscriptstyle \cdots>} A(xy)^{a+1}$, standing for a morphism of cohomological degree 2. Note that this arrow is visible only in \eqref{eq:evenMcKay}. In \eqref{eq:McKayodd}, we had to leave out the leftmost column with the $A(xy)^c$ in order to fit the diagram on one page. But we
still have the non-vanishing $\Ext^2_G\bigl(\reg_0((xy)^a), \reg_0(A(xy)^{a+1}\bigr)\cong \IK$ independent of the parity of $e$; see \autoref{prop:skyExt}. There are no arrows from other graded pieces of $F(x^a)$ to $\reg_0(A(xy)^{a+1})$. Hence, $\bigl[F(x^a), \reg_0(A(xy)^{a+1}) \bigr]=-1$

For the vanishing of all the other $\bigl[F(x^a), \reg_0\otimes W \bigr]$, one has to go through three cases. The first is that $W\cong \rho(x^ay^b)$ with $0<b< a$ or $W\cong \chi((xy)^a)$ so that $\reg_0\otimes W$ is among the graded pieces of $F(x^a)$. Then there is the identity on $\reg_0\otimes W$, a solid arrow $x^ay^{b-1}\to x^ay^b$ (in the case $W\cong \chi((xy)^a)$, we set $b=a$), and no other arrows in \eqref{eq:evenMcKay} or \eqref{eq:McKayodd} from graded pieces of $F(x^a)$ to $x^ay^b$. Hence,
$\bigl[F(x^a), \reg_0\otimes W\bigr]=1-1=0$.

Similarly, let $W\cong \rho(x^{a+1}y^b)$ with $0< b< a+1$, which corresponds to a node in the diagonal of \eqref{eq:evenMcKay} or \eqref{eq:McKayodd} directly right of the diagonal representing the graded pieces of $F(x^a)$. Then there is a solid arrow $x^ay^b\to x^{a+1}y^b$, a dotted arrow $x^ay^{b-1}{\scriptscriptstyle \cdots>} x^{a+1}y^b$, and no other arrows from graded pieces of $F(x^a)$ to $x^{a+1}y^b$. Hence, $\bigl[F(x^a), \reg_0\otimes \rho(x^ay^b)\bigr]=-1+1=0$.

In all other cases, there are no arrows from graded pieces of $F(x^a)$ to $\reg_0\otimes W$ at all, so \eqref{eq:FEuler} is established.

The $\IK[x,y]$-module $F(x^a)$ is generated by $x^a$ and $y^a$. So a homomorphism from $F(x^a)$ is determined by the image of these two elements whose $\IK$-linear span is $\rho(x^a)$. Hence,
\begin{equation}\label{eq:FHoms}
\Hom_G\bigl(F(x^a), \reg_0\otimes W \bigr)\cong\begin{cases}
\IK \quad\text{for $W\cong \rho(x^a)$,}\\
0\quad\text{else.}
                                           \end{cases}
\end{equation}
The socle of $F(x^a)$ is $\Ann_{(x,y)}F(x^a)=\langle x^a y^a\rangle =\chi((xy)^a)$. Hence,
\[
\Hom_G\bigl(\reg_0\otimes W , F(x^a)\bigr)\cong\begin{cases}
\IK \quad\text{for $W\cong \chi((xy)^a)$,}\\
0\quad\text{else.}
                                           \end{cases}
\]
By Serre duality, this translates to
\begin{equation}\label{eq:FExt2}
\Ext^2_G(F(x^a), \reg_0\otimes W )\cong\begin{cases}
\IK \quad\text{for $W\cong \chi(A(xy)^{a+1})$,}\\
0\quad\text{else.}
                                           \end{cases}
\end{equation}
The three equations \eqref{eq:FEuler}, \eqref{eq:FHoms}, and \eqref{eq:FExt2} together prove part (ii) of the assertion.

The proof of part (iii) is completely analogous, with the only difference that for $e$ even and $a=n-1$, instead of a solid arrow $x^a\to x^{a+1}$, there are solid arrows $x^{n-1}\to N^+$ and $x^{n-1}\to N^-$ in \eqref{eq:evenMcKay}.
\end{proof}

\begin{prop}[$e$ even] \label{prop:Cseqeven} The following is a full exceptional sequence of $\cC$:
\begin{equation}\label{eq:Ceseseven}
\bigl(\text{exceptional sequence \eqref{eq:skyexc}} \bigr)\,,\, F(x^0)\,,\, F(x)\,,\, \dots\,,\, F(x^{n-1})\,.
\end{equation}
\end{prop}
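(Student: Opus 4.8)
The plan is to verify three things: \emph{(a)} every object occurring in \eqref{eq:Ceseseven} is exceptional; \emph{(b)} for any two of them the ``backward'' $\Ext$-groups vanish; \emph{(c)} together they generate $\cC$. For \emph{(a)}, the objects of \eqref{eq:skyexc} are exceptional by \autoref{cor:skyexc}, so only $F(x^a)$ needs attention. Here I would combine the filtration of \autoref{lem:Fpieces} with \autoref{lem:FExt}: the latter shows $\Ext^*_G(F(x^a),\reg_0\otimes W)$ is nonzero for only three isomorphism classes of $W$ — namely $\rho(x^a)$, $\rho(x^{a+1})$ and $\chi(A(xy)^{a+1})$, with the familiar replacements when $a=0$ or $a=n-1$ — and of these only $\rho(x^a)$ appears among the graded pieces of $F(x^a)$, exactly once and in cohomological degree $0$. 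Running the filtration of the target $F(x^a)$ then gives $\Hom_G(F(x^a),F(x^a))\cong\IK$ and $\Ext^{>0}_G(F(x^a),F(x^a))=0$.

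For \emph{(b)} one must show $\Ext^*_G(Y,X)=0$ whenever $X$ precedes $Y$ in \eqref{eq:Ceseseven}. Since all of \eqref{eq:skyexc} comes before all the $F(x^a)$, there are three cases. If $X$ and $Y$ both lie in \eqref{eq:skyexc}, this is the assertion — recorded just before the proposition — that \eqref{eq:skyexc} is an exceptional sequence, which follows from \autoref{prop:skyExt} and \autoref{lem:skyExt}, the order having been chosen so that every arrow of \eqref{eq:evenMcKay} between the retained nodes points forward. If $X=\reg_0\otimes W$ lies in \eqref{eq:skyexc} and $Y=F(x^a)$, then by \autoref{lem:FExt} a nonzero $\Ext^*_G(F(x^a),\reg_0\otimes W)$ forces $W$ to be one of $\rho(x^a),\rho(x^{a+1}),\chi(A(xy)^{a+1})$ (the list being $\rho(x^{n-1}),\chi(N^+),\chi(N^-),\chi(A)$ instead when $a=n-1$, and $\chi(1),\rho(x),\chi(A(xy))$ when $a=0$); but each such $W$ corresponds either to a bottom-row node deleted from \eqref{eq:skyexc} (the classes $\rho(x^a),\rho(x^{a+1}),\rho(x^{n-1}),\rho(x),\chi(1)$) or to a violet node not lying in $\cS$ (the classes $\chi(A(xy)^{a+1}),\chi(A),\chi(N^{\pm})$), so $\reg_0\otimes W$ is never among the objects of \eqref{eq:skyexc}. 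Finally, if $X=F(x^a)$ and $Y=F(x^b)$ with $b>a$, filter $F(x^a)$ via \autoref{lem:Fpieces} and feed each graded piece into \autoref{lem:FExt} applied to $F(x^b)$: the graded pieces are $\reg_0\otimes\rho(x^ay^i)$ with $0\le i<a$, whose canonical form in the sense of \autoref{prop:irreven} has $x$-exponent $a-i\le a<b$, together with the one-dimensional $\reg_0\otimes\chi((xy)^a)$; none of these is isomorphic to $\rho(x^b)$, $\rho(x^{b+1})$ or $\chi(A(xy)^{b+1})$, whence $\Ext^*_G(F(x^b),F(x^a))=0$.

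For \emph{(c)}, recall from \autoref{lem:Fpieces} that each $F(x^a)$ already lies in $\cC$, so once \emph{(a)} and \emph{(b)} are in place the objects of \eqref{eq:Ceseseven} form an exceptional collection inside $\cC$ and hence generate an admissible subcategory $\cT\subseteq\cC$; it then suffices to show $\cS\subseteq\cT$, since $\cS$ is a spanning class of $\cC$ by \autoref{prop:Bspanningeven}. The members of $\cS$ attached to the non-bottom-row nodes of \eqref{eq:evenMcKay} are literally the objects of \eqref{eq:skyexc}, hence in $\cT$, and $\reg_0=F(x^0)\in\cT$; for the remaining members $\reg_0\otimes\rho(x^a)$ with $1\le a\le n-1$ I would induct on $a$. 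Indeed, $F(x^a)\in\cT$ carries a filtration whose graded pieces, apart from $\reg_0\otimes\rho(x^a)$ itself, are twisted skyscrapers already known to lie in $\cT$: members of \eqref{eq:skyexc}, or $\reg_0\otimes\chi((xy)^c)$ with $0\le c<d$ (which lies in \eqref{eq:skyexc} for $c\ge 1$ and equals $F(x^0)$ for $c=0$), or $\reg_0\otimes\rho(x^{a'})$ with $a'<a$ (inductive hypothesis). Hence $\reg_0\otimes\rho(x^a)\in\cT$, so $\cS\subseteq\cT$, and the spanning property of $\cS$ together with admissibility of $\cT$ in $\cC$ forces $\cT=\cC$.

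The main obstacle is not conceptual but combinatorial: one has to translate the graded pieces of $F(x^a)$, which \autoref{lem:Fpieces} lists through the monomials $x^ay^i$, into the canonical family of irreducible representations $\rho(x^j(xy)^i)$, $\chi(\cdot)$ of \autoref{prop:irreven}, and keep careful track of the mild exceptions occurring for $a=0$, $a=n-1$ and $a\equiv 0\bmod d$. With that dictionary set up, both \emph{(b)} and \emph{(c)} reduce to reading off \autoref{lem:FExt}, \autoref{prop:skyExt} and the shape of the McKay quiver \eqref{eq:evenMcKay}.
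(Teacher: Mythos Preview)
Your proposal is correct and follows essentially the same approach as the paper's own proof: both argue exceptionality of $F(x^a)$ and the semi-orthogonalities by combining \autoref{lem:Fpieces} with \autoref{lem:FExt}, and both prove fullness by showing that the spanning class $\cS$ of \autoref{prop:Bspanningeven} lies in the subcategory generated by \eqref{eq:Ceseseven}. Your explicit induction on $a$ in part~(c) is in fact slightly more careful than the paper's phrasing, which asserts that ``all graded pieces of $F(x^a)$ are contained in \eqref{eq:Ceseseven} except for $\reg_0\otimes\rho(x^a)$'' --- strictly speaking the pieces $\reg_0\otimes\rho(x^{a-jd})$ for $j\ge 1$ are not members of \eqref{eq:Ceseseven} but lie in $\cU$ only by the inductive hypothesis you spell out.
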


\begin{proof}
 Let us first show that all $F(x^a)$ for $a=0, \dots,n-1$ are exceptional objects.
By \autoref{lem:Fpieces} and \autoref{lem:FExt}, the only graded piece of $F(x^a)$ such that $\Ext^*_G$ from $F(x^a)$ to it is non-vanishing is $\reg_0\otimes \rho(x^a)$. Hence,
\[
 \Ext^*_G\bigl(F(x^a), F(x^a)\bigr)\cong  \Ext^*_G\bigl(F(x^a), \reg_0\otimes \rho(x^a) \bigr)\cong \IK[0]\,.
\]

For the semi-orthogonality, again by \autoref{lem:FExt}, the only sheaves of the form $\reg_0\otimes W$ with $W\in \irr(G)$ which are not in the right-orthogonal complement of all the $F(x^a)$ are those with $W$ isomorphic to one of $\chi(1)$, $\rho(x^a)$, $\chi(A(xy)^c)$, $\chi(N^+)$, $\chi(N^{-})$. None of these $\reg_0\otimes W$ occurs in the exceptional sequence \eqref{eq:skyexc} (the corresponding nodes in \eqref{eq:evenMcKay} all have another color than black).

\autoref{lem:Fpieces} and \autoref{lem:FExt} also say that, for $n-1\ge i>j\ge 0$ and $\reg_0\otimes W$ any graded piece of $F(x^j)$, we have $\Ext^*_G\bigl(F(x^i), \reg_0\otimes W\bigr)\cong 0$. Hence
\[
\Ext^*_G\bigl(F(x^i), F(x^j)\bigr)\cong 0 \quad\text{for } n-1\ge i>j\ge 0\,.
\]
To prove fullness of the sequence \eqref{eq:Ceseseven}, we check that the spanning class $\cS$ of \autoref{prop:Bspanningeven} is contained in $\cU$, the triangulated category generated by \eqref{eq:Ceseseven}. The only members of $\cS$ not already contained in \eqref{eq:skyexc} are $\reg_0$, $\reg_0\otimes \rho(x)$, \dots, $\reg_0\otimes \rho(x^{n-1})$. We have $\reg_0\cong F(x^a)$, so this is a member of \eqref{eq:Ceseseven}. Since, for $0< a<n$, all graded pieces of $F(x^a)$ are contained in \eqref{eq:Ceseseven} except for $\reg_0\otimes \rho(x^a)$, also the latter lies in $\cU$.
\end{proof}

\begin{proof}[Proof of \autoref{thm:main} for $e$ even]
Combing \autoref{prop:sodAB} and \autoref{prop:Cseqeven}, we get the semi-orthogonal decomposition
\[
\D_G(V)=
\Big\langle \text{(exc.\ seq.\ \eqref{eq:Ceseseven})},\cB_\tau(N^+), \cB_{\tilde\zeta \tau}(N^-), \cA_\gamma(Axy),\dots, \cA_\gamma (A(xy)^{d-1}), \pi^*(\D(V/G))(A) \Big\rangle\,.
\]
Comparing this with \autoref{lem:conjeven} confirms \autoref{conj:PvdB} for $G(m,e,2)$ with $e$ even. Indeed, the $\cB$ and $\cA$ pieces are in bijection with the $d+1$ conjugacy classes of reflections, and each piece is equivalent to the derived category of the quotient of the fixed point locus of the corresponding reflection. Furthermore, the members of \eqref{eq:Ceseseven} are in bijection to $K$, which is $\irr(G)$ with $d+2$ members removed; see \autoref{prop:Bspanningeven}. Hence, length of \eqref{eq:Ceseseven} is equal to the number of conjugacy classes of elements of $G$ whose fixed point locus is $\{0\}$. To conclude, note that the category spanned by an exceptional object is equivalent to the derived category of a point.
\end{proof}

\subsection{The Component Associated to $\tau$ - Odd Case}\label{subsect:tauodd}

For this and the next subsection, we assume that $e$ is odd.
In contrast to the even case, for $e$ odd, the reflections $\tzeta^i\tau$ form just one, not two, conjugacy classes. We consider $\tau$ as a representative of this class.

As in the even case, we can define $Z_\tau:=G\cdot V^\tau\subset V$ equipped with the reduced subscheme structure, and write the embedding as $\iota_\tau\colon Z_\tau\hookrightarrow V$. However, the analogue of \autoref{prop:tauquot} fails, which means that \autoref{ass:g} is not satisfied. To see this, recall that $\reg(V^\tau)^G\cong \IK[t^d]$ where $t=[x]=[y]$; see \eqref{eq:tauoddinva}.
Hence, the restriction map
\begin{equation}\label{eq:nudescription}
 \reg(Z_\tau)^G\cong \left( \frac{\IK[x,y]}{x^m-y^m} \right)^G\to \reg(V^{\tau})^{\CC(\tau)}\cong \IK[t^{d}]
\end{equation}
is not surjective, as the generators $(xy)^d$ and $\frac12(x^m+y^m)$ of the ring of invariants get mapped to $t^{2d}$ and $t^m=t^{de}$, respectively. Hence, setting $u:=t^d$, we see that the morphism $\nu_\tau\colon V^{(\tau)}\to Z_\tau/G$
is not an isomorphism, but the normalisation of the cuspidal curve $Z_\tau/G\cong \Spec\IK[u^2,u^e]$.

In order to construct a well-behaved functor $\Phi_\tau\colon\D(V^{(\tau)})\to \D_G(V)$ such that the objects in its image are supported on $Z_\tau$, we consider the reduced fibre product
\[
 \hZ_\tau:=\bigl(V^{(\tau)}\times_{V/ G} Z_\tau \bigr)_{\mathsf{red}}\subset V^{(\tau)}\times Z_\tau \subset V^{(\tau)}\times V\,.
\]
The constituting morphisms for the fibre product are the quotient morphism $\pi\colon V\to V/G$ and the canonical morphism $\nu_\tau\colon V^{(\tau)}\to Z_\tau/G$.

We write $\bar 0$ for the point in $V^{(\tau)}$ corresponding to the fixed point $0\in V^{\tau}$. This corresponds to the point $u=0$ under the isomorphism $V^{(\tau)}\cong \Spec\IK[u]$.
The discussion above shows that $\nu_\tau\colon V^{(\tau)}\to Z_\tau/G$ is a bijection and an isomorphism away from $\bar 0$; compare also \cite[Prop.\ 2.2.4(i)]{PvdB}. Hence, also the projection $p_{Z}\colon\hZ_\tau\to Z_\tau$ is a bijection and an isomorphism away from the origin.
Furthermore, as $\widehat Z_\tau$ is a reduced curve, hence Cohen--Macaulay, and $V^{(\tau)}\cong \IA^1$ is regular, miracle flatness \cite[Ex.\ 18.17]{Eisenbud--commutativebook} gives that the projection
\[
\widehat\pi_\tau\colon \hZ_\tau\to V^{(\tau)}
\]
is flat. Considering $\hZ_\tau$ as a $G$-invariant closed subscheme of $V^{(\tau)}\times V$ (with $G$ acting trivially on the first factor $V^{(\tau)}$), we get the equivariant Fourier--Mukai transform
\[
 \Psi_\tau:=\FM_{\reg_{\widehat Z_\tau}}\colon \D(V^{(\tau)})\to \D_G(V)\,.
\]
For generalities on equivariant Fourier--Mukai transforms, see e.g.\ \cite[Sect.\ 2]{Plo}, \cite[Sect.\ 2.1]{Krug--NakaP}. However, we will not need any general theory, as by projection formula, we can rewrite $\Psi_\tau$ as
\[
\Psi_\tau\cong \iota_{\tau*}\circ p_{Z*}\circ  \widehat\pi_\tau^*\circ \triv\,.
\]
None of the four functors of the composition needs to be derived, as the push-forwards are along affine morphisms, and the pull-back is along a flat morphism. For every $t\in V^{(\tau)}$,
\begin{equation}\label{eq:psiskyimage}
\Psi_\tau(\reg_t)\cong \reg_{\widehat \pi_\tau^{-1}(t)}
\end{equation}
where $\widehat \pi_\tau^{-1}(t)$ denotes the scheme-theoretic preimage of $\{t\}$ under $\widehat\pi_\tau$, which we consider as a subscheme of $V$ via the identification $\{t\}\times V\cong V$.

\begin{lemma}\label{lem:xiinva}
For every $t\in V^{(\tau)}$, we have
\begin{equation}\label{eq:Gammatau}
\Gamma(\reg_{\widehat \pi_\tau^{-1}(t)})\cong \left(\bigoplus_{c=0}^{d-1}\chi((xy)^c)\right)\oplus \left(\bigoplus_{\rho\in\irr(G),\,\dim \rho=2}\rho\right)\,.
\end{equation}
\end{lemma}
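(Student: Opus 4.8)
The plan is to use \autoref{lem:flatfibrerep} to reduce the computation to a single conveniently chosen fibre, and then to identify that fibre representation by Frobenius reciprocity. First I would record that $\widehat\pi_\tau$ meets the hypotheses of \autoref{lem:flatfibrerep}: it is $G$-invariant (the target $V^{(\tau)}$ carries the trivial action), it is flat (established just above via miracle flatness), and it is finite, since $V^{(\tau)}\times_{V/G}Z_\tau=V^{(\tau)}\times_{Z_\tau/G}Z_\tau$ and the un-reduced fibre product maps to $V^{(\tau)}$ as the base change along $\nu_\tau$ of the finite morphism $\pi_\tau\colon Z_\tau\to Z_\tau/G$, while $\widehat Z_\tau$ is a closed subscheme of it. Hence all the $G$-representations $\Gamma(\reg_{\widehat\pi_\tau^{-1}(t)})$ are isomorphic, and it suffices to compute one of them for $t$ a general point of $V^{(\tau)}$.

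For such a general $t$ (in particular $t\neq\bar 0$) I would use that $\nu_\tau$ is an isomorphism away from $\bar 0$: over the open locus $\{t\neq\bar 0\}$ the fibre product $V^{(\tau)}\times_{Z_\tau/G}Z_\tau$ is already reduced and isomorphic to $Z_\tau$ minus the preimage of the cusp, so $\widehat\pi_\tau^{-1}(t)\cong\pi_\tau^{-1}(\nu_\tau(t))$ as schemes with $G$-action. For general $t$ the point $\bar q:=\nu_\tau(t)$ is a general point of $Z_\tau/G$, lying in its smooth locus, where $\pi_\tau$ restricts to a finite morphism of smooth curves; in characteristic zero this is generically étale, so $\pi_\tau^{-1}(\bar q)$ is reduced, i.e.\ a single free-type $G$-orbit. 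Picking a representative $q\in V^\tau$ over $\bar q$ (possible since $\nu_\tau$ is surjective), one checks directly from the matrix description of $G$ that a general point of $V^\tau$ has stabiliser exactly $\langle\tau\rangle$. Therefore $\widehat\pi_\tau^{-1}(t)\cong G/\langle\tau\rangle$ as a $G$-set, and $\Gamma(\reg_{\widehat\pi_\tau^{-1}(t)})\cong\Ind_{\langle\tau\rangle}^G\mathbf 1_\tau$ as a $G$-representation.

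It then remains to decompose $\Ind_{\langle\tau\rangle}^G\mathbf 1_\tau$ into irreducibles. By Frobenius reciprocity the multiplicity of $W\in\irr(G)$ equals $\dim W^\tau$, the dimension of the $(+1)$-eigenspace of the involution $\tau$ on $W$. Running through the irreducible representations as listed in the odd-case classification: $\tau$ fixes $xy$, hence acts trivially on each $\chi((xy)^c)$, giving multiplicity $1$; it sends $A=x^m-y^m$ to $-A$, hence acts by $-1$ on each $\chi(A(xy)^c)$, giving multiplicity $0$; and on each two-dimensional $\rho(x^ay^b)=\langle x^ay^b,x^by^a\rangle$ it swaps the two spanning monomials, so its fixed space is one-dimensional, giving multiplicity $1$. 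This yields exactly the asserted isomorphism \eqref{eq:Gammatau}, and as a sanity check both sides have dimension $md=|G|/2$. The only delicate point is the reduction step, namely checking that for general $t$ the reduced fibre $\widehat\pi_\tau^{-1}(t)$ really is the reduced free orbit $\pi_\tau^{-1}(\nu_\tau(t))$; once that is in place, the representation-theoretic computation is immediate.
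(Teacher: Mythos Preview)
Your proposal is correct and follows essentially the same approach as the paper: reduce to a single fibre via \autoref{lem:flatfibrerep}, identify the fibre over a point $t\neq\bar 0$ with a reduced orbit $G\cdot v$ for $v\in V^\tau\setminus\{0\}$ with $G_v=\langle\tau\rangle$, and then decompose $\Ind_{\langle\tau\rangle}^G\mathbf 1_\tau$ via Frobenius reciprocity by computing $\dim W^\tau$. You supply more explicit justification than the paper does (finiteness of $\widehat\pi_\tau$, reducedness of the generic fibre, and the case-by-case computation of the $\tau$-fixed spaces), but the underlying argument is the same.
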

In other words, every irreducible $G$-representation occurs with multiplicity 1 in $\Gamma(\reg_{\widehat \pi_\tau^{-1}(t)})$ except for the characters $\chi(A(xy)^c)$.

\begin{proof}
Flatness of $\widehat \pi_\tau$ implies that the $G$-representation $\Gamma(\reg_{\widehat \pi_\tau^{-1}(t)})$ is independent of $t$; see \autoref{lem:flatfibrerep}.
For $t\neq \bar 0$, the fibre $\widehat \pi_\tau^{-1}(t)$ is a reduced orbits $G\cdot v\subset V$ with $v\in V^{\tau}\setminus\{0\}$. For such $v$, we have $G_v=\langle \tau\rangle$, hence
\[
\Gamma(\reg_{G\cdot v})\cong \Ind_{\langle \tau\rangle}^G\mathbf 1_\tau\,.
\]
By adjunction, $W\in \irr(G)$ occurs in the decomposition of $\Ind_{\langle \tau\rangle}^G\mathbf 1_\tau$ into irreducibles with multiplicity $\dim W^\tau$, which gives the right-hand side of \eqref{eq:Gammatau}.
\end{proof}

\begin{prop}[$e$ odd]\label{prop:Psiff}
 The functor $\Psi_\tau\colon \D(V^{(\tau)})\to \D_G(V)$ has a left and a right adjoint and is fully faithful.
\end{prop}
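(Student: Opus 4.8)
The plan is to mirror the proof of \autoref{prop:reflectionff}: obtain the adjoints from standard duality, and prove full faithfulness via a Fourier--Mukai criterion, the whole difficulty being localised at $\bar 0$.

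\emph{Adjoints.} Decompose $\Psi_\tau=\iota_{\tau*}\circ p_{Z*}\circ\widehat\pi_\tau^*\circ\triv$. The functor $\triv$ has the biadjoints $(\_)_G$ and $(\_)^G$, which coincide since $\Char\IK=0$; $\widehat\pi_\tau^*$ is exact with right adjoint $\widehat\pi_{\tau*}$, and since $\widehat\pi_\tau$ is finite and flat over the smooth $V^{(\tau)}$ it has a left adjoint as well; finally $p_{Z*}$ and $\iota_{\tau*}$ are pushforwards along proper morphisms, so Grothendieck duality produces both of their adjoints. Because $V$ and $V^{(\tau)}\cong\IA^1$ are smooth and the morphisms involved are finite, resp.\ closed immersions, all of these adjoints preserve the bounded coherent derived categories. (Equivalently: $\Psi_\tau$ is a Fourier--Mukai transform whose kernel $\reg_{\widehat Z_\tau}$ is a coherent sheaf on $V^{(\tau)}\times V$ which is finite and flat over $V^{(\tau)}$, so it has Fourier--Mukai adjoints; one checks that the kernel of $\Psi_\tau^R$ is the relative dualising sheaf $\omega_{\widehat Z_\tau/V^{(\tau)}}$, again coherent and flat over $V^{(\tau)}$.) We write $\Psi_\tau^R=(\_)^G\circ\widehat\pi_{\tau*}\circ p_Z^!\circ\iota_\tau^!$ for the right adjoint.

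\emph{Reduction to skyscrapers.} Since $\reg_{\widehat Z_\tau}$ is flat over $V^{(\tau)}$ with proper (finite) support over $V^{(\tau)}$, the standard criterion for full faithfulness of a Fourier--Mukai transform with flat kernel applies: $\Psi_\tau$ is fully faithful if and only if for all closed points $t_1,t_2\in V^{(\tau)}$ the natural map
\[
 \Ext^\bullet_{\D(V^{(\tau)})}(\reg_{t_1},\reg_{t_2})\longrightarrow\Ext^\bullet_{\D_G(V)}\bigl(\reg_{\widehat\pi_\tau^{-1}(t_1)},\reg_{\widehat\pi_\tau^{-1}(t_2)}\bigr)
\]
is an isomorphism, where we have used \eqref{eq:psiskyimage}. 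For $t_1\neq t_2$ the two fat points have disjoint supports, so both sides vanish. For $t_1=t_2=t\neq\bar 0$ the fibre $\widehat\pi_\tau^{-1}(t)$ is the reduced free $G$-orbit of a point $v\in V^\tau\smallsetminus\{0\}$, so $\reg_{\widehat\pi_\tau^{-1}(t)}\cong\Ind_{\langle\tau\rangle}^G\reg_v$ by \autoref{lem:Indcriterion} and, by adjunction, the statement reduces to $\Ext^\bullet_{\langle\tau\rangle}(\reg_v,\reg_v)\cong\IK[0]\oplus\IK[-1]$, which holds because $\langle\tau\rangle$ fixes $v\in V^\tau$ and acts on $T_vV\cong\IK^2$ as $\mathbf 1_\tau\oplus\alt_\tau$.

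\emph{The computation at $\bar 0$ --- the main obstacle.} The only remaining case is $t_1=t_2=\bar 0$, where one must prove
\[
 \Ext^\bullet_{\D_G(V)}\bigl(\reg_{\widehat\pi_\tau^{-1}(\bar 0)},\reg_{\widehat\pi_\tau^{-1}(\bar 0)}\bigr)\cong\IK[0]\oplus\IK[-1],
\]
with the degree-$0$ part spanned by the identity. This is the heart of the matter, and where I expect the real labour: the scheme $\widehat\pi_\tau^{-1}(\bar 0)$ is a fat point supported at $0\in V$ whose length reflects the cusp singularity of $Z_\tau/G\cong\Spec\IK[u^2,u^e]$, and $p_Z$ fails to be an isomorphism over $\bar 0$, so the argument of the previous paragraph does not transport. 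The computation can be carried out with the machinery already assembled: by \autoref{lem:equifilt} together with \autoref{lem:xiinva}, the sheaf $\reg_{\widehat\pi_\tau^{-1}(\bar 0)}$ carries a $G$-equivariant filtration whose graded pieces are the twisted skyscrapers $\reg_0\otimes W$ with $W$ running \emph{exactly once} over $\irr(G)\smallsetminus\{\chi(A(xy)^c)\mid 0\le c<d\}$; one then evaluates the two self-$\Ext$ groups by running the long exact sequences of this filtration against themselves, using the complete table of non-vanishing $\Ext^1_G$ and $\Ext^2_G$ between twisted skyscrapers from \autoref{prop:skyExt} (conveniently read off from the McKay quiver \eqref{eq:McKayodd}) and equivariant Serre duality \eqref{eq:Serre}. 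The $\Ext^1$- and $\Ext^2$-contributions must pair off so that precisely the identity survives in degree $0$ and a single class in degree $1$; I expect this cancellation, rather than any conceptual difficulty, to be the genuine content. A recurring point making it work is that the sign character $\chi(A)$ --- by which $\langle\tau\rangle$ acts on $\reg_{Z_\tau}(Z_\tau)$, as one sees from $Z_\tau=\Van(A)$ and $\tau(A)=-A$ --- does not occur in $\Gamma(\reg_{\widehat\pi_\tau^{-1}(t)})$ by \autoref{lem:xiinva}; the same remark, combined with the $G$-equivariant Koszul resolution $0\to\reg_V\otimes\chi(A)\xrightarrow{\,A\,}\reg_V\to\reg_{Z_\tau}\to 0$ and the short exact sequence $0\to\reg_{Z_\tau}\to p_{Z*}\reg_{\widehat Z_\tau}\to Q\to 0$ with $Q$ of finite length at $0$, is what one uses should one wish instead to verify full faithfulness directly on the structure sheaf $\reg_{V^{(\tau)}}$ as well.
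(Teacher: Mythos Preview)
Your strategy---adjoints via Grothendieck duality, full faithfulness via a skyscraper criterion---is exactly the paper's. The difference is that you are using a needlessly strong criterion, and this is what makes the computation at $\bar 0$ look like ``real labour'' when in fact it is a two-line check.

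The paper invokes the Bondal--Orlov criterion \cite[Thm.\ 1.1]{BO--ffcrit} in the form that only requires, for each $t$, that $\Hom_G(\Psi_\tau(\reg_t),\Psi_\tau(\reg_t))\cong\IK$ and that $\Ext^i_G$ vanishes for $i\notin\{0,\dots,\dim V^{(\tau)}\}=\{0,1\}$, plus orthogonality for distinct points. It does \emph{not} require matching $\Ext^1$, so you never have to verify $\Ext^1_G(\reg_\xi,\reg_\xi)\cong\IK$. With this weaker criterion, the case $t=\bar 0$ becomes immediate: writing $\xi=\widehat\pi_\tau^{-1}(\bar 0)$, one has $\Hom_G(\reg_\xi,\reg_\xi)\cong\Gamma(\reg_\xi)^G\cong\IK$ because, by \autoref{lem:xiinva}, the trivial character occurs exactly once in $\Gamma(\reg_\xi)$; and by Serre duality $\Ext^2_G(\reg_\xi,\reg_\xi)\cong\Hom_G(\reg_\xi,\reg_\xi(Axy))^\vee\cong\Gamma(\reg_\xi(Axy))^{G\,\vee}=0$, because $\chi(Axy)^\vee\cong\chi(A(xy)^{d-1})$ does not occur in $\Gamma(\reg_\xi)$ (again \autoref{lem:xiinva}). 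No filtration bookkeeping, no cancellation argument.

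Two minor remarks: the orbit $G\cdot v$ for $v\in V^\tau\setminus\{0\}$ is not free (the stabiliser is $\langle\tau\rangle$), though your subsequent use of $\Ind_{\langle\tau\rangle}^G$ is correct; and your treatment of $t\neq\bar 0$ is in fact more than the paper does---the paper handles all $t$ uniformly via the $\Hom$/$\Ext^2$ computation just described, without separating the generic point.
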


\begin{proof}
The argument for the existence of the adjoints is the same as in the proof of \autoref{prop:reflectionff}.
For the fully faithfulness, we use the Bondal--Orlov criterion \cite[Thm.\ 1.1]{BO--ffcrit}, \cite[Thm.\ 1.2]{BPol} which allows to check fully faithfulness of Fourier--Mukai transforms on skyscraper sheaves of points. Concretely, the functor $\Psi_\tau$ is fully faithful if and only if the following three conditions hold:
\begin{enumerate}
 \item\label{BO1} $\Hom_G(\Psi_\tau(\reg_t), \Psi_\tau(\reg_t))\cong \IK$ for every $t\in V^{(\tau)}$,
\item\label{BO2} $\Ext^i_G(\Psi_\tau(\reg_t), \Psi_\tau(\reg_t))\cong 0$ for every $t\in V^{(\tau)}$ and every $i\notin\{0,1=\dim V^{(\tau)}\}$,
\item\label{BO3} $\Ext^*_G(\Psi_\tau(\reg_s), \Psi_\tau(\reg_t))\cong 0$ for every $s,t\in V^{(\tau)}$ with $s\neq t$.
 \end{enumerate}
Recall that $\Psi_\tau(\reg_t)\cong \reg_{\widehat \pi_\tau^{-1}(t)}$; see \eqref{eq:psiskyimage}. We have
\[
 \Hom_G(\reg_{\widehat \pi_\tau^{-1}(t)}, \reg_{\widehat \pi_\tau^{-1}(t)})\cong \Gamma(\reg_{\widehat \pi_\tau^{-1}(t)})^G\cong \IC
\]
where the last isomorphism is due to \autoref{lem:xiinva}. This confirms Condition \ref{BO1}. We also have
\[
 \Hom_G(\reg_{\widehat \pi_\tau^{-1}(t)}, \reg_{\widehat \pi_\tau^{-1}(t)}(Axy))\cong \Gamma(\reg_{\widehat \pi_\tau^{-1}(t)}(Axy))^G\,.
\]
The invariants on the right side vanish because $\chi(Axy)^\vee\cong \chi(A(xy)^{d-1})$ does not occur in \eqref{eq:Gammatau}. By Serre duality \eqref{eq:Serre}, this implies
$\Ext^2_G(\reg_{\widehat \pi_\tau^{-1}(t)}, \reg_{\widehat \pi_\tau^{-1}(t)})\cong 0$ confirming Condition \ref{BO2}.
Condition \ref{BO3} holds because for $s,t\in V^{(\tau)}$ with $s\neq t$, the supports of $\Psi_\tau(\reg_s)$ and $\Psi_\tau(\reg_t)$ are different $G$-orbits in $V$.
 \end{proof}

  \begin{remark}
In the case that $e$ is even, we can still define $\widehat Z_\tau=\bigl(V^{(\tau)}\times_{V/ G} Z_\tau \bigr)_{\mathsf{red}}$
as the reduced fibre product and $\Psi_\tau$ as the Fourier--Mukai transform with kernel $\widehat Z_\tau$. However, this gives nothing new, because the projection $\widehat Z_\tau\to Z_\tau$ is an isomorphism for $e$ even. Hence, $\Psi_\tau \cong \Phi_\tau$ agrees with the fully faithful functor of \autoref{prop:Phitau}.
 \end{remark}

\begin{prop}[$e$ odd]\label{prop:sodABodd}
We have a semi-orthogonal decomposition
\begin{equation}\label{eq:sodABodd}
\bigl\langle \cA_\gamma(A(xy)), \cA_\gamma(A(xy)^2)\dots, \cA_\gamma (A(xy)^{d-1}), \pi^*(\D(V/G))(A), \cB_\tau(A) \bigr\rangle
\end{equation}
of some admissible subcategory of $\D_G(V)$ where $\cB_\tau:= \Psi_\tau\bigl(\D(V^{(\tau)})\bigr)$.
\end{prop}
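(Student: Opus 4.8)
The plan is to append $\cB_\tau(A)$ on the right of the already established sequence. The pieces of \eqref{eq:sodABodd} other than $\cB_\tau(A)$ are exactly the image of the semi-orthogonal sequence \eqref{eq:Asod} under the autoequivalence $(\_)\otimes\chi(A)$, so by \autoref{prop:gammasod} they form a semi-orthogonal decomposition of an admissible subcategory of $\D_G(V)$. By \autoref{prop:Psiff} and the fact that $(\_)\otimes\chi(A)$ is an autoequivalence, $\cB_\tau(A)=\Psi_\tau(\D(V^{(\tau)}))\otimes\chi(A)$ is admissible. Hence it remains only to show that $\Hom^*_G(\cB_\tau(A),\cA_\gamma(A(xy)^c))=0$ for $c=1,\dots,d-1$ and $\Hom^*_G(\cB_\tau(A),\pi^*(\D(V/G))(A))=0$. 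Cancelling the common twist by $\chi(A)$, this is equivalent to
\[
\Hom^*_G(\cB_\tau,\cA_\gamma((xy)^c))=0\ \ (c=1,\dots,d-1),\qquad \Hom^*_G(\cB_\tau,\pi^*(\D(V/G)))=0.
\]

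Next I would reduce to generators. Since $\D(V^{(\tau)})=\D(\IA^1)$ is classically generated by $\reg_{\IA^1}$ and $\Psi_\tau$ is exact, $\cB_\tau$ is classically generated by the single object $P:=\Psi_\tau(\reg_{V^{(\tau)}})$; by the projection-formula description of $\Psi_\tau$ preceding \eqref{eq:psiskyimage}, $P\cong\iota_{\tau*}p_{Z*}\reg_{\widehat Z_\tau}$, a $G$-equivariant coherent sheaf on $V$ supported on the curve $Z_\tau$. Likewise $\cA_\gamma((xy)^c)$ is classically generated by $\reg_{Z_\gamma}((xy)^c)$ (as in the proof of \autoref{prop:gammasod}) and $\pi^*(\D(V/G))$ by $\reg_V$. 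So it suffices to prove
\[
\Ext^*_G(P,\reg_{Z_\gamma}((xy)^c))=0\ \ (c=1,\dots,d-1),\qquad \Ext^*_G(P,\reg_V)=0.
\]
For this, note that $\widehat Z_\tau$ is a reduced curve, hence Cohen--Macaulay, and $p_Z$, $\iota_\tau$ are finite, so $P$ is a Cohen--Macaulay $\IK[x,y]$-module of dimension $1$; as $\IK[x,y]$ is regular of dimension $2$, $P$ has projective dimension $1$ and admits a two-term $G$-equivariant locally free resolution $0\to\reg_V\otimes A_1\xrightarrow{\varphi}\reg_V\otimes A_0\to P\to 0$ with $A_0,A_1$ $G$-representations of equal dimension. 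By \eqref{eq:Extres}, $\Ext^i_G(P,F)$ is the $i$-th cohomology of $\bigl[\,\Gamma(F)\otimes A_0^\vee\xrightarrow{\varphi^\vee}\Gamma(F)\otimes A_1^\vee\,\bigr]^G$, so only $i\in\{0,1\}$ can contribute. For $F=\reg_V$ one gets $\Ext^0_G(P,\reg_V)=\Hom_{\IK[x,y]}(P,\IK[x,y])^G=0$ since $P$ is torsion, and $\Ext^1_G(P,\reg_V)=\bigl(\Ext^1_{\IK[x,y]}(P,\IK[x,y])\bigr)^G$; by Grothendieck duality for the finite morphisms $\iota_\tau$ and $p_Z$ (using that $Z_\tau$ is a hypersurface, hence Gorenstein, and that $P$ is a maximal Cohen--Macaulay $\reg_{Z_\tau}$-module) this $G$-module is, up to an explicit twist by $\chi(A)$ — the character of the defining equation $A=x^m-y^m$ of $Z_\tau$ — and by a power of $\chi(xy)$, isomorphic to $\Gamma(\widehat Z_\tau,\omega_{\widehat Z_\tau})$. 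For $F=\reg_{Z_\gamma}((xy)^c)$, the intersection $Z_\tau\cap Z_\gamma$ is scheme-theoretically the Artinian scheme $\Spec\IK[x,y]/(xy,\,x^m-y^m)$ supported at the origin, so the cohomology sheaves of $[\,F\otimes A_0^\vee\to F\otimes A_1^\vee\,]$ have finite length; thus $\Ext^*_G(P,\reg_{Z_\gamma}((xy)^c))$ is finite-dimensional and (as an alternative to the resolution) one may even invoke equivariant Serre duality, which is available here since these $\Ext$-complexes have zero-dimensional support.

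The key input for the vanishing is \autoref{lem:xiinva}: the $G$-representation $\Gamma(\reg_{\widehat\pi_\tau^{-1}(t)})$ is independent of $t$ and omits every character $\chi(A(xy)^c)$. Since $\widehat\pi_\tau$ is flat and finite, $\Gamma(\reg_{\widehat Z_\tau})$ is a finite free $\IK[u]=\reg(V^{(\tau)})$-module on which $G$ acts $\IK[u]$-linearly, so $\Gamma(\reg_{\widehat Z_\tau})\cong\Gamma(\reg_{\widehat\pi_\tau^{-1}(\bar 0)})\otimes_\IK\IK[u]$ as $G$-representations; hence the $G$-isotypic types occurring in $\reg_{\widehat Z_\tau}$, and therefore in $P$, are exactly $\irr(G)\setminus\{\chi(A(xy)^c)\}$. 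Because the dual modules above are $\chi(A)$-twisted relative to $P$, the absence of the $\chi(A(xy)^c)$ in $P$ turns into the absence of $\chi(1)$ (resp.\ of the $\chi((xy)^{c'})$) in the relevant dual, which is precisely what makes the $G$-invariants — hence all of the $\Ext$-groups above — vanish. The main obstacle is the step where one needs a sufficiently explicit hold on $P=\Psi_\tau(\reg_{V^{(\tau)}})$, equivalently on $\reg_{\widehat Z_\tau}\cong\bigl(\IK[u]\otimes_{\IK[a,b]}\IK[x,y]/(x^m-y^m)\bigr)_{\mathrm{red}}$ (with $a=(xy)^d$, $b=x^m+y^m$, $a\mapsto u^2$, $b\mapsto 2u^e$), together with its equivariant resolution $\varphi$ — or, what amounts to the same, on the $G$-equivariant structure of $\Ext^1_{\IK[x,y]}(P,\IK[x,y])$ and of $\omega_{\widehat Z_\tau}$. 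In the even case (\autoref{prop:sodAB}) this was painless because $\cB_\tau$ was generated by the complete-intersection sheaf $\reg_{Z_\tau}$ with its one-step Koszul resolution; here $Z_\tau=\bigcup_iV^{\tilde\zeta^i\tau}$ is a union of $m$ lines and $P$ is the pushforward of the partial normalization $\widehat Z_\tau$, so genuine extra work is required. Once that description is in hand, the remaining bookkeeping — checking that the offending characters do not appear — is a finite computation of exactly the same flavour as in the proof of \autoref{prop:sodAB}, using \autoref{lem:tensor} and \autoref{prop:skyExt}.
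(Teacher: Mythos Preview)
Your overall reduction is correct, and your observation that $\Gamma(\reg_{\widehat Z_\tau})\cong\Gamma(\reg_{\widehat\pi_\tau^{-1}(\bar 0)})\otimes_\IK\IK[u]$ as $G$-representations is valid and useful. But the step you yourself flag as ``the main obstacle'' --- getting an explicit enough hold on $P=\iota_{\tau*}p_{Z*}\reg_{\widehat Z_\tau}$, its equivariant resolution, or equivalently on the $G$-structure of $\Ext^1_{\IK[x,y]}(P,\IK[x,y])$ --- is a genuine gap that you do not close. Your heuristic that ``the dual modules are $\chi(A)$-twisted relative to $P$'' would amount to knowing that $p_{Z*}\reg_{\widehat Z_\tau}$ is self-dual over $\reg_{Z_\tau}$ up to a character twist, i.e.\ that $\widehat Z_\tau$ is Gorenstein with an explicitly controlled dualising sheaf; this is not automatic and would itself require the computation you postpone. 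Also, your suggestion to invoke equivariant Serre duality for $\Ext^*_G(P,\reg_{Z_\gamma}((xy)^c))$ does not work as stated: the duality \eqref{eq:Serre} needs one of the two \emph{sheaves} to have zero-dimensional support, not merely the $\Ext$-groups to be finite-dimensional, and here both $P$ and $\reg_{Z_\gamma}$ are supported on curves.

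The paper sidesteps all of this by replacing your single one-dimensional generator $P$ with the spanning class $\bigl\{\Psi_\tau(\reg_t)(A)\cong\reg_{\widehat\pi_\tau^{-1}(t)}(A)\mid t\in V^{(\tau)}\bigr\}$ of $\cB_\tau(A)$. This is legitimate because showing $\Hom^*_G(\cB_\tau(A),F)=0$ for fixed $F$ is, by adjunction, the same as showing $(\Psi_\tau(A))^R(F)=0$ in $\D(V^{(\tau)})$, and vanishing of an object \emph{can} be tested on the spanning class $\{\reg_t\}$. The payoff is that every $\reg_{\widehat\pi_\tau^{-1}(t)}$ has zero-dimensional support, so now Serre duality \eqref{eq:Serre} is available and reduces the orthogonality to $\reg_V(A)$ immediately to the vanishing of $\Gamma(\reg_{\widehat\pi_\tau^{-1}(t)}(Axy))^G$, which follows from \autoref{lem:xiinva}. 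For orthogonality to $\reg_{Z_\gamma}(A(xy)^c)$: when $t\neq\bar 0$ the supports are disjoint; when $t=\bar 0$ one applies Serre duality and then resolves $\reg_{Z_\gamma}$ by the Koszul sequence \eqref{eq:gammaKoszul}, landing again on a vanishing of $G$-invariants controlled by \autoref{lem:xiinva}. In short, using many small zero-dimensional test objects instead of one curve-supported generator trades your unresolved structural analysis of the partial normalisation for a routine computation with skyscraper-type sheaves.
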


\begin{proof}
The left part of \eqref{eq:sodABodd} is just $\_\otimes \chi(A)$ applied to the semi-orthogonal decomposition \eqref{eq:Asod}. By
\autoref{prop:Psiff}, all that is left to show is that $\cB_\tau(A)$ is left-orthogonal to the other pieces. We will check that on the spanning  class
\begin{equation}\label{eq:Bspanning}
\bigl\{ \Psi_\tau(\reg_t)(A)\cong \reg_{\widehat \pi_\tau^{-1}(t)}(A) \mid t\in V^{(\tau)} \bigr\}
\end{equation}
 of $\cB_\tau(A)$, and the generators $\reg_{Z_\gamma}(A(xy)^c)$ and $\reg_V(A)$ of the other pieces.
By Serre duality,
\[
\Ext^*_G\bigl(\reg_{\widehat \pi_\tau^{-1}(t)}(A), \reg_V(A)\bigr)\cong   \Ext^*_G\bigl(\reg_{\widehat \pi_\tau^{-1}(t)}, \reg_V\bigr)\cong \Ext^*_G\bigl(\reg_V,\reg_{\widehat \pi_\tau^{-1}(t)}(Axy)\bigr)^\vee[-2]\,.
\]
We have $\Ext^*_G\bigl(\reg_V,\reg_{\widehat \pi_\tau^{-1}(t)}(Axy)\bigr)\cong \Gamma\bigl(\reg_{\widehat \pi_\tau^{-1}(t)}(Axy)\bigr)^G[0]$, which vanishes by \autoref{lem:xiinva}. Hence, we also have the desired vanishing of $\Ext^*_G\bigl(\reg_{\widehat \pi_\tau^{-1}(t)}(A), \reg_V(A)\bigr)$.

For $t\neq \bar 0$, the sheaves $\reg_{\widehat \pi_\tau^{-1}(t)}(A)$ and $\reg_{Z_\gamma}(A(xy)^c)$ have disjoint support, hence are orthogonal in $\D_G(V)$.
So now, let $t=\bar 0$. To ease the notation, we write
$\xi:=\widehat \pi_\tau^{-1}(\bar 0)\subset V$. It is a non-reduced subscheme concentrated in $0\in V$.
We need to check the vanishing of
\begin{align*}
 \Ext^*_G\bigl(\reg_\xi(A), \reg_{Z_\gamma}(A(xy)^c)\bigr)\cong \Ext^*_G\bigl(\reg_\xi, \reg_{Z_\gamma}((xy)^c)\bigr)&\cong \Ext^*_G\bigl( \reg_{Z_\gamma}((xy)^c), \reg_\xi(Axy)\bigr)^\vee[-2]\\& \cong \Ext^*_G\bigl( \reg_{Z_\gamma}, \reg_\xi(A(xy)^{d+1-c})\bigr)^\vee[-2]
\end{align*}
where the second isomorphism is again Serre duality. To ease the notation, we set $k:=d+1-c$. To compute $\Ext^*_G\bigl( \reg_{Z_\gamma}, \reg_\xi(A(xy)^{k})\bigr)$ we apply $\Hom\bigl(\_, \reg_{\xi}(A(xy)^k)\bigl)$ to the resultion \eqref{eq:tauKoszul}. This gives the two term complex
\[
\reg_\xi(A(xy)^k)\xrightarrow{xy} \reg_\xi(A(xy)^{k-1})
\]
whose cohomologies we abbreviate by $\cH^0:=\ker(xy)$ and $\cH^1:=\coker(xy)$.
Then $\Gamma(\cH^0)$ is a subrepresentation of $\Gamma(\reg_\xi)(A(xy)^k)$ and $\Gamma(\cH^1)$ is a quotient of the $G$-representation $\Gamma(\reg_\xi)(A(xy)^{k-1})$. As, by \autoref{lem:xiinva}, no character of the form $\chi(A(xy)^c)$ occurs in $\Gamma(\reg_\xi)$, we have
$\Gamma(\cH^0)^G\cong 0\cong \Gamma(\cH^1)^G$, which by \eqref{eq:Extres} gives the desired vanishing.
\end{proof}

Before we can provide a spanning class of the orthogonal complement of
\eqref{eq:sodABodd} in analogy to \autoref{prop:Bspanningeven}, we need a more concrete description of $\xi=\widehat\pi_\tau^{-1}(\bar 0)\subset V$.

\begin{lemma}\label{lem:xi}
The vanishing ideal of $\xi\in V$ is $I(\xi)=\bigl((xy)^d, x^{n^+}, y^{n^+}\bigr)$
\end{lemma}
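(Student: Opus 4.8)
The plan is to compute the vanishing ideal of $\xi = \widehat\pi_\tau^{-1}(\bar 0)$ directly, using that $\widehat Z_\tau$ is by definition the reduced fibre product $V^{(\tau)} \times_{V/G} Z_\tau$ and that $\bar 0 \in V^{(\tau)}$ is the point $u = 0$. First I would unwind the fibre product: $\xi$ is the preimage in $Z_\tau$ of $\pi(0) \in V/G$ along the composite $Z_\tau \hookrightarrow V \xrightarrow{\pi} V/G$, intersected with the condition cutting out $\bar 0$ inside $V^{(\tau)}$, then everything made reduced. Concretely, on coordinate rings, $\reg(V^{(\tau)}) \cong \IK[u]$ with $u = t^d = [x^d] = [y^d]$, and the map $\nu_\tau$ sends the invariants $(xy)^d \mapsto u^2$ and $\tfrac12(x^m+y^m) \mapsto u^e$ (as recorded in \eqref{eq:nudescription}). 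The point $\bar 0$ is $(u) \subset \IK[u]$, so its preimage along $\pi\circ\iota_\tau$ is cut out in $\reg(Z_\tau) = \IK[x,y]/(x^m - y^m)$ by the ideal generated by the images of $(xy)^d$ and $x^m + y^m$, i.e.\ by $(xy)^d$ and $x^m + y^m$. Hence, lifting to $\IK[x,y]$, the scheme-theoretic preimage is $\Van\bigl((x^m - y^m,\ (xy)^d,\ x^m + y^m)\bigr) = \Van\bigl((x^m, y^m, (xy)^d)\bigr)$, and $\xi$ is the reduction of this.

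Next I would identify the reduced structure. The ideal $J := (x^m, y^m, (xy)^d)$ has radical supported at the origin, so I must compute $\sqrt{J}$. Since $n^+ = \tfrac{m+d}{2}$, we have $2n^+ = m + d$, so $x^{n^+} \cdot y^{n^+} = (xy)^{n^+}$; I claim $\bigl((xy)^d, x^{n^+}, y^{n^+}\bigr)$ is exactly $\sqrt J$. For the inclusion $\sqrt{J} \supseteq (xy)^d$: already $(xy)^d \in J$. For $x^{n^+} \in \sqrt J$: note $(x^{n^+})^2 = x^{m+d} = x^m \cdot x^d$, and $x^m \in J$, so $x^{2n^+} \in J$, giving $x^{n^+} \in \sqrt J$; symmetrically for $y^{n^+}$. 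This shows $\bigl((xy)^d, x^{n^+}, y^{n^+}\bigr) \subseteq \sqrt J$. For the reverse inclusion, I would show $\IK[x,y]/\bigl((xy)^d, x^{n^+}, y^{n^+}\bigr)$ is already reduced — equivalently, that this monomial ideal is radical. A monomial ideal is radical iff it is generated by squarefree monomials \emph{after} a change of variables; more directly here, one checks that the ideal $\bigl((xy)^d, x^{n^+}, y^{n^+}\bigr)$ equals its own radical by an explicit exponent argument: a monomial $x^a y^b$ lies in the radical iff some power lies in the ideal iff ($a \ge n^+$ or $b \ge n^+$ or $\min(a,b) \ge d$), and one verifies the same three-way disjunction already forces $x^ay^b$ itself into $\bigl((xy)^d, x^{n^+}, y^{n^+}\bigr)$. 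This pins down $I(\xi) = \bigl((xy)^d, x^{n^+}, y^{n^+}\bigr)$.

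As a consistency check — which also doubles as an alternative route — I would verify that $\dim_\IK \IK[x,y]/\bigl((xy)^d, x^{n^+}, y^{n^+}\bigr) = 2md = |G|$, matching the fact (from \autoref{lem:xiinva} and \eqref{eq:Gammatau}) that $\Gamma(\reg_\xi) \cong \bigoplus_{c=0}^{d-1}\chi((xy)^c) \oplus \bigoplus_{\dim\rho = 2}\rho$ has dimension $d + 2 \cdot \tfrac{(m-1)d}{2} = md$... here I should be careful: the representation in \eqref{eq:Gammatau} has dimension $d + 2\cdot\frac{(m-1)d}{2} = d + (m-1)d = md$, so the target dimension is $md$, not $2md$. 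So the count to verify is $\dim_\IK \IK[x,y]/\bigl((xy)^d, x^{n^+}, y^{n^+}\bigr) = md$. Counting lattice points $(a,b)$ with $a < n^+$, $b < n^+$, and $\min(a,b) < d$: the full square has $(n^+)^2$ points, and we remove those with $a,b \ge d$, i.e.\ $(n^+ - d)^2 = (n^-)^2$ points, where $n^- = \tfrac{m-d}{2}$. Thus the count is $(n^+)^2 - (n^-)^2 = (n^+ + n^-)(n^+ - n^-) = m \cdot d$, as required. This confirms the ideal, and since $\reg_\xi$ and $\reg_{\Spec \IK[x,y]/I(\xi)}$ have the same (supported, finite) length, it also reconfirms reducedness is not needed separately once the $\IK$-dimension matches the length of the non-reduced $J$-quotient and $\bigl((xy)^d, x^{n^+}, y^{n^+}\bigr) \subseteq \sqrt{J}$... but to be safe I would still include the direct monomial argument above. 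The only real obstacle is the bookkeeping in the radical computation; everything else is a short unwinding of the fibre-product definition and a lattice-point count.
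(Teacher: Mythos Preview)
Your argument has a genuine conceptual error. You write that the scheme-theoretic preimage (before reducing) is $\Van\bigl((x^m,y^m,(xy)^d)\bigr)$ and then that ``$\xi$ is the reduction of this.'' That is not what $\xi$ is. By definition $\xi=\widehat\pi_\tau^{-1}(\bar 0)$ is the fibre over $\bar 0$ of the \emph{already reduced} scheme $\widehat Z_\tau=(V^{(\tau)}\times_{V/G}Z_\tau)_{\mathsf{red}}$. Reduction and taking fibres do not commute: the fibre of a reduced scheme need not be reduced, and indeed $\xi$ is a fat point of length $md$, not a reduced point. If you actually compute the radical of $J'=(x^m,y^m,(xy)^d)$, you get $\sqrt{J'}=(x,y)$, since $J'$ is $\fm$-primary. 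Your subsequent claim that $\bigl((xy)^d,x^{n^+},y^{n^+}\bigr)$ is a radical ideal is also false for the same reason: its quotient is an Artinian local ring in which $x$ and $y$ are nilpotent. The monomial criterion you invoke (``$x^ay^b$ in the radical iff $a\ge n^+$ or $b\ge n^+$ or $\min(a,b)\ge d$'') is wrong; for instance $x$ itself satisfies $x^{n^+}\in J'$, so $x\in\sqrt{J'}$.

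What you are missing is the inclusion $\bigl((xy)^d,x^{n^+},y^{n^+}\bigr)\subset I(\xi)$, and this requires finding relations in $I(\widehat Z_\tau)\subset\IK[u,x,y]$ that are \emph{not} in the ideal of the non-reduced fibre product but lie in its radical. The paper does exactly this: it exhibits $x^{n^+}-uy^{n^-}$ and $y^{n^+}-ux^{n^-}$ in $I(\widehat Z_\tau)$ by checking they hold on the generic (hence reduced) locus where $u$ is invertible, via the identity $u=u^e/(u^2)^k=x^m/(xy)^{n^-}=x^{n^+}/y^{n^-}$ (with $e=2k+1$). Setting $u=0$ then gives $x^{n^+},y^{n^+}\in I(\xi)$, while $(xy)^d\in I(\xi)$ is immediate. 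Your dimension count $(n^+)^2-(n^-)^2=md$ is correct and is precisely the step the paper uses to turn the inclusion into an equality; but without a valid argument for the inclusion, the dimension count alone proves nothing.
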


\begin{proof}
Let $I(\hZ_{\tau})$ be the vanishing ideal of $\hZ_\tau\subset V^{(\tau)}\times V=\Spec\IK[u,x,y]$. We check that
\begin{equation}\label{eq:ideal}
I(\widehat Z_{\tau})\supset \bigl(x^m-u^e,y^m-u^e, (xy)^d-u^{2}, x^{n^+}-uy^{n^-}, y^{n^+}-ux^{n^-}   \bigr)\,.
\end{equation}
That the three relations $x^m=u^e$, $y^m=u^e$, $(xy)^d=u^2$ hold on the fibre product $V^{(\tau)}\times_{V/G}V$ follows by the description \eqref{eq:nudescription} of $\nu_\tau$. As $\widehat Z_{\tau}$ is defined as the reduced fibre product, it is sufficient to check that the other two relations $x^{n^+}=uy^{n^-}$ and $y^{n^+}-ux^{n^-}$ hold generically on $Z_\tau$, so we may assume that $u$ is invertible. We write $e=2k+1$ which gives $n^-=kd$ and $n^+=(k+1)d$, and compute
\begin{equation}\label{eq:u}
 u=\frac{u^e}{(u^2)^k}=\frac{x^m}{(xy)^{dk}}=\frac{x^m}{x^{n^-}y^{n^-}}=\frac{x^{n^+}}{y^{n^-}}\,,
\end{equation}
which gives $x^{n^+}=uy^{n^-}$. When we replace $u^e$ by $y^m$ instead of by $x^m$ in the second step of \eqref{eq:u}, we get
$y^{n^+}-ux^{n^-}$.

Actually, there should be equality in \eqref{eq:ideal}, but this is harder to check than the inclusion and we can avoid it with the following argument.
Setting $u=0$ in \eqref{eq:ideal} gives
\begin{equation}\label{eq:inclusion}
I(\xi)\supset \bigl((xy)^d, x^{n^+}, y^{n^+}\bigr)=:J
\end{equation}
A $\IK$-base of $\IK[x,y]/J$ is given by
\[
 \bigl\{x^ay^b\mid \min\{a,b\}<d\,,\, \max\{a,b\}<n^+ \bigr\}\,.
\]
Counting the members of the base gives $\dim (\IK[x,y]/J)=md$.
From the proof of \autoref{lem:xiinva}, we see that this agrees with
\[
 \dim\Gamma(\reg_\xi)=\dim(\Ind_{\langle \tau\rangle}^G\mathbf 1)=\frac{|G|}{|\langle \tau\rangle|}=\frac{2md}2=md\,.
\]
Hence, the inclusion \eqref{eq:inclusion} must be an equality.
\end{proof}

\begin{lemma}\label{lem:xiExtvanish}
 We have $\Ext^*_G(\reg_\xi(A), \reg_0\otimes W)\cong 0$ for all
 \[
W\in K:=\irr(G)\setminus \bigl(\{\chi(A(xy)^c)\mid 0\le c<d\}\cup\{\rho(x^{n^-})\}\bigr)\,.
 \]
\end{lemma}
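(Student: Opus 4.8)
The plan is to exhibit an explicit $G$-equivariant free resolution of $\reg_\xi$ and run it through \eqref{eq:Extres}. First I would make a preliminary reduction: since $\chi(A)$ has order $2$ we have $\chi(A)^\vee\cong\chi(A)$, hence
\[
\Ext^*_G\bigl(\reg_\xi(A),\reg_0\otimes W\bigr)\cong\Ext^*_G\bigl(\reg_\xi,\reg_0\otimes(W\otimes\chi(A))\bigr),
\]
so it suffices to show $\Ext^*_G(\reg_\xi,\reg_0\otimes W')=0$ unless $W'\in\{\chi(1),\rho(x^{n^-})\}$. Indeed $\chi(1)\otimes\chi(A)=\chi(A)=\chi(A(xy)^0)$ and $\rho(x^{n^-})\otimes\chi(A)\cong\rho(x^{n^-})$ by \eqref{eq:rhoA}, so the resulting exceptional set $\{\chi(A),\rho(x^{n^-})\}$ for $W$ lies inside $\irr(G)\setminus K$.

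Next I would build the resolution. By \autoref{lem:xi}, $I(\xi)=\bigl((xy)^d,x^{n^+},y^{n^+}\bigr)$, and since $n^+-d=n^-$ a monomial-basis bookkeeping argument gives the colon ideal $\bigl((x^{n^+},y^{n^+}):(xy)^d\bigr)=(x^{n^-},y^{n^-})$. Writing $\eta^\pm:=\Spec\IK[x,y]/(x^{n^\pm},y^{n^\pm})$, this yields a short exact sequence of $G$-equivariant sheaves $0\to\reg_{\eta^-}\xrightarrow{\,\cdot(xy)^d\,}\reg_{\eta^+}\to\reg_\xi\to0$ (the map is $G$-equivariant because $(xy)^d$ is invariant), so $\reg_\xi\cong\cone(\reg_{\eta^-}\to\reg_{\eta^+})$. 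Both $\reg_{\eta^\pm}$ are complete intersections with $G$-equivariant Koszul resolutions $0\to\reg_V\otimes\chi(A)\to\reg_V\otimes\rho(x^{n^\pm})\to\reg_V\to\reg_{\eta^\pm}\to0$, using $\langle x^{n^\pm},y^{n^\pm}\rangle=\rho(x^{n^\pm})$ and, since $n^\pm\equiv0\bmod d$ forces $H$ to act trivially on the determinant line while $\tau$ acts by $-1$, $\det\rho(x^{n^\pm})\cong\chi(A)$. A short computation shows that the chain map lifting $\cdot(xy)^d$ to these Koszul resolutions has identity degree-$2$ component; cancelling this acyclic summand from the mapping cone would produce the $G$-equivariant minimal free resolution
\[
0\to\reg_V\otimes\rho(x^{n^-})\to\reg_V\otimes\bigl(\chi(1)\oplus\rho(x^{n^+})\bigr)\to\reg_V\to\reg_\xi\to0,
\]
where $\rho(x^{n^+})\cong\rho(x^{n^-})$ by \autoref{lem:camparerhos} with $\lambda=\tfrac{e+1}2$. (Equivalently, this is the Hilbert--Burch resolution of $I(\xi)$.)

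Finally, all entries of both differentials lie in $\fm=(x,y)$, so applying $\Hom_G(-,\reg_0\otimes W')$ and using \eqref{eq:Extres} gives a complex with vanishing differentials; hence $\Ext^i_G(\reg_\xi,\reg_0\otimes W')\cong\Hom_G(P_i\otimes_{\reg_V}\reg_0,\,W')$, which is nonzero only for $i=0$ with $W'\cong\chi(1)$, for $i=1$ with $W'\in\{\chi(1),\rho(x^{n^-})\}$, and for $i=2$ with $W'\cong\rho(x^{n^-})$, and vanishes for all other $i$. Together with the reduction above, this gives the lemma.

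The colon-ideal computation and the verification of the Koszul chain map are routine; the step I expect to require the most care is the equivariant bookkeeping --- making sure the middle term of the minimal resolution is exactly $\chi(1)\oplus\rho(x^{n^-})$ with no spurious character twist, which comes down to tracking the $\tau$-action and the semi-invariance of the monomials involved correctly, plus the identification $\rho(x^{n^+})\cong\rho(x^{n^-})$. (The degenerate value $e=1$, where $n^-=0$ and $G(m,1,2)$ is a wreath product already treated in \cite{PvdB}, is excluded.)
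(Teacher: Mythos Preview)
Your proof is correct, and it takes a genuinely different route from the paper's argument. The paper never builds a resolution of $\reg_\xi$; instead it uses the filtration of $\reg_\xi(A)$ by the sheaves $\reg_0\otimes U$ coming from \autoref{lem:xiinva} and \autoref{lem:equifilt}, then shows that for each $W\in K$ the Euler bicharacteristic $[\reg_\xi(A),\reg_0\otimes W]$ vanishes by an arrow count in the McKay quiver \eqref{eq:McKayodd}, and finally pins down $\Hom$ and $\Ext^2$ separately via the cyclic generator $1\in\reg_\xi$ and a socle computation (with Serre duality). Your approach, by contrast, produces the minimal $G$-equivariant free resolution of $\reg_\xi$ directly as a cancelled mapping cone of two Koszul complexes, so all $\Ext^i_G(\reg_\xi,\reg_0\otimes W')$ are read off at once from the representation content of the terms $\chi(1)$, $\chi(1)\oplus\rho(x^{n^+})\cong\chi(1)\oplus\rho(x^{n^-})$, $\rho(x^{n^-})$. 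This is sharper --- you actually identify the non-vanishing Ext groups rather than only proving vanishing on $K$ --- and it bypasses the McKay-quiver bookkeeping entirely. The paper's approach, on the other hand, reuses machinery (the quiver, the Euler-characteristic trick) that it needs anyway for \autoref{lem:FExt}, so for the paper's purposes it is economical. Your resolution-based argument is self-contained and would work equally well in settings where the McKay quiver is less transparent. The equivariance check you flag as the delicate point (that $\phi_2=\id$ and that the middle term carries no hidden twist) is indeed the crux, and your outline handles it correctly.
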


The $W\in K$ correspond exactly to the nodes in \eqref{eq:McKayodd} after removing the \textcolor{violet}{violet $x^{n^-}$}.

\begin{proof}
We have $\chi((xy)^c)\otimes \chi(A)\cong \chi(A(xy)^c)$ and $\rho\otimes \chi(A)\cong \rho$ for $\rho\in \irr(G)$ with $\dim\rho=2$.
Hence, by \autoref{lem:xiinva} together with \autoref{lem:equifilt}, the equivariant sheaf $\reg_\xi(A)$ has a filtration such that every $\reg_0\otimes U$ with $U\in K\cup\{\rho(x^{n^-})\}$, i.e.\ every $U\in \irr(G)$ which is not a character of the form $\chi(A(xy)^c)$ occurs exactly once as a graded piece. These are exactly the irreducible representations displayed in \eqref{eq:McKayodd} (we left out the column with the $A(xy)^c$ to be able to fit \eqref{eq:McKayodd} on one page).

We compute that, for every $W\in K$, we have the vanishing of the Euler characteristic
\begin{equation}\label{eq:xiEulervanish}
[\reg_\xi, \reg_0\otimes W]=0\,.
\end{equation}
Indeed, for $c=0,\dots, d-1$, we have exactly one solid arrow into $(xy)^c$ in \eqref{eq:McKayodd}. Since we also have the identity on $\reg_0((xy)^c)$, this gives $[\reg_\xi, \reg_0((xy)^c)]=1-1=0$.

For $\rho\in \irr(G)$ of $\dim\rho=2$, we have the identity on $\reg_0\otimes \rho$, two solid arrows, and one dotted arrow into the corresponding monomial in \eqref{eq:McKayodd}. This gives $[\reg_\xi, \reg_0\otimes \rho]=1-2+1=0$, so we confirmed \eqref{eq:xiEulervanish}.

As $1$ is a generator of $\reg_\xi$ as a $\reg_V$-module, we have $\Hom_G\bigl(\reg_\xi(A), \reg_0\otimes W\bigr)\cong 0$ for $W\in \irr(G)$, except for $W\cong \chi(A)$.
Furthermore, by \autoref{lem:xi}, the socle of $\reg_\xi$ is
\[
 \Ann_{(x,y)}\reg_\xi=\langle x^{n^+-1}y^{d-1}, x^{d-1}y^{n^+-1}\rangle \cong \rho(x^{n^+-1}y^{d-1})\,.
\]
Hence, by Serre duality,
\[
 \Ext^2_G\bigl(\reg_\xi(A), \reg_0\otimes W\bigr)^\vee\cong \Hom_G\bigl(\reg_0\otimes W, \reg_\xi(xy)\bigr)\cong \Hom_G\bigl(\reg_0\otimes W((xy)^{d-1}), \reg_\xi\bigr)
\]
vanishes except for $W((xy)^{d-1})\cong \rho(x^{n^+-1}y^{d-1})$, which is the same as $W\cong \rho(x^{n^-})$ by \eqref{eq:chirho}. As $\chi(A), \rho(x^{n^-})\notin K$, we have proved
\[
 \Hom_G\bigl(\reg_\xi(A), \reg_0\otimes W\bigr)\cong 0\cong \Ext^2_G\bigl(\reg_\xi(A), \reg_0\otimes W\bigr) \quad\text{for every }W\in K\,.
\]
Together with \eqref{eq:xiEulervanish}, this gives the assertion.
\end{proof}

We denote the right-orthogonal complement of \eqref{eq:sodABodd} in $\D_G(V)$ by
\[
\cC=
\bigl\langle \cA_\gamma(A(xy)), \cA_\gamma(A(xy)^2)\dots, \cA_\gamma (A(xy)^{d-1}), \pi^*(\D(V/G))(A), \cB_\tau(A) \bigr\rangle^\perp \,.
\]

\begin{prop}[$e$ odd]\label{prop:Bspanningodd}
A spanning class of $\cC$ is given by $\cS=\{\reg_0\otimes W\mid W\in K\}$ where
\[
 K=\irr(G)\setminus \bigl(\{\chi(A(xy)^c)\mid 0\le c<d\}\cup\{\rho(x^{n^-})\}\bigr)\,.
\]
\end{prop}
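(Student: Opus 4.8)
The plan is to mirror the proof of \autoref{prop:Bspanningeven}: first verify $\cS\subseteq\cC$, then show that $\cS$ generates $\cC$ as a triangulated category. The only genuinely new difficulty is the single representation $\rho(x^{n^-})$, which is deliberately left out of $\cS$.

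\emph{Step 1 ($\cS\subseteq\cC$).} I would check that $\reg_0\otimes W$, $W\in K$, is right-orthogonal to each block of \eqref{eq:sodABodd}. For the $\cA_\gamma(A(xy)^c)$ and for $\pi^*(\D(V/G))(A)$ this is \autoref{lem:complementA} tensored by $\chi(A)$: since $\chi(A)^{\otimes 2}\cong\chi(1)$, the relevant $\Ext^*_G$ vanish unless $W\cong\chi(A(xy)^{c'})$ for some $c'$, and such characters are not in $K$. For $\cB_\tau(A)$ it suffices to test against the spanning class $\{\Psi_\tau(\reg_t)(A)\cong\reg_{\widehat\pi_\tau^{-1}(t)}(A)\mid t\in V^{(\tau)}\}$: for $t\neq\bar 0$ the supports are $G$-orbits disjoint from the origin, and for $t=\bar 0$ this is exactly \autoref{lem:xiExtvanish}.

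\emph{Step 2 (the easy part of generation).} Let $\cT$ be the triangulated subcategory generated by $\cS$ and the pieces of \eqref{eq:sodABodd}; as $\cC$ is the right-orthogonal complement of \eqref{eq:sodABodd}, it suffices to put the spanning class $\Omega=\{\Ind_{G_v}^G(\reg_v\otimes U)\}$ of $\D_G(V)$ from \cite[Prop.\ 2.1]{BPol} into $\cT$. For $G_v=1$: $\Ind_1^G\reg_v\cong(\Ind_1^G\reg_v)(A)\cong\pi^*\reg_{\pi(v)}(A)\in\pi^*(\D(V/G))(A)$ by \eqref{eq:Indinva}. For $v\in V^{\gamma_y}\setminus\{0\}$: $\Ind_{\langle\gamma_y\rangle}^G(\reg_v\otimes\Res\chi((xy)^c))\cong\reg_{G\cdot v}(A(xy)^c)\in\cA_\gamma(A(xy)^c)$ for $c=1,\dots,d-1$, and the $c=0$ term is recovered from the filtration of $\pi^*\reg_{\pi(v)}(A)$ given by \autoref{lem:pifibres}, whose remaining graded pieces we just placed in $\cT$. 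For $v\in V^{\tzeta^i\tau}\setminus\{0\}$: here $G\cdot v\subseteq Z_\tau$, so by flatness of $\widehat\pi_\tau$ one has $\reg_{G\cdot v}=\Psi_\tau(\reg_t)$ for a unique $t\neq\bar 0$; since $\Res_G^{\langle\tzeta^i\tau\rangle}\chi(A)$ is the non-trivial character, $\Ind_{\langle\tzeta^i\tau\rangle}^G(\reg_v\otimes\alt)\cong\reg_{G\cdot v}(A)=\Psi_\tau(\reg_t)(A)\in\cB_\tau(A)$, and again the complementary term comes from the \autoref{lem:pifibres}-filtration of $\pi^*\reg_{\pi(v)}(A)$. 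Now only the $\reg_0\otimes W$, $W\in\irr(G)$, remain; for $W\in K$ these are in $\cS$, and for $W\cong\chi(A(xy)^c)$ with $c\geq 1$ one twists the filtration of \autoref{lem:Phigammaimage}: $\Phi_\gamma(\reg_{\pi_\gamma(0)})(A(xy)^c)\in\cA_\gamma(A(xy)^c)\subseteq\cT$ has all its graded pieces in $\cS$ except $\reg_0(A(xy)^c)$ (one checks $\rho(x^{i+c}y^c)\not\cong\rho(x^{n^-})$ for $1\leq c\leq d-1$, just as in the proof of \autoref{prop:skyExt}), so $\reg_0(A(xy)^c)\in\cT$.

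\emph{Step 3 (the crux: $\reg_0\otimes\rho(x^{n^-}),\reg_0(A)\in\cT$).} Only $\reg_0\otimes\rho(x^{n^-})$ and $\reg_0(A)$ are still missing, and since $\rho(x^{n^-})\notin K$ it cannot be read off a single object in a single piece as in the even case. I would argue as follows. From $I(\pi^{-1}(\pi(0)))=((xy)^d,x^m+y^m)\subseteq((xy)^d,x^{n^+},y^{n^+})=I(\xi)$ (valid because $m\geq n^+$; see \autoref{lem:xi}) one gets a $G$-equivariant surjection $\pi^*\reg_{\pi(0)}\twoheadrightarrow\reg_\xi$; let $R$ be its kernel, so that after tensoring with $\chi(A)$ there is a short exact sequence $0\to R(A)\to\pi^*\reg_{\pi(0)}(A)\to\reg_\xi(A)\to 0$. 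Its middle term lies in $\pi^*(\D(V/G))(A)$ and its right term is $\Psi_\tau(\reg_{\bar 0})(A)\in\cB_\tau(A)$, so both lie in $\cT$, hence $R(A)\in\cT$. Taking global sections (exact, as $V$ is affine) and using $\Gamma(\pi^*\reg_{\pi(0)})\cong\IK\langle G\rangle$ (\autoref{lem:pifibres}) together with the decomposition of $\Gamma(\reg_\xi)$ from \autoref{lem:xiinva}, a computation in the representation ring gives $\Gamma(R(A))\cong\bigoplus_{c=0}^{d-1}\chi((xy)^c)\oplus\bigoplus_{\dim\rho=2}\rho$; since $R(A)$ is supported at the origin, \autoref{lem:equifilt} provides a filtration whose graded pieces are precisely the corresponding $\reg_0\otimes W$, each once. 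Every such $W$ lies in $K$ except $\rho(x^{n^-})$, so the device used in the proof of \autoref{prop:Bspanningeven} (a filtered object of $\cT$ all but one of whose graded pieces lies in $\cT$ has its last piece in $\cT$) yields $\reg_0\otimes\rho(x^{n^-})\in\cT$. Applying the same device to the $\IK\langle G\rangle$-filtration of $\pi^*\reg_{\pi(0)}(A)\in\cT$, whose graded pieces are now all in $\cT$ apart from the single copy of $\reg_0(A)$, gives $\reg_0(A)\in\cT$. Thus $\Omega\subseteq\cT$, and $\cS$ spans $\cC$. The expected main obstacle is precisely this Step 3: it is forced by the failure of \autoref{ass:g} for $\tau$ when $e$ is odd, and recovering $\rho(x^{n^-})$ requires combining the two pieces $\pi^*(\D(V/G))(A)$ and $\cB_\tau(A)$ through the fat point $\pi^{-1}(\pi(0))$ and its subscheme $\xi$, rather than extracting it from one object in one piece.
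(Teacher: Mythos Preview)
Your proof is correct and follows essentially the same route as the paper's: the verification $\cS\subset\cC$, the treatment of the spanning class $\Omega$ for $v$ with $G_v\in\{1,\langle\gamma_y\rangle,\langle\tau\rangle\}$, and the key Step~3 (using the short exact sequence $0\to R\to\reg_\eta\to\reg_\xi\to 0$ twisted by $\chi(A)$ to extract $\reg_0\otimes\rho(x^{n^-})$, then recovering $\reg_0(A)$ from $\reg_\eta(A)$) all match the paper exactly. The only difference is the order: you place $\reg_0(A(xy)^c)$ for $1\le c\le d-1$ \emph{before} $\reg_0\otimes\rho(x^{n^-})$, which forces the extra check that no twisted graded piece of $\Phi_\gamma(\reg_{\pi_\gamma(0)})(A(xy)^c)$ is isomorphic to $\rho(x^{n^-})$; the paper does it afterwards and thereby avoids this check. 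Both orderings are valid. (One cosmetic point: that extra check is not in the proof of \autoref{prop:skyExt} as you cite, but it follows immediately from the character computations of \autoref{lem:conjcondition} and \autoref{lem:camparerhos}, since $d\mid n^-$ forces $c\equiv 0\pmod d$.)
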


\begin{proof}
 This is very similar to the prove of \autoref{prop:Bspanningeven}. We first check that $\cS\subset \cC$. That $\cS$ is right orthogonal to
$\bigl\langle \cA_\gamma(A(xy)), \cA_\gamma(A(xy)^2)\dots, \cA_\gamma (A(xy)^{d-1}), \pi^*(\D(V/G))(A) \bigr\rangle$ follows directly from \autoref{lem:complementA}.

That $\cS$ is right orthogonal to $\cB_\tau(A)$ can be checked on the spanning class \eqref{eq:Bspanning}. So we have to verify that
\[\Ext^*_G\bigl(\reg_{\widehat \pi_\tau^{-1}(t)}, \reg_0\otimes W\bigr)\cong 0\quad \text{for every $t\in V^{(\tau)}$ and every $W\in K$.}\]
For $t\neq \bar 0$, the supports of $\reg_{\widehat \pi_\tau^{-1}(t)}$ and
$\reg_0\otimes W$ are disjoint, so these two sheaves are orthogonal. For $t=\bar 0$, the desired vanishing is \autoref{lem:xiExtvanish}.

So now we established $\cS\subset \cC$. That $\cS$ is a spanning class of $\cC$ is again shown by checking that spanning class
\[
 \Omega=\bigl\{\Ind_{G_v}^G(\reg_x\otimes U)\mid v\in V\,,\, U\in \irr(G_v)  \bigr\}
\]
of $\D_G(V)$
is contained in the triangulated subcategory $\cT\subset \D_G(V)$ spanned by $\cS$ and \eqref{eq:sodABodd}.
In the two cases $G_v=1$ and $v\in Z_\gamma\setminus \{0\}$, which means $G_v=\langle \gamma_y\rangle$, it was already shown in the proof of \autoref{prop:Bspanningeven} that all the $\Ind_{G_v}^G(\reg_x\otimes U)$ for $U\in \irr(G_v)$ are contained in \eqref{eq:Asod} twisted by $\chi(A)$. This part is independent of the parity of $e$, so we have these $\Ind_{G_v}^G(\reg_v\otimes U)$ in $\cT$.

Let now $v\in V^\tau\setminus \{0\}$. Then $G_v=\langle \tau\rangle$,  which has irreducible representations $\mathbf 1_\tau$ and $\alt_\tau=\Res\chi(A)$. By \eqref{eq:Indprojformula}, we have
\[
 \Ind_{\langle \tau\rangle}^G(\reg_v\otimes \alt_\tau)\cong \reg_{G\cdot v}(A)\in \cB_\tau(A)\,.
\]
By \autoref{lem:pifibres}, the sheaf $\reg_{\pi^{-1}(\pi(v))}(A)\cong \pi^*\reg_{\pi(v)}(A)\in \pi^*(\D(V/G))(A)$ has a filtration with graded pieces $\Ind_{\langle \tau\rangle}^G(\reg_v\otimes \alt_\tau)$ and $\Ind_{\langle \tau\rangle}^G(\reg_v)$. Hence, also the latter is contained in $\cT$.

It remains to find the $\reg_0\otimes W$ with $W\in \irr(G)$ in $\cT$. For $W\in K$, they are in $\cS\subset \cT$ by definition.

To find $\reg_0\otimes \rho(x^{n^-})$ in $\cT$, let $\eta:=\pi^{-1}([0])\subset V$ be the scheme-theoretic fibre of the quotient morphism $\pi\colon V\to V/G$, which means that $I(\eta)=((xy)^d, x^m+y^m)$; see the beginning of \autoref{subsect:irrep}. There is a surjective morphism $\reg_{\eta}\to \reg_{\xi}$ whose kernel we denote by $K$. We have
\[
\Gamma(\reg_{\eta})\cong\IK\langle G\rangle\cong \left(\bigoplus_{c=0}^{d-1}\chi((xy)^c)  \right)\oplus\left(\bigoplus_{c=0}^{d-1}\chi(A(xy)^c)  \right)\oplus\left(\bigoplus_{\rho\in \irr(G)\,,\, \dim \rho=2}\rho^{\oplus 2}  \right)\,;\]
see \autoref{lem:pifibres}.
Comparing this to \autoref{lem:xiinva} gives
\[
\Gamma(K)\cong \left(\bigoplus_{c=0}^{d-1}\chi(A(xy)^c)  \right)\oplus\left(\bigoplus_{\rho\in \irr(G)\,,\, \dim \rho=2}\rho  \right)\,.
\]
Hence, by \autoref{lem:equifilt} the equivariant sheaf $K(A)=K\otimes \chi(A)$ has a filtration whose graded pieces are $\reg_0((xy)^c)$ for $c=0,\dots, d-1$ and $\reg_0\otimes \rho$ for $\rho\in \irr(G)$ with $\dim\rho=2$. All these graded pieces are contained in $\cS$, except for $\reg_0\otimes \rho(x^{n^-})$. Since $\reg_\xi(A)\in \cB_\tau(A)$ and $\reg_{\eta}(A)\in \pi^*(\D(V/G))(A)$, we have $K(A)\in \cT$. Hence, also $\reg_0\otimes \rho(x^{n^-})\in \cT$.

Let $c=1,\dots,d-1$. We now know that all the graded pieces $\reg_0\otimes W$, except for $W=\chi(A(xy)^c)$, of the filtration of $\Phi_{\gamma}(\reg_{\pi_{\gamma}(0)})(A(xy)^c)\in \cA_\gamma(A(xy)^c)\subset \cT$ given in \autoref{lem:Phigammaimage} lie in $\cT$. Hence,  also $\reg_0(A(xy)^c)\in \cT$.

Now, all graded pieces of the filtration of $\reg_{\eta}(A)\in \pi^*(\D(V/G))(A)$ described in \autoref{lem:pifibres} are known to lie in $\cT$ except for $\reg_0(A)$, which then accordingly also lies in $\cT$.
 \end{proof}

\subsection{The Exceptional Collection - odd case}\label{subsect:exceptionalodd}
Just as in the case that $e$ is even, all the members of the spanning class $\cS$ of the right orthogonal complement $\cC$ of \eqref{eq:sodABodd} are exceptional objects; see \autoref{cor:skyexc}. However, to be able to organise them into an exceptional sequence we have to remove those which correspond to the bottom row of \eqref{eq:McKayodd}. As the top row and the upper right diagonal of \eqref{eq:McKayodd} correspond to objects isomorphic to the removed ones, we are left exactly with the objects corresponding to nodes of \eqref{eq:McKayodd} in the usual text color black.

We can organise them into an exceptional sequence by subsequently following the diagonals in \eqref{eq:McKayodd} along the arrows:
\begin{equation}\label{eq:skyexcodd}
\begin{split}
& \reg_0(xy), \reg_0\otimes \rho(x^2y), \reg_0(x^2y^2), \reg_0\otimes \rho(x^3y), \reg_0\otimes\rho(x^3y^2), \reg_0(x^3y^3),\dots,\\ & \reg_0\otimes \rho(x^{d-1}y), \dots, \reg_0(x^{d-1}y^{d-1}),\reg_0\otimes \rho(x^dy),\dots, \reg_0\otimes \rho(x^dy^{d-1}), \dots, \\&\reg_0\otimes \rho(x^{n^+-1}y),\dots, \reg_0\otimes \rho(x^{n^+-1}y^{d-1}) \,.
\end{split}
\end{equation}
We can again turn this into a full exceptional sequence of $\cC$ by adding the sheaves $F(x^a)$ of \autoref{def:F}.

\begin{prop}[$e$ odd]\label{prop:Cseqodd} The following is a full exceptional sequence of $\cC$:
\begin{equation}\label{eq:Cesodd}
\bigl(\text{Exceptional Sequence \eqref{eq:skyexcodd}} \bigr), F(x^0), F(x), \dots, F(x^{n^--1}), F(x^{n^-+1}),\dots,  F(x^{n^+-1})\,.
\end{equation}
\end{prop}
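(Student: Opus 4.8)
The statement to prove is that \eqref{eq:Cesodd} is a full exceptional sequence of $\cC$. The structure of the argument should mirror the even case treated in \autoref{prop:Cseqeven}, with the one genuine novelty being the handling of the index $n^-$ which must be skipped. First I would verify that every $F(x^a)$ appearing in \eqref{eq:Cesodd} (that is, for $a \in \{0,\dots,n^--1\} \cup \{n^-+1,\dots,n^+-1\}$) is an exceptional object. This is immediate from \autoref{lem:Fpieces} and \autoref{lem:FExt}(i),(ii): the only graded piece of $F(x^a)$ to which $\Ext^*_G$ from $F(x^a)$ is non-vanishing is $\reg_0\otimes\rho(x^a)$, giving $\Ext^*_G(F(x^a),F(x^a))\cong\IK[0]$. (For $a=n^+-1$, note $\rho(x^{a+1})=\rho(x^{n^+})\cong\rho(x^{n^-})$ by \autoref{lem:camparerhos}; this is still not equal to $\rho(x^{n^+-1})$, so the exceptionality computation is unaffected, but it is exactly why the $F(x^{n^-})$ term had to be omitted — see below.)

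Next I would establish the semi-orthogonality of \eqref{eq:Cesodd}. There are three kinds of vanishing to check. For the mutual semi-orthogonality among the $F(x^a)$, \autoref{lem:Fpieces} and \autoref{lem:FExt}(ii) give $\Ext^*_G(F(x^i),\reg_0\otimes W)\cong 0$ for every graded piece $\reg_0\otimes W$ of $F(x^j)$ whenever $i>j$, hence $\Ext^*_G(F(x^i),F(x^j))\cong 0$; this works for all relevant indices $i > j$ in \eqref{eq:Cesodd} (the omitted index $n^-$ causes no gap in the argument since we only ever apply \autoref{lem:FExt}(ii) with $a \le n^+-1$). For semi-orthogonality of the $F(x^a)$ against the skyscraper part \eqref{eq:skyexcodd}, I would observe via \autoref{lem:FExt}(ii) that the only $\reg_0\otimes W$ not right-orthogonal to all the $F(x^a)$ are those with $W$ isomorphic to one of $\chi(1)$, $\rho(x^a)$ (for $0\le a\le n^+-1$), $\rho(x^{n^-})$ (arising as $\rho(x^{n^+})$ from the $a=n^+-1$ term), or $\chi(A(xy)^c)$ — and none of these objects occurs in \eqref{eq:skyexcodd}, since \eqref{eq:skyexcodd} by construction consists precisely of the black nodes of \eqref{eq:McKayodd}, excluding the bottom row ($\reg_0$, $\reg_0\otimes\rho(x^a)$), the violet node $\rho(x^{n^-})$, and the $\chi(A(xy)^c)$ column. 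The internal semi-orthogonality of the subsequence \eqref{eq:skyexcodd} itself follows from \autoref{prop:skyExt}: along the diagonal ordering, all $\Ext^1$ and $\Ext^2$ arrows point forward, which is exactly why the bottom-row objects had to be removed to break the length-$d$ cycles of degree-$2$ morphisms.

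Finally, for fullness I would check that the spanning class $\cS = \{\reg_0\otimes W \mid W\in K\}$ of $\cC$ from \autoref{prop:Bspanningodd} is contained in the triangulated subcategory $\cU$ generated by \eqref{eq:Cesodd}. The members of $\cS$ lying in \eqref{eq:skyexcodd} are trivially in $\cU$. The remaining ones are $\reg_0 \cong F(x^0)$, which is literally a term, and $\reg_0\otimes\rho(x^a)$ for $a \in \{1,\dots,n^+-1\}\setminus\{n^-\}$: for such $a$ the object $F(x^a)$ is a term of \eqref{eq:Cesodd}, and by \autoref{lem:Fpieces} all its graded pieces except $\reg_0\otimes\rho(x^a)$ are already known to lie in $\cU$ (they belong to \eqref{eq:skyexcodd} or are of the form $\reg_0\otimes\rho(x^b)$ with $b<a$, handled inductively), so the filtration forces $\reg_0\otimes\rho(x^a)\in\cU$. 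This covers all of $\cS$, since $\rho(x^{n^-})\notin K$. The main obstacle, and the point requiring the most care, is precisely the bookkeeping around the index $n^-$: one must check that omitting $F(x^{n^-})$ loses nothing for fullness (indeed $\reg_0\otimes\rho(x^{n^-})$ is not in $\cS$ because $\rho(x^{n^-})\notin K$), that its omission does not break the semi-orthogonality chain, and that the isomorphism $\rho(x^{n^+})\cong\rho(x^{n^-})$ from \autoref{lem:camparerhos} is correctly accounted for in the $\Ext$-computations for the last term $F(x^{n^+-1})$ — everything else is a direct transcription of the even-case arguments via the McKay quiver \eqref{eq:McKayodd}.
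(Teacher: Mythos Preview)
Your proposal is correct and follows essentially the same route as the paper's proof: both reduce to \autoref{lem:Fpieces} and \autoref{lem:FExt}, both identify the one problematic non-vanishing $\Ext^*_G\bigl(F(x^{n^+-1}),F(x^{n^-})\bigr)$ coming from $\rho(x^{n^+})\cong\rho(x^{n^-})$ and note that it is harmless because $F(x^{n^-})$ is omitted, and both defer the remaining verifications to the even-case template of \autoref{prop:Cseqeven}. If anything, your write-up is more explicit than the paper's---you spell out the fullness argument and the inductive recovery of the bottom-row objects $\reg_0\otimes\rho(x^a)$, whereas the paper simply says ``analogous to the proof of \autoref{prop:Cseqeven}'' and records only the extra non-vanishing pair $(n^+-1,n^-)$.
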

Note that all $F(x^a)$ with $a=0,\dots, n^+-1$ occur except for $F(x^{n^-})$.
\begin{proof}
This is analogous to the proof of \autoref{prop:Cseqeven}. One concludes from \autoref{lem:FExt}, that the $F(x^a)$ for $a=0,\dots,n^{+}-1$ are exceptional objects, semi-orthogonal to the members of \eqref{eq:skyexcodd}.
Furthermore, $\Ext^*_G\bigl(F(x^a), F(x^b)  \bigr)\neq 0$ if and only if $b=a+1$ or $(a,b)=(n^+-1, n^-)$. The latter extra case comes from the isomorphism $\rho(x^{n^+})\cong \rho(x^{n^-})$; see the comment below the formula in \autoref{lem:FExt}(ii). However, $F(x^{n^-})$ does not occur in \eqref{eq:Cesodd}, so the extra case does not disturb the semi-orthogonality.
\end{proof}

\begin{proof}[Proof of \autoref{thm:main} for $e$ odd]
Combining \autoref{prop:sodABodd} and \autoref{prop:Cseqodd}, we get the semi-orthogonal decomposition
\[
\D_G(V)=
\bigl\langle \text{(exc.\ seq.\ \eqref{eq:Cesodd})},\cA_\gamma(A(xy)), \dots, \cA_\gamma (A(xy)^{d-1}), \pi^*(\D(V/G))(A), \cB_\tau(A) \bigr\rangle\,.
\]
Comparing this with \autoref{lem:conjodd} confirms \autoref{conj:PvdB} for $G(m,e,2)$ with $e$ odd.
\end{proof}

\section{Final Comments}

\subsection{Linearity over $V/G$}\label{subsect:linear}
In this paper, we fixed a set of representatives of the conjugacy classes of $G=G(m,e,2)$ and then constructed for every representative $g$ a fully faithful embedding $\Theta_g\colon \D(V^{(g)})\hookrightarrow \D_G(V)$. For $g$ a reflection, these embeddings where denoted by $\Phi_g$ or $\Psi_g$, and for $g=\id$, we found $\Theta_{\id}=\pi^*$. Furthermore, the members of the exceptional sequences \eqref{eq:Ceseseven} and \eqref{eq:Cesodd} are in bijection with the representatives of the conjugacy classes with $V^g=\{0\}=\Spec\IK$. It is unclear whether there is a canonical choice of this bijection. But making any choice of a bijection $g\mapsto E_g$,  we can consider the fully faithful embeddings $\Theta_g\colon \D(\Spec\IK)\to \D_G(V)$ given by $\Theta_g(W^*)=E_g\otimes W^*$ for every finite-dimensional graded $\IK$-vector space $W^*$.

All these $\Theta_g$ are Fourier--Mukai transforms relative over $\D(V/G)$ with respect to the quotient morphism $\pi\colon V\to V/G$ and the morphism $\beta_g\colon V^{(g)}\to V/G$ induced by the $\CC(g)$-invariant composition
\[
 V^{g}\hookrightarrow V\xrightarrow\pi V/G\,.
\]
Note that, for $g$ a reflection, $\beta_g$ is the same as the canonical morphism $\nu_g\colon V^{(g)}\to Z_g/G$ used earlier followed by the embedding $Z_g/G\hookrightarrow V/G$.

This means that the Fourier--Mukai kernels of $\Theta_g$, which are objects in $\D_G(V^{(g)}\times V)$, arise as push forwards of objects in the equivariant derived category $\D_G(V^{(g)}\times_{V/G} V)$ of the fibre product.
It implies that the functors are $\D(V/G)$-linear in the sense that
\[
\Theta_g(A\otimes \alpha_g^*(B))\cong \Theta_g(A)\otimes \pi^*(B)\quad\text{ for $A\in \D(V^{(g)})$, $B\in \D(V/G)$;}
\]
see \cite[Lem.\ 2.35]{Kuz}. Hence, our semi-orthogonal decompositions are preserved under tensor products by objects of $\pi^*\D(V/G)$.
\subsection{Non Algebraically Closed Ground Fields}\label{subsect:groundfield}
For the whole paper, we had the assumption that $\IK$ is of characteristic zero and algebraically closed. For some steps of the proofs, $\IK$ being algebraically closed might indeed be necessary (or not, the author has not this checked in detail).

However, if we fix the group $G=G(m,e,2)$, it is enough to assume that $\IK$ is of characteristic zero and contains all $m$-th roots of unity for \autoref{thm:main} to hold. The condition that $\IK$ contains the roots of unity is necessary anyway for $G(m,e,2)\le \GL(\IK^2)$ to be well-defined.

Indeed, we can deduce the statement for the more general ground field $\IK$ from the case of its algebraic closure $\overline \IK$. The reason is that all the functors $\Theta_g\colon \D(V^{(g)})\to \D_G(V)$ constructed are already defined over $\IK$. Note, in particular, that the description of $\irr(G)$ given in \autoref{sect:G} holds already over $\IK$, so all the sheaves $\reg_0\otimes W$, which where proved to be exceptional over $\overline \IK$ can already be defined over $\IK$. Now, one can apply the descent result \cite[Thm.\ B]{BS--descent} to the faithfully flat morphism $S'=\Spec \overline \IK\to S=\Spec \IK$.

\appendix{
\newgeometry{left=20mm,right=20mm,top=2.5cm,bottom=1.5cm}
\begin{landscape}
\section{McKay Quiver - even case}\label{app:even}
Diagram \eqref{eq:evenMcKay} below is a diagrammatic description of \autoref{prop:skyExt} in the case that $e$ is even.
After removing the top row, whose nodes are displayed in tiny font size, the nodes of the diagram, labeled by polynomials $f\in \IK[x,y]$ are in bijection with the isomorphism classes of irreducible $G$-representations; see \autoref{prop:irreven}. More concretely, a polynomial $f$ in one of the four outer columns (two on the left and two on the right) is semi-invariant, and corresponds to the $G$-character $\chi(f)$. The monomials $x^ay^b$ in the inner columns correspond to the 2-dimensional irreducible representations $\rho(x^ay^b)$.

To interpret the diagram in terms of \autoref{prop:skyExt} one further sends $W\in \irr(G)$ to $\reg_0\otimes W\in \Coh_G(V)\subset \D_G(V)$. More concretely, the semi-invariant $f$ in the outer columns stand for $\reg_0(f)$ and the $x^ay^b$ in the middle columns stand for $\reg_0\otimes\rho(x^ay^b)$. A solid arrow than stands for a non-vanishing $\Ext^1_G$ class between the two connected objects, in other words, a morphisms of degree 1 in $\D_G(V)$. A dotted arrow stands for a non-vanishing $\Ext^2_G$ class, i.e.\ a degree 2 morphism in $\D_G(V)$.

If we ignore the dotted arrows, the diagram is also known as the McKay quiver of $G$. This means that the number of arrows from a node corresponding to $U\in \irr(G)$ to a node corresponding to $W\in \irr(G)$ is $\dim\Hom_G(U, V\otimes W)$ where $V\cong \IK^2$ is the two dimensional representation which turns $G$ into a reflection group. This agrees with the $\Ext^1_G$ interpretation by the second formula of \autoref{lem:skyExt}.

The top row is redundant in the sense that the corresponding $W\in \irr(G)$ are isomorphic to those in the bottom row. However, we felt that including the top row improves readability, because it avoids arrows going over the whole diagram from top to bottom.
That the bottom and top row are in the color \textcolor{teal}{teal} is mainly to emphasize this redundancy. However, it also might help the reader with the orientation in \autoref{subsect:exceptionaleven}, as the $\reg_0\otimes W$ corresponding to the bottom row are those not appearing in the exceptional sequence \eqref{eq:skyexc}. The \textcolor{violet}{violet} nodes (leftmost column and $N^+$, $N^-$) are exactly the $\reg_0\otimes W$ not contained in $\cC$; see \autoref{prop:Bspanningeven}.

\begin{equation}\label{eq:evenMcKay}
\begin{tikzcd}
\scriptscriptstyle{\color{violet}{A(xy)^{d}\cong A}} \arrow[rr, bend right]               & \scriptscriptstyle{\color{teal}{(xy)^{d}\cong 1}} \arrow[r]               & \scriptscriptstyle{\color{teal}{x^{d+1}y^{d}\cong x}} \arrow[r]                                     & \scriptscriptstyle{\color{teal}{x^{d+2}y^{d}\cong x^2}}  \arrow[r]                     & \dots \arrow[r]            & \scriptscriptstyle{\color{teal}{x^{n+d-1}y^{d}\cong x^{n-1}}} \arrow[r]                                                     & \scriptscriptstyle{\color{violet}{N^+(xy)^{d}\cong N^+}} \arrow[r]               & \scriptscriptstyle{\color{violet}{N^-(xy)^{d}\cong N^-}}                            \\
\color{violet}{A(xy)^{d-1}} \arrow[rr, bend right]  \arrow[ru, dotted]             & (xy)^{d-1} \arrow[r]    \arrow[lu, dotted]           & x^dy^{d-1} \arrow[r]   \arrow[u, dotted] \arrow[llu] \arrow[lu]                                   & x^{d+1}y^{d-1} \arrow[r]    \arrow[u, dotted]      \arrow[lu]           & \dots \arrow[r]  \arrow[u, dotted] \arrow[lu]         & x^{n+d-2}y^{d-1} \arrow[r]    \arrow[u, dotted] \arrow[lu]                                                & N^+(xy)^{d-1} \arrow[r]   \arrow[ru, dotted]    \arrow[lu]         & N^-(xy)^{d-1}   \arrow[lu, dotted]           \arrow[llu]               \\
\color{violet}{\vdots} \arrow[ru, dotted]                        & \vdots \arrow[lu, dotted]          & \vdots \arrow[lu] \arrow[llu] \arrow[u, dotted]          & \vdots \arrow[u, dotted]                     &                            & \vdots \arrow[u, dotted]                                                       & \vdots \arrow[ru, dotted]             & \vdots \arrow[lu, dotted]               \\
\color{violet}{A(xy)^2} \arrow[rr, bend right] \arrow[ru, dotted] & (xy)^2 \arrow[r] \arrow[lu, dotted] & x^3y^2 \arrow[r] \arrow[u, dotted] \arrow[llu] \arrow[lu] & x^4y^2 \arrow[r] \arrow[u, dotted] \arrow[lu]  & \dots \arrow[r] \arrow[lu] & x^{n+1}y^2 \arrow[r] \arrow[rr, bend right] \arrow[u, dotted]  & N^+(xy)^2 \arrow[lu] \arrow[ru, dotted] & N^-(xy)^2 \arrow[llu] \arrow[lu, dotted] \\
\color{violet}{Axy} \arrow[rr, bend right] \arrow[ru, dotted]    & xy \arrow[r] \arrow[lu, dotted]    & x^2y \arrow[r] \arrow[u, dotted] \arrow[llu] \arrow[lu]   & x^3y \arrow[r] \arrow[lu] \arrow[u, dotted] & \dots \arrow[r] \arrow[lu] & x^{n}y \arrow[r] \arrow[rr, bend right] \arrow[u, dotted] \arrow[lu]           & N^+(xy) \arrow[ru, dotted] \arrow[lu]  & N^-(xy) \arrow[lu, dotted] \arrow[llu]   \\
\color{violet}{A} \arrow[rr, bend right] \arrow[ru, dotted]      & \color{teal}{1} \arrow[r] \arrow[lu, dotted]     & \color{teal}{x} \arrow[r] \arrow[u, dotted] \arrow[lu] \arrow[llu]     & \color{teal}{x^2}\arrow[r] \arrow[lu] \arrow[u, dotted]    & \dots \arrow[r] \arrow[lu] & \color{teal}{x^{n-1}} \arrow[r] \arrow[rr, bend right] \arrow[u, dotted] \arrow[lu]          & \color{violet}{N^+} \arrow[ru, dotted] \arrow[lu]     & \color{violet}{N^-} \arrow[lu, dotted] \arrow[llu]
\end{tikzcd}
\end{equation}
\end{landscape}

\begin{landscape}
\section{McKay Quiver - odd case}\label{app:odd}
Diagram \eqref{eq:McKayodd} below is a diagrammatic description of \autoref{prop:skyExt} in the case that $e$ is odd. The interpretation is the same as for \eqref{eq:evenMcKay} above. Note that the column with the $A(xy)^c$ on the left is missing. This is simply to be able to fit the diagram onto the page. In reality, the left side of the McKay quivers, standing for the characters $\chi(A(xy)^c)$ and $\chi((xy)^c)$ look exactly the same for $e$ even and $e$ odd, but there are differences on the right side, where we do not have the additional characters $\chi(N^{\pm}(xy)^c)$ anymore in the odd case.

Instead the rightmost diagonal, displayed in tiny font size and \textcolor{orange}{orange}, is identified with the right part of the bottom row via the isomorphisms $\rho(x^{n^+}y^c)\cong \rho(x^{n^-+c})$, which is the special case $\lambda=\frac{e+1}2$ of \autoref{lem:camparerhos}. There is still, similar to the even case, the identification of the top row with the left part of the bottom row, indicated by the color \textcolor{teal}{teal}.

The knot $x^{n^-}$ is \textcolor{violet}{violet} to indicate that $\reg_0\otimes \rho(x^{n^-})$ is not contained in $\cC$; see \autoref{prop:Bspanningodd}.

\begin{equation}\label{eq:McKayodd}
\begin{tikzcd}[column sep = small]
\scriptscriptstyle{\color{teal}{(xy)^{d}\cong 1}}  \arrow[r]                            & \scriptscriptstyle{\color{teal}{x^{d+1}y^{d}\cong x}} \arrow[r]                                              & \color{teal}{\dots} \arrow[r]            & \scriptscriptstyle{\color{violet}{x^{n^+}y^{d}\cong x^{n^-}}}                                &  &                                                                          &                                                       &                                                          &                                                        &                                                     \\
(xy)^{d-1} \arrow[r]                               & x^{d}y^{d-1} \arrow[r] \arrow[lu]  \arrow[u, dotted]                                             & \dots \arrow[r] \arrow[lu]           & x^{n^+-1}y^{d-1} \arrow[r] \arrow[lu] \arrow[u, dotted]                               & \scriptscriptstyle{\color{orange}{x^{n^+}y^{d-1}\cong x^{n^+-1}}} \arrow[lu] &                                                                          &                                                       &                                                          &                                                        &                                                     \\
 (xy)^{d-2} \arrow[r]  & x^{d-1}y^{d-2} \arrow[r] \arrow[lu]  \arrow[u, dotted]  & \dots \arrow[r] \arrow[lu] & x^{n^-+d-2}y^{d-2} \arrow[r] \arrow[lu] \arrow[u, dotted] & x^{n^+-1}y^{d-2} \arrow[lu] \arrow[r] \arrow[u, dotted]       & \scriptscriptstyle{\color{orange}{x^{n^+}y^{d-2}\cong x^{n^+-2}}} \arrow[lu] &                                                       &                                                          &                                                        &                                                     \\
 \vdots                & \vdots \arrow[lu]  \arrow[u, dotted]                         & {}                         & \vdots \arrow[u, dotted]                                  & \vdots \arrow[u, dotted]                                                            & \ddots \arrow[lu] \arrow[u, dotted]                                      & \color{orange}{\ddots} \arrow[lu]                                     &                                                          &                                                        &                                                     \\
 (xy)^2 \arrow[r]      & x^3y^2 \arrow[r] \arrow[u, dotted]  \arrow[lu]           & \dots \arrow[r] \arrow[lu] & x^{n^-+2}y^2 \arrow[r] \arrow[u, dotted] \arrow[lu]       & x^{n^-+3}y^2 \arrow[r] \arrow[lu] \arrow[u, dotted]           & \dots \arrow[r]                                                          & x^{n^+-1}y^{2} \arrow[lu] \arrow[r] \arrow[u, dotted] & \scriptscriptstyle{\color{orange}{x^{n^+}y^2\cong x^{n^-+2}}} \arrow[lu] &                                                        &                                                     \\
 xy \arrow[r]         & x^2y \arrow[r] \arrow[u, dotted]  \arrow[lu]              & \dots \arrow[r] \arrow[lu] & x^{n^-+1}y \arrow[r] \arrow[u, dotted] \arrow[lu]         & x^{n^-+2}y \arrow[r] \arrow[lu] \arrow[u, dotted]             & \dots \arrow[r] \arrow[lu]                                               & x^{n^+-2}y \arrow[r] \arrow[lu] \arrow[u, dotted]     & x^{n^+-1}y \arrow[lu] \arrow[r] \arrow[u, dotted]        & \scriptscriptstyle{\color{orange}{x^{n^+}y\cong x^{n^-+1}}} \arrow[lu] &                                                     \\
 \color{teal}{1} \arrow[r]          & \color{teal}{x} \arrow[r] \arrow[u, dotted] \arrow[lu]                   & \color{teal}{\dots} \arrow[r] \arrow[lu] & \color{violet}{x^{n^-}} \arrow[r] \arrow[u, dotted] \arrow[lu]            & \color{orange}{x^{n^-+1}} \arrow[lu] \arrow[r] \arrow[u, dotted]              & \color{orange}{\dots} \arrow[r] \arrow[lu]                                               & \color{orange}{x^{n^+-3}} \arrow[r] \arrow[lu] \arrow[u, dotted]      & \color{orange}{x^{n^+-2}} \arrow[r] \arrow[lu]   \arrow[u, dotted]                        & \color{orange}{x^{n^+-1}} \arrow[lu] \arrow[r] \arrow[u, dotted]       & \scriptscriptstyle{\color{violet}{x^{n^+}\cong x^{n^-}}} \arrow[lu]
\end{tikzcd}
\end{equation}
\end{landscape}
}
\restoregeometry

\bibliographystyle{alpha}
\addcontentsline{toc}{chapter}{References}
\bibliography{references}

\end{document}